\titleformat{\section}{\large\bfseries}{\thesection}{1em}{}
\numberwithin{equation}{section}
\newtheorem{theorem}{Theorem}
\numberwithin{theorem}{section}
\newtheorem{lemma}{Lemma}
\numberwithin{lemma}{section}
\newtheorem{corollary}{Corollary}
\numberwithin{corollary}{section}
\numberwithin{proposition}{section}
\theoremstyle{definition}
\newtheorem{definition}{Definition}
\newtheorem*{remark*}{Remark}
  \title{Extension of Process Convergence With Application to Chatterjee's Rank Correlation\footnote{This article was previously entitled `Asymptotic Normality of Chatterjee's Rank Correlation'.}}
\date{\today}
\author{Marius Kroll\footnote{Faculty of Mathematics, Ruhr-Universität Bochum, \href{mailto:marius.kroll@rub.de}{marius.kroll@rub.de}}}
\begin{document}
\begin{titlepage}
  \centering
  {\huge Extension of Process Convergence With Application to Chatterjee's Rank Correlation\footnote{This article was previously entitled `Asymptotic Normality of Chatterjee's Rank Correlation'.} \par}
  \vspace{2mm}

  {\large Marius Kroll\footnote{Faculty of Mathematics, Ruhr University Bochum, \href{mailto:marius.kroll@rub.de}{marius.kroll@rub.de}} \par}
  \vspace{2mm}

  {\today \par}

  \begin{abstract}
    We give conditions under which weak convergence of a stochastic process indexed in the class of $d$-dimensional hyperrectangles is sufficient to ensure convergence in the larger class of functions of uniformly bounded Hardy-Krause variation. When applied to the empirical process, this can further be extended to derive weak convergence of V-processes indexed in the class of kernel functions which are coordinate-wise of uniformly bounded Hardy-Krause variation. Our proofs use a generalisation of the Koksma-Hlawka inequality for linear operators, allowing us to establish our results without any continuity assumptions on the functions involved. Our theory is complemented by two separate applications: First, we establish asymptotic normality of Chatterjee's rank correlation in the fully general setting. Second, we present new limit theorems for U- and V-processes of strongly mixing data.\\[2mm]
    \noindent\textbf{MSC2020 Classification:} 62H20; 60F05; 62G20; 62G30.\\
    \noindent\textbf{Keywords and phrases:} Chatterjee's Rank Correlation; Empirical Process; Hardy-Krause Variation; Limit Theorems; Mixing; Rank Statistics; U-Processes.
  \end{abstract}

  \tableofcontents
\end{titlepage}

\section{Introduction}
Let $d \in \mathbb{N}$ and $\mathcal{G}$ be some class of functions $g : [0,1]^d \to \mathbb{R}$. Suppose that $X_n$, $n \in \mathbb{N}$, is a sequence of stochastic processes in $\ell^\infty(\mathcal{G})$, the space of all bounded real-valued functions defined on $\mathcal{G}$, for which we want to show weak convergence to some limiting process $X$, i.e.\@ $X_n \rightsquigarrow X$ in $\ell^\infty(\mathcal{G})$. Whether this is feasible depends quite strongly on the nature of both the processes $X_n$, and the function class $\mathcal{G}$. For instance, a common case is that of the empirical process $X_n = r_n(P_n - \mu_n)$, where $r_n$ is a rate, $P_n$ is the empirical measure based on some observations, and $\mu_n$ is a sequence of deterministic centring measures. If the observations which constitute the empirical measures obey suitable regularity conditions, such as i.i.d.\@ or weakly dependent data, then this problem is well-understood, and verifying the weak convergence $X_n \rightsquigarrow X$ often reduces to analysing the complexity of the function class $\mathcal{G}$ in some sense.

However, when the underlying processes $X_n$ are less regular, establishing $X_n \rightsquigarrow X$ in $\ell^\infty(\mathcal{G})$ can quickly become very challenging. For example, consider once again the empirical process $X_n = r_n(P_n - \mu_n)$, but now assume that the data which constitute the empirical measures $P_n$ are highly dependent or irregular in some other way which breaks the assumptions of the usual empirical process results. In such cases, it can be a fruitful approach to first establish the weak convergence $X_n \rightsquigarrow X$ in $\ell^\infty(\mathcal{F})$ for some smaller and easier to handle class $\mathcal{F}$, and to then show that this convergence extends to the larger class of interest $\mathcal{G}$.

In this article, we take care of the second part of this argument for specific function classes $\mathcal{F}$ and $\mathcal{G}$. $\mathcal{F}$ will be the class of all indicator functions $\textbf{1}_{[0,x]}$, $x = (x_1, \ldots, x_d) \in [0,1]^d$, where $[0,x]$ is simply notation for the $d$-dimensional hypercube $[0,x] = [0,x_1] \times \ldots \times [0,x_d]$. This is perhaps the most canonical function class in the study of random processes, and as such there are many tools available to prove the initial weak convergence $X_n \rightsquigarrow X$ in $\ell^\infty(\mathcal{F})$. The class $\mathcal{G}$ will consist of functions $h : [0,1]^d \to \mathbb{R}$ with uniformly bounded Hardy-Krause variation, which is a certain multivariate generalisation of the total variation of a univariate function. Loosely speaking, the Hardy-Krause variation (like all notions of variation) measures the roughness of a multivariate function. In the next section, we will give a formal definition of the Hardy-Krause variation, together with a simple bound for the Hardy-Krause variation applicable to suitably differentiable functions (although a function does not need to be differentiable or even continuous to be of bounded Hardy-Krause variation).

One of the main results of this article is the following: If $X_n \rightsquigarrow X$ in $\ell^\infty(\mathcal{F})$, where $\mathcal{F}$ is now the class of $d$-dimensional hyperrectangles in $[0,1]^d$, then the convergence can, under some assumptions, be extended to also hold on the space of all functions $h : [0,1]^d \to \mathbb{R}$ which are of uniformly bounded Hardy-Krause variation. The assumptions placed on the processes $X_n$ are only in terms of their sample path properties. In particular, they do not require any information about the dependence structure between the $X_n$. Our results are built on a generalisation of the so-called Koksma-Hlawka inequality. This not only allows us to work in a more general setting when compared to previous works in this area \citep[in partiuclar, we do not require any continuity assumptions on the functions of bounded Hardy-Krause variation. Compare with this][]{radulovic_et_al:2017,berghaus_et_al:2017,beutner_zaehle:2023}, but it also makes the extension of the limit process accessible. We discuss this new proof technique in more detail in Section \ref{sec:mains}.

There is another problem which may also be described as extending the convergence of a stochastic process: If we can show weak convergence of the univariate empirical process $X_n = r_n(P_n - \mu_n)$ in some function class $\mathcal{G}$, can we derive convergence of the associated product measure process $r_n(P_n^m - \mu_n^m)$ for some fixed $m \in \mathbb{N}$? We answer this question in the affirmative and describe the classes $\mathcal{G}_m$ in which the associated product measure process converges. These results should be understood in combination with the previously described extensions to functions of uniformly bounded Hardy-Krause variation. Taken together like this, our article implies that showing convergence of the empirical process $r_n(P_n - \mu_n)$ in $\ell^\infty(\mathcal{F})$, $\mathcal{F}$ being the class of $d$-dimensional hyperrectangles in $[0,1]^d$, is sufficient to guarantee convergence of the product measure process $r_n(P_n^m - \mu_n^m)$ indexed in a large class of functions $h : ([0,1]^d)^m \to \mathbb{R}$. The precise definition of this class is given in Section \ref{sec:mains}; essentially it consists of functions which are required to be of bounded Hardy-Krause variation only coordinate-wise. Product measure processes of this kind may be called V-processes, since any evaluation $r_n[P_n^m(h) - \mu_n^m (h)]$ is a V-statistic with kernel function $h$ (though perhaps more common are their close relatives, U-processes, where every evaluation is a U-statistic). Since our results only rely on sample path properties, one particular application of them is to derive weak convergence of U- and V-statistics and -processes for non-standard data, e.g.\@ in the presence of erratic dependence structures. This is essentially a continuous mapping type argument, and the general idea of expressing a V-statistic as a smooth functional of the empirical measure is not new. Most relevant to this article are the works by \cite{beutner_zaehle:2012,beutner_zaehle:2014}; for details, see Section \ref{sec:mains}.

The second part of this article consists of applications of our theoretical results to specific processes, mostly those connected with order statistics and their concomitants. For an i.i.d.\@ sample $(X_1, Y_1), \ldots, (X_n, Y_n) \in [0,1]^2$, let us denote by $(X_{n,1}', Y_{n,1}'), \ldots, (X_{n,n}', Y_{n,n}')$ a reordering of the original sample such that $X_{n,k}'$ is the $k$-th order statistic of $X_1, \ldots, X_n$ with ties broken at random. The random variables $Y_{n,1}', \ldots, Y_{n,n}'$ are called the concomitants of the order statistics $X_{n,1}', \ldots, X_{n,n}'$. Letting the empirical measure based on $(Y_{n,1}', Y_{n,2}'), \ldots, (Y_{n,n-1}', Y_{n,n}')$ be denoted by $P_n$, we will show that $\sqrt{n}(P_n - \mathbb{E}P_n)$ converges in $\ell^\infty(\mathcal{F})$, where $\mathcal{F}$ is again the class of all indicator functions $\textbf{1}_{[0,x]}$, $x \in [0,1]^2$. Our interest in these processes is motivated by the study of Chatterjee's rank correlation \citep{chatterjee:2021}, defined as
$$
  \xi_n = \xi_n(X,Y) = 1 - \frac{n \sum_{i=1}^{n-1} |r_{i+1} - r_i|}{2 \sum_{i=1}^n l_i (n-l_i)},
$$
where $r_i = \sum_{j=1}^n \textbf{1}\{Y_{n,j}' \leq Y_{n,i}'\}$ is the rank of $Y_{n,i}'$ among the $Y_1, \ldots, Y_n$, and $l_i = \sum_{j=1}^n \textbf{1}\{Y_{n,j}' \geq Y_{n,i}'\}$. It estimates the Dette-Siburg-Stoimenov measure of dependence \citep{dette_siburg_stoimenov:2013} which can be written as
$$
  \xi = \xi(X,Y) = \frac{\int \mathrm{Var}\left(\mathbb{E}\left[\textbf{1}_{[y, \infty)}(Y) ~|~ X\right]\right) ~\mathrm{d}\mathbb{P}^Y(y)}{\int \mathrm{Var}\left(\textbf{1}_{[y, \infty)}(Y)\right) ~\mathrm{d}\mathbb{P}^Y(y)},
$$
where $\mathbb{P}^Y$ denotes the distribution of $Y$. This measure of dependence is $0$ if and only if $X$ and $Y$ are independent and $1$ if and only if $Y$ is a measurable function of $X$ almost surely. By inserting the representation of $r_i$ and $l_i$ as sums of indicator functions in the definition of $\xi_n$ and collecting sums, it is straightforward to show that $\xi_n = 1 - V_1/(2V_2) + \mathcal{O}(1/n)$, where $V_1$ and $V_2$ are certain V-statistics based on $(Y_{n,1}', Y_{n,2}'), \ldots, (Y_{n,n-1}', Y_{n,n}')$. Using the general extension results described above, convergence of $\sqrt{n}(P_n - \mathbb{E}P_n)$ indexed in indicator functions $\textbf{1}_{[0,x]}$, $x \in [0,1]^2$, also gives us convergence of the $m$-fold product measure process $\sqrt{n}(P_n^m - \mathbb{E}P_n^m)$, indexed in suitable classes of functions. We have already observed that evaluations of these processes are V-statistics -- in this case, V-statistics based on the data $(Y_{n,1}', Y_{n,2}'), \ldots, (Y_{n,n-1}', Y_{n,n}')$, which is precisely what we need for the analysis of Chatterjee's rank correlation. It should be appreciated that these data are highly erratic due to their complex dependence structure, so that more classical methods of dealing with V-statistics do not seem to be useful here.

The interest in Chatterjee's rank correlation is considerable, as is the body of literature on this simple yet elegant measure of dependence \citep[to give just a few references:][]{
  azadkia_chatterjee:2021,
  lin_han:2022,
  lin_han:2023,
  lin_han:2024,
  dette_kroll:2025,
  han_huang:2024,
  deb2020measuringassociationtopologicalspaces,
  auddy_deb_nandy:2024,
  shi_drton_han:2021,
  shi_drton_han:2024,
  ansari2025continuitychatterjeesrankcorrelation,
  ansari2025directextensionazadkia,
  bücher2024lackweakcontinuitychatterjees,
  hörmann2025azadkiachatterjeesdependencecoefficientinfinite}. Despite these efforts, the limiting behaviour of Chatterjee's rank correlation was, until now, only partly understood. \cite{chatterjee:2021} in his original paper derives the limiting distribution under the assumption that $X$ and $Y$ are independent, and \cite{lin_han:2022} obtain the weak limit for random vectors $(X,Y)$ with continuous joint distribution function. This article is the first to establish asymptotic normality in the fully general setting, which in particular allows for the application of the $m$ out of $n$ bootstrap for Chatterjee's rank correlation to construct confidence intervals for $\xi$ \citep[as suggested by][]{dette_kroll:2025}.

As a second application, we use our results to establish weak convergence of U- and V-processes based on strongly mixing data. Statistical literature usually focuses on U-processes instead of V-processes, but under suitable moment conditions they are equivalent. While certainly less obscure than the concomitants $(Y_{n,i}', Y_{n,i+1}')$ from the construction of Chatterjee's rank correlation, proving convergence of even a single U- or V-statistic based on strongly mixing data can be challenging (in fact, the literature on this topic is surprisingly scant, as most articles focus on stronger mixing conditions such as absolute regularity). In particular, our method does not rely on the so-called $P$-Lipschitz continuity assumption, which is a central condition in one of the most commonly used results for U-statistics of strongly mixing data \citep{dehling_wendler:2010}, and which can easily fail if the underlying data do not follow a continuous distribution. Two specific statistical quantities which we investigate using our technique are the Grassberger-Procaccia estimator \citep{grassberger_procaccia:1983} and Kendall's $\tau$ in the presence of ties for strongly mixing data.

The structure of the remaining article is as follows: In Section \ref{sec:preliminaries} we collect some basic definitions and notations that will be used throughout the paper. This includes a definition of the Hardy-Krause variation. Section \ref{sec:mains} contains our main results on the convergence of stochastic processes. Sections \ref{sec:mains_chatterjee} and \ref{sec:mixing} contain our results on Chatterjee's rank correlation and related processes, and on U- and V-processes of strongly mixing data, respectively. The remaining sections contain all proofs of this paper, sorted by topic: proofs for our theoretical main results (Section \ref{sec:proofs_empirical_processes}), proofs for processes arising from order statistics and their concomitants (Section \ref{sec:proofs_order_concomitants}), proofs for Chatterjee's rank correlation (Section \ref{sec:proofs_chatterjee}), and proofs for U- and V-processes of strongly mixing data (Section \ref{sec:mixing_proofs}).

\section{Preliminaries}
\label{sec:preliminaries}
\subsection{Basic Concepts and Notation}
\label{sec:notation}
The most important reference for this article is \cite{van_der_vaart_wellner:weak_convergence}. We collect some of their definitions in this section.

For an index set $T$, $\ell^\infty(T)$ denotes the linear space of all bounded functions from $T$ to $\mathbb{R}$, equipped with the supremum norm, which we denote by $\|\cdot\|_T$. When we understand $\ell^\infty(T)$ as a measurable space, we implicitly equip it with its Borel $\sigma$-algebra generated by the supremum norm. We usually take $T$ as some function class $\mathcal{F}$, and often we also consider the supremum norm defined on $\mathcal{F}$. To differentiate between the supremum norms on $\mathcal{F}$ and $\ell^\infty(\mathcal{F})$, we usually write $\|f\|_\infty$ for the supremum norm of some $f \in \mathcal{F}$, and reserve the notation $\|X\|_\mathcal{F}$ for the supremum norm of some $X \in \ell^\infty(\mathcal{F})$.

The notation $\bar{\mathbb{R}}$ means the extended real line, i.e.\@ $\bar{\mathbb{R}} = \mathbb{R} \cup \{\pm \infty\}$, equipped with its standard topology. Similarly, we always equip $\mathbb{R}$ with its standard topology. Whenever measurability is relevant, we understand both $\mathbb{R}$ and $\bar{\mathbb{R}}$ to be equipped with the Borel $\sigma$-algebras generated by their standard topologies.

Let $(\Omega, \mathcal{A}, \mathbb{P})$ be a probability space and $T : \Omega \to \bar{\mathbb{R}}$ a possibly non-measurable function. The outer expectation $\mathbb{E}^* T$ is defined as
$$
  \mathbb{E}^* T = \inf \left\{\mathbb{E}U ~|~ U : (\Omega, \mathcal{A}, \mathbb{P}) \to \bar{\mathbb{R}} \textrm{ is Borel measurable and integrable}\right\}.
$$
The outer probability $\mathbb{P}^*(A)$ of a possibly non-measurable subset $A \subseteq \Omega$ is defined as $\mathbb{P}^*(A) = \mathbb{E}^* \textbf{1}_A$ (there are other definitions but this equality always holds). A minimal measurable cover $T^*$ of $T$ is a measurable function $T : (\Omega, \mathcal{A}) \to \bar{\mathbb{R}}$ with the property that $T^* \geq T$ always, and $T^* \leq U$ almost surely for any measurable $U : (\Omega, \mathcal{A}) \to \bar{\mathbb{R}}$ satisfying $U \geq T$ almost surely. Minimal measurable covers always exist and are unique up to $\mathbb{P}$-null sets. If $\mathbb{E}^* T < \infty$, then $\mathbb{E}^* T = \mathbb{E} T^*$.

If $X_n$, $n \in \mathbb{N}$, is a sequence of not necessarily measurable random elements $X_n : (\Omega_n \mathcal{A}_n, P_n) \to \ell^\infty(T)$ defined on probability spaces $(\Omega_n \mathcal{A}_n, P_n)$, we say that $X_n$ converges weakly to some Borel measurable $X : (\Omega, \mathcal{A}, P) \to \ell^\infty(T)$ if
$$
  \mathbb{E}^*f(X_n) \xrightarrow[n \to \infty]{} \mathbb{E}f(X)
$$
for every bounded and continuous function $f : \ell^\infty(T) \to \mathbb{R}$. We denote this weak convergence by $X_n \rightsquigarrow X$. While the random elements $X_n$ may be non-measurable, the weak limit $X$ is always Borel measurable by definition. If the random elements $X_n$ are measurable, then this concept coincides with the usual definition of convergence in distribution. Other modes of convergence considered in this article are outer almost sure convergence, almost uniform convergence, and convergence in outer probability, denoted by
$$
  X_n \xrightarrow[n \to \infty]{as*} X, \quad X_n \xrightarrow[n \to \infty]{au} X, \quad X_n \xrightarrow[n \to \infty]{\mathbb{P}^*} X,
$$
respectively. Outer almost sure convergence holds by definition if $\|X_n - X\|_T^* \to 0$ almost surely for some versions of $\|X_n - X\|_T^*$. Almost uniform convergence holds by definition if for any $\varepsilon > 0$ there exists a measurable set $A$ with $\mathbb{P}(A) \geq 1-\varepsilon$ and $\|X_n - X\|_T \to 0$ uniformly on $A$. Convergence in outer probability holds by definition if $\|X_n - X\|_T^* \to 0$ in probability; equivalently, if $\mathbb{P}^*(\|X_n - X\|_T > \varepsilon) \to 0$ for any $\varepsilon > 0$. In contrast to weak convergence, we do not assume that the limit $X$ is Borel measurable in these three types of convergence; but if it is, then outer almost sure convergence and almost uniform convergence are equivalent.

In this article, we consider stochastic processes, which are random elements $X \in \ell^\infty(T)$ with the additional property that for any fixed $t \in T$, $X(t)$ is a Borel measurable random variable. For such a process $X$, we can define under appropriate integrability assumptions a non-random function on $T \ni t \mapsto \mathbb{E}X(t) \in \mathbb{R}$. We denote this new function by $\mathbb{E}X$, agreeing that this is to be understood only as notation for the pointwise expectation $t \mapsto \mathbb{E}X(t)$, as opposed to a Banach-space valued integral such as in the sense of Bochner or Dunford.

Let us now collect some miscellaneous notation which may or may not be standard. If $V$ is a linear space and $S \subset V$ a subset, we write $\mathrm{span}(S)$ for the linear span of $S$. If $V$ is normed, then $\overline{\mathrm{span}}(S)$ denotes the closed linear span of $S$. $\mathcal{U}[0,1]$ is the continuous uniform distribution on the unit interval. $\# A$ is the cardinality of a set $A$, $\bar{A}$ its closure, $A^C$ its complement, and $\textbf{1}_A$ its indicator function. We may move complicated sets from the index to the main text; e.g.\@ $\textbf{1}\{X \leq a\}$ for the indicator function of the set $\{X \leq a\}$. $f \lor g$ and $f \land g$ may denote the maximum and minimum, respectively, of $f$ and $g$. For any $p \geq 1$, $L_p(Q)$ denotes the $L_p$-norm with respect to $Q$, if $Q$ is a measure, or with respect to the distribution of $Q$, if $Q$ is a random variable.

For a function class $\mathcal{F}$, a norm $\|\cdot\|$ on $\mathcal{F}$ and two functions $l$ and $u$, $[l,u] = \{f \in \mathcal{F} ~|~ l \leq f \leq u\}$ is called a bracket, and an $\varepsilon$-bracket (with respect to $\|\cdot\|$) if additionally $\|u-l\| < \varepsilon$. $l$ and $u$ need not be elements of $\mathcal{F}$, but we require $\|u\|, \|l\| < \infty$. The bracketing number $N_{[\,]}(\varepsilon, \mathcal{F}, \|\cdot\|)$ is defined as the minimum number of $\varepsilon$-brackets needed to cover $\mathcal{F}$.

We often consider hyperrectangles in this article and use an interval notation for them. For $a,b \in \mathbb{R}^d$ we use the notation $[a,b] = \prod_{i=1}^d [a_i, b_i]$. $0 \in \mathbb{R}^d$ is the $d$-dimensional $0$-vector, and so if $x \in [0,1]^d$ is a vector, then as a special case of our multidimensional interval notation, $[0,x]$ is the closed hyperrectangle with one vertex at $0$ and its opposite vertex at $x$.

\subsection{The Hardy-Krause Variation}
\label{sec:HK_introduction}
The Hardy-Krause variation \citep{hardy:1906,krause:1903} extends the concept of bounded variation from the univariate case to multivariate functions $h : [0,1]^d \to \mathbb{R}$. We give two equivalent definitions of the Hardy-Krause variation: One which is constructive, and one which is more abstract. The constructive definition is very `hands-on', while the abstract one makes the connection to the indicator functions $\textbf{1}_{[0,x]}$, $x \in [0,1]^d$, more obvious.

Let us begin with the constructive definition. We follow the expositions in \cite{leonov:1998} and \cite{aistleitner_dick:2015}. For a function $h : [0,1]^d \to \mathbb{R}$ and a hyperrectangle $B = \prod_{i=1}^d [a_i, b_i] \subseteq [0,1]^d$, $a_i < b_i$, define the $d$-dimensional quasi-volume of the hyperrectangle $B$ with respect to $h$ by
\begin{align}
  \begin{split}
    \label{eq:definition_quasivolume}
    \Delta(h;B) & = \Delta^{(d)}(h;B)                                                                                                      \\
                & = \sum_{j_1, \ldots, j_d \in \{0,1\}} (-1)^{j_1 + \cdots + j_d} h\{b_1 + j_1(a_1 - b_1), \ldots, b_d + j_d(a_d - b_d)\}.
  \end{split}
\end{align}
If $\Delta^{(d)}(h;B) \geq 0$ for any hyperrectangle $B$, then $h$ is called completely monotone. Next, let $\mathcal{P}$ be the collection of all finite partitions of $[0,1]^d$ into hyperrectangles. The Vitali variation of $h$ on $[0,1]^d$ is defined as
$$
  V^{(d)}(h) = \sup_{P \in \mathcal{P}} \sum_{B \in P}\left|\Delta^{(d)}(h;\bar{B})\right|,
$$
where $\bar{B}$ is the closure of $B$ (simply because we have only defined the quasi-volume for closed hyperrectangles). For $1 \leq i_1 < \ldots < i_m \leq d$, $m = 1, \ldots, d$, let
\begin{equation}
  \label{eq:def_hk_boxes}
  B_{i_1, \ldots, i_m} = \left\{x \in [0,1]^d ~|~ x_j = 1 \textrm{ for } j \notin \{i_1, \ldots, i_m\}\right\}.
\end{equation}
By restricting $h$ to the face $B_{i_1, \ldots, i_m}$ and identifying it with $[0,1]^m$ in the natural way, we obtain functions $f_{i_1, \ldots, i_m} : [0,1]^m \to \mathbb{R}$. The Hardy-Krause variation of $h$ anchored at $1$ (or just Hardy-Krause variation) is defined as
\begin{equation}
  \label{eq:hk_vitali_sum}
  \|h\|_{\mathrm{HK}} = \sum_{m = 1}^d \sum_{1 \leq i_1 < \ldots < i_m \leq d} V^{(m)}(f_{i_1, \ldots, i_m}).
\end{equation}
Described in words, the Hardy-Krause variation of $h$ is obtained by summing the Vitali variations of the restrictions of $h$ to all faces of $[0,1]^d$ which contain $(1, \ldots, 1)$ as a vertex, plus the Vitali variation of $h$ itself. In the special case $d = 1$, the Vitali variation is the usual total variation of $h : [0,1] \to \mathbb{R}$, which we denote by
$$
  \|h\|_{\mathrm{TV}} = \sup \sum_{k=1}^{r} |h(t_k) - h(t_{k-1})|,
$$
where the supremum is taken over all partitions $0 = t_0 < t_1 < \ldots < t_r = 1$. Since the one-dimensional hypercube $[0,1]$ has no lower-dimensional faces, the Hardy-Krause variation is equal to the Vitali variation in this case. Hence, for a univariate function $h : [0,1] \to \mathbb{R}$, we have $\|h\|_{\mathrm{HK}} = \|h\|_{\mathrm{TV}}$.

For differentiable functions, the variations of both Vitali and Hardy-Krause can be controlled in terms of their partial derivatives. More precisely, if $h : [0,1]^d \to \mathbb{R}$ is a function whose mixed partial derivative $(\partial^d / \partial x_1 \cdots \partial x_d) h(x)$ exists, then it holds that
$$
  V^{(d)}(h) \leq \int_{[0,1]^d} \left|\frac{\partial^d}{\partial x_1 \cdots \partial x_d} h(x)\right| ~\mathrm{d}x,
$$
with equality if the mixed partial derivative $(\partial^d / \partial x_1 \cdots \partial x_d) h(x)$ is continuous on $[0,1]^d$ \citep[see][]{frechet:1910,owen:2005}. It then immediately follows from Eq.\@ \eqref{eq:hk_vitali_sum} that
$$
  \|h\|_{\mathrm{HK}} \leq \sum_{m = 1}^d \sum_{1 \leq i_1 < \ldots < i_m \leq d} \int_{[0,1]^d} \left|\frac{\partial^m}{\partial x_1 \cdots \partial x_m} f_{i_1, \ldots, i_m}(x)\right| ~\mathrm{d}x,
$$
with equality if the mixed partial derivatives are all continuous on their respective lower-dimensional hypercubes.

If $h : [0,1]^d \to \mathbb{R}$ is of bounded Hardy-Krause variation, then for any Borel probability measure $\mu$ on $[0,1]^d$, it holds that, for any $x_1, \ldots, x_n \in [0,1]^d$,
\begin{equation}
  \label{eq:koksma_hlawka_original}
  \left|\int h ~\mathrm{d}\mu - \frac{1}{n}\sum_{i=1}^n h(x_i)\right| \leq \|h\|_{\mathrm{HK}} \sup_{a \in [0,1]^d} \left|\frac{1}{n} \sum_{i=1}^n \textbf{1}_{[0,a]}(x_i) - \mu([0,a])\right|.
\end{equation}
This is a general form of the so-called Koksma-Hlawka inequality, proved in this form by \cite{aistleitner_dick:2015}. In that reference, the authors include Borel measurability of $h$ as an assumption; however, that assumption is superfluous, as \cite{aistleitner_etal:2017} later proved that every function of bounded Hardy-Krause variation is Borel measurable (Corollary 4.4 in that reference). By choosing $\mu$ as a Dirac measure and considering a single point $x_1$, we see that $\|h\|_\infty \leq \inf_x |h(x)| + \|h\|_{\mathrm{HK}}$, and so every function of bounded Hardy-Krause variation is bounded. A more general version of this inequality is given by Proposition 3.3 in \cite{pausinger_svane:2015}. The Koksma-Hlawka inequality goes back to \cite{koksma:1942} and \cite{hlawka:1961}, who proved it for the case where $\mu$ is the uniform distribution on $[0,1]$ (Koksma) and $[0,1]^d$ (Hlawka).

By Theorem 3 in \cite{aistleitner_dick:2015}, every right-continuous (in every argument) function $h : [0,1]^d \to \mathbb{R}$ which is of bounded Hardy-Krause variation can be written as $h(x) = \nu([0,x])$, where $\nu$ is a signed Borel measure uniquely determined by $h$. For a measurable $g : [0,1]^d \to \mathbb{R}$, we can now define
\begin{equation}
  \label{eq:definition_hk_integral}
  \int g ~\mathrm{d}h = \int g ~\mathrm{d}\nu.
\end{equation}
One major advantage of the Hardy-Krause variation is that this integral allows for an integration by parts formula similar to the usual Lebesgue-Stieltjes integral in the univariate case \citep[Theorem 15 in][]{radulovic_et_al:2017}.

We now turn to the more abstract definition. As proven by \cite{aistleitner_etal:2017}, the Hardy-Krause variation can be considered a special case of the $\mathcal{D}$-variation introduced by \cite{pausinger_svane:2015}. Let $\mathcal{D}$ be a family of measurable subsets of $[0,1]^d$ with $\emptyset, [0,1]^d \in \mathcal{D}$. Let $\mathcal{S}(\mathcal{D})$ denote the space of $\mathcal{D}$-simple functions, i.e.\@ the set of all $h : [0,1]^d \to \mathbb{R}$ for which there are $\alpha_1, \ldots, \alpha_m \in \mathbb{R}$ and $A_1, \ldots, A_m \in \mathcal{D}$ such that
$$
  h = \sum_{i=1}^m \alpha_i \textbf{1}_{A_i}.
$$
Letting $h_\mathcal{D}(A_i) = 0$ if $A_i \in \{\emptyset, [0,1]^d\}$ and $h_\mathcal{D}(A_i) = 1$ for all other $A_i \in \mathcal{D}$, the $\mathcal{D}$-variation of a $\mathcal{D}$-simple function $h$ is
$$
  V_{\mathcal{S}, \mathcal{D}}(h) = \inf \left\{\sum_{i=1}^m |\alpha_i| h_\mathcal{D}(A_i) ~\Big|~ h = \sum_{i=1}^m \alpha_i \textbf{1}_{A_i}, \alpha_i \in \mathbb{R}, A_i \in \mathcal{D}\right\}.
$$
More generally, if $\mathcal{V}_\infty(\mathcal{D})$ is the space of all functions $h : [0,1]^d \to \mathbb{R}$ which can be uniformly approximated by functions from $\mathcal{S}(\mathcal{D})$, \cite{pausinger_svane:2015} introduce the $\mathcal{D}$-variation of $h \in \mathcal{V}_\infty(\mathcal{D})$ as
$$
  V_\mathcal{D}(h) = \inf \left\{\liminf_{i \to \infty} V_{\mathcal{S}, \mathcal{D}}(f_i) ~|~ f_i \in \mathcal{S}(\mathcal{D}), \|f_i - h\|_\infty \to 0\right\}.
$$
Let $\mathcal{R}^*$ be the set of all hyperrectangles containing $0$ as a vertex, including the empty set. More precisely, set
\begin{equation}
  \label{eq:R_star}
  \mathcal{R}^* = \left\{[0,x]^v ~|~ x \in [0,1]^d, v \subseteq \{1, \ldots, d\}\right\},
\end{equation}
where $[0,x]^v = \{y \in [0,1]^d ~|~ \forall i \in v : 0 \leq y \leq x_i \land \forall i \notin v : 0 \leq y < x_i\}$. \cite{aistleitner_etal:2017} show that the Hardy-Krause variation is identical to the $\mathcal{R}^*$-variation (Corollary 4.3 in that reference). More precisely, they prove that a function $h : [0,1]^d \to \mathbb{R}$ is of bounded Hardy-Krause variation if and only if it is of bounded $\mathcal{R}^*$-variation, and it holds that
\begin{equation}
  \label{eq:hk_Dvar_equivalent}
  V_{\mathcal{R}^*}(h) = \|h\|_{\mathrm{HK}}
\end{equation}
for all functions $h : [0,1]^d \to \mathbb{R}$. The Hardy-Krause variation is therefore a special case of the more general $\mathcal{D}$-variation. Thus, the linear space of functions of bounded Hardy-Krause variation is dense in $\mathcal{V}_\infty(\mathcal{R}^*)$, which is just the closed (with respect to the supremum norm) linear span of the class of all indicator functions $\textbf{1}_R$, $R \in \mathcal{R}^*$. This class is of course strongly related the the class of all indicator functions $\textbf{1}_{[0,x]}$, $x \in [0,1]^d$. It is this connection which we leverage for our results.

\section{Extension of Process Convergence}
\label{sec:mains}
The idea of extending convergence of processes indexed in $\mathcal{F} = \{\textbf{1}_{[0,x]} ~|~ x \in [0,1]^d\}$, to a larger class of functions of bounded Hardy-Krause variation is not new. Related results are derived by \cite{radulovic_et_al:2017} \citep[][also use similar methods, though they work under different assumptions]{berghaus_et_al:2017,beutner_zaehle:2023}. Their main ingredient is a certain integration by parts formula for functions of bounded variation. An informal description of this technique is as follows: If $X_n$ is a sequence of random processes in $\ell^\infty(\mathcal{F})$ whose sample paths satisfy some regularity assumptions -- most importantly, bounded Hardy-Krause variation and right-continuity --, then
$$
  \int h ~\mathrm{d}X_n = \int \phi(X_n) ~\mathrm{d}h
$$
for any $h$ which is sufficiently regular, where $\phi(X_n)$ is a certain continuous transformation of $X_n$ \citep[Theorem 15 in][]{radulovic_et_al:2017}. The left-hand side is the appropriate extension of $X_n$ from the class $\mathcal{F}$ to a larger class of functions of bounded Hardy-Krause variation. Most importantly, if $X_n = r_n(P_n - \mu_n)$ is an empirical process, then the left-hand side is just equal to $X_n(h)$. Under some technical assumptions, and if $X_n \rightsquigarrow X$ in $\ell^\infty(\mathcal{F})$, the right-hand side can be shown to converge in $\ell^\infty(\mathcal{H})$ for suitable function classes $\mathcal{H}$ by a continuous mapping type argument.

This technique has two drawbacks: First, the transformation $\phi(X_n)$ has a rather complicated form. In the limit, $\phi(X_n)$ gets replaced by $\phi(X)$, and while this expression can be simplified when the sample paths of $X$ are continuous almost surely, this of course restricts the applicability of the result. It is possible to make this technique work if the limiting process $X$ does not have continuous sample paths, but the cost for this is that the limiting process $\int \phi(X) ~\mathrm{d}h$, $h \in \mathcal{H}$, will be challenging to analyse further. The second drawback is more fundamental: The function class $\mathcal{H}$, to which the process convergence $X_n \rightsquigarrow X$ is extended, cannot contain all functions of uniformly bounded Hardy-Krause variation. In particular, elements of $\mathcal{H}$ must always satisfy at least some continuity assumptions. This is because integration with respect to a function of bounded Hardy-Krause variation is defined in a Lebesgue-Stieltjes type sense; see its definition \eqref{eq:definition_hk_integral}. Thus, for the integral $\int \phi(X_n) ~\mathrm{d}h$ to be defined, $h$ must be right-continuous since this allows the identification of $h$ with the signed Borel measure $\nu = \mathrm{d}h$. This measure is uniquely determined by $h$ through the identity $h(x) = \nu([0,x])$; $h$ is the distribution function of the signed measure $\nu$. Again, it is possible to weaken this condition slightly, for instance by allowing $h$ to be left-continuous instead of right-continuous. This corresponds to identifying $h$ not with the distribution function $\nu([0,x])$, but instead with the function $x \mapsto \nu([0,x))$. Since this still uniquely determines the signed Borel measure $\nu$, the underlying integration theory remains untouched. If, however, $h$ is neither left- nor right-continuous, then the the set function $\nu$ from the identity $h(x) = \nu([0,x])$ no longer extends to a signed measure on the Borel $\sigma$-algebra. This means that the integral with respect to $\mathrm{d}h$ cannot be defined in a Lebesgue-Stieltjes sense. It might be possible to salvage the integration theory, e.g.\@ by considering integration with respect to finitely additive set functions, but in any case it is unclear whether the integration by parts formula from \cite{radulovic_et_al:2017} would remain applicable to such a generalised integral.

We therefore choose a different approach which circumvents the integration theory altogether. This allows us to extend the convergence $X_n \rightsquigarrow X$ to classes of functions which have no continuity restrictions. Specifically, we will show convergence in the class $\mathcal{HK}(c)$, as defined below.

\begin{definition}
  \label{def:HKc_not_multivariate}
  For fixed dimension $d \in \mathbb{N}$ and some number $0 < c \leq \infty$, we define $\mathcal{HK}(c)$ to be the set of all functions $h : [0,1]^d \to \mathbb{R}$ with $\|h\|_{\mathrm{HK}} < c$ and $\|h\|_\infty \leq c$. The dimension $d$ is suppressed in this notation.
\end{definition}

The reason why the norms $\|h\|_{\mathrm{HK}}$ and $\|h\|_\infty$ are controlled with different inequality signs (strict vs.\@ non-strict) in Definition \ref{def:HKc_not_multivariate} is purely technical and bears no special significance. As a special case of this definition, $\mathcal{HK}(\infty)$ is the linear space of all functions $h : [0,1]^d$ which are of bounded Hardy-Krause variation. Any element of $\mathcal{HK}(\infty)$ actually satisfies the strict inequality $\|h\|_\infty < \infty$, as we have already seen in Section \ref{sec:HK_introduction} that functions of finite Hardy-Krause variation are always bounded.

We can now present our first main result. The obvious application is for the case where $X_n = r_n(P_n - \mu_n)$ is an empirical process, $r_n$ being a rate, $P_n$ random probability measures, and $\mu_n$ deterministic probability measures. But our result is also valid for more general linear processes $X_n$, provided they satisfy some mild continuity assumptions. The theorem has two important aspects: Extension of sample paths, and extension of convergence. First, extension of the sample paths of $X_n$. This is irrelevant if $X_n$ is the empirical process, since then $X_n(h)$ is already well-defined for any bounded and Borel measurable function $h$; but in general, we only require $X_n$ to be defined on a certain subset of $\mathcal{HK}(\infty)$. Our theorem then ensures the existence of a unique extension to the entire space. Second, extension of the process convergence $X_n \rightsquigarrow X$. This is the main part of our theorem: If $X_n \rightsquigarrow X$ in $\ell^\infty(\mathcal{F})$, with $\mathcal{F}$ being the class of $d$-dimensional hyperrectangles, then our theorem guarantees that this convergence can be extended to the richer class $\mathcal{HK}(c)$. Similarly to before, the original limit $X$ need not be defined on $\mathcal{HK}(c)$ a priori, since our theorem also yields the existence of a suitable extension.

\begin{theorem}
  \label{thm:extension_hk_simple}
  Let $\mathcal{F} = \{\textbf{1}_{[0,x]} ~|~ x \in [0,1]^d\}$, and let $\mathcal{G}$ be the closure of $\mathcal{F}$ under monotone convergence, i.e.\@ $\mathcal{G}$ consists of all functions $g : [0,1]^d \to \mathbb{R}$ for which there exists a sequence $f_k \in \mathcal{F}$, $k \in \mathbb{N}$, with $f_k \uparrow g$ as $k \to \infty$. Suppose $X_n$, $n \in \mathbb{N}$, is a sequence of real-valued linear processes on $\mathrm{span}(\mathcal{G})$ such that every $X_n$ satisfies the following assumptions:
  \begin{enumerate}
    \item[(a)] All sample paths of $X_n$ are continuous with respect to monotone convergence in $\mathcal{F}$, i.e.\@ if $f_k$, $k \in \mathbb{N}$, is a sequence in $\mathcal{F}$ such that $f_k \uparrow g$ for some $g \in \mathcal{G}$, then $X_n (f_k) \to X_n (g)$ as $k \to \infty$.
    \item[(b)] All sample paths of $X_n$ are continuous with respect to the supremum norm on the entire space $\mathrm{span}(\mathcal{G})$.
    \item[(c)] $X_n (\textbf{1}_{[0,1]^d}) = 0$.
  \end{enumerate}
  Then every $X_n$ can be uniquely extended to a continuous (with respect to the supremum norm) linear operator $\bar{X}_n$ on the space $\mathcal{HK}(\infty)$ of functions of bounded Hardy-Krause variation. Moreover, if the restrictions of $X_n$ to $\mathcal{F}$ weakly converge to some tight process $X$, i.e.\@ $X_n \rightsquigarrow X$ in $\ell^\infty(\mathcal{F})$, then there exists a copy $X'$ of $X$ with the following properties:
  \begin{enumerate}
    \item $X'$ can be extended to some Borel measurable and tight $\bar{X}' \in \ell^\infty[\mathcal{HK}(c)]$.
    \item $\bar{X}_n \rightsquigarrow \bar{X}'$ in $\ell^\infty[\mathcal{HK}(c)]$ as $n \to \infty$.
    \item\label{it:extension_sample_paths} Almost all sample paths of the extension $\bar{X}'$ are continuous with respect to monotone convergence in $\mathcal{F}$, and uniformly continuous with respect to the supremum norm on $\mathcal{HK}(c)$.
    \item $\bar{X}'$ is obtained by applying a continuous linear operator to $X'$. In particular, if $X$ and thus $X'$ is mean-zero and/or Gaussian, then so is its extension $\bar{X}'$.
  \end{enumerate}
\end{theorem}

It is worth pointing out that we make no assumptions on the original limiting process $X$, apart from it being tight. If the covariance function of $X$ satisfies some mild assumptions, we can deduce from this the covariance function of the extension $\bar{X}'$. If $X$ is Gaussian, this completely determines the limiting process $\bar{X}'$, since it, too, is then Gaussian.

\begin{theorem}
  \label{thm:extension_covariance}
  Let $\mathcal{F}$ and $\mathcal{G}$ be as in Theorem \ref{thm:extension_hk_simple}. Let $\Gamma : \mathcal{HK}(\infty) \times \mathcal{HK}(\infty) \to \mathbb{R}$ be a bilinear function which is (in both arguments) continuous with respect to monotone convergence in $\mathcal{F}$, as defined in Theorem \ref{thm:extension_hk_simple}, and with respect to the supremum norm on $\mathcal{HK}(c)$. Suppose that the original limit $X$ in Theorem \ref{thm:extension_hk_simple} satisfies $\mathbb{E}\|X\|_\mathcal{F}^2 < \infty$ and $\mathrm{Cov}\left[X(f), X(f')\right] = \Gamma(f,f')$ for all $f,f' \in \mathcal{F}$. Then the extended process $\bar{X}'$ has the same covariance function $\Gamma$, i.e.\@
  $$
    \mathrm{Cov}\left[\bar{X}'(h), \bar{X}'(h')\right] = \Gamma(h,h')
  $$
  for all $h,h' \in \mathcal{HK}(c)$. If the original limit $X$ is Gaussian, then the condition $\mathbb{E}\|X\|_\mathcal{F}^2 < \infty$ is always satisfied.
\end{theorem}

Situations to which Theorem \ref{thm:extension_covariance} is applicable are not uncommon. For example, a $P$-Brownian bridge has covariance function $\Gamma(f,g) = P(fg) - Pf Pg$, and this satisfies the assumptions of Theorem \ref{thm:extension_covariance}. More generally, whenever $\Gamma$ is defined in terms of certain integrals, it is worth checking if it satisfies the assumptions of Theorem \ref{thm:extension_covariance}.

We have already explained that the proof of Theorem \ref{thm:extension_hk_simple} does not rely on the integration theory for integrator functions which are of bounded Hardy-Krause variation. Instead, we use a certain generalisation of the Koksma-Hlawka inequality \eqref{eq:koksma_hlawka_original}. One way to state the original Koksma-Hlawka inequality is as follows: If we define the linear operators $X, Y \in \ell^\infty[\mathcal{HK}(\infty)]$ by $X(h) = \int h ~\mathrm{d}\mu$ and $Y(h) = n^{-1} \sum_i h(x_i)$, where $\mu$ is a Borel probability measure, and $x_1, \ldots, x_n$ is any collection of points in $[0,1]^d$, then
$$
  |X(h) - Y(h)| \leq \|h\|_{\mathrm{HK}} \|X-Y\|_\mathcal{F},
$$
where $\mathcal{F}$ is again the class of all indicator functions $\textbf{1}_{[0,x]}$, $x \in [0,1]^d$. Taking the supremum over all $h \in \mathcal{HK}(c)$ for some fixed $c < \infty$ gives us
\begin{equation}
  \label{eq:koksma_hlawka_original_2}
  \|X - Y\|_{\mathcal{HK}(c)} \leq c \|X-Y\|_\mathcal{F}.
\end{equation}
The main application of the Koksma-Hlawka inequality seems to be in (quasi-)Monte Carlo methods, since $|X(h) - Y(h)|$ is the error made in approximating the integral $\int h ~\mathrm{d}\mu$ with the arithmetic mean $n^{-1}\sum_i h(x_i)$. But it is clear from the form \eqref{eq:koksma_hlawka_original_2} that an appropriate version of the Koksma-Hlawka inequality can be be useful in empirical process theory as well. If $X$ and $Y$ were allowed to be more general linear operators -- say, the sample paths of two stochastic processes --, then we could use such an inequality to extend convergence in $\ell^\infty(\mathcal{F})$ to $\ell^\infty[\mathcal{HK}(c)]$. This is precisely what we do to prove Theorem \ref{thm:extension_hk_simple}. The following result is the required Koksma-Hlawka inequality for general linear operators.

\begin{theorem}
  \label{thm:koksma-hlawka_variant}
  Let $\mathcal{F} = \{\textbf{1}_{[0,x]} ~|~ x \in [0,1]^d\}$, and let $\mathcal{G}$ be the closure of $\mathcal{F}$ under monotone convergence (as in Theorem \ref{thm:extension_hk_simple}). Suppose that $X, Y : \mathrm{span}(\mathcal{G}) \to \mathbb{R}$ are two linear maps which satisfy the following conditions:
  \begin{enumerate}
    \item[(a)] Both $X$ and $Y$ are continuous with respect to monotone convergence in $\mathcal{F}$ (as defined in Theorem \ref{thm:extension_hk_simple}).
    \item[(b)] It holds that $X(\textbf{1}_{[0,1]^d}) = Y(\textbf{1}_{[0,1]^d})$.
  \end{enumerate}
  Then it holds that
  \begin{equation}
    \label{eq:koksma-hlawka_variant_inequality_only_span}
    |X(g) - Y(g)| \leq \|g\|_{\mathrm{HK}} \|X - Y\|_{\mathcal{F}}
  \end{equation}
  for all $g \in \mathrm{span}(\mathcal{G})$ (and $\|g\|_{\mathrm{HK}} < \infty$ for all such $g$).

  If we furthermore assume (in addition to the previous conditions) that $X$ and $Y$ are defined on $\mathcal{HK}(c)$ for some $0 < c < \infty$ and continuous with respect to uniform convergence on $\mathcal{HK}(c)$, then it holds that
  \begin{equation}
    \label{eq:koksma-hlawka_variant_inequality}
    \left|X(h) - Y(h)\right| \leq \|h\|_{\mathrm{HK}} \left\|X - Y\right\|_\mathcal{F}
  \end{equation}
  for any $h \in \mathcal{HK}(c)$.
\end{theorem}

Theorem \ref{thm:koksma-hlawka_variant} requires knowledge about continuity properties of both $X$ and $Y$. These assumptions should be compared with statement \ref{it:extension_sample_paths} of Theorem \ref{thm:extension_hk_simple}, which ensures that our version of the Koksma-Hlawka inequality is applicable to the case where one of the linear maps is defined as a weak limit of suitable processes.

While we state Theorem \ref{thm:koksma-hlawka_variant} only for the Hardy-Krause variation, both its statement and its proof carry over almost verbatim to the more general $\mathcal{D}$-variation introduced by \cite{pausinger_svane:2015}. From this, a more general version of Theorem \ref{thm:extension_hk_simple} can be established. This allows for a flexible extension of the concept of this article -- use convergence in a small function class to get convergence in a large one --, which is not limited to the indicator functions $\textbf{1}_{[0,x]}$, $x \in [0,1]^d$, on the one side, and functions of bounded Hardy-Krause variation on the other side.

The remaining results in this section specifically concern empirical processes $r_n(P_n - \mu_n)$, where $r_n \in \mathbb{R}$ is a rate, each $P_n$ is a random probability measure (e.g.\@ an empirical measure), and the $\mu_n$ are deterministic probability measures. For instance, choosing $r_n = \sqrt{n}$ and $\mu_n = P$ for some fixed measure $P$ yields the usual empirical process $\sqrt{n}(P_n - P)$. Our next theorem takes the weak convergence of the empirical process $r_n(P_n - \mu_n)$ and extends it to convergence of the $m$-fold product measure process $r_n(P_n^m - \mu_n^m)$, where $m \in \mathbb{N}$ is some fixed parameter.

Before we state our next theorem, we need to define a suitable function class in which we will establish convergence, and the form of the resulting limiting process.

\begin{definition}
  \label{def:Fm_class}
  Suppose that $(\Omega, \mathcal{A})$ is a measurable space, $\mathcal{F}$ is a class of real-valued and measurable functions defined on $(\Omega, \mathcal{A})$, and $m \in \mathbb{N}$ is an integer.
  \begin{enumerate}
    \item We define the class $\mathcal{F}_m$ by setting $\mathcal{F}_1 = \mathcal{F}$, and, for $m \geq 2$, letting $\mathcal{F}_m$ consist of all functions $h : (\Omega^m, \mathcal{A}^m) \to \mathbb{R}$ with the following properties:
          \begin{enumerate}
            \item $h$ is bounded and measurable.
            \item\label{it:def_Fm_integral} For any product probability measure $Q = Q_1 \otimes \ldots \otimes Q_{m-1}$ on $(\Omega^{m-1}, \mathcal{A}^{m-1})$ and any $i = 1, \ldots, m$, the function
                  $$
                    x \mapsto \int h(x_1, \ldots, x_{i-1}, x, x_{i+1}, \ldots, x_m) ~\mathrm{d}Q(x_1, \ldots, x_{i-1}, x_{i+1}, \ldots, x_m)
                  $$
                  is an element of $\mathcal{F}$.
          \end{enumerate}
    \item For any $G \in \ell^\infty(\mathcal{F})$ and any probability measure $\mu$ on $(\Omega, \mathcal{A})$, we define the process $G_\mu^{(m)} \in \ell^\infty(\mathcal{F}_m)$ by setting $G_\mu^{(1)} = G$ and $G_{\mu}^{(m)}(h) = \sum_{i=1}^m G(h_{\mu, i})$ with
          $$
            h_{\mu, i}(x) = \int h(x_1, \ldots, x_{i-1}, x, x_{i+1}, \ldots, x_m) ~\mathrm{d}\mu^{m-1}(x_1, \ldots, x_{i-1}, x_{i+1}, \ldots, x_m)
          $$
          for $m \geq 2$..
  \end{enumerate}
\end{definition}

We point one important consequence of Property \ref{it:def_Fm_integral}, which is the following: For any $x_1, \ldots, x_m \in \Omega$ and any $i = 1, \ldots, m$, the univariate function
$$
  x \mapsto h(x_1, \ldots, x_{i-1}, x, x_{i+1}, \ldots, x_m)
$$
is an element of $\mathcal{F}$. To see this, just choose the measures $Q_1, \ldots, Q_{m-1}$ in Property \ref{it:def_Fm_integral} as the Dirac measures concentrated on $x_1, \ldots, x_{i-1}, x_{i+1}, \ldots, x_m$.

Let us now state our main result on V-processes.

\begin{theorem}
  \label{thm:v_statistik_from_empirical_process}
  Let $P_n$ and $\mu_n$, $n \in \mathbb{N}$, be sequences of probability measures on some measurable space $(\Omega, \mathcal{A})$, where the $P_n$ are random and the $\mu_n$ are deterministic. Let $\mathcal{F}$ be a class of measurable real-valued functions defined on $(\Omega, \mathcal{A})$. Assume that there exist a real-valued sequence $r_n \uparrow \infty$, a tight random process $G \in \ell^\infty(\mathcal{F})$ and a probability measure $\mu$ such that $r_n(P_n - \mu_n) \rightsquigarrow G$ and $\mu_n \to \mu$ in $\ell^\infty(\mathcal{F})$. Assume that almost all sample paths of $G$ are uniformly continuous with respect to the supremum norm. Then for any $m \in \mathbb{N}$, it holds that $r_n(P_n^m - \mu_n^m) \rightsquigarrow G_\mu^{(m)}$ in $\ell^\infty(\mathcal{F}_m)$, where $G_\mu^{(m)}$ is the process from Definition \ref{def:Fm_class}. The transformation $G \mapsto G_\mu^{(m)}$ is linear and continuous for all $m \in \mathbb{N}$; therefore, if $G$ is Borel measurable and/or tight and/or Gaussian and/or centred, then so is $G_\mu^{(m)}$. If $G$ has covariance function $\Gamma$, then $G_\mu^{(m)}$ has covariance function
  \begin{equation}
    \label{eq:V_kovarianz_statement}
    \Gamma_\mu^{(m)}(h,h') = \sum_{i,j=1}^m \Gamma(h_{\mu,i}, h_{\mu,j}'),
  \end{equation}
  where $h_{\mu,i}$ and $h_{\mu,j}'$ are constructed as in Definition \ref{def:Fm_class}.
\end{theorem}

Theorem \ref{thm:v_statistik_from_empirical_process} seems to be especially useful in combination with Theorem \ref{thm:extension_hk_simple}. By that theorem, if we can establish convergence of the empirical process $r_n(P_n - \mu_n)$ indexed in the class of indicator functions $\textbf{1}_{[0,x]}$, $x \in [0,1]^d$, we may apply Theorem \ref{thm:v_statistik_from_empirical_process} with the choice $\mathcal{F} = \mathcal{HK}(c)$. The result is then convergence of the $m$-fold product measure process $r_n (P_n^m - \mu_n^m)$ indexed in the class $\mathcal{HK}(c)_m$. For this class, condition \ref{it:def_Fm_integral} from Definition \ref{def:Fm_class} can be expressed in a simpler form, leading to the following characterisation.

\begin{theorem}
  \label{thm:characterisation_HKcm}
  Let $0 < c < \infty$ and $d,m \in \mathbb{N}$. A function $h : ([0,1]^d)^m \to \mathbb{R}$ is an element of $\mathcal{HK}(c)_m$ if and only if the following statements hold: $h$ is Borel measurable, and the function
  $$
    x \mapsto h(x_1, \ldots, x_{i-1}, x, x_{i+1}, \ldots, x_m)
  $$
  is an element of $\mathcal{HK}(c)$ for any fixed $x_1, \ldots, x_m \in [0,1]^d$ and $i = 1, \ldots, m$.
\end{theorem}

The assumption that $h$ is Borel measurable is necessary, as it does not follow from the other assumption that $h$ is of uniformly bounded Hardy-Krause variation in each coordinate separately. As an example, take a non-measurable subset $F \subseteq [0,1]^d$ and define the function $h$ by $h(x_1, \ldots, x_m) = 0$ except when $x_1 = \ldots = x_m \in F$, in which case $h(x_1, \ldots, x_m) = 1$.

There is one subtlety which should be kept in mind when using Theorem \ref{thm:v_statistik_from_empirical_process}. One natural choice for $\mu_n$ is the mean function of $P_n$, i.e.\@ $\mathbb{E}P_n$ -- recall that this is defined pointwise by $\mathbb{E}P_n(f) = \mathbb{E}[P_n(f)]$. Theorem \ref{thm:v_statistik_from_empirical_process} then makes a statement about the convergence of the processes $r_n[P_n^m - (\mathbb{E}P_n)^m]$. However, $(\mathbb{E}P_n)^m$ is not the natural centring measure for $P_n^m$, which would be $\mathbb{E}\left[P_n^m\right]$. As an example, suppose that $P_n$ is the empirical measure based on some observations $X_1, \ldots, X_n$. Then $\mathbb{E}\left[P_n^m\right]$ is given by
$$
  \mathbb{E}\left[P_n^m\right](h) = n^{-m} \sum_{1 \leq i_1, \ldots, i_m \leq n} \mathbb{E} h(X_{i_1}, \ldots, X_{i_m}).
$$
On the other hand, $(\mathbb{E}P_n)^m$ is given by
$$
  (\mathbb{E}P_n)^m (h) = n^{-m} \sum_{1 \leq i_1, \ldots, i_m \leq n} \mathbb{E} h\left(X_{i_1}^{(1)}, \ldots, X_{i_m}^{(m)}\right),
$$
where $(X_1^{(i)}, \ldots, X_n^{(i)})$ are independent copies of $(X_1, \ldots, X_n)$. This is perhaps a somewhat odd centring. Observe that
\begin{equation}
  \label{eq:moment_convergence_identity}
  r_n\left\{\mathbb{E}\left[P_n^m\right] - (\mathbb{E}P_n)^m\right\} = \mathbb{E}\left[r_n\left\{P_n^m - (\mathbb{E}P_n)^m\right\}\right].
\end{equation}
Theorem \ref{thm:v_statistik_from_empirical_process} ensures weak convergence of the integrand on the right-hand side. If its limiting process is mean-zero, then assuming a suitable kind of uniform integrability condition will allow us to replace the measure $(\mathbb{E}P_n)^m$ by the more natural choice $\mathbb{E}\left[P_n^m\right]$. This is formalised in the following theorem.

\begin{theorem}
  \label{thm:product_uniformly_integrable}
  Let the assumptions of Theorem \ref{thm:v_statistik_from_empirical_process} be satisfied. If
  \begin{equation}
    \label{eq:moment_convergence_assumption}
    \sup_{n \in \mathbb{N}}\mathbb{E}^* \left\|r_n(P_n - \mu_n)\right\|_\mathcal{F}^p < \infty,
  \end{equation}
  for some $p > 1$, then
  \begin{equation}
    \label{eq:moment_convergence_conclusion_1}
    \sup_{n \in \mathbb{N}}\mathbb{E}^* \left\|r_n(P_n^m - \mu_n^m)\right\|_{\mathcal{F}_m}^p < \infty,
  \end{equation}
  for any $m \in \mathbb{N}$. If we furthermore assume that the original limit process $G$ from the statement of Theorem \ref{thm:v_statistik_from_empirical_process} is tight and mean-zero, then it also holds that
  \begin{equation}
    \label{eq:moment_convergence_conclusion_2}
    \left\|\mathbb{E} r_n(P_n^m - \mu_n^m)\right\|_{\mathcal{F}_m} \xrightarrow[n \to \infty]{} 0
  \end{equation}
  for any $m \in \mathbb{N}$. In this case, if $\mu_n = \mathbb{E}P_n$, the conclusion of Theorem \ref{thm:v_statistik_from_empirical_process} remains valid if we replace $\mu_n^m = (\mathbb{E}P_n)^m$ by $\mathbb{E}[P_n^m]$.
\end{theorem}

Theorem \ref{thm:koksma-hlawka_variant} may be useful for verifying the assumption of Theorem \ref{thm:product_uniformly_integrable}, as it can be used to immediately extend condition \eqref{eq:moment_convergence_assumption} from the class of indicator functions $\textbf{1}_{[0,x]}$, $x \in [0,1]^d$, to the richer class $\mathcal{HK}(c)$. In fact, if $P_n$ is the usual empirical measure, then this can be done using the normal Koksma-Hlawka inequality \eqref{eq:koksma_hlawka_original}.

For the special case $d = 1$, $m = 2$, and constant measures $\mu_n = \mu$ for all $n$, \cite{beutner_zaehle:2012,beutner_zaehle:2014} give results which are similar to the technique described here. The authors of these papers only consider single U- or V-statistics, as opposed to the V-processes from Theorem \ref{thm:v_statistik_from_empirical_process}, and use a different notion of variation; namely, functions of locally bounded variation. Importantly, these results are also based on an integration-by-parts approach, similar to the one outlined in the beginning of this section.

We close this section by pointing out a small but important detail. The central condition in Theorem \ref{thm:characterisation_HKcm} is that $h$ has uniformly bounded Hardy-Krause variation in every coordinate separately. This is not the same as requiring that $h$, defined as a function on $([0,1]^d)^m = [0,1]^{dm}$ is of bounded Hardy-Krause variation. For instance, take $d = 1$, $m = 2$, and $h(x,y) = \textbf{1}\{x \leq y\}$. Then $x \mapsto h(x,y)$ has Hardy-Krause variation $1$ for any fixed $y \in [0,1]$, as does $y \mapsto h(x,y)$ for any fixed $x \in [0,1]$. Yet the full function $h$ is not of bounded Hardy-Krause variation; this follows from \cite{pausinger_svane:2015}, Proposition 3.10, and \cite{aistleitner_etal:2017}, Corollary 4.3. This is of course only a toy example, but in our study of Chatterjee's rank correlation in Section \ref{sec:mains_chatterjee}, we will encounter kernels with similar properties. The results in that section would be impossible if Theorem \ref{thm:v_statistik_from_empirical_process} were valid only for kernels $h$ which are of bounded Hardy-Krause variation as full functions on the $dm$-dimensional hypercube $[0,1]^{dm}$.

\section{Applications}
\subsection{Chatterjee's Rank Correlation}
\label{sec:mains_chatterjee}
Throughout this section, we assume that $(X_k,Y_k)$, $k \in \mathbb{N}$, are i.i.d.\@ copies of some random vector $(X,Y)$ with values in $[0,1]^2$. This induces no loss of generality for our results on Chatterjee's rank correlation, since we can always scale down both $X$ and $Y$ in some strictly monotonic way to the unit interval, and Chatterjee's rank correlation is invariant under strictly monotone transformations.

Recall from the introduction that, for an i.i.d.\@ sample $(X_1, Y_1), \ldots, (X_n, Y_n)$, we denote by $X_{n,1}', \ldots, X_{n,n}'$ the order statistics of $X_1, \ldots, X_n$ with ties broken at random, $Y_{n,1}', \ldots, Y_{n,n}'$ are their concomitants, $r_i = \sum_j \textbf{1}\{Y_{n,j}' \leq Y_{n,i}'\}$ and $l_i = \sum_j \textbf{1}\{Y_{n,j}' \geq Y_{n,i}'\}$. For our main theorem on Chatterjee's rank correlation, we need to extend its definition slightly. Recall that
$$
  \xi_n(X,Y) = 1 - \frac{n \sum_{i=1}^{n-1} |r_{i+1} - r_i|}{2 \sum_{i=1}^n l_i (n-l_i)}.
$$
If $Y_1 = \ldots = Y_n$, then this expression is not defined, since then $l_i = n$. In order to turn $\xi_n$ into an integrable random variable, we need to extend the definition of $\xi_n$ to this case. Let us therefore set $\xi_n(X,Y) = 1$ if $Y_1 = \ldots = Y_ n$. The precise value used here is not important; what is important is that it is some finite real number to ensure integrability of $\xi_n$.

\begin{theorem}
  \label{thm:asymptotik}
  If $Y$ is not almost surely constant, then
  $$
    \sqrt{n}\left(\xi_n - \mathbb{E}\xi_n\right) \rightsquigarrow \mathcal{N}\left(0, \sigma^2\right).
  $$
  A formula for the limiting variance $\sigma^2$ is given below in Corollary \ref{cor:chatterjee_variance}.
\end{theorem}

It should be pointed out that our result only concerns Chatterjee's original rank correlation, and not the multivariate extension given by \cite{azadkia_chatterjee:2021}. We believe that it is in principle possible to use our methods from Section \ref{sec:mains} to derive a corresponding result for the Azadkia-Chatterjee measure of dependence. However, at this point, our methods used to derive the initial weak convergence indexed in $\textbf{1}_{[0,x]}$, $x \in [0,1]^2$, do not immediately carry over to the case where the $X_1, \ldots, X_n$ are multidimensional, in which case the order statistics are replaced by nearest neighbours.

Let us give an overview of the proof idea. Our approach is to describe the ratio $1 - \xi_n$ in terms of V-statistics. Consider first the sum in the numerator. Writing $\mathrm{sgn}(x)$ for the sign of a real number $x$, we have
\begin{align*}
  \sum_{i=1}^{n-1} |r_{i+1} - r_i| & = \sum_{i=1}^{n-1} \mathrm{sgn}(r_{i+1} - r_i)(r_{i+1} - r_i) = \sum_{i=1}^{n-1} \mathrm{sgn}(Y'_{n,i+1} - Y'_{n,i})(r_{i+1} - r_i)                                   \\
                                   & = \sum_{i=1}^{n-1} \sum_{j=1}^{n} \mathrm{sgn}(Y'_{n,i+1} - Y'_{n,i})\left(\textbf{1}\{Y_{n,j}' \leq Y_{n,i+1}'\} - \textbf{1}\{Y_{n,j}' \leq Y_{n,i}'\}\right)       \\
                                   & = \sum_{i,j=1}^{n-1} \mathrm{sgn}(Y'_{n,i+1} - Y'_{n,i})\left(\textbf{1}\{Y_{n,j}' \leq Y_{n,i+1}'\} - \textbf{1}\{Y_{n,j}' \leq Y_{n,i}'\}\right) + \mathcal{O}(n-1)
\end{align*}
The denominator can be similarly written as
$$
  \sum_{i=1}^n l_i (n-l_i) = \sum_{i,j,k = 1}^n \textbf{1}\{Y_{n,j}' \geq Y_{n,i}'\} \textbf{1}\{Y_{n,k}' < Y_{n,i}'\}.
$$
With the notation
\begin{align*}
  h_1((s_1, s_2), (t_1, t_2)) & = \mathrm{sgn}(s_2 - s_1)\left(\textbf{1}\{t_1 \leq s_2\} - \textbf{1}\{t_1 \leq s_1\}\right), \\
  h_2(s,t,u)                  & = \textbf{1}\{t \geq s\} \textbf{1}\{u < s\},
\end{align*}
we can therefore write
\begin{align*}
  \frac{n \sum_{i=1}^{n-1} |r_{i+1} - r_i|}{2 \sum_{i=1}^n l_i (n-l_i)} = \frac{n^3 V_1 + \mathcal{O}\left(n^2\right)}{2 n^3 V_2} = \frac{V_1}{2 V_2} + \mathcal{O}\left(\frac{1}{n}\right),
\end{align*}
where $V_1$ denotes the V-statistic with kernels $h_1$ on $(Y_{n,1}', Y_{n,2}'), \ldots, (Y_{n,n-1}', Y_{n,n}')$, and $V_2$ the V-statistic with kernel $h_2$ on $Y_1, \ldots, Y_n$. We will establish the joint convergence of $V_1$ and $V_2$ with our methods from Section \ref{sec:mains}. For these methods to be applicable in a formal sense, we need to redefine the kernels $h_1$ and $h_2$ in such a way that they have the same degree $m$ and their data have the same dimension $d$. Thus, let us write
\begin{align}
  \begin{split}
    \label{eq:h_stern_definition}
    h_1^*\{(s_1, s_2), (t_1, t_2), (u_1, u_2)\} & = \mathrm{sgn}(s_2 - s_1)\left(\textbf{1}\{t_1 \leq s_2\} - \textbf{1}\{t_1 \leq s_1\}\right), \\
    h_2^*\{(s_1, s_2), (t_1, t_2), (u_1, u_2)\} & = \textbf{1}\{t_1 \geq s_1\} \textbf{1}\{u_1 < s_1\}.
  \end{split}
\end{align}
With this notation, $V_1$ and $V_2$ are also the V-statistics with kernels $h_1^*$ and $h_2^*$, respectively, based on the data $(Y_{n,1}', Y_{n,2}'), \ldots, (Y_{n,n-1}', Y_{n,n}')$, and we are in a position to employ the machinery from Section \ref{sec:mains}.

Let $P_n$ denote the empirical measure of $(Y_{n,1}', Y_{n,2}'), \ldots, (Y_{n,n-1}', Y_{n,n}')$. Before we can establish convergence of the V-process associated with $P_n$, we first need to introduce a certain measure which will show up as the limit of the random sequence $P_n$. We will reference the following definition whenever necessary.

\begin{definition}
  \label{def:centring_P}
  Define a measure $P$ on $\mathbb{R}^2$ in the following way: If $(X_1, Y_1)$ and $(X_2, Y_2)$ are two independent random vectors with the same distribution and $Y_1, Y_2 \in \mathbb{R}$, then we set
  \begin{equation}
    \label{eq:conditional_measure}
    P(A) = \int \mathbb{E}\left[\textbf{1}_A(Y_1, Y_2) ~|~ (X_1, X_2) = (x,x)\right] ~\mathrm{d}\mathbb{P}^X(x).
  \end{equation}
\end{definition}

\begin{theorem}
  \label{thm:prozesskonvergenz_klammer}
  Let $P_n$ be the empirical measure of $(Y_{n,1}', Y_{n,2}'), \ldots, (Y_{n,n-1}', Y_{n,n}')$, and let $P$ be the measure from Definition \ref{def:centring_P}. Then, for any $m \in \mathbb{N}$ and $0 < c < \infty$,
  $$
    \sqrt{n}\left(P_n^m - \mathbb{E}P_n^m\right) \rightsquigarrow G_P^{(m)}
  $$
  in $\ell^\infty[\mathcal{HK}(c)_m]$ for some tight mean-zero Gaussian process $G_P^{(m)}$ depending on $m$ and $P$. Finally, it holds that
  \begin{equation}
    \label{eq:concomitants_p_moments}
    \sup_{n \in \mathbb{N}} \mathbb{E}^* \left\|\sqrt{n}\left(P_n^m - \mathbb{E}P_n^m\right)\right\|_{\mathcal{HK}(c)_m}^p < \infty
  \end{equation}
  for sufficiently large $p$.
\end{theorem}
By considering the evaluations of the process $\sqrt{n}(P_n^m - \mathbb{E}P_n^m)$ at a specific collection of functions $h_1, \ldots, h_k \in \mathcal{HK}(c)_m$, we obtain joint convergence in distribution of the corresponding V-statistics. The class $\mathcal{HK}(c)_m$ for $m = 3$ contains the kernels associated with Chatterjee's rank correlation. Unfortunately, the covariance function of the process $G_P^{(m)}$ is rather complicated, which also translates into a complicated limiting variance of Chatterjee's rank correlation.

\begin{theorem}
  \label{thm:kovarianzfunktion_konkomitantenprozess}
  The process $G_P^{(m)}$ from Theorem \ref{thm:prozesskonvergenz_klammer} has covariance function
  $$
    \Lambda_P^{(m)}(h,h') =  \Gamma\left(\sum_{i=1}^m h_{P,i}, \sum_{j=1}^m h_{P,j}'\right) + \beta\left(\sum_{i=1}^m h_{P,i}, \sum_{j=1}^m h_{P,j}'\right),
  $$
  where $h_{P,i}$ and $h'_{P,j}$ are constructed from $h$ and $h'$ as in Definition \ref{def:Fm_class}, and the functions $\Gamma$ and $\beta$ are given by
  \begin{align*}
    \Gamma(h, h') & = \int \mathbb{E}[h(Y_1, Y_2) h'(Y_1, Y_2) ~|~ (X_1, X_2) = (x,x)] ~\mathrm{d}\mathbb{P}^X(x)                     \\
                  & \quad + \int \mathbb{E}[h(Y_1, Y_2) h'(Y_3, Y_1) ~|~ (X_1, X_2, X_3) = (x,x,x)] ~\mathrm{d}\mathbb{P}^X(x)        \\
                  & \quad + \int \mathbb{E}[h(Y_1, Y_2) h'(Y_2, Y_3) ~|~ (X_1, X_2, X_3) = (x,x,x)] ~\mathrm{d}\mathbb{P}^X(x)        \\
                  & \quad - 3 \int \mathbb{E}[h(Y_1, Y_2) h'(Y_3, Y_4) ~|~ (X_1, \ldots, X_4) = (x,x,x,x)] ~\mathrm{d}\mathbb{P}^X(x)
  \end{align*}
  and
  \begin{align*}
    \beta(h, h')
     & = \int \mathbb{E}[h(Y_1, Y_2) h'(Y_3, Y_4) ~|~ (X_1, \ldots, X_4) = (x,x,x,x)]~\mathrm{d}\mathbb{P}^X(x)          \\
     & \quad - \left\{\int \mathbb{E}[h(Y_1, Y_2) ~|~ (X_1, X_2) = (x,x)] ~\mathrm{d}\mathbb{P}^X(x)   \right.           \\
     & \qquad\qquad \left.\cdot \int \mathbb{E}[h'(Y_1, Y_2) ~|~ (X_1, X_2) = (x,x)] ~\mathrm{d}\mathbb{P}^X(x)\right\}.
  \end{align*}
\end{theorem}

\begin{corollary}
  \label{cor:chatterjee_variance}
  The variance $\sigma^2$ in Theorem \ref{thm:asymptotik} is given by
  $$
    \sigma^2 = \frac{1}{4\mu_2^2}\left\{\sigma_{11} - 2\frac{\mu_1}{\mu_2} \sigma_{12} + \left(\frac{\mu_1}{\mu_2}\right)^2 \sigma_{22}\right\},
  $$
  where, with $\Lambda_P^{(3)}$ being the covariance function from Theorem \ref{thm:kovarianzfunktion_konkomitantenprozess}, $\sigma_{ij} = \Lambda_P^{(3)}(h_i^*, h_j^*)$, and
  \begin{align*}
    \mu_{1} & = \int \mathbb{P}\left[Y_1 \land Y_2 < Y_3 \leq Y_1 \lor Y_2 ~|~ (X_1, X_2) = (x,x)\right] ~\mathrm{d}\mathbb{P}^X(x), \\
    \mu_{2} & = \mathbb{P}(Y_1 < Y_2 \leq Y_3).
  \end{align*}
\end{corollary}

If the joint distribution of $X$ and $Y$ is continuous, we know that $\sigma^2 = 0$ if and only if $Y = f(X)$ almost surely for some Borel measurable function $f$ \citep{lin_han:2022}. We do not have a corresponding result in the general case. An alternative expression for $\sigma^2$ is
$$
  \sigma^2 = \frac{1}{4\mu_2^2} \left\langle v_\mu, \Sigma v_\mu\right\rangle = \frac{1}{4\mu_2^2}\left\|\Sigma^{1/2}v_\mu\right\|_2^2,
$$
where $v_\mu = (-1, \mu_1/\mu_2)^\top$ and $\Sigma = (\sigma_{ij})_{1 \leq i,j \leq 2}$. From this it is clear that $\sigma^2 = 0$ if and only if $v_\mu$ is an element of the null space $\mathrm{ker}(\Sigma^{1/2}) = \mathrm{ker}(\Sigma)$, or equivalently if $\Sigma v_\mu = 0$. It is not difficult to see that, if $Y = f(X)$ almost surely, then $\mu_1 = 0$; furthermore, in this case, $\Gamma(h,h') = 0$ and $\beta(h,h') = \mathrm{Cov}[h(Y,Y), h'(Y,Y)]$ for all $h,h'$. By direct calculations, one then finds that $\sigma_{11} = \sigma_{12} = \sigma_{21} = 0$, and so
$$
  \Sigma v_\mu = \begin{pmatrix}
    0 & 0 \\ 0 & \sigma_{22}
  \end{pmatrix} \begin{pmatrix}
    -1 \\ 0
  \end{pmatrix} = \begin{pmatrix}
    0 \\ 0
  \end{pmatrix},
$$
which implies that $\sigma^2 = 0$. Thus, if $Y = f(X)$ almost surely for some Borel measurable $f$, it must hold that $\sigma^2 = 0$. We do not know how to prove the reverse implication (or indeed if it is always true).

In theory, the formula in Corollary \ref{cor:chatterjee_variance} might be used to construct a consistent estimator for $\sigma^2$. However, for statistical purposes, we recommend estimating $\sigma^2$ via the $m$ out of $n$ bootstrap described in \cite{dette_kroll:2025}. In that paper, the performance of the bootstrap estimator was evaluated (and found to be convincing) in finite samples for both continuous and discrete distributions.

The statistics in Theorems \ref{thm:asymptotik} and \ref{thm:prozesskonvergenz_klammer} are both mean-zero; their centring terms are $\mathbb{E}\xi_n$ and $\mathbb{E}P_n^m$, respectively. Sometimes this is not desirable; for instance, if Theorem \ref{thm:asymptotik} is used to construct confidence intervals, we want these to be centred around $\xi$ and not necessarily around $\mathbb{E}\xi_n$. We now give some sufficient conditions under which the centring terms $\mathbb{E}\xi_n$ and $\mathbb{E}P_n^m$ can be replaced by their population versions $\xi$ and $P^m$, respectively, where $P$ is the measure introduced in Definition \ref{def:centring_P}.

To state our conditions, we define the functions $f_a$, $a \in [0,1]$, by
$$
  f_a(x) = \mathbb{P}(Y \leq a ~|~ X = x).
$$
We denote the total variation of a univariate function $f : [0,1] \to \mathbb{R}$ by $\|f\|_{\mathrm{TV}}$. Finally, for a discrete distribution with infinitely many points of mass and associated probabilities $p_1 \geq p_2 \geq \ldots$, we define
$$
  \alpha(x) = \max \{j \in \mathbb{N} ~|~ p_j \geq 1/x\}.
$$

\begin{theorem}
  \label{thm:bias_conditions}
  Let us write $\delta_n = \mathbb{E}\xi_n - \xi $ and $\Delta_n = \mathbb{E}P_n^m - P^m$. Under any of the following conditions, it holds that $\sqrt{n}\Delta_n \to 0$ in $\ell^\infty[\mathcal{HK}(c)_m]$ and $\delta_n \to 0$ in $\mathbb{R}$:
  \begin{enumerate}
    \item \label{it:bounded_tv} The total variations $\|f_a\|_{\mathrm{TV}}$ are bounded uniformly in $a \in [0,1]$.
    \item \label{it:discrete_finite} $X$ has a discrete distribution with finitely many points of mass.
    \item \label{it:discrete_infinite} $X$ has a discrete distribution with infinitely many points of mass and $\alpha(x) = x^\gamma L(x)$ for some $\gamma < 1/2$ and a slowly varying function $L$.
  \end{enumerate}
\end{theorem}

Neither of the conditions in Theorem \ref{thm:bias_conditions} is necessary for the biases to be negligible. On the other hand, if no conditions at all are imposed on the joint distribution of $X$ and $Y$, then the biases may in fact decay too slowly. This is illustrated by the following result.

\begin{theorem}
  \label{thm:bias_conditions_large}
  There exists a probability distribution $F$ on $\mathbb{R}^2$ with the following property: If $(X,Y)$ is distributed according to $F$, then
  $$
    |\sqrt{n}\delta_n| \xrightarrow[n \to \infty]{} \infty \quad \textrm{and} \quad \|\sqrt{n}\Delta_n\|_{\mathcal{HK}(c)_m} \xrightarrow[n \to \infty]{} \infty.
  $$
\end{theorem}

The fact that $\xi_n$ is not an unbiased estimator for $\xi$ was also encountered by \cite{lin_han:2022}, who prove convergence of $\sqrt{n}(\xi_n - \mathbb{E}\xi_n)$ and then use results by \cite{azadkia_chatterjee:2021} to control the bias $\mathbb{E}\xi_n - \xi$. Using this method, \cite{lin_han:2022} are able to obtain weak convergence of $\sqrt{n}(\xi_n - \xi)$ under two conditions: First, $X$ must satisfy a certain tail bound. Second, the functions $x \mapsto \mathbb{P}(Y \geq t ~|~ X = x)$ must be locally Lipschitz in the sense that there are constants $C, \beta > 0$ such that
$$
  \left|\mathbb{P}(Y \geq t ~|~ X = x) - \mathbb{P}(Y \geq t ~|~ X = x')\right| \leq C \left(1 + |x|^\beta + |x'|^\beta\right)|x - x'|
$$
for all $x,x',t \in \mathbb{R}$. Because \cite{lin_han:2022} consider continuous data, this is equivalent to the functions $f_a$ satisfying the same local Lipschitz condition. Since $\xi$ and $\xi_n$ are invariant under strictly monotone transformations, we can assume without loss of generality that $|x|, |x'| \leq 1$, in which case the local Lipschitz condition implies regular Lipschitz continuity. Lipschitz continuity on $[0,1]$ implies bounded variation on $[0,1]$, but not vice-versa, and so Theorem \ref{thm:bias_conditions} (\ref{it:bounded_tv}) is a relaxation of the local Lipschitz assumption.

\subsection{U- and V-Processes of Strongly Mixing Data}
\label{sec:mixing}
The application in Section \ref{sec:mains_chatterjee} highlighted the strength of our method when non-standard data, such as the concomitants of the order statistics used in the construction of Chatterjee's rank correlation, are involved. In this section, we consider a different application to illustrate that our method can also lead to stronger results in more standard cases.

Namely, we consider V-processes based on strongly mixing data. These are processes of the form $V_n(h)$, $h \in \mathcal{H}$, where for each fixed $h \in \mathcal{H}$, $V_n(h)$ is a V-statistic with kernel $h$. Closely related are U-processes, i.e.\@ processes of the form $U_n(h)$, $h \in \mathcal{H}$, where each $U_n(h)$ is a U-statistic. U-processes often occur in statistical application, such as dimension estimation, goodness-of-fit testing or density estimation. Under suitable moment conditions, V-processes and U-processes are equivalent. For more information, we refer to \cite{arcones_gine:1993}, \cite{nolan_pollard:1987,nolan_pollard:1988} for the i.i.d.\@ case, and to \cite{arcones_yu:1994}, \cite{borovkova_burton_dehling:2001} for the weakly dependent case. A nice introduction with regards to the weakly dependent case is given in Section 5 in \cite{dehling:emirical_process_techniques}. These references also include some concrete examples for statistical applications of U-processes.

For two sigma algebras $\mathcal{A}$ and $\mathcal{B}$, define
$$
  \alpha(\mathcal{A}, \mathcal{B}) = \sup_{A \in \mathcal{A}, B \in \mathcal{B}} | \mathbb{P}(A \cap B) - \mathbb{P}(A) \mathbb{P}(B)|.
$$
For a strictly stationary sequence of random variables $(X_k)_{k \in \mathbb{N}}$, define
$$
  \alpha(n) = \sup_{j \in \mathbb{N}} \alpha\left(\mathcal{F}_1^j, \mathcal{F}_{j+n}^\infty\right),
$$
where for $a \leq b \leq \infty$, $\mathcal{F}_a^b = \sigma(X_k ~|~ a \leq k \leq b)$. We call $(X_k)_{k \in \mathbb{N}}$ strongly mixing or $\alpha$-mixing if $\alpha(n) \to 0$ for $n \to \infty$. The standard reference for mixing conditions such as $\alpha$-mixing is \cite{bradley:mixing_vol1}.

\begin{theorem}
  \label{thm:v_statistik_alpha_mixing}
  Fix some $m \in \mathbb{N}$ and $0 < c < \infty$. Let $(X_k)_{k \in \mathbb{N}}$ be a strictly stationary process of $\mathbb{R}^d$-valued bounded random variables. Let $P$ denote the distribution of $X_1$ and define the processes $V_n = (V_n(h))_{h \in \mathcal{HK}(c)_m}$ and $U_n = (U_n(h))_{h \in \mathcal{HK}(c)_m}$ by
  $$
    V_n(h) = \frac{1}{n^m} \sum_{1 \leq i_1, \ldots, i_m \leq n} h(X_{i_1}, \ldots, X_{i_m}) - P(h)
  $$
  and
  $$
    U_n(h) = \frac{1}{n (n-1) \cdots (n-m+1)} \sum_{1 \leq i_1, \ldots, i_m \leq n : i_k \neq i_l \forall k \neq l} h(X_{i_1}, \ldots, X_{i_m}) - P(h).
  $$
  If $(X_k)_{k \in \mathbb{N}}$ is strongly mixing with a mixing rate satisfying $\alpha(n) = \mathcal{O}\left(n^{-r}\right)$ for some $r > 1$, then $\sqrt{n} V_n \rightsquigarrow G$ in $\ell^\infty[\mathcal{HK}(c)_m]$ for some tight mean-zero Gaussian process $G$, and $\sqrt{n}|U_n - V_n| \to 0$ in $\ell^\infty[\mathcal{HK}(c)_m]$.
\end{theorem}
Of course an immediate consequence of Theorem \ref{thm:v_statistik_alpha_mixing} is the weak convergence of the U-process, $U_n \rightsquigarrow G$ in $\ell^\infty[\mathcal{HK}(c)_m]$.

While there are many results for U-statistics and U-processes based on weakly dependent data, most of these assume dependence structures which are often stronger than strong mixing, e.g.\@ absolute regularity (also called $\beta$-mixing). This is also true for the U-process references cited above. In the context of single U-statistics, \cite{yoshihara:1992} proves a sequential U-process result under strong mixing, but his assumptions are difficult to verify. Arguably the most popular result for U-statistics of strongly mixing (real-valued) data is given by \cite{dehling_wendler:2010}, who investigate non-degenerate U-statistics of order $m = 2$ under the so-called $P$-Lipschitz condition. For a strictly stationary and real-valued process $(X_k)_{k \in \mathbb{N}}$, a kernel $h : \mathbb{R}^2 \to \mathbb{R}$ is called $P$-Lipschitz continuous if there is some constant $L > 0$ such that
$$
  \mathbb{E}\left[\left|h(X,Y) - h(X',Y)\right| \textbf{1}\{|X-X'| \leq \varepsilon\}\right] \leq L\varepsilon
$$
for all $\varepsilon > 0$ and all random variables $X,X',Y$ for which the following hold:
\begin{enumerate}
  \item $(X,Y) \sim \mathcal{L}(X_1, X_k)$ for some $k \in \mathbb{N}$ or $(X,Y) \sim \mathcal{L}(X_1) \otimes \mathcal{L}(X_1)$, and
  \item $X' \sim \mathcal{L}(X_1)$.
\end{enumerate}
\cite{dehling_wendler:2010} prove that, under some further technical conditions, a U-statistic of strongly mixing data is asymptotically normal, provided its kernel $h$ is $P$-Lipschitz continuous. Clearly, $P$-Lipschitz continuity depends on the sample generating process $(X_k)_{k \in \mathbb{N}}$ through the distributions $\mathcal{L}(X_1, X_k)$, $k \in \mathbb{N}$. If not all of these joint distributions are continuous, then the $P$-Lipschitz continuity assumption can fail or be more difficult to verify. In comparison, Theorem \ref{thm:v_statistik_alpha_mixing} works for multivariate data, and the conditions on the kernel do not depend on the distribution of the sample generating process.

Consider, for instance, the kernel functions $h_r : \mathbb{R}^{2d} \to \mathbb{R}$ defined by
$$
  h_r(x,y) = \textbf{1}\{\|x-y\| \leq r\},
$$
with $\|\cdot\|$ denoting the maximum norm. The U-process indexed in $\{h_r ~|~ 0 < r < r_0\}$, where $r_0$ is a fixed parameter, plays a central role in estimating the correlation dimension of an attractor of a dynamical system via the Grassberger-Procaccia estimator \citep{grassberger_procaccia:1983}, which can ultimately be used to test whether a time series is random, or if it is merely chaotic but deterministic; see also Section 5.1 in \cite{dehling:emirical_process_techniques}. For $d = 1$, Example 1.6 in \cite{dehling_wendler:2010} gives a sufficient condition for the kernels $h_r$ to be $P$-Lipschitz continuous, but this sufficient condition is easily violated if the marginal distribution of the sample generating process has a discrete component. On the other hand, the kernel functions $h_r$ are elements of $\mathcal{HK}(c)_2$ for some sufficiently large $c$ depending only on $d$. One way to prove this is by induction over $d$, using the identity
$$
  \textbf{1}\{\|x-y\| \leq r\} = \textbf{1}\left\{\max_{j=1, \ldots, d} |x_j - y_j| \leq r\right\} \textbf{1}\{|x_{d+1} - x_{d+1}| \leq r\},
$$
and the fact that $\mathcal{HK}(\infty)$ is closed under pointwise multiplication \citep[with an explicit bound on the Hardy-Krause variation of the product; cf.\@ Theorem 3.7 in][]{pausinger_svane:2015}. Theorem \ref{thm:v_statistik_alpha_mixing} therefore gives us convergence of the process $U_n(h_r)$, $0 < r < r_0$, in $\ell^\infty[\mathcal{HK}(c)_2]$ regardless of the distribution of the process $(X_k)_{k \in \mathbb{N}}$.

The final statistic on which we want to illustrate our method is Kendall's $\tau$. Assume that $(X_1, Y_1), \ldots, (X_n, Y_n)$ are not necessarily independent copies of some generic random vector $(X,Y) \in \mathbb{R}^2$. In the presence of ties, Kendall's $\tau$ is commonly defined as
$$
  \hat{\tau}_b = \frac{C_n - D_n}{\sqrt{[n(n-1)/2 - T_n] [n(n-1)/2 - U_n]}},
$$
where $C_n$ and $D_n$ are the numbers of concordant and discordant pairs in $(X_1, Y_1), \ldots, (X_n, Y_n)$, respectively, and $T_n$ and $U_n$ are the numbers of ties in $X_1, \ldots, X_n$ and $Y_1, \ldots, Y_n$, respectively; see Chapter 3 in \cite{kendall:rank_correlation_methods}. If there are no ties, then $\hat{\tau}_b$ reduces to the usual definition of Kendall's $\tau$. $\hat{\tau}_b$ is a consistent estimator for the population version
$$
  \tau_b = \frac{\mathbb{P}\{(X - \tilde{X})(Y - \tilde{Y}) > 0\} - \mathbb{P}\{(X - \tilde{X})(Y - \tilde{Y}) < 0\}}{\sqrt{\mathbb{P}(X \neq \tilde{X})\mathbb{P}(Y \neq \tilde{Y})}},
$$
where $(\tilde{X}, \tilde{Y})$ is an independent copy of $(X,Y)$. If $X$ and $Y$ are independent, then $\tau_b = 0$. All of the quantities $C_n$, $D_n$, $T_n$ and $U_n$ can be expressed as V-statistics up to some scaling. Specifically,
\begin{align*}
   & C_n = \frac{1}{2} \sum_{1 \leq i,j \leq n} \textbf{1}\{(X_i - X_j)(Y_i - Y_j) > 0\}, \quad  &  & T_n = \frac{1}{2} \sum_{1 \leq i,j \leq n} \textbf{1}\{X_i = X_j\} + \mathcal{O}(n), \\
   & D_n = \frac{1}{2} \sum_{1 \leq i,j \leq n} \textbf{1}\{(X_i - X_j)(Y_i - Y_j) < 0\},  \quad &  & U_n = \frac{1}{2} \sum_{1 \leq i,j \leq n} \textbf{1}\{Y_i = Y_j\} + \mathcal{O}(n).
\end{align*}
These kernel functions all lie in $\mathcal{HK}(c)_2$ for some sufficiently large $c > 0$, which gives rise to the following result.

\begin{corollary}
  \label{cor:kendalls_tau}
  Let $(X_k, Y_k)$, $k \in \mathbb{N}$, be strictly stationary but not necessarily independent copies of some random vector $(X,Y)$. Assume that $(X_k, Y_k)_{k \in \mathbb{N}}$ is strongly mixing with mixing rate $\alpha(n) = \mathcal{O}\left(n^{-r}\right)$ for some $r > 1$ and that neither $X$ nor $Y$ are almost surely constant. If $\hat{\tau}_b = \hat{\tau}_b(n)$ is calculated on the data $(X_1, Y_1), \ldots, (X_n, Y_n)$, then
  $$
    \sqrt{n}\left(\hat{\tau}_b - \tau_b\right) \rightsquigarrow \mathcal{N}\left(0, \sigma^2\right)
  $$
  for some $\sigma^2 \geq 0$ depending on the distribution of $(X_k, Y_k)_{k \in \mathbb{N}}$.
\end{corollary}

\section{Proofs for Process Convergence Extensions}
\label{sec:proofs_empirical_processes}

\subsection{Functions of Bounded Hardy-Krause Variation}
\label{sec:hk_proofs}
We begin with the proof of Theorem \ref{thm:koksma-hlawka_variant}, the generalised Koksma-Hlawka inequality, which is the backbone of our extension results. The proof idea is simple: We will first prove our claim for functions in $\mathrm{span}(\mathcal{G})$ by elementary means. Next, we will use the continuity assumptions on the operators $X$ and $Y$, together with the fact that $\mathrm{span}(\mathcal{G})$ is dense in $\mathcal{HK}(\infty)$, to prove the final claim of the theorem.

\begin{proof}[Proof of Theorem \ref{thm:koksma-hlawka_variant}]
  Our proof closely follows that of Theorem 4.1 in \cite{pausinger_svane:2015}. We begin by representing the space $\mathrm{span}(\mathcal{G})$ in a slightly different form. This is done for the purposes of making this proof aligned with the notation from the cited reference.

  Let $\mathcal{R}^*$ be the set from Eq.\@ \eqref{eq:R_star}. Let $\mathcal{S}(\mathcal{R}^*)$ be the set of all functions of the form $g = \sum_{i=1}^m \alpha_i \textbf{1}_{A_i}$, $m \in \mathbb{N}$, $\alpha_1, \ldots, \alpha_m \in \mathbb{R}$ and $A_1, \ldots, A_m \in \mathcal{R}^*$. It is easy to see that $\mathcal{R}^* = \mathcal{G} \cup \{\textbf{1}_\emptyset\}$ -- simply fill the interior of any non-empty hyperrectangle $[0,x]^v$, $v \subseteq \{1, \ldots, d\}$, with larger and larger \textit{closed} hyperrectangles $[0,y_1] \times \ldots \times [0,y_d]$, the indicator functions of which lie in $\mathcal{F}$. It follows that $\mathcal{S}(\mathcal{R}^*) = \mathrm{span}(\mathcal{G})$. We use the notation $\mathcal{S}(\mathcal{R}^*)$, because this is the notation which the references cited in this proof use. By Corollary 4.3 in \cite{aistleitner_etal:2017} we find that the Hardy-Krause variation is equal to what \cite{pausinger_svane:2015} call the $\mathcal{R}^*$-variation. By definition of the $\mathcal{R}^*$-variation (see also Section \ref{sec:HK_introduction}) it holds for any $g \in \mathcal{S}(\mathcal{R}^*) = \mathrm{span}(\mathcal{G})$ that
  $$
    \|g\|_{\mathrm{HK}} = \inf\left\{\sum_{i=1}^m |\alpha_i| h_{\mathcal{R}^*}(A_i) ~\Big|~ g = \sum_{i=1}^m \alpha_i \textbf{1}_{A_i}, m \in \mathbb{N}, \alpha_i \in \mathbb{R}, A_i \in \mathcal{R}^*\right\},
  $$
  where $h_{\mathcal{R}^*}(A_i) = 0$ if $A_i \in \{\emptyset, [0,1]^d\}$ and $1$ otherwise. A first implication of this is that any $g \in \mathcal{S}(\mathcal{R}^*) = \mathrm{span}(\mathcal{G})$ has finite Hardy-Krause variation. Let $g \in \mathcal{S}(\mathcal{R}^*) = \mathrm{span}(\mathcal{G})$ be fixed and assume without loss of generality that $g = \alpha_0 + \sum_{i=1}^m \alpha_i \textbf{1}_{A_i}$ with $A_i \notin \{\emptyset, [0,1]^d\}$ for all $i = 1, \ldots, m$ and $\sum_{i=1}^m |\alpha_i| \leq \|g\|_{\mathrm{HK}} + \varepsilon$ for some fixed but arbitrary $\varepsilon > 0$. Then
  \begin{align}
    \begin{split}
      \label{eq:koksma_hlawka_simple_1}
      \left|X(g) - Y(g)\right| & = \left|\alpha_0 \left[X(\textbf{1}_{[0,1]^d}) - Y(\textbf{1}_{[0,1]^d})\right] + \sum_{i=1}^m \alpha_i \left[X(\textbf{1}_{A_i}) - Y(\textbf{1}_{A_i})\right]\right| \\
                               & \leq \sum_{i=1}^m |\alpha_i| \left|X(\textbf{1}_{A_i}) - Y(\textbf{1}_{A_i})\right|,
    \end{split}
  \end{align}
  by linearity of $X$ and $Y$, and because $X(\textbf{1}_{[0,1]^d}) = Y(\textbf{1}_{[0,1]^d})$ by assumption. We have already noted that any $\textbf{1}_{A_i}$ can be monotonically approximated by functions from $\mathcal{F}$. Because $X$ and $Y$ are continuous with respect to monotone convergence, this implies
  $$
    \left|X(\textbf{1}_{A_i}) - Y(\textbf{1}_{A_i})\right| \leq \|X - Y\|_\mathcal{F}.
  $$
  Insert this into Eq.\@ \eqref{eq:koksma_hlawka_simple_1} to see that
  $$
    \left|X(g) - Y(g)\right| \leq \sum_{i=1}^m |\alpha_i| \|X-Y\|_{\mathcal{F}} \leq (\|g\|_{\mathrm{HK}} + \varepsilon) \|X-Y\|_{\mathcal{F}}
  $$
  for any $g \in \mathcal{S}(\mathcal{R}^*) = \mathrm{span}(\mathcal{G})$. Since $\varepsilon > 0$ was arbitrary, this establishes Eq.\@ \eqref{eq:koksma-hlawka_variant_inequality_only_span}.

  Now let $0 < c < \infty$ and assume that $X$ and $Y$ are defined and continuous with respect to uniform convergence on $\mathcal{HK}(c)$. Consider an arbitrary function $h \in \mathcal{HK}(c)$. By the comment following Definition 3.2 in \cite{pausinger_svane:2015}, there exists a sequence $g_k \in \mathcal{S}(\mathcal{R}^*) = \mathrm{span}(\mathcal{G})$ such that $\|h-g_k\|_\infty \to 0$ and $\|g_k\|_{\mathrm{HK}} \to \|h\|_{\mathrm{HK}}$ as $k \to \infty$. Because $\|h\|_{\mathrm{HK}} < c$ by definition of the class $\mathcal{HK}(c)$, it must hold that $\|g_k\|_{\mathrm{HK}} < c$ for almost all $k \in \mathbb{N}$, i.e.\@ $g_k \in \mathrm{span}(\mathcal{G}) \cap \mathcal{HK}(c)$ for sufficiently large $k$. Therefore, for sufficiently large $k \in \mathbb{N}$,
  \begin{align*}
    |X(h) - Y(h)| & \leq |X(h) - X(g_k)| + |Y(h) - Y(g_k)| + |X(g_k) - Y(g_k)|                           \\
                  & \leq |X(h) - X(g_k)| + |Y(h) - Y(g_k)| + \|g_k\|_{\mathrm{HK}} \|X-Y\|_{\mathcal{F}}
  \end{align*}
  by Eq.\@ \eqref{eq:koksma-hlawka_variant_inequality_only_span}, which we have just established. Letting $k$ tend to infinity proves our claim because $X$ and $Y$ are continuous on $\mathcal{HK}(c)$ with respect to uniform convergence, and we have $\|h - g_k\|_\infty \to 0$ as well as $\|g_k\|_{\mathrm{HK}} \to \|h\|_{\mathrm{HK}}$ by choice of the sequence $g_k$, $k \in \mathbb{N}$.
\end{proof}

The next lemma is central for the proof of Theorem \ref{thm:extension_hk_simple}. In the statement of that theorem, we only assume that the processes $X_n$ are defined on $\mathrm{span}(\mathcal{G})$, and their limit $X$ only needs to be defined a priori on the smaller class $\mathcal{F}$. The following result concerns continuous extensions of $X_n$ and $X$ to the classes $\mathcal{HK}(c)$. Its statement may seem somewhat complicated at first glance, but the underlying idea is not: Use the algebraic connection between $\mathrm{span}(\mathcal{G})$ and $\mathcal{HK}(c)$, which we have already encountered in the proof of Theorem \ref{thm:koksma-hlawka_variant}, together with the continuity properties of $X_n$ (and, as we will show, of $X$). The lemma concerns deterministic linear operators as opposed to random linear processes. Later, in the proof of Theorem \ref{thm:extension_hk_simple}, we will combine it with an almost sure coupling to apply it pointwise (in $\omega \in \Omega$, the elements of the underlying probability space) to the sample paths of the random processes in question.

\begin{lemma}
  \label{lem:extension_of_limit}
  Let $\mathcal{F} = \{\textbf{1}_{[0,x]} ~|~ x \in [0,1]^d\}$, and let $\mathcal{G}$ be the closure of $\mathcal{F}$ under monotone convergence (as in Theorem \ref{thm:extension_hk_simple}). Define the linear space $D$ to consist of all linear operators $T : \mathrm{span}(\mathcal{G}) \to \mathbb{R}$ which satisfy the following assumptions:
  \begin{enumerate}
    \item[(a)] $T$ is continuous with respect to monotone convergence in $\mathcal{F}$, i.e.\@ if $f_k$, $k \in \mathbb{N}$, is a sequence in $\mathcal{F}$ such that $f_k \uparrow g$ for some $g \in \mathcal{G}$, then $T f_k \to T g$ as $k \to \infty$.
    \item[(b)] $T$ is continuous with respect to the supremum norm on the entire space $\mathrm{span}(\mathcal{G})$.
    \item[(c)] $T \textbf{1}_{[0,1]^d} = 0$.
  \end{enumerate}
  Furthermore, let $E \subseteq \ell^\infty(\mathcal{F})$ be the linear space of all $T \in \ell^\infty(\mathcal{F})$ for which there exists a sequence $T_n \in D$, $n \in \mathbb{N}$, such that $\|T_n - T\|_\mathcal{F} \to 0$ as $n \to \infty$. (Note that $E$ is not quite equal to the closure of $D$ under uniform convergence, since the domain of elements of $D$ is different from the domain of elements of $E$.) Then the following statements hold:
  \begin{enumerate}
    \item\label{it:Tn_extension} Every $T \in D$ can be uniquely extended to a continuous (with respect to the supremum norm) linear operator $\bar{T} : \mathcal{HK}(\infty) \to \mathbb{R}$.
    \item\label{it:T_extension} If $T \in E$ and $T_n$, $n \in \mathbb{N}$, is a sequence in $D$ such that $\|T_n - T\|_\mathcal{F}$ as $n \to \infty$, then the equation $\bar{T} h = \lim_{n \to \infty} \bar{T}_n h$ gives a well-defined linear operator $\bar{T} : \mathcal{HK}(\infty) \to \mathbb{R}$. In particular, $\bar{T}$ is a linear extension of $T$, and this construction of $\bar{T}$ is independent of the specific sequence $T_n$. If $T$ is not only an element of $E$ but also of the smaller set $D$, then this extension agrees with that of Claim \ref{it:Tn_extension}.
    \item\label{it:T_monotone_continuity} The extension $\bar{T}$ of any $T \in E$ is continuous with respect to monotone convergence in $\mathcal{F}$ as defined above.
    \item\label{it:T_uniform_continuity} For any $0 < c < \infty$, the restriction of $\bar{T}$ to $\mathcal{HK}(c)$ is uniformly continuous with respect to the supremum norm.
  \end{enumerate}
\end{lemma}
\begin{proof}
  By assumption, every $T \in D$ is a continuous linear operator on $\mathrm{span}(\mathcal{G})$ equipped with the supremum norm. $\mathrm{span}(\mathcal{G})$ is a dense linear subspace of its own closure $\overline{\mathrm{span}}(\mathcal{G})$, which itself is a linear space \citep[Lemma II.1.3 in][]{dunford_schwartz:1958}. Here and in the remainder of the proof, the closure of a set is meant with respect to the supremum norm. By Theorem II.3.11 in \cite{dunford_schwartz:1958}, $T$ can be extended to a continuous linear operator $\bar{T}$ on $\overline{\mathrm{span}}(\mathcal{G})$, equipped with the supremum norm. Furthermore, the extension must be unique among all continuous linear extensions of $T$ because the original linear space $\mathrm{span}(\mathcal{G})$ is dense in $\overline{\mathrm{span}}(\mathcal{G})$. We have seen in the proof of Theorem \ref{thm:koksma-hlawka_variant} that functions of bounded Hardy-Krause variation can be uniformly approximated by functions from $\mathrm{span}(\mathcal{G})$; thus, $\mathcal{HK}(\infty)$ is a subspace of $\overline{\mathrm{span}}(\mathcal{G})$. This proves Claim \ref{it:Tn_extension}.

  Now consider some $T \in E$. Let $T_n$, $n \in \mathbb{N}$, be a sequence in $D$ such that $\|T_n - T\|_\mathcal{F} \to 0$ as $n \to \infty$. Fix some $h \in \mathcal{HK}(\infty)$. Since every extension $\bar{T}_n$ is continuous on $\mathcal{HK}(\infty)$ with respect to uniform convergence, we may use Theorem \ref{thm:koksma-hlawka_variant} to see that
  \begin{align}
    \begin{split}
      \label{eq:Tn_bar_cauchy}
      |\bar{T}_n h - \bar{T}_m h| & \leq \|h\|_{\mathrm{HK}} \|\bar{T}_n - \bar{T}_m\|_\mathcal{F}                          \\
                                  & = \|h\|_{\mathrm{HK}} \|T_n - T_m\|_\mathcal{F}                                         \\
                                  & \leq \|h\|_{\mathrm{HK}} \left(\|T_n - T\|_\mathcal{F} + \|T_m - T\|_\mathcal{F}\right)
    \end{split}
  \end{align}
  for any $n,m \in \mathbb{N}$. Since $\|T_n - T\|_\mathcal{F} \to 0$ by assumption, the last bound can be made arbitrarily small by choosing $n,m \in \mathbb{N}$ sufficiently large. Hence, the sequence $\bar{T}_n h$, $n \in \mathbb{N}$, is Cauchy in $\mathbb{R}$, and so $\bar{T}h = \lim_n \bar{T}_n h$ is well-defined. It is clear that $\bar{T}$ is an extension of $T$, since $\lim_n T_n f = Tf$ for any $f \in \mathcal{F}$ by assumption. Furthermore, $\bar{T}$ is linear because it is the pointwise limit of the linear operators $\bar{T}_n$. Finally, if $S_n$, $n \in \mathbb{N}$, is a second sequence in $D$ with the property that $\|S_n - T\|_\mathcal{F} \to 0$ as $n \to \infty$, then Theorem \ref{thm:koksma-hlawka_variant} implies
  $$
    |\bar{T}_n h - \bar{S}_n h| \leq \|h\|_{\mathrm{HK}} \|\bar{T}_n - \bar{S}_n\|_\mathcal{F} = \|h\|_{\mathrm{HK}} \|T_n - S_n\|_\mathcal{F}
  $$
  for any $h \in \mathcal{HK}(\infty)$. The right-hand side converges to $0$ as $n \to \infty$, since both $T_n$ and $S_n$ converge to the same limit in $\ell^\infty(\mathcal{F})$. Therefore,
  $$
    \bar{T} h = \lim_{n \to \infty} \bar{T}_n h = \lim_{n \to \infty} \bar{S}_n h.
  $$
  This proves Claim \ref{it:T_extension}.

  For Claim \ref{it:T_monotone_continuity}, let $T_n$, $n \in \mathbb{N}$, be a sequence in $D$ such that $\|T_n - T\|_\mathcal{F} \to 0$. Fix some $g \in \mathcal{G}$ and $f_k \in \mathcal{F}$, $k \in \mathbb{N}$, such that $f_k \uparrow g$. Then
  \begin{equation}
    \label{eq:extension_monotone}
    \bar{T} g = \lim_{n \to \infty} \bar{T}_n g = \lim_{n \to \infty} \lim_{k \to \infty} \bar{T}_n f_k = \lim_{k \to \infty} \lim_{n \to \infty} \bar{T}_n f_k = \lim_{k \to \infty} \bar{T} f_k.
  \end{equation}
  The switching of the limits in the third equality is valid because
  $$
    \|\bar{T}_n - \bar{T}\|_\mathcal{F} = \|T_n - T\|_\mathcal{F} \to 0
  $$
  by assumption, which in particular implies that $\bar{T}_n f_k \to \bar{T} f_k$ uniformly in $k \in \mathbb{N}$.

  Finally, let $0 < c < \infty$. Choose again an approximating sequence $T_n \in D$ such that $\|T_n - T\|_\mathcal{F} \to 0$. From Eq.\@ \eqref{eq:Tn_bar_cauchy}, we find that
  $$
    \|\bar{T}_n - \bar{T}_m\|_{\mathcal{HK}(c)} \leq c \left(\|T_n - T\|_\mathcal{F} + \|T_m - T\|_\mathcal{F}\right),
  $$
  and so $\bar{T}_n$, $n \in \mathbb{N}$, is Cauchy in $\ell^\infty[\mathcal{HK}(c)]$. Since this is a Banach space \citep[p.\@ 258 in][]{dunford_schwartz:1958}, we find that the sequence $\bar{T}_n$, $n \in \mathbb{N}$, must converge in $\ell^\infty[\mathcal{HK}(c)]$. But $\bar{T}$ is the pointwise limit of $\bar{T}_n$ by construction, and so the limit in $\ell^\infty[\mathcal{HK}(c)]$ of the restrictions of $\bar{T}_n$ to $\mathcal{HK}(c)$ must be the restriction of $\bar{T}$ to $\mathcal{HK}(c)$. Every $\bar{T}_n$ is linear and continuous, hence uniformly continuous \citep[Lemma II.3.4 in][]{dunford_schwartz:1958}. Since $\ell^\infty[\mathcal{HK}(c)]$ is equipped with the supremum norm, this reveals that the restriction of $\bar{T}$ to $\mathcal{HK}(c)$ is the uniform limit of uniformly continuous functions, meaning that it must be uniformly continuous itself. This proves Claim \ref{it:T_uniform_continuity}.
\end{proof}

Next, let us make a simple observation about the continuity properties of the extension $T \mapsto \bar{T}$ from Lemma \ref{lem:extension_of_limit}. We need this because we want the extension of the limiting process $X$ from Theorem \ref{thm:extension_hk_simple} to inherit certain properties from $X$, most importantly Borel measurability and, if applicable, Gaussianity.

\begin{lemma}
  \label{lem:extension_is_continuous}
  Let the set $E$ be as in Lemma \ref{lem:extension_of_limit}. The functions $\phi_c : E \to \ell^\infty[\mathcal{HK}(c)]$, which map an element $T \in E$ to its extension $\bar{T}$ described in Lemma \ref{lem:extension_of_limit}, are linear and Lipschitz continuous for all $0 < c < \infty$.
\end{lemma}
\begin{proof}
  The linearity of $\phi_c$ follows directly from the definition of $\bar{T}$ as a pointwise limit. It remains to prove continuity. Fix some $S, T \in E$. By Lemma \ref{lem:extension_of_limit}, their extensions $\bar{S}$ and $\bar{T}$ satisfy the assumptions of Theorem \ref{thm:koksma-hlawka_variant} for any fixed $0 < c < \infty$. Therefore, by that theorem,
  $$
    |\bar{S}h - \bar{T}h| \leq \|h\|_{\mathrm{HK}} \|\bar{S} - \bar{T}\|_\mathcal{F} = \|h\|_{\mathrm{HK}}\|S-T\|_\mathcal{F}
  $$
  for all $h \in \mathcal{HK}(c)$. Taking the supremum over all $h \in \mathcal{HK}(c)$ gives us
  $$
    \|\bar{S} - \bar{T}\|_{\mathcal{HK}(c)} \leq c\|S-T\|_\mathcal{F},
  $$
  and so $\phi_c$ is Lipschitz continuous.
\end{proof}

We are now in a position to prove Theorem \ref{thm:extension_hk_simple}. The claims in that theorem are already contained in Lemmas \ref{lem:extension_of_limit} and \ref{lem:extension_is_continuous} in a pointwise sense, i.e.\@ for any fixed sample path. To make these lemmas applicable to the weak convergence sense, we use a certain almost sure representation theorem akin to Skorohod's coupling theorem, allowing us to go from weak convergence to outer almost sure convergence.

\begin{proof}[Proof of Theorem \ref{thm:extension_hk_simple}]
  Let us denote the probability spaces underlying $X_n$ and $X$ by $(\Omega_n, \mathcal{A}_n, P_n)$ and $(\Omega, \mathcal{A}, P)$, respectively. $X$ is Borel measurable and tight, hence separable \citep[Lemma 1.3.2 in][]{van_der_vaart_wellner:weak_convergence}. By Theorem 1.10.4 and Addendum 1.10.5 in \cite{van_der_vaart_wellner:weak_convergence}, there exists a probability space $(\Omega', \mathcal{A}', P')$ and measurable maps $\phi_n : (\Omega', \mathcal{A}') \to (\Omega_n, \mathcal{A}_n)$ as well as $\phi : (\Omega', \mathcal{A}') \to (\Omega, \mathcal{A})$ with the following properties: If we set $X'_n = X_n \circ \phi_n$ and $X' = X \circ \phi$, it holds for any bounded function $f : \ell^\infty(\mathcal{F}) \to \mathbb{R}$ that
  \begin{equation}
    \label{eq:Xn_copy_1}
    \mathbb{E}^* f(X'_n) = \mathbb{E}^* f(X_n)
  \end{equation}
  as well as
  \begin{equation}
    \label{eq:Xn_copy_2}
    \mathbb{E}^* f(X') = \mathbb{E}^* f(X).
  \end{equation}
  Furthermore, it holds that $X'_n \to X'$ in $\ell^\infty(\mathcal{F})$ almost uniformly. Almost uniform convergence of a sequence to a Borel measurable limit is equivalent to outer almost sure convergence \citep[Lemma 1.9.2 in][]{van_der_vaart_wellner:weak_convergence}, and so $X'_n \to X'$ in $\ell^\infty(\mathcal{F})$ outer almost surely. By definition \citep[cf.\@ Definition 1.9.1 in][]{van_der_vaart_wellner:weak_convergence}, this means that $\|X'_n - X'\|_\mathcal{F}^* \to 0$ almost surely for some version of the minimal measurable cover $\|X'_n - X'\|_\mathcal{F}^*$ of $\|X'_n - X'\|_\mathcal{F}$. A minimal measurable cover of a possibly non-measurable element $U$ is a measurable random variable $U^*$ with values in $\mathbb{R} \cup \{\pm \infty\}$ and the property that $U^* \geq U$ always and $U^* \leq V$ almost surely for any measurable random variable $V$ satisfying $V \geq U$ almost surely. Minimal measurable covers always exist and are unique up to null sets \citep[specifically in our case, $P'$-null sets; for all of this, see Lemma 1.2.1 and the following paragraphs in][]{van_der_vaart_wellner:weak_convergence}. Thus, there exists a measurable $P'$-null set $N \in \mathcal{A}'$ with
  \begin{equation}
    \label{eq:as_in_F}
    \|X'_n - X'\|_\mathcal{F}(\omega) \leq \|X'_n - X'\|_\mathcal{F}^*(\omega) \xrightarrow[n \to \infty]{} 0
  \end{equation}
  as $n \to \infty$ for any $\omega \notin N$. The first inequality follows from the definition of a minimal measurable cover, and the convergence is just the definition of almost sure convergence.

  Recall the sets $D$ and $E$ from Lemma \ref{lem:extension_of_limit}. Because $X'_n = X_n \circ \phi_n$, $X'_n$ takes no other values than $X_n$. In particular, since $X_n \in D$ for any $n$, it holds that $X_n' \in D$ for all $n$, and so $X' \in E$ on $\Omega'\setminus N$ by Eq.\@ \eqref{eq:as_in_F}. More precisely: For any $\omega \notin N$, the map $X'(\omega) : \mathcal{F} \to \mathbb{R}$ is an element of $E$. We may change $X'$ arbitrarily on the $P'$-null set $N$ without invalidating the outer almost sure convergence $X'_n \to X'$; for instance, we may define $X''$ by $X''(\omega) = X'(\omega)$ for $\omega \in \Omega' \setminus N$, and $X''(\omega) = 0$ for $\omega \in N$. Then, while $\|X_n' - X'\|_\mathcal{F}^*$ is not necessarily a minimal measurable cover of $\|X_n' - X''\|_\mathcal{F}$, it is certainly still a measurable random variable which dominates $\|X_n' - X''\|_\mathcal{F}$ outside the $P'$-null set $N$. Therefore, any minimal measurable cover of $\|X_n - X''\|_\mathcal{F}$ must be smaller than $\|X_n' - X'\|_\mathcal{F}^*$ $P'$-almost surely. Since $\|X_n' - X'\|_\mathcal{F}^*$ converges to $0$ $P'$-almost surely, we can find versions of $\|X_n' - X''\|_\mathcal{F}^*$ which also converge to $0$ $P'$-almost surely (though the $P'$-null set involved here is not necessarily the set $N$). Therefore, $X_n' \to X''$ in $\ell^\infty(\mathcal{F})$ outer almost surely, and $X''$ inherits the Borel measurability and separability from $X'$. Importantly, $X'' \in E$ holds on the entire space $\Omega'$; and we have already seen that, for any $n \in \mathbb{N}$, $X_n' \in D \subseteq E$ on the entire space $\Omega'$ as well. Hence, by Lemma \ref{lem:extension_is_continuous} and the extended continuous mapping theorem \citep[Theorem 1.11.1 in][]{van_der_vaart_wellner:weak_convergence}, we get $\phi_c(X_n') \to \phi_c(X'')$ in $\ell^\infty[\mathcal{HK}(c)]$ outer almost surely, where $\phi_c$ is the map from Lemma \ref{lem:extension_is_continuous}. This implies $\phi_c(X_n') \rightsquigarrow \phi_c(X'')$ in $\ell^\infty[\mathcal{HK}(c)]$ by Lemmas 1.9.2 and 1.10.2 in \cite{van_der_vaart_wellner:weak_convergence}. Now if $g : \ell^\infty[\mathcal{HK}(c)] \to \mathbb{R}$ is bounded and continuous, apply Eq.\@ \eqref{eq:Xn_copy_1} to the bounded and continuous function $f = g \circ \phi_c$ to deduce from this that
  $$
    \mathbb{E}^* g\circ\phi_c(X_n) = \mathbb{E}^* g\circ\phi_c(X_n') \xrightarrow[n \to \infty]{} \mathbb{E} g \circ \phi_c(X'') = \mathbb{E} g \circ \phi_c(X').
  $$
  The final equality sign holds because $X'$ and $X''$ agree outside the $P'$-null set $N$. But this is just the definition of the weak convergence
  $$
    \bar{X}_n = \phi_c(X_n) \rightsquigarrow \phi_c(X') = \bar{X}'.
  $$
  The continuity properties of the sample paths of $\bar{X}'$ follows from Lemma \ref{lem:extension_of_limit}. Finally, $X'$ is a copy of $X$ by Eq.\@ \eqref{eq:Xn_copy_2}, and so we have proven our claim.
\end{proof}

The final proof in this section is that of Theorem \ref{thm:extension_covariance}. It contains no surprises: We use the algebraic connection between $\mathcal{F}$ and $\mathcal{HK}(\infty)$, which we have already exploited several times in this section, together with the assumed continuity properties of the covariance function $\Gamma$, to establish the claim.

\begin{proof}[Proof of Theorem \ref{thm:extension_covariance}]
  Since $X$ and $\bar{X}'$ are Borel measurable, so are $\|X\|_\mathcal{F}$ and $\|\bar{X}'\|_{\mathcal{HK}(c)}$. Apply Theorem \ref{thm:koksma-hlawka_variant} to $\bar{X}'$ with $Y$ taken as the constant $0$-process to see that
  \begin{equation}
    \label{eq:bar_X_prime_sup}
    \|\bar{X}'\|_{\mathcal{HK}(c)} \leq c \|X\|_\mathcal{F}.
  \end{equation}
  Since the right-hand side is square-integrable by assumption, so is $\|\bar{X}'\|_{\mathcal{HK}(c)}$. Let $g,g'$ be elements of $\mathcal{G}$ and pick two sequences $f_k, f_k' \in \mathcal{F}$ with $f_k \uparrow g$ and $f_k' \uparrow g'$. Since the sample paths of $\bar{X}'$ are continuous with respect to monotone convergence in $\mathcal{F}$, we get $\bar{X}'(f_k) \to \bar{X}'(g)$ and $\bar{X}'(f_k') \to \bar{X}'(g')$. Since $\bar{X}'(f_k) \lor \bar{X}'(f'_k) \leq \|\bar{X}'\|_{\mathcal{HK}(c)}$, and the right-hand side is integrable by Eq.\@ \eqref{eq:bar_X_prime_sup}, it follows that $\mathbb{E}\bar{X}'(f_k) \to \mathbb{E}\bar{X}'(g)$ and $\mathbb{E}\bar{X}'(f'_k) \to \mathbb{E}\bar{X}'(g')$ by the dominated convergence theorem. Similarly, we have for every $k \in \mathbb{N}$ that
  $$
    \left[\bar{X}'(f_k) - \mathbb{E}\bar{X}'(f_k)\right] \left[\bar{X}'(f_k') - \mathbb{E}\bar{X}'(f_k')\right] \leq 4 \left(\|\bar{X}'\|_{\mathcal{HK}(c)} \lor \mathbb{E}\|\bar{X}'\|_{\mathcal{HK}(c)}\right)^2,
  $$
  and the right-hand side is integrable. We can use the dominated convergence theorem once more to establish
  $$
    \mathrm{Cov}\left[\bar{X}'(g), \bar{X}'(g')\right] = \lim_{k \to \infty} \lim_{l \to \infty}\mathrm{Cov}\left[\bar{X}'(f_k), \bar{X}'(f_l')\right].
  $$
  Now recall that $\bar{X}'$ is an extension of $X'$, which in turn is a copy of $X$, and so
  $$
    \mathrm{Cov}\left[\bar{X}'(f_k), \bar{X}'(f_l')\right] = \mathrm{Cov}\left[X'(f_k), X'(f_l')\right] = \Gamma(f_k, f_l').
  $$
  Since $\Gamma$ is continuous in each argument with respect to monotone convergence in $\mathcal{F}$, we therefore get
  \begin{equation}
    \label{eq:cov_barX_spanG}
    \mathrm{Cov}\left[\bar{X}'(g), \bar{X}'(g')\right] = \lim_{k \to \infty} \lim_{l \to \infty}\Gamma(f_k, f_l') = \Gamma(g,g').
  \end{equation}
  Now we can use the linearity of $\bar{X}'$ and the bilinearity of both the covariance operator and the function $\Gamma$ to see that Eq.\@ \eqref{eq:cov_barX_spanG} is actually valid not only for $g,g' \in \mathcal{G}$, but even for all $g, g' \in \mathrm{span}(\mathcal{G}) \cap \mathcal{HK}(c)$.

  The extension to $\mathcal{HK}(c)$ can be performed in almost the same way: Fix $h, h' \in \mathcal{HK}(c)$ and pick two sequences $g_k, g_k' \in \mathrm{span}(\mathcal{G}) \cap \mathcal{HK}(c)$ such that $\|h-g_k\|_\infty \to 0$ and $\|h' - g_k'\|_\infty \to 0$ as $k \to \infty$. Since almost all sample paths of $\bar{X}'$ are continuous with respect to the supremum norm on $\mathcal{HK}(c)$, it holds that $\bar{X}'(g_k) \to \bar{X}'(h)$ and $\bar{X}'(g_k') \to \bar{X}'(h')$ almost surely as $k \to \infty$. By Eq.\@ \eqref{eq:cov_barX_spanG} and the same dominated convergence argument as before, we see that
  $$
    \mathrm{Cov}\left[\bar{X}'(h), \bar{X}'(h')\right] = \lim_{k \to \infty} \lim_{l \to \infty}\mathrm{Cov}\left[\bar{X}'(g_k), \bar{X}'(g_l')\right] = \lim_{k \to \infty} \lim_{l \to \infty}\Gamma(g_k, g_l') = \Gamma(h,h').
  $$
  This proves our claim about the covariance structure of $\bar{X}'$. Now assume that the original process $X$ is Gaussian. As an element of $\ell^\infty(\mathcal{F})$, it has bounded sample paths on $\mathcal{F}$ by definition. Borell's inequality \citep[Proposition A.2.1 in][]{van_der_vaart_wellner:weak_convergence} therefore implies that $\|X\|_{\mathcal{F}}$ has finite moments of all orders. In particular, it holds that $\mathbb{E}\|X\|_\mathcal{F}^2 < \infty$.
\end{proof}

\subsection{Product Measure Processes}
\label{sec:product_measures}
In this section, we prove Theorem \ref{thm:v_statistik_from_empirical_process} and the results relating to it. We do not use a Delta-method type argument for the proof of Theorem \ref{thm:v_statistik_from_empirical_process}, and instead prove our claim directly via induction over the degree $m$ (to apply the Delta-method, one has to verify that the map $P \mapsto P^m$ is Hadamard differentiable, and we suspect that the steps necessary for this are very similar to what we do in our proof anyway). There is a series of auxiliary results preceding the actual proof of Theorem \ref{thm:v_statistik_from_empirical_process}, all of which concern sequences which converge outer almost surely or almost uniformly. The reason for this is that, in proving Theorem \ref{thm:v_statistik_from_empirical_process}, we will resort to an almost sure representation technique, just like we did in Section \ref{sec:hk_proofs}.

We begin with a short technical lemma. Its statement is perhaps not very surprising, and in fact it would be trivial if the sequence $\delta_n$ were not random itself, and potentially dependent on $X$.

\begin{lemma}
  \label{lem:sup_delta_n_au}
  Let $\mathcal{F}$ be a function class and $X \in \ell^\infty(\mathcal{F})$ a Borel measurable process which has continuous sample paths with respect to the supremum norm. For $\delta \geq 0$, write
  $$
    \varepsilon(\delta) = \sup_{f,f' \in \mathcal{F} : \|f - f'\|_\infty \leq \delta} |X(f) - X(f')|.
  $$
  If $\delta_n$, $n \in \mathbb{N}$, is a sequence of $[0,\infty)$-valued, not necessarily measurable random elements and $\delta_n \to 0$ almost uniformly, then $\varepsilon(\delta_n) \to 0$ almost uniformly.
\end{lemma}
\begin{proof}
  Almost uniform convergence is equivalent to outer almost sure convergence for sequences with Borel measurable limits \citep[Lemma 1.9.2 in][]{van_der_vaart_wellner:weak_convergence}. By definition of outer almost sure convergence \citep[Definition 1.9.1 in][]{van_der_vaart_wellner:weak_convergence}, this means that $\delta_n^* \to 0$ almost surely for some versions of the minimal measurable covers $\delta_n^*$ of $\delta_n$, $n \in \mathbb{N}$. Recall from Section 1.2 in \cite{van_der_vaart_wellner:weak_convergence} that for a possibly non-measurable random element $T : (\Omega, \mathcal{A}, \mathbb{P}) \to \bar{\mathbb{R}}$, where $\bar{\mathbb{R}}$ denotes the extended real line, a minimal measurable cover $T^*$ is a Borel measurable random variable $T^* : (\Omega, \mathcal{A}, \mathbb{P}) \to \bar{\mathbb{R}}$ such that $T^* \geq T$ always and $T^* \leq U$ almost surely for every Borel measurable $U : (\Omega, \mathcal{A}, \mathbb{P}) \to \bar{\mathbb{R}}$ with $U \geq T$ almost surely. The minimal measurable cover is unique up to $\mathbb{P}$-null sets.

  Define the random elements $E_n = \varepsilon(\delta_n^*)$. We claim that every $E_n$ is Borel measurable. To see this, define
  \begin{equation}
    \label{eq:Enk_sum}
    E_{n,k} = \varepsilon\left(\frac{\lfloor k \delta_n^*\rfloor + 1}{k}\right) = \sum_{l=1}^\infty \varepsilon\left(\frac{l}{k}\right) \textbf{1}\left\{\delta_n^* \in \left[\frac{l-1}{k}, \frac{l}{k}\right)\right\}
  \end{equation}
  for any $n,k \in \mathbb{N}$. Because the process $X$ is a Borel measurable $\ell^\infty(\mathcal{F})$-valued random variable by assumption and the map
  \begin{align*}
    \ell^\infty(\mathcal{F}) & \to \mathbb{R},                                                                    \\
    T                        & \mapsto \sup_{f,f' \in \mathcal{F} : \|f - f'\|_\infty \leq \delta} |T(f) - T(f')|
  \end{align*}
  is continuous for any fixed $\delta$, the quantities $\varepsilon(l/k)$ are Borel measurable random variables for every fixed $l,k \in \mathbb{N}$. Furthermore, every $\delta_n^*$ is Borel measurable by definition, since it is a measurable cover of $\delta_n$. Therefore, each summand on the right-hand side of Eq.\@ \eqref{eq:Enk_sum} is Borel measurable, and so every $E_{n,k}$ is Borel measurable by Theorem 13.4 in \cite{billingsley:prob_and_measure}. By construction, every sample path $\delta \mapsto \varepsilon(\delta) = \varepsilon(\omega, \delta)$ is right-continuous. Therefore, $E_{n,k}(\omega) \to E_n(\omega)$ for every fixed $\omega$. Thus, $E_n$ is the pointwise limit of Borel measurable random variables, and hence Borel measurable, again by Theorem 13.4 in \cite{billingsley:prob_and_measure}.

  Having established measurability of every $E_n$, we immediately get $E_n \to 0$ almost surely, since all sample paths of $X$ are continuous with respect to the supremum norm, and $\delta_n^* \to 0$ almost surely. Now consider a minimal measurable cover $\varepsilon(\delta_n)^*$ of $\varepsilon(\delta_n)$. Since $\delta_n^* \geq \delta_n$, it holds that $E_n = \varepsilon(\delta_n^*) \geq \varepsilon(\delta_n)$. But we have just seen that $E_n$ is Borel measurable, and so by definition of the minimal measurable cover, this means that $\varepsilon(\delta_n)^* \leq E_n$ almost surely. Since $E_n \to 0$ almost surely, this implies $\varepsilon(\delta_n)^* \to 0$ almost surely. Hence, by definition of outer almost sure convergence, we get $\varepsilon(\delta_n) \to 0$ outer almost surely. Our claim now follows from another application of Lemma 1.9.2 in \cite{van_der_vaart_wellner:weak_convergence}.
\end{proof}

The next lemma is the main ingredient in the proof of Theorem \ref{thm:v_statistik_from_empirical_process}. It contains both the almost sure representation argument as well as an approximation of the process of interest, $r_n(P_n^m - \mu_n^m)$, via the original limiting process $G$ (or more precisely, a coupling thereof). The approximation is not quite the limiting process $G_\mu^{(m)}$ from the statement of Theorem \ref{thm:v_statistik_from_empirical_process} -- this gap will be closed in another lemma --, but it has a very similar structure already. The main challange in the proof of this lemma is to bound the difference between the couplings of $r_n(P_n^m - \mu_n^m)$ and the approximating process which we have just mentioned. It is at this point where the assumptions on the class $\mathcal{F}_m$ are used. In short, they are necessary because we can control the behaviour of $r_n(P_n - \mu_n)$ only in the class $\mathcal{F}$, and the definition of the class $\mathcal{F}_m$ allows us to relate the behaviour of $r_n(P_n^m - \mu_n^m)$ in $\ell^\infty(\mathcal{F}_m)$, to the behaviour of the original process $r_n(P_n - \mu_n)$ in $\ell^\infty(\mathcal{F})$. The critical step in the following proof where this is carried out is Eqs.\@ \eqref{eq:v-1} and \eqref{eq:Rn_au_0}.

\begin{lemma}
  \label{lem:V_approximation}
  Let $P_n$ and $\mu_n$, $n \in \mathbb{N}$, be sequences of probability measures on some measurable space $(\Omega, \mathcal{A})$, where the $P_n$ are random and the $\mu_n$ are deterministic. Let $\mathcal{F}$ be a class of measurable real-valued functions defined on $(\Omega, \mathcal{A})$. Assume that there exist a real-valued sequence $r_n \uparrow \infty$ and a Borel measurable and tight random process $G \in \ell^\infty(\mathcal{F})$ such that $r_n(P_n - \mu_n) \rightsquigarrow G$. Finally, assume that, almost surely, all sample paths of this process $G$ are uniformly continuous with respect to the supremum norm.

  Then there exist random elements $\tilde{P}_n, \tilde{G} \in \ell^\infty(\mathcal{F})$, $\tilde{G}$ being Borel measurable and tight, such that the following statements hold: First, for any bounded function $f : \ell^\infty(\mathcal{F}) \to \mathbb{R}$ and any $n \in \mathbb{N}$, we have
  \begin{equation}
    \label{eq:statement_Pn_copy}
    \mathbb{E}^* f(\tilde{P}_n) = \mathbb{E}^* f(P_n),
  \end{equation}
  as well as
  \begin{equation}
    \label{eq:statement_G_copy}
    \mathbb{E}^* f(\tilde{G}) = \mathbb{E}^* f(G).
  \end{equation}
  Second, all sample paths of $\tilde{G}$ are uniformly continuous with respect to the supremum norm. Third, it holds for any $m \in \mathbb{N}$ that
  \begin{equation}
    \label{eq:statement_V_convergence}
    r_n(\tilde{P}_n^m - \mu_n^m) - \tilde{G}_{\mu,n}^{(m)} \xrightarrow[n \to \infty]{au}0,
  \end{equation}
  in $\ell^\infty(\mathcal{F}_m)$, where we define $\tilde{G}_{\mu,n}^{(1)} = \tilde{G}$ and $\tilde{G}_{\mu,n}^{(m)}(h) = \sum_{i=1}^m \tilde{G}(h_{n,i})$ with
  $$
    h_{n,i}(x) = \int h(x_1, \ldots, x_{i-1}, x, x_{i+1}, \ldots, x_m) ~\mathrm{d}\mu_n^{m-1}(x_1, \ldots, x_{i-1}, x_{i+1}, \ldots, x_m)
  $$
  for $m \geq 2$.
\end{lemma}
\begin{proof}
  By changing $G$ on a null set, we may assume without loss of generality that all sample paths of $G$ are continuous with respect to the supremum norm. Since $r_n(P_n - \mu_n) \rightsquigarrow G$ in $\ell^\infty(\mathcal{F})$ by assumption and $G$ is Borel measurable and tight \citep[hence separable, cf.\@ Lemma 1.3.2 in][]{van_der_vaart_wellner:weak_convergence}, there exist by Theorem 1.10.4 in \cite{van_der_vaart_wellner:weak_convergence} random elements $\tilde{X}_n$ and $\tilde{G}$ such that $\tilde{X}_n \to \tilde{G}$ almost uniformly in $\ell^\infty(\mathcal{F})$ and
  \begin{equation}
    \label{eq:nonmeasurable_copy}
    \mathbb{E}^* f(\tilde{X}_n) = \mathbb{E}^* f[r_n(P_n - \mu_n)]
  \end{equation}
  as well as
  $$
    \mathbb{E}^* f(\tilde{G}) = \mathbb{E}^* f(G)
  $$
  for any bounded function $f : \ell^\infty(\mathcal{F}) \to \mathbb{R}$. This establishes Eq.\@ \eqref{eq:statement_G_copy}. Moreover, by Addendum 1.10.5 in \cite{van_der_vaart_wellner:weak_convergence}, we can assume that $\tilde{X}_n = r_n[P_n \circ \phi_n - \mu_n]$ for a certain sequence of measurable functions $\phi_n : \Omega_0 \to \Omega_n$, where $\Omega_0$ is some probability space and the $\Omega_n$ are the probability spaces underlying the random elements $P_n : \Omega_n \to \ell^\infty(\mathcal{F})$. Set $\tilde{P}_n = P_n \circ \phi_n$. Then all $\tilde{P}_n$ are probability measures, because $\tilde{P}_n = P_n \circ \phi_n$ takes the same values as $P_n$ for all $n \in \mathbb{N}$. Furthermore, it holds that
  \begin{equation}
    \label{eq:copies_au}
    r_n(\tilde{P}_n - \mu_n) = \tilde{X}_n \xrightarrow[n \to \infty]{au} \tilde{G}
  \end{equation}
  in $\ell^\infty(\mathcal{F})$, and
  $$
    \mathbb{E}^* f(\tilde{P}_n) = \mathbb{E}^* f(\mu_n + r_n^{-1} \tilde{X}_n) = \mathbb{E}^* f(P_n)
  $$
  for any bounded function $f : \ell^\infty(\mathcal{F})$. This follows by applying Eq.\@ \eqref{eq:nonmeasurable_copy} to the function $x \mapsto f(\mu_n + r_n^{-1} x)$. This is exactly the claim from Eq.\@ \eqref{eq:statement_Pn_copy}. Similarly, we can assume \citep[also by Addendum 1.10.5 in][]{van_der_vaart_wellner:weak_convergence} that $\tilde{G} = G \circ \phi$ for some measurable $\phi : \Omega_0 \to \Omega_G$, where $\Omega_G$ is the space on which the random variable $G$ is defined. Since we assumed that all sample paths of $G$ are uniformly continuous with respect to the supremum norm, this implies that all sample paths of $\tilde{G}$ have the same continuity property, and that $\tilde{G}$ inherits Borel measurability and separability from $G$.

  We now prove the statement from Eq.\@ \eqref{eq:statement_V_convergence} by induction over $m$. For $m = 1$, our claim is an immediate consequence of Eq.\@ \eqref{eq:copies_au}:
  $$
    r_n(\tilde{P}_n^m - \mu_n^m) - \tilde{G}_{\mu,n}^{(m)} = r_n(\tilde{P}_n - \mu_n) - \tilde{G} \xrightarrow[n \to \infty]{au} 0.
  $$
  Now assume that we have established our claim for $m$. To extend it to $m+1$, consider a fixed function $h \in \mathcal{F}_{m+1}$ and write
  \begin{align}
    \begin{split}
      \label{eq:v-1}
       & r_n[\tilde{P}_n^{m+1}(h) - \mu_n^{m+1}(h)]                                                                                                   \\
       & = r_n\iint h(x_1, \ldots, x_{m+1}) ~\mathrm{d}\tilde{P}_n(x_{m+1}) ~\mathrm{d}\tilde{P}_n^m(x_1, \ldots, x_m)                                \\
       & \quad - r_n\iint h(x_1, \ldots, x_{m+1}) ~\mathrm{d}\mu_n(x_{m+1}) ~\mathrm{d}\mu_n^m(x_1, \ldots, x_m)                                      \\
       & = \iint h(x_1, \ldots, x_{m+1}) ~\mathrm{d}\mu_n(x_{m+1}) ~\mathrm{d}\left[r_n(\tilde{P}_n^m - \mu_n^m)\right](x_1, \ldots, x_m)             \\
       & \quad + \iint h(x_1, \ldots, x_{m+1}) ~\mathrm{d}\mu_n^m(x_1, \ldots, x_m) ~\mathrm{d}\left[r_n(\tilde{P}_n - \mu_n)\right](x_{m+1})         \\
       & \quad + \iint h(x_1, \ldots, x_{m+1}) ~\mathrm{d}\tilde{P}_n(x_{m+1}) ~\mathrm{d}\left[r_n(\tilde{P}_n^m - \mu_n^m)\right](x_1, \ldots, x_m) \\
       & \quad - \iint h(x_1, \ldots, x_{m+1}) ~\mathrm{d}\mu_n(x_{m+1}) ~\mathrm{d}\left[r_n(\tilde{P}_n^m - \mu_n^m)\right](x_1, \ldots, x_m)       \\
       & = \tilde{G}_{\mu,n}^{(m)}\left[(x_1, \ldots, x_m) \mapsto \int h(x_1, \ldots, x_{m+1}) ~\mathrm{d}\mu_n(x_{m+1})\right]                      \\
       & \quad + \tilde{G}_{\mu,n}^{(1)}\left[x_{m+1} \mapsto \int h(x_1, \ldots, x_{m+1}) ~\mathrm{d}\mu_n^m(x_1, \ldots, x_m)\right]                \\
       & \quad + \tilde{G}_{\mu,n}^{(m)}\left[(x_1, \ldots, x_m) \mapsto \int h(x_1, \ldots, x_{m+1}) ~\mathrm{d}\tilde{P}_n(x_{m+1})\right]          \\
       & \quad - \tilde{G}_{\mu,n}^{(m)}\left[(x_1, \ldots, x_m) \mapsto \int h(x_1, \ldots, x_{m+1}) ~\mathrm{d}\mu_n(x_{m+1})\right]                \\
       & \quad + R_n(h)
    \end{split}
  \end{align}
  for some remainder term $R_n(h)$, which is just defined by the last equality. This last expression is somewhat artificial, as two of the terms actually cancel out; the reason for choosing this particular representation is that it somewhat simplifies the following arguments. Observe that the three functions
  \begin{align*}
    (x_1, \ldots, x_m) & \mapsto \int h(x_1, \ldots, x_{m+1}) ~\mathrm{d}\mu_n(x_{m+1}),            \\
    x_{m+1}            & \mapsto \int h(x_1, \ldots, x_{m+1}) ~\mathrm{d}\mu_n^m(x_1, \ldots, x_m), \\
    (x_1, \ldots, x_m) & \mapsto \int h(x_1, \ldots, x_{m+1}) ~\mathrm{d}\tilde{P}_n(x_{m+1}),
  \end{align*}
  lie in $\mathcal{F}_m$, $\mathcal{F}$ and $\mathcal{F}_m$, respectively. This is a consequence of the fact that the original function $h$ was chosen from $\mathcal{F}_{m+1}$ (the third function is a random element of $\mathcal{F}_{m}$ because it is constructed by integration with respect to the random measure $\tilde{P}_n$, but this does not complicate the following arguments). Since we assume that we haven proven our claim for $m$, it follows that
  \begin{align}
    \begin{split}
      \label{eq:Rn_au_0}
       & \sup_{h \in \mathcal{F}_{m+1}}|R_n(h)|                                                                                                                                                                                     \\
       & \leq 3 \sup_{h \in \mathcal{F}_m} \left|r_n\left[\tilde{P}_n^m(h) - \mu_n^m(h)\right] - \tilde{G}_{\mu,n}^{(m)}(h)\right| + \sup_{f \in \mathcal{F}} \left|r_n\left[\tilde{P}_n(f) - \mu_n(f)\right] - \tilde{G}(f)\right| \\
       & \xrightarrow[n \to \infty]{au} 0.
    \end{split}
  \end{align}

  Let us now consider the other terms on the right-hand side of Eq.\@ \eqref{eq:v-1}. By definition of $\tilde{G}_{\mu,n}^{(m)}$ we have that
  \begin{align*}
     & \tilde{G}_{\mu,n}^{(m)}\left[(x_1, \ldots, x_m) \mapsto \int h(x_1, \ldots, x_{m+1}) ~\mathrm{d}\mu_n(x_{m+1})\right]                            \\
     & = \sum_{i=1}^m \tilde{G}\left[x_i \mapsto \int h(x_1, \ldots, x_{m+1}) ~\mathrm{d}\mu_n^m(x_1, \ldots, x_{i-1}, x_{i+1}, \ldots, x_{m+1})\right] \\
     & = \sum_{i=1}^m \tilde{G}(h_{n,i}).
  \end{align*}
  Similarly, it holds that
  \begin{align*}
     & \tilde{G}_{\mu,n}^{(1)}\left[x_{m+1} \mapsto \int h(x_1, \ldots, x_{m+1}) ~\mathrm{d}\mu_n^m(x_1, \ldots, x_m)\right] \\
     & = \tilde{G}\left[x_{m+1} \mapsto \int h(x_1, \ldots, x_{m+1}) ~\mathrm{d}\mu_n^m(x_1, \ldots, x_m)\right]             \\
     & = \tilde{G}(h_{n,m+1}).
  \end{align*}
  These two identities combined imply
  \begin{align}
    \begin{split}
      \label{eq:v-2}
       & \tilde{G}_{\mu,n}^{(m)}\left[(x_1, \ldots, x_m) \mapsto \int h(x_1, \ldots, x_{m+1}) ~\mathrm{d}\mu_n(x_{m+1})\right]         \\
       & \quad + \tilde{G}_{\mu,n}^{(1)}\left[x_{m+1} \mapsto \int h(x_1, \ldots, x_{m+1}) ~\mathrm{d}\mu_n^m(x_1, \ldots, x_m)\right] \\
       & = \sum_{i=1}^m \tilde{G}(h_{n,i}) + \tilde{G}(h_{n,m+1})                                                                      \\
       & = \tilde{G}_{\mu,n}^{(m+1)}(h).
    \end{split}
  \end{align}

  Now let $\delta > 0$ and consider two functions $h, h' \in \mathcal{F}_m$ such that $\|h-h'\|_\infty \leq \delta$. Then for any $i = 1, \ldots, m$,
  $$
    \|h_{n,i} - h'_{n,i}\|_\infty \leq \int \|h-h'\|_\infty ~\mathrm{d}\mu_n^{m-1} \leq \delta,
  $$
  and therefore, since $h_{n,i}, h_{n,i}' \in \mathcal{F}$,
  \begin{align}
    \begin{split}
      \label{eq:G_tilde_uniformly_continuous}
      \left|\tilde{G}_{\mu,n}^{(m)}(h) - \tilde{G}_{\mu,n}^{(m)}(h')\right| \leq \sum_{i=1}^m \left|\tilde{G}(h_{n,i}) - \tilde{G}(h_{n,i}')\right| \leq m \sup_{f,f' \in \mathcal{F} : \|f-f'\|_\infty \leq \delta} \left|\tilde{G}(f) - \tilde{G}(f')\right|.
    \end{split}
  \end{align}
  For any $h \in \mathcal{F}_{m+1}$, we have
  \begin{align}
    \begin{split}
      \label{eq:h_diff_Pn_mun}
       & \sup_{x_1, \ldots, x_m \in \Omega}\left|\int h(x_1, \ldots, x_{m+1}) ~\mathrm{d}\tilde{P}_n(x_{m+1}) - \int h(x_1, \ldots, x_{m+1}) ~\mathrm{d}\mu_n(x_{m+1})\right| \\
       & \leq \sup_{f \in \mathcal{F}}|\tilde{P}_n(f) - \mu_n(f)|,
    \end{split}
  \end{align}
  because $x_{m+1} \mapsto h(x_1, \ldots, x_m, x_{m+1})$ is an element of $\mathcal{F}$ for any $x_1, \ldots, x_m$. Eqs.\@ \eqref{eq:G_tilde_uniformly_continuous} and \eqref{eq:h_diff_Pn_mun} combined yield
  \begin{align}
    \begin{split}
      \label{eq:v-3}
       & \sup_{h \in \mathcal{F}_{m+1}} \left|\tilde{G}_{\mu,n}^{(m)}\left[(x_1, \ldots, x_m) \mapsto \int h(x_1, \ldots, x_{m+1}) ~\mathrm{d}\tilde{P}_n(x_{m+1})\right]\right. \\
       & \qquad\qquad - \left.\tilde{G}_{\mu,n}^{(m)}\left[(x_1, \ldots, x_m) \mapsto \int h(x_1, \ldots, x_{m+1}) ~\mathrm{d}\mu_n(x_{m+1})\right]\right|                       \\
       & \leq m \sup_{f,f' \in \mathcal{F} : \|f-f'\|_\infty \leq \delta_n} \left|\tilde{G}(f) - \tilde{G}(f')\right|,
    \end{split}
  \end{align}
  where $\delta_n = \sup_{f \in \mathcal{F}}|\tilde{P}_n(f) - \mu_n(f)|$.

  Now, by Eqs.\@ \eqref{eq:v-1}, \eqref{eq:v-2} and \eqref{eq:v-3},
  \begin{align}
    \begin{split}
      \label{eq:v-4}
       & \sup_{h \in \mathcal{F}_{m+1}}\left|r_n[\tilde{P}_n^{m+1}(h) - \mu_n^{m+1}(h)] - \tilde{G}_{\mu,n}^{(m+1)}(h)\right|                               \\
       & \leq \sup_{h \in \mathcal{F}_m}|R_n(h)| + m \sup_{f,f' \in \mathcal{F} : \|f-f'\|_\infty \leq \delta_n} \left|\tilde{G}(f) - \tilde{G}(f')\right|.
    \end{split}
  \end{align}
  We have already seen in Eq.\@ \eqref{eq:Rn_au_0} that $\sup_{h \in \mathcal{F}_{m+1}}|R_n(h)| \to 0$ almost uniformly. On the other hand, we can see from Eq.\@ \eqref{eq:copies_au} that
  $$
    \delta_n = \sup_{f \in \mathcal{F}}|\tilde{P}_n(f) - \mu_n(f)| \xrightarrow[n \to \infty]{au} 0,
  $$
  which also implies
  $$
    \sup_{f,f' \in \mathcal{F} : \|f-f'\|_\infty \leq \delta_n} \left|\tilde{G}(f) - \tilde{G}(f')\right| \xrightarrow[n \to \infty]{au} 0
  $$
  by Lemma \ref{lem:sup_delta_n_au}. Therefore, the right-hand side of Eq.\@ \eqref{eq:v-4} converges to $0$ almost uniformly. This is what we wanted to show.
\end{proof}

Next, a quick technical lemma showing that the difference between $\tilde{G}_{\mu, n}^{(m)}$ from the previous lemma and $\tilde{G}_\mu^{(m)}$ -- the coupling of the limiting process from Theorem \ref{thm:v_statistik_from_empirical_process} -- is small. It is not very complicated: Use the sample path continuity of the process $\tilde{G}$ together with the fact that $\mu_n$ is close to $\mu$ in $\ell^\infty(\mathcal{F})$. Next, use the definition of the class $\mathcal{F}_m$ to show that this also implies that $\mu_n^m$ must be close to $\mu^m$ in $\ell^\infty(\mathcal{F}_m)$. This part of the argument is similar to the technique from the previous lemma.

\begin{lemma}
  \label{lem:V-difference}
  Assume that the conditions of Lemma \ref{lem:V_approximation} are satisfied. We use the notation from that lemma. Suppose that the sequence $\mu_n$, $n \in \mathbb{N}$, fulfils the additional assumption that $\mu_n \to \mu$ in $\ell^\infty(\mathcal{F})$ for some $\mu \in \ell^\infty(\mathcal{F})$. Then, for any $m \in \mathbb{N}$,
  $$
    \tilde{G}_{\mu,n}^{(m)} \xrightarrow[n \to \infty]{} \tilde{G}_\mu^{(m)}
  $$
  in $\ell^\infty(\mathcal{F}_m)$, where $\tilde{G}_{\mu}^{(m)}$ is defined as in Definition \ref{def:Fm_class}.
\end{lemma}
\begin{proof}
  This is a straightforward consequence of the fact that the process $\tilde{G}$ has uniformly continuous sample paths with respect to the supremum norm. If $m = 1$, the claim is trivial since then $\tilde{G}_{\mu,n}^{(1)} = \tilde{G} = \tilde{G}_{\mu}^{(1)}$. Assume therefore that $m \geq 2$. For any $h \in \mathcal{F}_m$, we have that
  \begin{align*}
    h_{n,i}(x) & = \int h(x_1, \ldots, x_{i-1}, x, x_{i+1}, \ldots, x_m) ~\mathrm{d}\mu_n^{m-1}(x_1, \ldots, x_{i-1}, x_{i+1}, \ldots, x_m)                      \\
               & = \iint h(x_1, \ldots, x_{i-1}, x, x_{i+1}, \ldots, x_m) ~\mathrm{d}\mu(x_1) ~\mathrm{d}\mu_n^{m-2}(x_2, \ldots, x_{i-1}, x_{i+1}, \ldots, x_m) \\
               & \quad + R_{n,1}(h,x),
  \end{align*}
  where the remainder term $R_{n,1}(h,x)$ is equal to
  $$
    \iint h(x_1, \ldots, x_{i-1}, x, x_{i+1}, \ldots, x_m) ~\mathrm{d}(\mu_n - \mu)(x_1) ~\mathrm{d}\mu_n^{m-2}(x_2, \ldots, x_{i-1}, x_{i+1}, \ldots, x_m).
  $$
  Hence,
  \begin{align*}
    |R_{n,1}(h,x)| & \leq \sup_{x_2, \ldots, x_m \in \Omega} \left|\int h(x_1, \ldots, x_m) ~\mathrm{d}(\mu_n - \mu)(x_1)\right| \leq \sup_{f \in \mathcal{F}}|\mu_n(f) - \mu(f)|,
  \end{align*}
  since $x_1 \mapsto h(x_1, \ldots, x_m)$ is an element of $\mathcal{F}$ for any $x_2, \ldots, x_m$. We can iterate this argument to find that
  \begin{align*}
     & h_{n,i}(x)                                                                                                                          \\
     & = \int h(x_1, \ldots, x_{i-1}, x, x_{i+1}, \ldots, x_m) ~\mathrm{d}\mu^{m-1}(x_1, \ldots, x_{i-1}, x_{i+1}, \ldots, x_m) + R_n(h,x) \\
     & = h_i(x) + R_n(h,x),
  \end{align*}
  for some remainder term $R_n(h,x)$ satisfying
  $$
    |R_n(h,x)| \leq (m-1) \sup_{f \in \mathcal{F}}|\mu_n(f) - \mu(f)|.
  $$
  Let us denote the right-hand side of this inequality by $\delta_n$. Thus, for any $h \in \mathcal{F}_m$, we get $\|h_{n,i} - h_i\|_\infty \leq \delta_n$, and so
  $$
    \left|\tilde{G}_{\mu,n}^{(m)}(h) - \tilde{G}_{\mu}^{(m)}(h)\right| \leq \sum_{i=1}^m \left|\tilde{G}(h_{n,i}) - \tilde{G}(h_i)\right| \leq m \sup_{f,f' \in \mathcal{F} : \|f-f'\|_\infty \leq \delta_n} \left|\tilde{G}(f) - \tilde{G}(f')\right|,
  $$
  since $h_{n,i}, h_i \in \mathcal{F}$. The right-hand side does not depend on $h$, and so
  $$
    \sup_{h \in \mathcal{F}_m}\left|\tilde{G}_{\mu,n}^{(m)}(h) - \tilde{G}_{\mu}^{(m)}(h)\right| \leq m \sup_{f,f' \in \mathcal{F} : \|f-f'\|_\infty \leq \delta_n} \left|\tilde{G}(f) - \tilde{G}(f')\right|.
  $$
  Because $\mu_n \to \mu$ in $\ell^\infty(\mathcal{F})$ by assumption, we get that $\delta_n \downarrow 0$. Since $\tilde{G}$ has uniformly continuous sample paths with respect to the supremum norm, our claim follows.
\end{proof}

We can now prove Theorem \ref{thm:v_statistik_from_empirical_process}. As in Section \ref{sec:hk_proofs}, there is not much left to do except to apply an appropriate almost uniform representation argument. The pointwise behaviour is handled by the previous results.

\begin{proof}[Proof of Theorem \ref{thm:v_statistik_from_empirical_process}]
  By Lemmas \ref{lem:V_approximation} and \ref{lem:V-difference}, there exist random processes $\tilde{P}_n, \tilde{G} \in \ell^\infty(\mathcal{F})$ such that
  \begin{equation}
    \label{eq:copy_1}
    \mathbb{E}^* f(\tilde{P}_n) = \mathbb{E}^* f(P_n),
  \end{equation}
  and
  \begin{equation}
    \label{eq:copy_2}
    \mathbb{E}^* f(\tilde{G}) = \mathbb{E}^* f(G).
  \end{equation}
  for any bounded function $f : \ell^\infty(\mathcal{F}) \to \mathbb{R}$, and
  $$
    r_n(\tilde{P}_n^m - \mu_n^m) \xrightarrow[n \to \infty]{au} \tilde{G}_{\mu}^{(m)}
  $$
  in $\ell^\infty(\mathcal{F}_m)$. Since $\tilde{G}_\mu^{(m)}$ arises as a continuous transformation of the Borel measurable and tight random variable $\tilde{G}$, it too is tight and Borel measurable. Almost uniform convergence implies weak convergence for Borel measurable limits \citep[Lemmas 1.9.3 and 1.10.2 in][]{van_der_vaart_wellner:weak_convergence}, and so we get that
  \begin{equation}
    \label{eq:copy_3}
    \mathbb{E}^* f[r_n(\tilde{P}_n^m - \mu_n^m)] \xrightarrow[n \to \infty]{} \mathbb{E}f\left(\tilde{G}_\mu^{(m)}\right)
  \end{equation}
  for every bounded and continuous $f : \ell^\infty(\mathcal{F}_m) \to \mathbb{R}$. The objects $r_n(\tilde{P}_n^m - \mu_n^m)$ and $\tilde{G}_\mu^{(m)}$ can be written as functions of $\tilde{P}_n$ and $\tilde{G}$, i.e.\@ there exist functions $g, g_n : \ell^\infty(\mathcal{F}) \to \ell^\infty(\mathcal{F}_m)$ ($g$ continuous) such that $g_n(\tilde{P}_n) = r_n(\tilde{P}_n^m - \mu_n^m)$ and $g(\tilde{G}) = \tilde{G}_\mu^{(m)}$. We can use this representation on both sides of Eq.\@ \eqref{eq:copy_3} and combine it with Eqs.\@ \eqref{eq:copy_1} and \eqref{eq:copy_2} to find that
  \begin{align*}
    \mathbb{E}^* f[r_n(P_n^m - \mu_n^m)] & = \mathbb{E}^* f \circ g_n (P_n) = \mathbb{E}^* f \circ g_n (\tilde{P}_n) = \mathbb{E}^* f[r_n(\tilde{P}_n^m - \mu_n^m)]                                                     \\
                                         & \xrightarrow[n \to \infty]{} \mathbb{E}f\left(\tilde{G}_\mu^{(m)}\right) = \mathbb{E} f \circ g (\tilde{G}) = \mathbb{E} f \circ g (G) = \mathbb{E}f\left(G_\mu^{(m)}\right)
  \end{align*}
  for every bounded and continuous $f : \ell^\infty(\mathcal{F}_m) \to \mathbb{R}$. But this is just the definition of the weak convergence $r_n(P_n^m - \mu_n^m) \rightsquigarrow G_\mu^{(m)}$.

  The fact that the transformation $G \mapsto G_\mu^{(m)}$ is continuous and linear, as well as the statement about the covariance function of $G_\mu^{(m)}$, follow immediately from the definition of $G_\mu^{(m)}$ given in Definition \ref{def:Fm_class}.
\end{proof}

We now turn to Theorem \ref{thm:characterisation_HKcm}. Before we prove that theorem, we first need a technical lemma about functions of bounded Hardy-Krause variation. Recall that the definition of the class $\mathcal{F}_m$ contains a certain integration operation applied to every $h \in \mathcal{F}_m$. When $\mathcal{F} = \mathcal{HK}(c)$, we need to be able to relate the Hardy-Krause variation of the result of this integral operation to the original function. This is done in the following lemma. Its conclusion is perhaps not entirely surprising, as the Hardy-Krause variation (or indeed any type of variation) of a function $h$ is connected to its smoothness, and integrating usually makes functions smoother.

If $h : ([0,1]^d)^m \to \mathbb{R}$ is a function and $x = (x_1, \ldots, x_{m-1}) \in ([0,1]^d)^m$ a point, let $h_{i,x}$, $i = 1, \ldots, m$, be the functions defined by
$$
  [0,1]^d \ni t \mapsto h_{i,x}(t) = h(x_1, \ldots, x_{i-1}, t, x_i, \ldots, x_{m-1}).
$$

\begin{lemma}
  \label{lem:HK_integration_contraction}
  Let $m \in \mathbb{N}$, and let $Q$ be a Borel probability measure on $([0,1]^d)^{m-1}$ and $h : ([0,1]^d)^m \to \mathbb{R}$ a Borel measurable function. Let $i = 1, \ldots, m$ be fixed and define $h_Q : [0,1]^d \to \mathbb{R}$ by
  $$
    h_Q(t) = \int h(x_1, \ldots, x_{i-1}, t, x_{i+1}, \ldots, x_m) ~\mathrm{d}Q(x_1, \ldots, x_{i-1}, x_{i+1}, \ldots, x_m).
  $$
  Then
  $$
    \|h_Q\|_{\mathrm{HK}} \leq \int \|h_{i,x}\|_{\mathrm{HK}} ~\mathrm{d}Q(x).
  $$
\end{lemma}
\begin{proof}
  Assume without loss of generality that $\|h_{i,x}\|_{\mathrm{HK}} < \infty$ for all $i = 1, \ldots, m$ and $Q$-almost all $x \in ([0,1]^d)^{m-1}$. Otherwise, the right-hand side in the statement of the lemma is infinite, and the claim is trivial.

  We can write
  $$
    h_Q(t) = \int h_{i,x}(t) ~\mathrm{d}Q(x).
  $$
  Consider a collection of indices $1 \leq i_1 < \ldots < i_r \leq d$ and consider the restrictions $h_{Q;i_1, \ldots, i_r}$ and $h_{i,x ; i_1, \ldots, i_r}$ as defined in Eq.\@ \eqref{eq:hk_vitali_sum}. To make the notation less cluttered, we will write $h_Q^{i_1, \ldots, i_r}$ and $h_{i,x}^{i_1, \ldots, i_r}$ instead of $h_{Q; i_1, \ldots, i_r}$ and $h_{i,x; i_1, \ldots, i_r}$, respectively. It is clear frome the above representation that
  $$
    h_Q^{i_1, \ldots, i_r}(t) = \int h_{i,x}^{i_1, \ldots, i_r}(t) ~\mathrm{d}Q(x).
  $$
  Combining this with Jensen's inequality in the definition of the quasi-volume from Eq.\@ \eqref{eq:definition_quasivolume}, we find that
  \begin{equation}
    \label{eq:quasivolume_bound}
    \left|\Delta\left(h_Q^{i_1, \ldots, i_r};\bar{B}\right)\right| \leq \int \left|\Delta\left(h_{i,x}^{i_1, \ldots, i_r};\bar{B}\right)\right| ~\mathrm{d}Q(x).
  \end{equation}
  Recall from Section \ref{sec:HK_introduction} that $\mathcal{P}$ is the collection of all finite partitions of $[0,1]^d$ into hyperrectangles. It holds that
  \begin{equation}
    \label{eq:supremum_quasivolume}
    \sup_{P \in \mathcal{P}} \int \sum_{B \in P} \left|\Delta\left(h_{i,x}^{i_1, \ldots, i_r}; \bar{B}\right)\right| ~\mathrm{d}Q(x) \leq \int \sup_{P \in \mathcal{P}} \sum_{B \in P} \left|\Delta\left(h_{i,x}^{i_1, \ldots, i_r}; \bar{B}\right)\right| ~\mathrm{d}Q(x).
  \end{equation}
  Eqs.\@ \eqref{eq:quasivolume_bound} and \eqref{eq:supremum_quasivolume} can be combined to yield
  \begin{align*}
    V^{(d)}\left(h_Q^{i_1, \ldots, i_r}\right) & = \sup_{P \in \mathcal{P}} \sum_{B \in P}\left|\Delta\left(h_Q^{i_1, \ldots, i_r};\bar{B}\right)\right|                         \\
                                               & \leq \int \sup_{P \in \mathcal{P}} \sum_{B \in P} \left|\Delta\left(h_{i,x}^{i_1, \ldots, i_r}; B\right)\right| ~\mathrm{d}Q(x) \\
                                               & = \int V^{(d)}\left(h_{i,x}^{i_1, \ldots, i_r}\right) ~\mathrm{d}Q(x).
  \end{align*}
  Therefore,
  \begin{align*}
    \|h_Q\|_{\mathrm{HK}} & = \sum_{r=1}^d \sum_{1 \leq i_1 < \ldots < i_r \leq d} V^{(r)}\left(h_{Q}^{i_1, \ldots, i_r}\right)                           \\
                          & \leq \int \sum_{r=1}^d \sum_{1 \leq i_1 < \ldots < i_r \leq d} V^{(r)}\left(h_{i,x}^{i_1, \ldots, i_r}\right) ~\mathrm{d}Q(x) \\
                          & = \int \|h_{i,x}\|_{\mathrm{HK}} ~\mathrm{d}Q(x).
  \end{align*}
  This is what we wanted to prove.
\end{proof}

\begin{proof}[Proof of Theorem \ref{thm:characterisation_HKcm}]
  If $h \in \mathcal{HK}(c)_m$, then it trivially satisfies the conditions from the statement of this theorem. Let us therefore assume that $h$ is a function with the properties in the statement of the theorem, from which we will prove that $h \in \mathcal{HK}(c)_m$.

  Since $h$ is an element of $\mathcal{HK}(c)$ coordinate-wise, and elements of $\mathcal{HK}(c)$ are bounded in the supremum norm by $c$, it must hold that $\|h\|_\infty \leq c$. In particular, $h$ is bounded (and Borel measurable by assumption). It remains to verify that the function
  $$
    x \mapsto \int h(x_1, \ldots, x_{i-1}, x, x_{i+1}, \ldots, x_m) ~\mathrm{d}Q(x_1, \ldots, x_{i-1}, x_{i+1}, \ldots, x_m)
  $$
  is an element of $\mathcal{HK}(c)$ for any Borel product probability measure $Q = Q_1 \otimes \ldots \otimes Q_{m-1}$ on $([0,1]^d)^m$. But this is an immediate consequence of the already established inequality $\|h\|_\infty \leq c$, as well as Lemma \ref{lem:HK_integration_contraction}.
\end{proof}

The next lemma is a preparation for the proof of Theorem \ref{thm:product_uniformly_integrable}. It is a version for potentially non-measurable stochastic processes of the well-known fact that weak convergence plus uniform $p$-integrability implies convergence of the $p$-th moments. We wish to prove that, if $X_n$ and $X$ are processes indexed in some set $T$, with corresponding mean functions $\mu_n$ and $\mu$, then $X_n \rightsquigarrow X$ plus some appropriate moment condition implies $\mu_n \to \mu$ in $\ell^\infty(T)$. There is some added difficulty here because we require convergence in the supremum norm. Thus, our proof consists of two steps: First, pointwise convergence of $\mu_n$ to $\mu$, which is just a consequence of the previously mentioned well-known fact for convergence of the $p$-th moments. We then show that $\mu_n$, $n \in \mathbb{N}$, is a Cauchy sequence in $\ell^\infty(T)$, and since $\ell^\infty(T)$ is a Banach space when equipped with the supremum norm, this implies that $\mu_n \to \mu$ must hold not only in a pointwise sense, but also in $\ell^\infty(T)$. The proof that $\mu_n$, $n \in \mathbb{N}$, is Cauchy in $\ell^\infty(T)$ is itself not obvious. The key insight here is to establish a suitable connection between the condition of asymptotic tightness -- which is one part of weak convergence -- to a certain characterisation of relative compactness in the Banach space $\ell^\infty(T)$.

\begin{lemma}
  \label{lem:uniformly_integrable}
  Let $T$ be an arbitrary set and $X_n$, $n \in \mathbb{N}$, be a sequence in $\ell^\infty(T)$ such that $X_n \rightsquigarrow X$ for some tight limit $X$. Define the functions $\mu_n$ and $\mu$ by $\mu_n(t) = \mathbb{E} X_n(t)$ and $\mu(t) = \mathbb{E}X(t)$. If $\sup_n \mathbb{E}^* \|X_n\|_T^p < \infty$ for some $p > 1$, then $\mu \in \ell^\infty(T)$, and $\|\mu_n - \mu\|_T \to 0$ as $n \to \infty$.
\end{lemma}
\begin{proof}
  For any fixed $t \in T$, it holds that $X_n(t) \rightsquigarrow X(t)$ in $\mathbb{R}$ by the continuous mapping theorem, and
  $$
    \sup_{n \in \mathbb{N}} \mathbb{E} |X_n(t)|^p \leq \sup_{n \in \mathbb{N}} \mathbb{E}^*\|X_n\|_T^p < \infty.
  $$
  By Theorem 25.12 in \cite{billingsley:prob_and_measure}, it therefore holds that $X(t)$ is integrable, and
  \begin{equation}
    \label{eq:uniform_convergence_pintwise}
    \mu_n(t) \xrightarrow[n \to \infty]{} \mu(t)
  \end{equation}
  for any fixed $t \in T$.

  For $C > 0$ define a new process $X_n^{(C)}$ by
  $$
    X_n^{(C)}(t) = X_n(t) \textbf{1}\{|X_n(t)| < C\} + C \textbf{1}\{|X_n(t)| \geq C\},
  $$
  and $X^{(C)}$ analogously. Denote the corresponding mean functions by $\mu_n^{(C)}$ and $\mu^{(C)}$, respectively. Then
  \begin{equation}
    \label{eq:uniform_convergence_pintwise_truncated}
    \mu_n^{(C)}(t) \xrightarrow[n \to \infty]{} \mu^{(C)}(t)
  \end{equation}
  for any fixed $t \in T$, which can be shown in exactly the same way as Eq.\@ \eqref{eq:uniform_convergence_pintwise}. Furthermore,
  \begin{equation}
    \label{eq:outer_uniform_integrability_1}
    \|\mu_n - \mu_n^{(C)}\|_T \leq \mathbb{E}^* \left\|X_n - X_n^{(C)}\right\|_T \leq \mathbb{E}^* \left[\|X_n\|_T \textbf{1}\{\|X_n\|_T \geq C\}\right].
  \end{equation}
  Recall the following elementary proof of Hölder's inequality: For random variables $U$ and $V$, and a pair of Hölder conjugates $1/p + 1/q = 1$, it holds by Young's inequality \citep[e.g.\@ Theorem 156 in][with $\phi(x) = x^{p-1}$]{hardy_littlewood_polya:inequalities} that
  $$
    \frac{|U V|}{\|U\|_{L_p} \|V\|_{L_p}} \leq \frac{(|U|/\|U\|_{L_p})^p}{p} + \frac{(|V|/\|V\|_{L_q})^q}{q}.
  $$
  Integrate to see that $\|UV\|_{L_1} \leq \|U\|_{L_p}\|V\|_{L_q} (1/p + 1/q) = \|U\|_{L_p}\|V\|_{L_q}$. This argument only uses the sublinearity (as opposed to full linearity) of the expected value, which is also satisfied by the outer expectation. Therefore,
  $$
    \mathbb{E}^*|UV| \leq (\mathbb{E}^*|U|^p)^{1/p} (\mathbb{E}^*|V|^q)^{1/q}
  $$
  for any two real-valued random elements $U,V$. Apply this to the right-hand side of Eq.\@ \eqref{eq:outer_uniform_integrability_1} to see that
  \begin{align}
    \begin{split}
      \label{eq:outer_uniform_integrability_2}
      \|\mu_n(t) - \mu_n^{(C)}(t)\|_T & \leq (\mathbb{E}^* \|X_n\|_T^p)^{1/p} (\mathbb{E}^* \textbf{1}\{\|X_n\|_T \geq C\})^{1/q}                         \\
                                      & \leq \sup_{n \in \mathbb{N}}(\mathbb{E}^* \|X_n\|_T^p)^{1/p} (\mathbb{E}^* \textbf{1}\{\|X_n\|_T \geq C\})^{1/q}.
    \end{split}
  \end{align}
  The function $U \mapsto \textbf{1}\{\|U\|_T \geq C\}$ is upper semicontinuous on $\ell^\infty(T)$. By the Portmanteau theorem \citep[Theorem 1.3.4 in][]{van_der_vaart_wellner:weak_convergence}, it therefore holds that
  \begin{equation}
    \label{eq:outer_uniform_integrability_3}
    \limsup_{n \to \infty} \mathbb{E}^* \textbf{1}\{\|X_n\|_T \geq C\} \leq \mathbb{P}(\|X\|_T \geq C),
  \end{equation}
  and the right-hand side monotonically decreases to $0$ for $C \to \infty$ because $\|X\|_T$ is tight in $\mathbb{R}$. Now fix some $\varepsilon > 0$. Eqs.\@ \eqref{eq:outer_uniform_integrability_2} and \eqref{eq:outer_uniform_integrability_3} together imply that we can find two numbers $N_1 = N_1(\varepsilon)$ and $C = C(\varepsilon)$ such that
  \begin{equation}
    \label{eq:cauchy_1}
    \|\mu_n - \mu_n^{(C)}\|_T \leq \varepsilon/3
  \end{equation}
  for any $n \geq N_1$. For the remainder of the proof, $C$ will be this special choice $C(\varepsilon)$. Furthermore, the assumption $X_n \rightsquigarrow X$ in $\ell^\infty(T)$ also implies $X_n^{(C)} \rightsquigarrow X^{(C)}$ by the continuous mapping theorem. Fix some arbitrary $\delta > 0$. By Theorems 1.5.4 and 1.5.6 in \cite{van_der_vaart_wellner:weak_convergence}, we get the existence of a partition $T = T_1 \cup \ldots \cup T_K$ and some integer $N_2 = N_2(\delta)$ such that
  $$
    \mathbb{P}^*\left(\max_{k=1, \ldots, K} \sup_{s,t \in T_k} \left|X_n^{(C)}(s) - X_n^{(C)}(t)\right| > \delta/2\right) \leq \frac{\delta}{4C}
  $$
  for any $n \geq N_2$. Consider any $k =1, \ldots, K$ and $s,t \in T_k$. Since $|X_n^{(C)}(s) - X_n^{(C)}(t)| \leq 2C$ is always satisfied, the above inequality means that
  \begin{align}
    \begin{split}
      \label{eq:truncated_mean_functions_relatively_compact}
      \left|\mu_n^{(C)}(s) - \mu_n^{(C)}(t)\right| & = \left|\mathbb{E}X_n^{(C)}(s) - \mathbb{E}X_n^{(C)}(t)\right|                                                                           \\
                                                   & \leq \mathbb{E}^*\left[\max_{k=1, \ldots, K} \sup_{s,t \in T_k} \left|X_n^{(C)}(s) - X_n^{(C)}(t)\right|\right]                          \\
                                                   & \leq \delta/2 + 2C \mathbb{P}^*\left(\max_{k=1, \ldots, K} \sup_{s,t \in T_k} \left|X_n^{(C)}(s) - X_n^{(C)}(t)\right| > \delta/2\right) \\
                                                   & \leq \delta
    \end{split}
  \end{align}
  for all $n \geq N_2$. The identity $\mathbb{E}^* \textbf{1}_B = \mathbb{P}^*(B)$ for any set $B$, which we have used in the second inequality, holds by Lemma 1.2.3 in \cite{van_der_vaart_wellner:weak_convergence}. On the other hand, it is trivial to construct a partition $\tilde{T}_1, \ldots, \tilde{T}_L$ of $T$ such that
  \begin{equation}
    \label{eq:truncated_mean_functions_relatively_compact_2}
    \max_{n = 1, \ldots, N_2}\left|\mu_n^{(C)}(s) - \mu_n^{(C)}(t)\right| \leq \delta
  \end{equation}
  for any $s,t \in \tilde{T}_l$, $l = 1, \ldots, L$ (just observe that this can certainly be done for any fixed $n$, since every $\mu_n^{(C)}$ is bounded; then take intersections). Eqs.\@ \eqref{eq:truncated_mean_functions_relatively_compact} and \eqref{eq:truncated_mean_functions_relatively_compact_2} can be combined to see that
  $$
    \sup_{n \in \mathbb{N}}\left|\mu_n^{(C)}(s) - \mu_n^{(C)}(t)\right| \leq \delta
  $$
  for any $s,t \in T_k \cap \tilde{T}_l$, $k = 1, \ldots, K$, $l = 1, \ldots, L$. But this is just the characterisation of relative compactness in $\ell^\infty(T)$ \citep[Theorem IV.5.6 in][]{dunford_schwartz:1958}, and so the set $\{\mu_n^{(C)} ~|~ n \in \mathbb{N}\}$ is relatively compact in $\ell^\infty(T)$. Since compactness and sequential compactness are equivalent in metric spaces, Eq.\@ \eqref{eq:uniform_convergence_pintwise_truncated} now implies
  $$
    \|\mu_n^{(C)} - \mu^{(C)}\|_T \xrightarrow[n \to \infty]{} 0.
  $$
  Any convergent sequence in a metric space is Cauchy, and so we may choose some $N_3 = N_3(\varepsilon)$ such that
  \begin{equation}
    \label{eq:cauchy_2}
    \|\mu_n^{(C)} - \mu_m^{(C)}\|_T \leq \varepsilon/3
  \end{equation}
  for any $n,m \geq N_3$. Eqs.\@ \eqref{eq:cauchy_1} and \eqref{eq:cauchy_2} now give us
  $$
    \|\mu_n - \mu_m\|_T \leq \|\mu_n^{(C)} - \mu_m^{(C)}\|_T + \|\mu_n - \mu_n^{(C)}\|_T + \|\mu_m - \mu_m^{(C)}\|_T \leq \varepsilon
  $$
  for any $n,m \geq N_1 \lor N_3$, i.e.\@ the sequence $\mu_n$, $n \in \mathbb{N}$, is Cauchy and hence convergent in the Banach space $\ell^\infty(T)$. By Eq.\@ \eqref{eq:uniform_convergence_pintwise}, its limit must be equal to $\mu$.
\end{proof}

The proof of Theorem \ref{thm:product_uniformly_integrable} holds no surprises. It is a simple application of Lemma \ref{lem:uniformly_integrable}.

\begin{proof}[Proof of Theorem \ref{thm:product_uniformly_integrable}]
  We begin by proving the claim in Eq.\@ \eqref{eq:moment_convergence_conclusion_1}. The case $m = 1$ is clearly fulfilled since it is just a restatement of Eq.\@ \eqref{eq:moment_convergence_assumption} (recall that $\mathcal{F}_1 = \mathcal{F}$). Let us therefore assume that we have proven our claim for some $m$, and we will show that it must also hold for $m+1$. Fix some $h \in \mathcal{F}_{m+1}$. Similar to Eq.\@ \eqref{eq:v-1}, write
  \begin{align*}
     & r_n[P_n^{m+1}(h) - \mu_n^{m+1}(h)]                                                                                            \\
     & = \iint h(x_1, \ldots, x_{m+1}) ~\mathrm{d}\left[r_n(P_n - \mu_n)\right](x_{m+1})~\mathrm{d}\mu_n^m(x_1, \ldots, x_m)         \\
     & \quad + \iint h(x_1, \ldots, x_{m+1}) ~\mathrm{d}\left[r_n(P_n^m - \mu_n^m)\right](x_1, \ldots, x_m) ~\mathrm{d}P_n(x_{m+1}).
  \end{align*}
  By Jensen's inequality, this implies
  \begin{align*}
     & \left|r_nP_n^{m+1}(h) - \mu_n^{m+1}(h)\right|                                                                                                 \\
     & \leq \int \left|\int h(x_1, \ldots, x_{m+1})  ~\mathrm{d}\left[r_n(P_n - \mu_n)\right](x_{m+1}) \right|~\mathrm{d}\mu_n^m(x_1, \ldots, x_m)   \\
     & \quad + \int \left|\int h(x_1, \ldots, x_{m+1})~\mathrm{d}\left[r_n(P_n^m - \mu_n^m)\right](x_1, \ldots, x_m) \right| ~\mathrm{d}P_n(x_{m+1}) \\
     & \leq \|r_n(P_n - \mu_n)\|_\mathcal{F} + \left\|r_n\left(P_n^m - \mu_n^m\right)\right\|_{\mathcal{F}_m},
  \end{align*}
  where the last inequality holds because $h \in \mathcal{F}_{m+1}$. The right-hand side is independent of $h$, and so taking the supremum over all $h \in \mathcal{F}_{m+1}$ does not change this bound. Therefore,
  \begin{align}
    \begin{split}
      \label{eq:bias_bound}
       & \sup_{n \in \mathbb{N}}\mathbb{E}^* \left\|r_n(P_n^{m+1} - \mu_n^{m+1})\right\|_{\mathcal{F}_m+1}                                                                                 \\
       & \leq \sup_{n \in \mathbb{N}}\mathbb{E}^* \left\|r_n(P_n - \mu_n)\right\|_{\mathcal{F}} + \sup_{n \in \mathbb{N}}\mathbb{E}^* \left\|r_n(P_n^m - \mu_n^m)\right\|_{\mathcal{F}_m},
    \end{split}
  \end{align}
  and the right-hand is bounded by assumption. This proves Eq.\@ \eqref{eq:moment_convergence_conclusion_1}. Under the additional assumption that the original limiting process $G$ is tight and mean-zero, Eq.\@ \eqref{eq:moment_convergence_conclusion_2} is an immediate consequence of Eq.\@ \eqref{eq:moment_convergence_conclusion_1} and Lemma \ref{lem:uniformly_integrable}. Finally, the statement about replacing $(\mathbb{E}P_n)^m$ with $\mathbb{E}[P_n^m]$ is a consequence of Eqs.\@ \eqref{eq:moment_convergence_identity} and \eqref{eq:moment_convergence_conclusion_2}.
\end{proof}

\section{Proofs for Processes Based on Order Statistics and Their Concomitants}
\label{sec:proofs_order_concomitants}
\subsection{Order Statistics}
\label{sec:proofs_order_statistics}
Our goal in this section is to prove a process convergence result of the form
\begin{equation}
  \label{eq:goal_order}
  \left[(f,g) \mapsto \frac{1}{\sqrt{n}} \sum_{i=1}^{n-1} \left\{f(X_{n,i}') g(X_{n,i+1}') - \mathbb{E}\left[f(X_{n,i}') g(X_{n,i+1}')\right]\right\}\right] \rightsquigarrow G
\end{equation}
in $\ell^\infty(\mathcal{F} \times \mathcal{F})$ for appropriate function classes $\mathcal{F}$, where $X_{n,1}' \leq \ldots \leq X_{n,n}'$ are the order statistics of an i.i.d.\@ sample $X_1, \ldots, X_n$ (with ties broken at random, as usual). This convergence result will be used in the following sections to investigate the limiting behaviour of the empirical process associated with the concomitants of the order statistics, and ultimately in the study of Chatterjee's rank correlation. Following the usual procedure, we break up the weak convergence \eqref{eq:goal_order} into two parts: Convergence of the finite-dimensional projections and asymptotic tightness. Unfortunately, both of these are somewhat difficult to establish.

We begin with two technical lemmas: A purely analytical one, and one giving a tail bound on uniform spacings.

\begin{lemma}
  \label{lem:alpha_exp_probs}
  For $\alpha > 0$ and $t \geq 1$, define $f_\alpha(t) = 1 - [1 - \exp(-t^\alpha)/t]^t$. Then $\exp(t^\alpha)f_\alpha(t) \to 1$ for $t \to \infty$.
\end{lemma}
\begin{proof}
  By Taylor's theorem, it holds for any $x \in (0,1)$ that
  $$
    \log(1-x) = -x + R(x),
  $$
  where $|R(x)| \leq |1 - \xi|^{-2} x^2/2$ for some $\xi \in (0,x)$. Set $x = x_\alpha = \exp(-t^\alpha)/t$ to find that
  $$
    \left[1 - \frac{\exp(-t^\alpha)}{t}\right]^t = \exp\{t[-x_\alpha + R(x_\alpha)]\} = \exp\left\{-\exp(-t^\alpha) + t R(x_\alpha)\right\}.
  $$
  Next use Taylor's theorem on the exponential function to find that $\exp(x) = 1 + x + S(x)$ for any $x \in (-1,1)$, where $|S(x)| \leq (e/2) x^2$. Using this Taylor expansion on the outer exponential function in the above display, we find that
  \begin{equation}
    \label{eq:exp_diffs}
    1 - \left[1 - \frac{\exp(-t^\alpha)}{t}\right]^t = \exp(-t^\alpha) - t R(x_\alpha) - S(y_\alpha),
  \end{equation}
  where $y_\alpha = - \exp(-t^\alpha) + t R(x_\alpha)$. Since $t \geq 1$, we have $x_\alpha \leq e^{-1}$, and the term $|1 - \xi|^{-2}$ in the bound for $R(x_\alpha)$ can be bounded by $|1 - e^{-1}|^{-2}$ for any $t$. We therefore get
  $$
    |t R(x_\alpha)| \leq \frac{1}{2|1 - e^{-1}|^2} \exp(-2t^\alpha)/t,
  $$
  which also implies
  $$
    |S(y_\alpha)| \lesssim2 \exp(-2t^\alpha) + 2 t^2 R^2(x_\alpha) \lesssim \exp(-2t^\alpha),
  $$
  where $\lesssim$ is hiding universal constants. Combining these two error bounds with Eq.\@ \eqref{eq:exp_diffs} proves our claim.
\end{proof}

\begin{lemma}
  \label{lem:spacings_lemma}
  Let $Z_1, \ldots, Z_n$ be i.i.d.\@ $\mathcal{U}[0,1]$ random variables. For $i = 1, \ldots, n+1$, define the $i$-th spacing as $\delta_{n,i} = Z_{n,i}' - Z_{n,i-1}$, where we set $Z_{n,0}' = 0$ and $Z_{n,n+1}' = 1$, and let $D_n = \max_{i=1, \ldots, n+1} \delta_{n,i}$ be the largest spacing. Then the following holds: For any $e^{-1} \leq \alpha < 1$, there exists a constant $C_\alpha > 0$ depending only on $\alpha$ such that
  $$
    \mathbb{P}\left(D_n > 5 (n+1)^{\alpha-1}\right) \leq C_\alpha \exp[-(n+1)^\alpha]
  $$
  for all $n \in \mathbb{N}$.
\end{lemma}
\begin{proof}
  By Eqs.\@ (d) and (e) in \cite{shorack_wellner:1986}, Chapter 21, Section 2, pp.\@ 726f.\@ it holds for any $t > 0$ that
  \begin{align*}
    \left(1 - \frac{\exp(-t)}{n+1}\right)^{n+1} & = \mathbb{P}\left(\left[(n+1)D_n - \log (n+1)\right]\Sigma_n + (\Sigma_n - 1) \log (n+1) \leq t\right)   \\
                                                & = \mathbb{P}\left(D_n \leq \frac{\log (n+1) + \Sigma_n^{-1} [t - (\Sigma_n - 1)\log (n+1)]}{n+1}\right),
  \end{align*}
  where $\Sigma_n = Y_1 + \ldots + Y_{n+1}$ for $Y_1, \ldots, Y_n$ being independent random variables following an exponential distribution with expected value $1/(n+1)$. Since $\Sigma_n \geq 0$, we have $-(\Sigma_n - 1)\log (n+1) \leq \log (n+1)$. The factor $\Sigma_n^{-1}$ can be controlled by considering the cases $\Sigma_n \geq 1/2$ and $\Sigma_n < 1/2$ separately. We therefore get
  $$
    \left(1 - \frac{\exp(-t)}{n+1}\right)^{n+1} \leq \mathbb{P}\left(D_n \leq \frac{3 \log (n+1) + 2t}{n+1}\right) + \mathbb{P}(\Sigma_n < 1/2).
  $$
  Choose $t = (n+1)^\alpha$ for some $e^{-1} < \alpha < 1$ to see that
  $$
    \left(1 - \frac{\exp[-(n+1)^\alpha]}{n+1}\right)^{n+1} - \mathbb{P}(\Sigma_n < 1/2) \leq \mathbb{P}\left(D_n \leq \frac{3 \log (n+1) + 2 (n+1)^\alpha}{n+1}\right).
  $$
  Now use the inequality $\log(n+1) \leq (n+1)^\alpha$ which holds because $\alpha \geq e^{-1}$ to further bound the probability on the right-hand side. Then by subtracting both sides from $1$ we find that
  \begin{align}
    \begin{split}
      \label{eq:spacings_prob_1}
      \mathbb{P}\left(D_n > 5 (n+1)^{\alpha-1}\right) & \leq 1 - \left(1 - \frac{\exp[-(n+1)^\alpha]}{n+1}\right)^{n+1} + \mathbb{P}(\Sigma_n < 1/2) \\
                                                      & \leq c_\alpha \exp[-(n+1)^\alpha] + \mathbb{P}(\Sigma_n < 1/2)
    \end{split}
  \end{align}
  for some constant $c_\alpha$ depending only on $\alpha$ by Lemma \ref{lem:alpha_exp_probs}. Let us now bound the probability that $\Sigma_n < 1/2$. Recall that an exponential distribution with expected value $1/\theta$ has moment generating function $t \mapsto \theta / (\theta - t)$ for $t < \theta$ \citep[Example 21.3 in][]{billingsley:prob_and_measure}. Chernoff's bound therefore gives us
  \begin{equation}
    \label{eq:spacings_prob_2}
    \mathbb{P}(\Sigma_n < 1/2) \leq \inf_{t < 0} \left[\exp(-t/2) \left(\frac{n+1}{n+1-t}\right)^{n+1}\right] = \exp[-(n+1)(\log 2 - 1/2)],
  \end{equation}
  where we have used that the infimum is attained at $t = -(n+1)$. The right-hand side decreases exponentially in $n+1$ because $\log 2 - 1/2 > 0$. At the cost of increasing the constant $c_\alpha$ to some larger constant $C_\alpha$, still only depending on $\alpha$, we can combine Eqs.\@ \eqref{eq:spacings_prob_1} and \eqref{eq:spacings_prob_2} to find that
  $$
    \mathbb{P}\left(D_n > 5 (n+1)^{\alpha-1}\right) \leq C_\alpha \exp[-(n+1)^\alpha],
  $$
  and this is what we wanted to prove.
\end{proof}

Our approach to prove convergence of the finite-dimensional projection of the process in \eqref{eq:goal_order} is to write each summand as
$$
  f(X_{n,i}') g(X_{n,i+1}') = f(X_{n,i}')g(X_{n,i}') + f(X_{n,i}')\left[g(X_{n,i+1}') - g(X_{n,i}')\right].
$$
Subtracting the expectations and summing the first term on right-hand side over $i = 1, \ldots, n-1$ yields
$$
  \frac{1}{\sqrt{n}} \sum_{i=1}^n f(X_i)g(X_i) - \mathbb{E}\left[f(X_i)g(X_i)\right] + \mathcal{O}\left(\frac{1}{n}\right),
$$
assuming that $|f|, |g| \leq 1$, since the order statistics $X_{n,1}', \ldots, X_{n,n}'$ are simply a permutation of the original sample $X_1, \ldots, X_n$. For any fixed $f$ and $g$, this sum then converges by the usual central limit theorem. The difficult part is to show that the sum over the remainder terms is negligible. We will do this using the Efron-Stein inequality: For independent random variables $X_1, \ldots, X_n$, a measurable function $f$ and $Z = f(X_1, \ldots, X_n)$, it holds that
$$
  \mathrm{Var}(Z) \leq \frac{1}{2}\sum_{i=1}^n \mathbb{E}\left[(Z - Z_i')^2\right],
$$
where $Z_i'$ is defined just like $Z$, but with $X_i$ replaced by an independent copy $X_i'$. Thus, the Efron-Stein inequality relates the variance of a function of independent random variables to the expected change when replacing just one of the random variables by an independent copy. In the study of order statistics, this is a useful concept. For consider the order statistics $X_{n,1}', \ldots, X_{n,n}'$ of some sample $X_1, \ldots, X_n$, and imagine how the order statistics change when replacing one of the $X_i$ by some other number (here, an independent copy). They will change only in at most two places: Where the old observation $X_i$ is removed, and where the new observation $X_i'$ is inserted. Since we are interested in blocks $(X_{n,i}', X_{n,i+1}')$ of consecutive order statistics, this means that at most four such blocks will be affected by the switch from $X_i$ to $X_i'$. Thus, while in our application the random variables $Z$ and $Z_i'$ are defined as sums with $n$ summands each, their difference $Z - Z_i'$ is a sum with at most four summands, and it will be seen that these four summands can be bounded by differences of the form  $f(X_{n,i+1}') - f(X_{n,i}')$ and $g(X_{n,i+1}') - g(X_{n,i}')$. To control these differences, we will make the assumption that $f$ and $g$ are Borel measurable. Borel measurable functions are almost continuous by Lusin's theorem, and so $f(X_{n,i+1}') - f(X_{n,i}')$ and $g(X_{n,i+1}') - g(X_{n,i}')$ will be small with high probability.

\begin{lemma}
  \label{lem:nn_remainder_0}
  Let $f,g : [0,1] \to [0,1]$ be Borel measurable functions and define
  $$
    \Delta_n = \frac{1}{\sqrt{n}} \sum_{i=1}^{n-1} f(X_{n,i}')\left[g(X_{n,i+1}') - g(X_{n,i}')\right].
  $$
  Then $\mathrm{Var}(\Delta_n) \to 0$ as $n \to \infty$.
\end{lemma}
\begin{proof}
  In this proof we assume without loss of generality that the data $X_1, \ldots, X_n$ follow a $\mathcal{U}[0,1]$-distribution. If this is not the case, we can replace the functions $f$ and $g$ by $f \circ F^{-1}$ and $g \circ F^{-1}$, respectively, where $F$ is the distribution function of $X_1$, and apply these functions to an i.i.d.\@ $\mathcal{U}[0,1]$ process.

  Fix some $0 < \varepsilon < 1/4$. By Lusin's theorem \citep[Exercise III.9.18 in][]{dunford_schwartz:1958}, there exists a compact set $K \subseteq [0,1]$ with $\mathbb{P}(X_1 \in K) > 1 - \varepsilon$ such that the restrictions of both $f$ and $g$ onto $K$ are continuous. Let $(Y_1, \ldots, Y_n)$ be an independent copy of $(X_1, \ldots, X_n)$ and define, for any $i = 1, \ldots, n$, $\Delta_n^{(i)}$ just like $\Delta_n$, but with $X_i$ replaced by $Y_i$ (and all other $X_j$, $j \neq i$, unchanged). To bound the difference between $\Delta_n$ and $\Delta_n^{(i)}$, let $Z_j = X_j$ for $j = 1, \ldots, n$ and $Z_{n+1} = Y_i$, and then consider the order statistics $Z_{n,1}' \leq \ldots \leq Z_{n,n+1}'$ (there are almost surely no ties because we assumed the $X_i$ to be distributed uniformly on the unit interval). Let $A$ and $B$ be the random indices such that $Z_{n,A}' = X_i$ and $Z_{n,B}' = Y_i$. If $1 < A, B < n+1$ and $|A-B| > 1$, then $\Delta_n - \Delta_n^{(i)}$ consists of only four summands. For instance, if we assume that $B > A$, we get
  \begin{align}
    \begin{split}
      \label{eq:Delta_n_Delta_n_i}
       & \sqrt{n}\left(\Delta_n - \Delta_n^{(i)}\right)                                                                    \\
       & = f(Z_{n,A-1}')\left[g(Z_{n,A}') - g(Z_{n,A-1}')\right] - f(Z_{n,A-1}')\left[g(Z_{n,A+1}') - g(Z_{n,A-1}')\right] \\
       & + f(Z_{n,A}')\left[g(Z_{n,A+1}') - g(Z_{n,A}')\right] - f(Z_{n,A+1}')\left[g(Z_{n,A+2}') - g(Z_{n,A+1}')\right]   \\
       & + f(Z_{n,B-2}')\left[g(Z_{n,B-1}') - g(Z_{n,B-2}')\right] - f(Z_{n,B-1}')\left[g(Z_{n,B}) - g(Z_{n,B-1}')\right]  \\
       & + f(Z_{n,B-1}')\left[g(Z_{n,B+1}') - g(Z_{n,B-1}')\right] - f(Z_{n,B}')\left[g(Z_{n,B+1}') - g(Z_{n,B})\right].
    \end{split}
  \end{align}
  This identity comes frome the fact that we can construct the order statistics of both $X_1, \ldots, X_n$ and $X_1, \ldots, X_{i-1}, Y_i, X_{i+1}, \ldots, X_n$ from $Z_{n,1}' \leq \ldots \leq Z_{n,n+1}'$ by deleting either $Z_{n,B}'$ or $Z_{n,A}'$. A similar identity holds if $1 < A,B < n+1$, $|A-B| > 1$ and $B < A$. If one of the indexes $A$ and $B$ are $1$ or $n+1$, or if they are directly adjacent, i.e.\@ $|A-B| = 1$, then the corresponding identity consists of fewer than $4$ summands. Given $Z_{n,1}', \ldots, Z_{n,n+1}'$, the random vector $(A, B)$ is uniformly distributed on all possible choices $(a,b)$ such that $1 \leq a \neq b \leq n+1$, because the random variables $Z_1, \ldots, Z_{n+1}$ are i.i.d. Hence, from Eq.\@ \eqref{eq:Delta_n_Delta_n_i}, we get
  \begin{align}
    \begin{split}
      \label{eq:Delta_n_Delta_n_i_conditional}
       & n \mathbb{E}\left[\left(\Delta_n - \Delta_n^{(i)}\right)^2 ~|~ Z_{n,1}', \ldots, Z_{n,n+1}'\right]                                    \\
       & \lesssim \frac{1}{n} \sum_{a = 2}^{n+1} |g(Z_{n,a}') - g(Z_{n,a-1}')|^2 + \frac{1}{n} \sum_{a = 1}^{n}|g(Z_{n,a+1}') - g(Z_{n,a}')|^2 \\
       & \qquad + \frac{1}{n} \sum_{a = 2}^{n+1}|g(Z_{n,a+1}') - g(Z_{n,a-1}')|^2 + \mathcal{O}\left(\frac{1}{n}\right)
    \end{split}
  \end{align}
  where $\lesssim$ is hiding universal constants and the term $\mathcal{O}(1/n)$ is the result of the indices $a \neq b$ such that $\{a,b\} \cap \{1,n+1\} \neq \emptyset$ or $|a-b| = 1$, of which there are $\mathcal{O}(n)$ many. The constant hidden in the Landau symbol $\mathcal{O}(1/n)$ is also universal. Now define the event
  $$
    \mathcal{E}_n = \left\{\sum_{j=1}^{n+1} \textbf{1}_K(Z_j) \geq n(1-2\varepsilon) ~\textrm{and}~ D_{n+1} \leq 5(n+2)^{-1/2}\right\},
  $$
  where $D_{n+1}$ denotes the largest spacing of the $Z_1, \ldots, Z_{n+1}$ as in Lemma \ref{lem:spacings_lemma}. $\mathcal{E}_n$ is measurable with respect to the $\sigma$-algebra generated by $Z_{n,1}', \ldots, Z_{n,n+1}'$. Furthermore, on $\mathcal{E}_n$, at least $n(1 - 4\varepsilon)$ many pairs $(Z_{n,j}', Z_{n,j+1}')$ are elements of $K \times K$. This is because the number of indices $j$ for which this is not true is maximised if in the order statistics $Z_{n,1}' \leq \ldots \leq Z_{n,n+1}'$, an observation belonging to $K$ is followed by one not belonging to $K$, and vice-versa (for as long as this is possible, i.e.\@ until we have run out of observations not lying in $K$). Sorted in this way, every $Z_{n,j}'$ not belonging to $K$ causes the two blocks $(Z_{n,j-1}', Z_{n,j}')$ and $(Z_{n,j}', Z_{n,j+1}')$ to not belong to $K \times K$. On $\mathcal{E}_n$, there are at most $2n\varepsilon$ many observations among the $Z_1, \ldots, Z_{n+1}$ not belonging to $K$, and so $(Z_{n,j}', Z_{n,j+1}') \in K \times K$ for at least $n(1-4\varepsilon)$ many $j$. Essentially the same argument can be made for blocks of the form $(Z_{n,j-1}', Z_{n,j+1}')$. Furthermore, on $\mathcal{E}_n$, it holds that
  $$
    |Z_{n,j+1}' - Z_{n,j-1}| \leq 2 D_{n+1} \leq 10 (n+2)^{-1/2}
  $$
  for any $j = 2, \ldots, n$, and the same inequality holds for $|Z_{n,j}' - Z_{n,j-1}|$ for $j = 1, \ldots, n+1$. Writing
  $$
    \omega_n = \sup\left\{|g(x) - g(x')| ~:~ x,x' \in K \land |x-x'| \leq 10 (n+2)^{-1/2}\right\},
  $$
  it follows from Eq.\@ \eqref{eq:Delta_n_Delta_n_i_conditional} that
  \begin{equation}
    \label{eq:Delta_n_Delta_n_i_conditional_2}
    \mathbb{E}\left[\textbf{1}_{\mathcal{E}_n}\left(\Delta_n - \Delta_n^{(i)}\right)^2 ~|~ Z_{n,1}', \ldots, Z_{n,n+1}'\right] \lesssim \frac{(1-4\varepsilon)\omega_n^2 + 4\varepsilon + n^{-1}}{n},
  \end{equation}
  where $\lesssim$ is again hiding a universal constant. The term $4\varepsilon$ comes from the at most $4n\varepsilon$ many blocks $(Z_{n,j}', Z_{n,j+1}') \notin K \times K$, in which case we use trivial bound $|g(x) - g(x')| \leq 1$. The same trivial bound gives us, with $\Omega \setminus \mathcal{E}_n$ denoting the complement of $\mathcal{E}_n$,
  \begin{equation}
    \label{eq:Delta_n_Delta_n_i_conditional_3}
    \mathbb{E}\left[\textbf{1}_{\Omega \setminus \mathcal{E}_n}\left(\Delta_n - \Delta_n^{(i)}\right)^2 ~|~ Z_{n,1}', \ldots, Z_{n,n+1}'\right] \lesssim n^{-1} \textbf{1}_{\Omega \setminus \mathcal{E}_n}.
  \end{equation}
  Eqs.\@ \eqref{eq:Delta_n_Delta_n_i_conditional_2} and \eqref{eq:Delta_n_Delta_n_i_conditional_3} combined with the law of total expectation give us
  $$
    \mathbb{E}\left[\left(\Delta_n - \Delta_n^{(i)}\right)^2\right] \lesssim \frac{(1 - 4\varepsilon)\omega_n^2 + 4\varepsilon + n^{-1} + \mathbb{P}\left(\Omega \setminus \mathcal{E}_n\right)}{n}.
  $$
  By the Efron-Stein inequality \citep[Theorem 3.1 in][]{boucheron_etal:2013} we now get
  \begin{equation}
    \label{eq:efron_stein_Delta_n}
    \mathrm{Var}(\Delta_n) \leq \frac{1}{2} \sum_{i=1}^n \mathbb{E}\left[\left(\Delta_n - \Delta_n^{(i)}\right)^2\right] \lesssim (1 - 4\varepsilon)\omega_n^2 + 4\varepsilon + n^{-1} + \mathbb{P}\left(\Omega \setminus \mathcal{E}_n\right).
  \end{equation}
  Furthermore,
  \begin{align*}
    \mathbb{P}\left(\Omega \setminus \mathcal{E}_n\right) & \leq \mathbb{P}\left(\sum_{j=1}^{n+1} \textbf{1}_K(Z_j) < n(1-2\varepsilon)\right) +  \mathbb{P}\left(D_{n+1} > 5(n+2)^{-1/2}\right) \\
                                                          & \leq \exp\left(-2n\varepsilon^2\right) + C_{1/2} \exp\left(-\sqrt{n+2}\right)
  \end{align*}
  by Hoeffding's inequality and Lemma \ref{lem:spacings_lemma}. $C_{1/2}$ is the universal constant from the statement of that lemma. Finally, $\omega_n \to 0$ as $n \to 0$ because the restriction of $g$ to $K$ is continuous, hence uniformly continuous. Using these observations in Eq.\@ \eqref{eq:efron_stein_Delta_n} reveals that
  $$
    \limsup_{n \to \infty} \mathrm{Var}(\Delta_n) \lesssim 4\varepsilon.
  $$
  Since $\varepsilon$ can be arbitrarily small and the constant hidden by the symbol $\lesssim$ is universal, this proves our claim.
\end{proof}

\begin{lemma}
  \label{lem:Sn_fidi}
  Let $m \in \mathbb{N}$ and suppose that $f_j, g_j : [0,1] \to [0,1]$ are Borel measurable functions for $j = 1, \ldots, m$. Define the random vector $S_n = (S_{n,1}, \ldots, S_{n,m})$ by
  $$
    S_{n,j} = \frac{1}{\sqrt{n}} \sum_{i=1}^n f_j(X_{n,i}')g_j(X_{n,i+1}').
  $$
  Then $S_n - \mathbb{E}S_n \rightsquigarrow \mathcal{N}(0, \Sigma)$ as $n \to \infty$, where $\Sigma_{kl} = \mathrm{Cov}[f_k(X_1)g_k(X_1), f_l(X_1)g_l(X_1)]$.
\end{lemma}
\begin{proof}
  Define the random vectors $\tilde{S}_n = (\tilde{S}_{n,1}, \ldots, \tilde{S}_{n,m})$ and $\Delta_n = (\Delta_{n,1}, \ldots, \Delta_{n,m})$ by
  $$
    \tilde{S}_{n,j} = \frac{1}{\sqrt{n}} \sum_{i=1}^n f_j(X_i)g_j(X_i)
  $$
  and
  $$
    \Delta_{n,j} = \frac{1}{\sqrt{n}} \sum_{i=1}^{n-1} f_j(X_{n,i}')\left[g_j(X_{n,i+1}') - g_j(X_{n,i}')\right].
  $$
  Then, for any $j = 1, \ldots, m$,
  $$
    S_{n,j} = \frac{1}{\sqrt{n}} \sum_{i=1}^{n-1} f_j(X_{n,i}')g_j(X_{n,i}') + \Delta_{n,j} = \tilde{S}_{n,j} + \Delta_{n,j} + \mathcal{O}\left(\frac{1}{\sqrt{n}}\right),
  $$
  with the constant hidden in the Landau symbol being universal. By Lemma \ref{lem:nn_remainder_0}, we get that $\|\Delta_n - \mathbb{E}\Delta_n\|_2^2 \to 0$ in probability as $n \to \infty$. Hence $S_n - \mathbb{E}S_n = \tilde{S}_n - \mathbb{E}\tilde{S}_n + R_n$, where $\|R_n\|_2 \to 0$ in probability. Our claim follows from the usual multivariate central limit theorem \citep[e.g.\@ Theorem 29.5 in][]{billingsley:prob_and_measure} and Slutsky's Lemma.
\end{proof}

We have now established convergence of the finite dimensional projections of the process in \eqref{eq:goal_order}, and it remains to establish the asymptotic tightness, or equivalently, the asymptotic uniform equicontinuity in probability \citep[cf.\@ Section 1.5 in][]{van_der_vaart_wellner:weak_convergence}. For this, we need to bound a single increment of the process, which will later be used in a maximal inequality. We will bound the increments in the Orlicz norm.

Recall the definition of the exponential Orlicz norms $\|\cdot\|_{\psi_r}$, $r \geq 1$,
$$
  \|X\|_{\psi_r} = \inf\left\{C > 0 ~|~ \mathbb{E}\psi_r(X/C) \leq 1\right\},
$$
where $\psi_r(x) = \exp(x^r) - 1$. It is known that if $X$ satisfies the tail bound $\mathbb{P}(|X| \geq t) \leq K \exp(-C t^r)$ for all $t \geq 0$, then $\|X\|_{\psi_r} \leq (1+K)/C$ \citep[this is Lemma 2.2.1 in][]{van_der_vaart_wellner:weak_convergence}. The following lemma is a (very slight) generalisation of this statement, allowing for mixed tail bounds.

\begin{lemma}
  \label{lem:mixed_tailbound}
  Let $K, C_1, C_2, T > 0$ and $1 \leq r \leq 2$ be constants, and $X$ a random variable satisfying the mixed tail bound
  $$
    \mathbb{P}(|X| > t) \leq \begin{cases}
      K \exp(-C_1 t^2) & \textrm{if } t < T,    \\
      K \exp(-C_2 t^r) & \textrm{if } t \geq T.
    \end{cases}
  $$
  for all $t > 0$. Then $\|X\|_{\psi_r} \leq K^* \max\{C_1^{-1/2}, C_2^{-1/r}\}$, where $K^*$ is a constant depending only on $K$.
\end{lemma}
\begin{proof}
  Observe that
  \begin{align*}
    \mathbb{P}\left(\textbf{1}\{|X| \leq T\}|X| > t\right) & \leq \begin{cases}
                                                                    0 & \quad \textrm{if } t \geq T, \\ K \exp(-C_1 t^2) &\quad \textrm{if } t < T
                                                                  \end{cases} \\
                                                           & \leq K \exp(-C_1 t^2)
  \end{align*}
  for all $t > 0$, which implies $\|\textbf{1}\{|X| \leq T\}X\|_{\psi_2} \leq \sqrt{(1+K)/C_1}$ by Lemma 2.2.1 in \cite{van_der_vaart_wellner:weak_convergence}. Similarly,
  \begin{align*}
    \mathbb{P}\left(\textbf{1}\{|X| > T\}|X| > t\right) & \leq \begin{cases}
                                                                 \mathbb{P}(|X| > T) & \quad \textrm{if } t \leq T, \\ K \exp(-C_2 t^r) &\quad \textrm{if } t > T
                                                               \end{cases} \\
                                                        & \leq K \exp\left(- C_2 \max\{t^r, T^r\}\right)                                                       \\
                                                        & \leq K \exp\left(- C_2 t^r\right).
  \end{align*}
  Again by Lemma 2.2.1 in \cite{van_der_vaart_wellner:weak_convergence}, this gives us $\|\textbf{1}\{|X| > T\}X\|_{\psi_r} \leq [(1+K) / C_2]^{1/r}$. Furthermore, $\|\cdot\|_{\psi_r} \leq (\log 2)^{1/2 - 1/r} \|\cdot\|_{\psi_2}$ \citep[Exercise 2.2.5 in][]{van_der_vaart_wellner:weak_convergence}, which, combined with the triangle inequality, yields
  $$
    \|X\|_{\psi_r} \leq (\log 2)^{1/2 - 1/r}\|\textbf{1}\{|X| \leq T\}X\|_{\psi_2} + \|\textbf{1}\{|X| > T\}X\|_{\psi_r}.
  $$
  Our claim now follows from the previously established bounds for the Orlicz norms on the right-hand side, and the inequality $(\log 2)^{1/2 - 1/r} \leq (\log 2)^{-1/2}$.
\end{proof}

We can now prove the main fact about the increments of the process in \eqref{eq:goal_order}. As we have already stated, we will bound each increment in the Orlicz norm (of order $1 < r < 2$), and the bound will be in terms of the product semimetric on $\mathcal{F} \times \mathcal{F}$ induced by the $L_1(X_1)$-seminorm on $\mathcal{F}$. This result should be thought of as bounding the increment in a strong norm (the Orlicz norm) by a weak metric, which is crucial for the maximal inequality which we will employ later.

The statement covers two cases: One where the product semimetric is large, and one where it is small. The latter case is easy; most of the proof is occupied with the former, and it is rather involved. Its general idea is as follows: Since we want to control the increment in the Orlicz norm of order $1 < r < 2$, we will need to establish tail bounds which are somewhere between subexponential and sub-Gaussian. For this, we will use a version of Herbst's argument combined with a modified logarithmic Sobolev inequality. Herbst's argument is the following observation: If $Z$ is an integrable random variable such that
\begin{equation}
  \label{eq:goal_sobolev}
  \mathrm{Ent}\left(e^{\lambda Z}\right) \leq \frac{\lambda^2 v}{2} \mathbb{E}e^{\lambda Z}
\end{equation}
for some $v > 0$ and $\lambda > 0$, then
\begin{equation}
  \label{eq:goal_herbst}
  \log \mathbb{E} e^{\lambda(Z - \mathbb{E}Z)} \leq \frac{\lambda^2 v}{2}
\end{equation}
for all $\lambda > 0$. $\mathrm{Ent}$ denotes the entropy of a random variable defined by
$$
  \mathrm{Ent}(Y) = \mathbb{E}[Y\log Y] - (\mathbb{E} Y) \log \mathbb{E}Y.
$$
The left-hand side in Eq.\@ \eqref{eq:goal_herbst} is the logarithmic moment generating function of $Z - \mathbb{E}Z$, and the bound in that display corresponds to sub-Gaussian tails. The modified logarithmic Sobolev inequality is used to establish Eq.\@ \eqref{eq:goal_sobolev}. It states that, if $Z = f(X_1, \ldots, X_n)$ for independent random variables $X_1, \ldots, X_n$, it holds for all $\lambda > 0$ that
$$
  \mathrm{Ent}(e^{\lambda Z}) \leq \sum_{i=1}^n \mathbb{E}\left[e^{\lambda Z} h(-\lambda [Z - Z_i])\right],
$$
where $h(x) = e^x - x - 1$ and $Z_i = f_i(X_1, \ldots, X_{i-1}, X_{i+1}, \ldots, X_n)$ and arbitrary measurable functions $f_1, \ldots, f_n$. To deal with this part, we will use similar observations as in the proof of Lemma \ref{lem:nn_remainder_0} (how does removing one of the observations $X_1, \ldots, X_n$ affect the order statistics?). To complicate things, we will also have to introduce a certain truncation argument to facilitate analysis of the expression $h(-\lambda [Z - Z_i])$. But this should be thought of as a technical nuisance. The real heart of the proof is the combination of Herbst's argument, the modified logarithmic Sobolev inequality, and the nature of the order statistics and how removing one observation $X_i$ from the sample changes them.

\begin{lemma}
  \label{lem:increments_orlicz}
  Let $f_1, g_1, f_2, g_2 : [0,1] \to [0,1]$ be Borel measurable functions and let us define $\rho = \rho[(f_1,g_1), (f_2,g_2)] = \|f_1(X_1) - f_2(X_1)\|_{L_1} + \|g_1(X_1) - g_2(X_1)\|_{L_1}$. For $j = 1,2$, write
  $$
    S_n(f_j,g_j) = \frac{1}{\sqrt{n}}\sum_{i=1}^{n-1} f_j(X_{n,i}')g_j(X_{n,i+1}').
  $$
  Let $1 < r < 2$ and $\alpha \geq 1$ be two numbers. Then there exists a constant $K$ depending only on $\alpha$ and $r$ such that
  \begin{equation}
    \label{eq:orlicz_statement_large}
    \left\|S_n(f_1,g_1) - \mathbb{E}S_n(f_1,g_1) - S_n(f_2,g_2) + \mathbb{E}S_n(f_2,g_2)\right\|_{\psi_r} \leq K \frac{1}{\log (1 / \rho)},
  \end{equation}
  if $\rho \geq n^{-\alpha}$. On the other hand, there also exists a constant $\tilde{K}$ depending only on $r$ such that
  \begin{equation}
    \label{eq:orlicz_statement_small}
    \left\|S_n(f_1,g_1) - \mathbb{E}S_n(f_1,g_1) - S_n(f_2,g_2) + \mathbb{E}S_n(f_2,g_2)\right\|_{\psi_r} \leq \tilde{K} n^{1/2 - 1/r}
  \end{equation}
  if $\rho \leq n^{-1}$.
\end{lemma}
\begin{proof}
  We begin by proving Eq.\@ \eqref{eq:orlicz_statement_large}. Fix some $0 < s < 1/(2\alpha) - r/(4\alpha)$. We may assume without loss of generality that $\rho \leq e^{-2/s}$. If this is not the case, we can simply replace the functions $f_j$ and $g_j$, $j = 1,2$, with $f_j/(2e^{2/s}) $ and $g_j/(2e^{2/s})$, respectively. This will only increase the bound derived in this proof by a constant factor of $2e^{2/s}$. To account for the fact that this rescaling might affect the assumption $\rho \geq n^{-\alpha}$, we will prove our claim using only the weaker assumption that
  \begin{equation}
    \label{eq:rho_lower_bound_weak}
    \rho \geq \frac{n^{-\alpha}}{2e^{2/s}},
  \end{equation}
  which is always fulfilled.

  Define the function $\phi : [0,\infty) \to [0,1]$ by
  \begin{equation}
    \label{eq:definition_phi_interpolation}
    \phi(x) = \begin{cases}
      1                                                          & \quad \textrm{if } x < \rho^s,                    \\
      \log\left(\rho^{s/2}/x\right)/\log\left(\rho^{-s/2}\right) & \quad \textrm{if } \rho^s \leq x \leq \rho^{s/2}, \\
      0                                                          & \quad \textrm{if } x > \rho^{s/2}.
    \end{cases}
  \end{equation}
  $\phi$ is continuous everywhere and differentiable on $(\rho^s, \rho^{s/2})$ with derivative
  $$
    \phi'(x) = - \frac{2}{x s \log(1/\rho)}.
  $$
  We note for later the following fact: If $\rho^s \leq x \leq y \leq \rho^{s/2}$ with $|x-y| \leq 2/n$, then there exists (by the mean value theorem) an intermediate point $\xi \in [x,y]$ such that
  \begin{equation}
    \label{eq:phi_bound}
    |\phi(x) - \phi(y)| x \sqrt{n} \leq 2 |\phi'(\xi)| x n^{-1/2} = \frac{4x}{\xi s \log(1/\rho)} n^{-1/2} \leq \frac{4}{s \log(1/\rho)} n^{-1/2}.
  \end{equation}
  Because $\phi$ is constant on either side of the interval $[\rho^s, \rho^{s/2}]$, the above inequality even holds for all $0 \leq x \leq y$.

  Next, define the functions $a, \mu_n : [0,1]^n \to [0,1]$ and $b, \mu_{n-1} : [0,1]^{n-1} \to [0,1]$ by
  \begin{align*}
    \mu_n(x_1, \ldots, x_n)         & = \frac{1}{n}\sum_{i=1}^n \{|f_1 - f_2|(x_i) + |g_1 - g_2|(x_i)\},    \\
    \mu_{n-1}(x_1, \ldots, x_{n-1}) & = \frac{1}{n}\sum_{i=1}^{n-1} \{|f_1 - f_2|(x_i) + |g_1 - g_2|(x_i)\}
  \end{align*}
  and
  \begin{align*}
    a(x_1, \ldots, x_n)     & = \phi \circ \mu_n (x_1, \ldots, x_n),         \\
    b(x_1, \ldots, x_{n-1}) & = \phi \circ \mu_{n-1} (x_1, \ldots, x_{n-1}).
  \end{align*}
  Define furthermore the random variables
  $$
    Z = a(X_1, \ldots, X_n) \left[S_n(f_1,g_1) - S_n(f_2,g_2)\right]
  $$
  and, for any $i = 1, \ldots, n$,
  $$
    Z_i = b(X_1, \ldots, X_{i-1}, X_{i+1}, \ldots, X_n)\left[S_n^{(i)}(f_1,g_1) - S_n^{(i)}(f_2, g_2)\right],
  $$
  where $S_n^{(i)}(f_j,g_j)$ is defined just as $S_n(f_j, g_j)$, but based on the sample with $X_i$ removed. More precisely,
  $$
    S_n^{(i)}(f_j, g_j) = \frac{1}{\sqrt{n}}\sum_{k=1}^{n-2} f_j\left(\hat{X}_{n,k}'\right)g_j\left(\hat{X}_{n,k+1}'\right),
  $$
  with $(\hat{X}_1, \ldots, \hat{X}_{n-1}) = (X_1, \ldots, X_{i-1}, X_{i+1}, \ldots, X_n)$. Now it holds that
  \begin{align}
    \begin{split}
      \label{eq:Z-Zni_1}
       & |Z - Z_i|                                                                                                                  \\
       & \leq |a(X_1, \ldots, X_n) - b(X_1, \ldots, X_{i-1}, X_{i+1}, \ldots, X_n)| \cdot |S_n^{(i)}(f_1,g_1) - S_n^{(i)}(f_2,g_2)| \\
       & \quad + |a(X_1, \ldots, X_n)| \cdot \left|S_n(f_1,g_1) - S_n(f_2,g_2) - S_n^{(i)}(f_1,g_1) + S_n^{(i)}(f_2,g_2)\right|.
    \end{split}
  \end{align}
  By using the the triangle inequality, the bound $|xy - uv| \leq |x-u| + |y-v|$, valid for all $-1 \leq x,y,u,v \leq 1$, and the boundedness of the functions $f_j$ and $g_j$, we find that the following two inequalities hold:
  \begin{align}
    \begin{split}
      \label{eq:Sn_myn}
      |S_n(f_1,g_1) - S_n(f_2,g_2)|             & \leq \sqrt{n} \mu_n(X_1, \ldots, X_n),                               \\
      |S_n^{(i)}(f_1,g_1) - S_n^{(i)}(f_2,g_2)| & \leq \sqrt{n} \mu_{n-1}(X_1, \ldots, X_{i-1}, X_{i+1}, \ldots, X_n).
    \end{split}
  \end{align}
  To bound the first term on the right-hand side of Eq.\@ \eqref{eq:Z-Zni_1}, observe that
  $$
    0 \leq \mu_n(X_1, \ldots, X_n) - \mu_{n-1}(X_1, \ldots, X_{i-1}, X_{i+1}, \ldots, X_n) \leq 2/n,
  $$
  and so by Eqs.\@ \eqref{eq:phi_bound} and \eqref{eq:Sn_myn},
  \begin{align}
    \begin{split}
      \label{eq:Z-Zni_2}
       & |a(X_1, \ldots, X_n) - b(X_1, \ldots, X_{i-1}, X_{i+1}, \ldots, X_n)| \cdot |S_n^{(i)}(f_1,g_1) - S_n^{(i)}(f_2,g_2)|   \\
       & \leq \left|\phi \circ \mu_n(X_1, \ldots, X_n) - \phi \circ \mu_{n-1}(X_1, \ldots, X_{i-1}, X_{i+1}, \ldots, X_n)\right| \\
       & \quad \cdot \sqrt{n} \mu_{n-1}(X_1, \ldots, X_{i-1}, X_{i+1}, \ldots, X_n)                                              \\
       & \leq \frac{4}{s \log(1/\rho)} n^{-1/2}.
    \end{split}
  \end{align}

  For the next observation, suppose that $X_i$ corresponds to $X_{n,k}'$ in the order statistics. Then most summands in $S_n(f_1,g_1) - S_n(f_2,g_2)$ will also occur in $S_n^{(i)}(f_1,g_1) - S_n^{(i)}(f_2,g_2)$, and vice-versa. If $1 < k < n$, then the only summands which are not contained in both sums are
  \begin{align*}
     & n^{-1/2}\left[f_1(X_{n,k-1}')g_1(X_{n,k}') - f_2(X_{n,k-1}')g_2(X_{n,k}')\right], \\
     & n^{-1/2}\left[f_1(X_{n,k}')g_1(X_{n,k+1}') - f_2(X_{n,k}')g_2(X_{n,k+1}')\right],
  \end{align*}
  from $S_n(f_1,g_1) - S_n(f_2,g_2)$, and
  $$
    n^{-1/2}\left[f_1(X_{n,k-1}')g_1(X_{n,k+1}') - f_2(X_{n,k-1}')g_2(X_{n,k+1}')\right] \\
  $$
  from $S_n^{(i)}(f_1,g_1) + S_n^{(i)}(f_2,g_2)$. If $k = 1$, i.e.\@ $X_i$ is the smallest observation, then the only summand not contained in both sums is
  $$
    n^{-1/2}\left[f_1(X_{n,k}')g_1(X_{n,k+1}') - f_2(X_{n,k}')g_2(X_{n,k+1}')\right]
  $$
  from $S_n(f_1,g_1) - S_n(f_2,g_2)$. Finally, if $k = n$, i.e.\@ $X_i$ is the largest observation, then the only summand not contained in both sums is
  $$
    n^{-1/2}\left[f_1(X_{n,k-1}')g_1(X_{n,k}') - f_2(X_{n,k-1}')g_2(X_{n,k}')\right],
  $$
  again from $S_n(f_1,g_1) - S_n(f_2,g_2)$. From this it follows that, if $1 < k < n$,
  \begin{align*}
     & \left|S_n(f_1,g_1) - S_n(f_2,g_2) - S_n^{(i)}(f_1,g_1) + S_n^{(i)}(f_2,g_2)\right| \\
     & \leq n^{-1/2} |f_1-f_2|(X_{n,k-1}') + n^{-1/2}|g_1 - g_2|(X_{n,k}')                \\
     & \quad + n^{-1/2}|f_1 - f_2|(X_{n,k}') + n^{-1/2}|g_1 - g_2|(X_{n,k+1}')            \\
     & \quad + n^{-1/2}|f_1 - f_2|(X_{n,k-1}') + n^{-1/2}|g_1 - g_2|(X_{n,k+1}').
  \end{align*}
  If $m \in \mathbb{N}$ is an integer, we can raise both sides to the power $m$ and use the generic inequality $|t_1 + \ldots + t_l|^m \leq l^{m-1}(|t_1|^m + \ldots + |t_l|^m)$ together with the fact that $|f_1 - f_2|^m \leq |f_1 - f_2|$ and $|g_1 - g_2|^m \leq |g_1 - g_2|$ to find that
  \begin{align}
    \begin{split}
      \label{eq:Z-Zni_3}
       & \left|S_n(f_1,g_1) - S_n(f_2,g_2) - S_n^{(i)}(f_1,g_1) + S_n^{(i)}(f_2,g_2)\right|^m         \\
       & \leq 6^{m-1} n^{-m/2} |f_1-f_2|(X_{n,k-1}') + 6^{m-1} n^{-m/2} |g_1 - g_2|(X_{n,k}')         \\
       & \quad + 6^{m-1} n^{-m/2} |f_1 - f_2|(X_{n,k}') + 6^{m-1} n^{-m/2} |g_1 - g_2|(X_{n,k+1}')    \\
       & \quad + 6^{m-1} n^{-m/2} |f_1 - f_2|(X_{n,k-1}') + 6^{m-1} n^{-m/2} |g_1 - g_2|(X_{n,k+1}'),
    \end{split}
  \end{align}
  if $1 < k < n$. Similarly, if $k = 1$, then
  \begin{align}
    \begin{split}
      \label{eq:Z-Zni_4}
       & \left|S_n(f_1,g_1) - S_n(f_2,g_2) - S_n^{(i)}(f_1,g_1) + S_n^{(i)}(f_2,g_2)\right|^m  \\
       & \leq 2^{m-1} n^{-m/2}|f_1 - f_2|(X_{n,k}') + 2^{m-1} n^{-m/2}|g_1 - g_2|(X_{n,k+1}').
    \end{split}
  \end{align}
  for any $m \in \mathbb{N}$. Finally, if $k = n$, then
  \begin{align}
    \begin{split}
      \label{eq:Z-Zni_5}
       & \left|S_n(f_1,g_1) - S_n(f_2,g_2) - S_n^{(i)}(f_1,g_1) + S_n^{(i)}(f_2,g_2)\right|^m   \\
       & \leq 2^{m-1} n^{-m/2}|f_1 - f_2|(X_{n,{k-1}}') + 2^{m-1} n^{-m/2}|g_1 - g_2|(X_{n,k}')
    \end{split}
  \end{align}
  for $m \in \mathbb{N}$.

  We now want to use Eqs.\@ \eqref{eq:Z-Zni_2} through \eqref{eq:Z-Zni_5} to bound the sum
  $$
    \sum_{i=1}^n \left\{a(X_1, \ldots, X_n)\left|S_n(f_1,g_1) - S_n(f_2,g_2) - S_n^{(i)}(f_1,g_1) + S_n^{(i)}(f_2,g_2)\right|\right\}^m
  $$
  for any fixed $m \in \mathbb{N}$. First, since the function $a$ is bounded by $1$ in absolute value, we get
  \begin{align}
    \begin{split}
      \label{eq:sum_investigation}
       & \sum_{i=1}^n \left\{a(X_1, \ldots, X_n)\left|S_n(f_1,g_1) - S_n(f_2,g_2) - S_n^{(i)}(f_1,g_1) + S_n^{(i)}(f_2,g_2)\right|\right\}^m \\
       & \leq \sum_{i=1}^n \left|S_n(f_1,g_1) - S_n(f_2,g_2) - S_n^{(i)}(f_1,g_1) + S_n^{(i)}(f_2,g_2)\right|^m.
    \end{split}
  \end{align}
  Now recall that the index $k$ on the right-hand sides of Eqs.\@ \eqref{eq:Z-Zni_2} through \eqref{eq:Z-Zni_5} actually depends on the index $i$, i.e.\@ $k = k(i)$. For any fixed $i = 1, \ldots, n$, the corresponding summand in the last line of Eq.\@ \eqref{eq:sum_investigation} can be bounded by one of the right-hand sides of Eqs.\@ \eqref{eq:Z-Zni_2} through \eqref{eq:Z-Zni_5}. We can only increase the sum by including all of the right-hand sides of Eqs.\@ \eqref{eq:Z-Zni_2} through \eqref{eq:Z-Zni_5}, i.e.\@
  \begin{align}
    \begin{split}
      \label{eq:sum_investigation_2}
       & \sum_{i=1}^n \left|S_n(f_1,g_1) - S_n(f_2,g_2) - S_n^{(i)}(f_1,g_1) + S_n^{(i)}(f_2,g_2)\right|^m                                                                                         \\
       & \leq \sum_{i=1}^n \{\textrm{RHS of Eqs.\@ \eqref{eq:Z-Zni_3}}\} + \sum_{i=1}^n \{\textrm{RHS of Eqs.\@ \eqref{eq:Z-Zni_4}}\} + \sum_{i=1}^n \{\textrm{RHS of Eqs.\@ \eqref{eq:Z-Zni_5}}\} \\
       & \leq \frac{6^{m-1}}{n^{m/2 - 1}} n^{-1}\sum_{i=1}^n |f_1 - f_2|(X_{n,k(i)-1}') + \frac{6^{m-1}}{n^{m/2 - 1}} n^{-1}\sum_{i=1}^n |g_1-g_2|(X_{n,k(i)}') + \ldots,
    \end{split}
  \end{align}
  where the sum in the last line includes all single terms occurring on the right-hand sides of Eqs.\@ \eqref{eq:Z-Zni_3} through \eqref{eq:Z-Zni_5}. The matching $i \mapsto k$ of an observation $X_i$ to its position $k$ in the order statistics $X_{n,1}' \leq \ldots \leq X_{n,k}' ( = X_i) \leq \ldots \leq X_{n,n}'$ is bijective, because the order statistics are just a (random) permutation of the original sample $X_1, \ldots, X_n$. Therefore,
  $$
    \frac{1}{n}\sum_{i=1}^n |g_1 - g_2|(X_{n,k(i)}') = \frac{1}{n}\sum_{i=1}^n |g_1 - g_2|(X_i),
  $$
  and the same identity holds if we replace $|g_1 - g_2|$ by $|f_1 - f_2|$. This exact equality is not true for summands containing $X_{n,k(i)-1}'$ or $X_{n,k(i)+1}'$, but we have a corresponding inequality. For instance,
  \begin{align*}
    \frac{1}{n}\sum_{i=1}^n |f_1 - f_2|(X_{n,k(i)-1}') & = \frac{1}{n}\sum_{i=1}^n |f_1 - f_2|(X_i) - |f_1-f_2|(X_{n,1}') \\
                                                       & \leq \frac{1}{n}\sum_{i=1}^n |f_1 - f_2|(X_i).
  \end{align*}
  Hence, each sum on the last line in Eq.\@ \eqref{eq:sum_investigation_2} can be bounded by
  $$
    \frac{6^{m-1}}{n^{m/2 - 1}} n^{-1}\sum_{i=1}^n |f_1 - f_2|(X_i)
  $$
  or
  $$
    \frac{6^{m-1}}{n^{m/2 - 1}} n^{-1}\sum_{i=1}^n |g_1 - g_2|(X_i).
  $$
  The last line Eq.\@ \eqref{eq:sum_investigation_2} contains $10$ sums in total, $5$ corresponding to $|f_1 - f_2|$ and $5$ corresponding $|g_1 - g_2|$ (simply count the terms on the right-hand sides of Eqs.\@ \ref{eq:Z-Zni_3} through \ref{eq:Z-Zni_5} to see this). Hence, Eq.\@ \eqref{eq:sum_investigation_2} can further be bounded by
  \begin{equation}
    \label{eq:sum_investigation_3}
    \frac{5 \cdot 6^{m-1}}{n^{m/2 - 1}} n^{-1}\sum_{i=1}^n \left\{|f_1-f_2|(X_i) + |g_1 - g_2|(X_i)\right\},
  \end{equation}
  and of course we can trivially bound the numerator $5 \cdot 6^{m-1}$ by $6^m$ (purely to reduce notation later). All in all, we see from Eqs.\@ \eqref{eq:sum_investigation} through \eqref{eq:sum_investigation_3} that, for any $m \in \mathbb{N}$,
  \begin{align}
    \begin{split}
      \label{eq:sum_investigation_final}
       & \sum_{i=1}^n \left\{a(X_1, \ldots, X_n)\left|S_n(f_1,g_1) - S_n(f_2,g_2) - S_n^{(i)}(f_1,g_1) + S_n^{(i)}(f_2,g_2)\right|\right\}^m \\
       & \leq \sum_{i=1}^n \left|S_n(f_1,g_1) - S_n(f_2,g_2) - S_n^{(i)}(f_1,g_1) + S_n^{(i)}(f_2,g_2)\right|^m                              \\
       & \leq \frac{6^m}{n^{m/2 - 1}} n^{-1}\sum_{i=1}^n \left\{|f_1-f_2|(X_i) + |g_1 - g_2|(X_i)\right\}.
    \end{split}
  \end{align}

  Now combine Eqs.\@ \eqref{eq:Z-Zni_1}, \eqref{eq:Z-Zni_2} and \eqref{eq:sum_investigation_final} to see that, for any $m \in \mathbb{N}$,
  \begin{align*}
     & \sum_{i=1}^n |Z-Z_i|^m                                                                                                                             \\
     & \leq \left(\frac{4}{s \log(1/\rho)}\right)^m n^{-m/2 + 1}                                                                                          \\
     & \quad + \textbf{1}\{a(X_1, \ldots, X_n) > 0\} \frac{6^m}{n^{m/2 - 1}} n^{-1}\sum_{i=1}^n \left\{|f_1-f_2|(X_i) + |g_1 - g_2|(X_i)\right\}          \\
     & \leq \frac{6^m}{n^{m/2 - 1}} \left[\left(\frac{4}{s \log(1/\rho)}\right)^m +  \textbf{1}\{a(X_1, \ldots, X_n) > 0\} \mu_n(X_1, \ldots, X_n)\right] \\
     & \leq \frac{6^m}{n^{m/2 - 1}}\left[\left(\frac{4}{s \log(1/\rho)}\right)^m +  \rho^{s/2}\right].
  \end{align*}
  In the final inequality, we have used that $a(X_1, \ldots, X_n) = \phi \circ \mu_n (X_1, \ldots, X_n) > 0$ also implies that $\mu_n(X_1, \ldots, X_n) \leq \rho^{s/2}$ by definition of $\phi$. For ease of notation, let us write $v = v(\rho) = 4/\{s\log(1/\rho)\}$, then the above inequality may be restated as
  \begin{equation}
    \label{eq:sum_Z-Zi_simple}
    \sum_{i=1}^n |Z-Z_i|^m \leq \frac{6^m}{n^{m/2 - 1}}\left(v^m +  \rho^{s/2}\right).
  \end{equation}

  Define the entropy $\mathrm{Ent}(Y)$ of a real-valued random variable $Y$ by
  $$
    \mathrm{Ent}(Y) = \mathbb{E}[Y \log Y] - \mathbb{E}Y \log \mathbb{E}Y,
  $$
  with the convention that $0 \log 0 = 0$. By Theorem 6.6 in \cite{boucheron_etal:2013}, it holds that
  \begin{equation}
    \label{eq:modified_sobolev_inequality}
    \mathrm{Ent}\left(e^{\lambda Z}\right) \leq \sum_{i=1}^n \mathbb{E}\left[e^{\lambda Z} h(-\lambda[Z - Z_i])\right]
  \end{equation}
  for all $\lambda \in \mathbb{R}$, where $h(x) = e^x - x - 1$. Now we can use the Taylor expansion of the exponential function in $h(x)$ and apply the bound from Eq.\@ \eqref{eq:sum_Z-Zi_simple} to see that, for any $\lambda \geq 0$,
  \begin{align}
    \begin{split}
      \label{eq:h_bound_refined}
      \left|\sum_{i=1}^n h(-\lambda[Z - Z_i])\right| & \leq \sum_{i=1}^n \sum_{m=2}^\infty \frac{\lambda^m |Z - Z_i|^m}{m!} = \sum_{m=2}^\infty \frac{\lambda^m}{m!} \sum_{i=1}^n |Z - Z_i|^m                                                   \\
                                                     & \leq \sum_{m=2}^\infty \frac{(6 \lambda v)^m (n^{-1/2})^{m-2}}{m!} + \rho^{s/2}\sum_{m=2}^\infty \frac{(6 \lambda)^m (n^{-1/2})^{m-2}}{m!}                                               \\
                                                     & = \frac{\lambda^2}{2} (6v)^2 \left[ \sum_{m=0}^\infty \frac{(6 \lambda v / \sqrt{n})^m}{(m+2)!} + \frac{\rho^{s/2}}{v^2}\sum_{m=0}^\infty \frac{(6 \lambda / \sqrt{n})^m}{(m+2)!}\right] \\
                                                     & \leq \frac{\lambda^2}{2} (6v)^2 \left[\exp\left(\frac{6\lambda v}{\sqrt{n}}\right) + \frac{\rho^{s/2}}{v^2} \exp\left(\frac{6\lambda}{\sqrt{n}}\right)\right].
    \end{split}
  \end{align}
  In particular, if $C > 0$ is some constant and $\lambda \leq C \rho^{s/2} \sqrt{n}/v$, then
  \begin{equation}
    \label{eq:exponential_bound}
    \exp\left(\frac{6\lambda v}{\sqrt{n}}\right) + \frac{\rho^{s/2}}{v^2} \exp\left(\frac{6\lambda}{\sqrt{n}}\right) \leq \exp\left(6C \rho^{s/2}\right) + \frac{\rho^{s/2}}{v^2}\exp\left(\frac{6C \rho^{s/2}}{v}\right).
  \end{equation}
  Both the functions $\rho \mapsto \rho^{s/2} v^{-2}$ and  $\rho \mapsto \rho^{s/2} v^{-1}$ are non-negative and continuous on $[0,e^{-2/s}]$, and we can bound their respective maxima by some number $m_s$ depending only on $s$. Combining this observation with Eqs.\@ \eqref{eq:h_bound_refined} and \eqref{eq:exponential_bound}, we find that there exist some constant $M_s(C) > 0$ depending only on $s$ and $C$, such that
  \begin{align}
    \begin{split}
      \label{eq:h_bound_refined_2}
      \left|\sum_{i=1}^n h(-\lambda[Z - Z_i])\right| & \leq \frac{\lambda^2}{2} v^2 M_s(C),
    \end{split}
  \end{align}
  whenever $\lambda \leq C \rho^{s/2} \sqrt{n}/v$. We may assume without loss of generality that $M_s(C) > 1$ for any $C \geq 0$; otherwise replace it with $\max\{1, M_s(C)\}$. Combine Eqs.\@ \eqref{eq:modified_sobolev_inequality} and \eqref{eq:h_bound_refined_2} to see that
  \begin{equation}
    \label{eq:herbst_1_new}
    \mathrm{Ent}\left(e^{\lambda Z}\right) \leq \mathbb{E}\left[e^{\lambda Z}\right] \frac{\lambda^2}{2} v^2 M_s(C)
  \end{equation}
  for any $\lambda \leq C \rho^{s/2} \sqrt{n}/v$ By Herbst's argument \citep[Proposition 6.1 in][]{boucheron_etal:2013}, this implies
  \begin{equation}
    \label{eq:herbst_2_new}
    \log \mathbb{E}\left[\exp\left(\lambda(Z - \mathbb{E}Z)\right)\right] \leq \frac{\lambda^2}{2} v^2 M_s(C)
  \end{equation}
  for any $\lambda \leq C \rho^{s/2} \sqrt{n}/v$. \citep[Note: the cited Proposition 6.1 in][actually starts with the assumption that Eq.\@ \ref{eq:herbst_1_new} holds for all $\lambda > 0$, which then implies that Eq.\@ \ref{eq:herbst_2_new} also holds for all $\lambda >0$. But an inspection of the proof of that proposition reveals that one can indeed relax the assumption like we have done here: if Eq.\@ \ref{eq:herbst_1_new} holds for all $0 < \lambda \leq L$ for some fixed $L > 0$, then Eq.\@ \ref{eq:herbst_2_new} also holds for all $0 < \lambda \leq L$.]{boucheron_etal:2013}

  We are now interested in bounding $Z$ in absolute value. First, by Eq.\@ \eqref{eq:Sn_myn},
  $$
    |Z| \leq \sqrt{n} a(X_1, \ldots, X_n) \mu_n(X_1, \ldots, X_n) = \sqrt{n} \left[\phi \circ \mu_n(X_1, \ldots, X_n)\right] \mu_n(X_1, \ldots, X_n).
  $$
  An upper bound for $x \mapsto \sqrt{n} x \phi(x)$ will therefore also serve as an upper bound for $|Z|$. But it is clear from the definition of $\phi$ in Eq.\@ \eqref{eq:definition_phi_interpolation} that any maximal point $x_0$ of $\phi$ must lie in the interval $[\rho^s, \rho^{s/2}]$. It therefore suffices to find a maximum of the interpolating part of $x \mapsto x \phi(x)$ given by the function
  \begin{align*}
    u : [\rho^s, \rho^{s/2}] & \to [0,\infty),                                         \\
    x                        & \mapsto x \frac{\log(\rho^{s/2}/x)}{\log(\rho^{-s/2})}.
  \end{align*}
  $u$ has a maximum at $x = \rho^{s/2}/e$ with value $u(\rho^{s/2}/e) = 2\rho^{s/2}/\{e s \log (1/\rho)\}$ (the fact that $\rho^{s/2}/e \in [\rho^s, \rho^{s/2}]$ follows from our assumption $\rho \leq e^{-2/s}$). Therefore,
  \begin{equation}
    \label{eq:Z_absolute_bound}
    |Z| \leq\sqrt{n} \max_{\rho^s \leq x \leq \rho^{s/2}} u(x) = e^{-1}\rho^{s/2} \sqrt{n} \frac{2}{s \log(1/\rho)}  = \frac{e^{-1}}{2}  \rho^{s/2} v \sqrt{n}.
  \end{equation}
  Consider some $0 \leq t \leq e^{-1} \rho^{s/2} v \sqrt{n}$. The left-hand side of Eq.\@ \eqref{eq:herbst_2_new} is the log-moment generating function of $Z - \mathbb{E}Z$. If we choose
  $$
    \lambda = \frac{t}{v^2 M_s(1/e)} \leq \frac{\rho^{s/2} v \sqrt{n}}{e v^2 M_s(1/e)} \leq e^{-1} \frac{\rho^{s/2}\sqrt{n}}{v},
  $$
  we see by Chernoff's inequality and Eq.\@ \eqref{eq:herbst_2_new} that
  $$
    \mathbb{P}\left(Z - \mathbb{E}Z \geq t\right) \leq \exp\left(\frac{t^2}{2v^2 M_s(1/e)} - \frac{t^2}{v^2 M_s(1/e)}\right) = \exp\left(- \frac{t^2}{2v^2 M_s(1/e)}\right).
  $$
  We assumed that $0 \leq t \leq e^{-1} \rho^{s/2} v \sqrt{n}$, but in fact this inequality is valid for all $t \geq 0$. To see this, just note that $|Z - \mathbb{E}Z| \leq e^{-1} \rho^{s/2} v \sqrt{n}$ as a consequence of Eq.\@ \eqref{eq:Z_absolute_bound}, and so $\mathbb{P}\left(Z - \mathbb{E}Z \geq t\right) = 0$ for all $t > e^{-1} \rho^{s/2} v \sqrt{n}$. Furthermore, all arguments up to this point remain valid if we replace $Z$ with $-Z$ and $Z_i$ with $-Z_i$. In exactly the same way, we therefore get a bound for the lower tail of $Z-\mathbb{E}Z$,
  $$
    \mathbb{P}\left(Z - \mathbb{E}Z \leq -t\right) = \mathbb{P}\left(- Z - \mathbb{E}[-Z] \geq t\right)\leq \exp\left(- \frac{t^2}{2v^2 M_s(1/e)}\right),
  $$
  for all $t \geq 0$. This implies the following two-sided tail bound for all $t \geq 0$:
  $$
    \mathbb{P}\left(|Z - \mathbb{E}Z| \geq t\right) \leq 2\exp\left(- \frac{t^2}{2v^2 M_s(1/e)}\right).
  $$
  Hence, $Z - \mathbb{E}Z_i$ has sub-Gaussian tails, and by Lemma 2.2.1 in \cite{van_der_vaart_wellner:weak_convergence} its sub-Gaussian Orlicz norm can be bounded by
  \begin{equation}
    \label{eq:orlicz_part_a}
    \|Z - \mathbb{E}Z\|_{\psi_2} \leq v \sqrt{6M_s(1/e)}.
  \end{equation}

  Having bounded the truncated random variable $Z$, we now need to investigate the tail behaviour of the remainder
  $$
    D = S_n(f_1, g_1) - S_n(f_2,g_2) - Z = [1 - a(X_1, \ldots, X_n)]\left\{S_n(f_1, g_1) - S_n(f_2,g_2)\right\}.
  $$
  We observe two things: First, $|D| > 0$ implies that $\mu_n(X_1, \ldots, X_n) \geq \rho^s$ by definition of the function $a(X_1, \ldots, X_n)$. $\mu_n(X_1, \ldots, X_n)$ is simply a mean of i.i.d.\@ random variables and $\rho = \mathbb{E} \mu_n(X_1, \ldots, X_n)$. We can therefore use Hoeffding's inequality \citep[Theorem 2.8 in][]{boucheron_etal:2013} to see that, for any $0 \leq t \leq 2 \sqrt{n}$,
  \begin{align*}
    \mathbb{P}\left(|D| \geq t\right) & \leq \mathbb{P}\left(\mu_n(X_1, \ldots, X_n) \geq \rho^s\right)                                         \\
                                      & = \mathbb{P}\left(\mu_n(X_1, \ldots, X_n) - \mathbb{E}\mu_n(X_1, \ldots, X_n) \geq \rho^s - \rho\right) \\
                                      & \leq \exp\left(-\frac{n (\rho^s - \rho)^2}{2}\right).
  \end{align*}
  Our assumption $\rho \leq e^{-2/s}$ implies $\rho \leq 2^{-1/(1-s)}$, which in turn implies $\rho^s - \rho \geq \rho^s/2$. Inserting this into the last line of the previous display, we obtain
  \begin{align}
    \begin{split}
      \label{eq:D_remainder_1}
      \mathbb{P}\left(|D| \geq t\right) & \leq \exp\left(-\frac{n \rho^{2s}}{8}\right)                            \\
                                        & = \exp\left(- (2 \sqrt{n})^r \frac{n^{1-r/2} \rho^{2s}}{2^{3+r}}\right) \\
                                        & \leq \exp\left(- t^r \frac{n^{1-r/2} \rho^{2s}}{2^{3+r}}\right)
    \end{split}
  \end{align}
  for all $0 \leq t \leq 2\sqrt{n}$. The second observation is that, since
  $$
    |D| \leq \left|S_n(f_1,g_1) - S_n(f_2,g_2)\right| \leq \max\left\{S_n(f_1,g_1), S_n(f_2,g_2)\right\} \leq 2\sqrt{n},
  $$
  we trivially have
  $$
    \mathbb{P}\left(|D| \geq t\right) = 0
  $$
  for any $t > 2\sqrt{n}$. This means that the bound in Eq.\@ \eqref{eq:D_remainder_1} is actually valid for any $t \geq 0$, and so, by Lemma 2.2.1 in \cite{van_der_vaart_wellner:weak_convergence},
  \begin{align}
    \begin{split}
      \label{eq:D_remainder_2}
      \|D\|_{\psi_r} \leq \left[2^{4+r} n^{r/2 - 1} \rho^{-2s}\right]^{1/r} = & 2^{1 + 4/r} n^{1/2 - 1/r} \rho^{-2s/r}                       \\
                                                                              & \leq 2^{1 + (4 + 2s)/r} e^{4/r} n^{1/2 - (1 - 2\alpha s)/r}.
    \end{split}
  \end{align}
  In the last inequality we have used Eq.\@ \eqref{eq:rho_lower_bound_weak}. Furthermore,
  $$
    |\mathbb{E}D| \leq \|D\|_{L_1} \leq \|D\|_{\psi_1} \leq \|D\|_{\psi_r} (\log 2)^{1/r - 1}
  $$
  by the inequalities given on p.\@ 95 of \cite{van_der_vaart_wellner:weak_convergence}. Hence,
  \begin{align}
    \begin{split}
      \label{eq:D_remainder_3}
      \|D - \mathbb{E}D\|_{\psi_r} & \leq \|D\|_{\psi_r} + |\mathbb{E}D| (\log 2)^{-1/r}                                         \\
                                   & \leq \|D\|_{\psi_r} \left[1 + (\log 2)^{-1}\right]                                          \\
                                   & \leq  n^{1/2 - (1 - 2\alpha s)/r} 2^{1 + (4 + 2s)/r} e^{4/r} \left[1 + (\log 2)^{-1}\right] \\
                                   & =:  n^{1/2 - (1 - 2\alpha s)/r} Q(s,r)                                                      \\
    \end{split}
  \end{align}
  by Eq.\@ \eqref{eq:D_remainder_2}. In the first inequality we have used that the constant $1$-function has Orlicz norm $\|1\|_{\psi_r} = (\log 2)^{-1/r}$ \citep[Problem 2.2.2 in][]{van_der_vaart_wellner:weak_convergence}.

  Now by construction of $D$ we have
  $$
    S_n(f_1, g_1) - S_n(f_2,g_2) = D + Z.
  $$
  Therefore, using the inequality $\|\cdot\|_{\psi_r} \leq \|\cdot\|_{\psi_2} (\log 2)^{1/2 - 1/r}$ \citep[p.\@ 95 in][]{van_der_vaart_wellner:weak_convergence}, Eqs.\@ \eqref{eq:orlicz_part_a} and \eqref{eq:D_remainder_3} give us
  \begin{align}
    \begin{split}
      \label{eq:final_orlicz_bound}
       & \|S_n(f_1, g_1)- \mathbb{E}S_n(f_1, g_1) - S_n(f_2,g_2) + \mathbb{E}S_n(f_2,g_2)\|_{\psi_r}              \\
       & = \|D + Z - \mathbb{E}[D + Z]\|_{\psi_r}                                                                 \\
       & \leq \|Z - \mathbb{E}Z\|_{\psi_2} (\log 2)^{1/2 - 1/r} + \|D - \mathbb{E}D\|_{\psi_r}                    \\
       & \leq v \sqrt{6 M_s(1/e)}(\log 2)^{1/2 - 1/r} + n^{1/2 - (1 - 2s)/r} Q(s,r)                               \\
       & \leq v \left[\sqrt{6 M_s(1/e)}(\log 2)^{1/2 - 1/r} + Q(s,r)\frac{n^{1/2 - (1 - 2\alpha s)/r}}{v}\right].
    \end{split}
  \end{align}
  Finally, use the definition of $v$ and the lower bound on $\rho$ from Eq.\@ \eqref{eq:rho_lower_bound_weak} to find that
  $$
    \frac{n^{1/2 - (1 - 2\alpha s)/r}}{v} = \frac{s n^{1/2 - (1 - 2\alpha s)/r}}{4}\log(1/\rho) \leq \frac{sn^{1/2 - (1 - 2\alpha s)/r}}{4} \log\left(2e^{2/s} n^\alpha\right).
  $$
  Since $0 < s < 1/(2\alpha) - r/(4\alpha)$, we have $1/2 - (1-2\alpha s)/r < 0$, and so the right-hand side converges to $0$ for $n \to \infty$. In particular, the right-hand side (understood as a sequence in $n$) attains a finite maximum which depends only on $\alpha$, $s$ and $r$, and therefore
  \begin{align*}
     & \|S_n(f_1, g_1)- \mathbb{E}S_n(f_1, g_1) - S_n(f_2,g_2) + \mathbb{E}S_n(f_2,g_2)\|_{\psi_r}                                                                                          \\
     & \leq v \left[\sqrt{6 M_s(1/e)}(\log 2)^{1/2 - 1/r} + Q(s,r) \max_{n \in \mathbb{N}} \left\{\frac{sn^{1/2 - (1 - 2\alpha s)/r}}{4} \log\left(2e^{2/s} n^\alpha \right)\right\}\right]
  \end{align*}
  by Eq.\@ \eqref{eq:final_orlicz_bound}. A valid choice for the constant $K$ from Eq.\@ \eqref{eq:orlicz_statement_large} is given by $(4/s) \cdot 2e^{2/s} = 8e^{2/s}/s$ times the bracketed term in the preceding display. The factor $4/s$ is the one included in the definition of $v$, and the factor $2e^{2/s}$ accounts for the fact that we assumed in the beginning that $\rho \leq e^{-2/s}$ (see the paragraph before Eq.\@ \ref{eq:rho_lower_bound_weak}). To make the constant $K$ depend only on $\alpha$ and $r$ (and not on $s$), all we need to do is specify some arbitrary but unique way to choose the constant $s$. For instance, we could simply pick $s = 1/(2\alpha) - r/(8\alpha)$. This concludes the proof of Eq.\@ \eqref{eq:orlicz_statement_large}.

  The proof of Eq.\@ \eqref{eq:orlicz_statement_small} is much simpler. If $\rho \leq n^{-1}$, then we know from Eq.\@ \eqref{eq:Sn_myn} that
  \begin{align}
    \begin{split}
      \label{eq:small_rho_1}
       & \left\|S_n(f_1,g_1) - \mathbb{E}S_n(f_1,g_1) - S_n(f_2,g_2) + \mathbb{E}S_n(f_2,g_2)\right\|_{\psi_r}                                                                 \\
       & \leq \left\|S_n(f_1,g_1) - S_n(f_2,g_2)\right\|_{\psi_r} + \left\|\mathbb{E}S_n(f_1,g_1) - \mathbb{E}S_n(f_2,g_2)\right\|_{\psi_r}                                    \\
       & \leq \sqrt{n}\left\|\mu_n(X_1, \ldots, X_n)\right\|_{\psi_r} + \sqrt{n} \left\|\mathbb{E}\mu_n(X_1, \ldots,X_n)\right\|_{\psi_r}                                      \\
       & \leq \sqrt{n}\left\|\mu_n(X_1, \ldots, X_n) - \mathbb{E}\mu_n(X_1, \ldots,X_n)\right\|_{\psi_r} + 2 \sqrt{n} \left\|\mathbb{E}\mu_n(X_1, \ldots,X_n)\right\|_{\psi_r} \\
       & = \sqrt{n}\left\|\mu_n(X_1, \ldots, X_n) - \mathbb{E}\mu_n(X_1, \ldots,X_n)\right\|_{\psi_r} + 2 \sqrt{n} \rho (\log 2)^{-1/r}.
    \end{split}
  \end{align}
  Here we have used that $\mathbb{E}\mu_n(X_1, \ldots, X_n) = \rho$ and the fact that the Orlicz norms are monotone, i.e.\@ if $|X| \leq |Y|$ almost surely, then $\|X\|_{\psi_r} \leq \|Y\|_{\psi_r}$. Since $\rho \leq n^{-1}$ by assumption, we immediately get
  \begin{equation}
    \label{eq:small_rho_2}
    2\sqrt{n} \rho (\log 2)^{-1/r} < 2 (\log 2)^{-1/r} n^{-1/2}.
  \end{equation}
  On the other hand, observe that
  $$
    n \mu_n(X_1, \ldots, X_n) = \sum_{i=1}^n \{|f_1 - f_2|(X_i) + |g_1 - g_2|(X_i)\}
  $$
  is just the sum of $n$ i.i.d.\@ observations, each of which takes values in the interval $[0,2]$, and
  $$
    \sum_{i=1}^n \mathbb{E}\left[\{|f_1 - f_2|(X_i) + |g_1 - g_2|(X_i)\}^2\right] \leq 2n \rho.
  $$
  Hence, by Bernstein's inequality \citep[Corollary 2.11 in][]{boucheron_etal:2013}, it holds for any $0 < t \leq 4$ that
  \begin{align}
    \begin{split}
      \label{eq:small_rho_3}
       & \mathbb{P}\left(|\mu_n(X_1, \ldots, X_n) - \mathbb{E}\mu_n(X_1, \ldots, X_n)| \geq t\right)      \\
       & = \mathbb{P}\left(n |\mu_n(X_1, \ldots, X_n) - \mathbb{E}\mu_n(X_1, \ldots, X_n)| \geq nt\right) \\
       & \leq 2\exp\left(- \frac{n^2 t^2}{2(2n\rho + 2nt/3)}\right)                                       \\
       & \leq 2\exp\left(- \frac{n t^2}{4(\rho + t/3)}\right).
    \end{split}
  \end{align}
  If $t < \rho$, then $\rho + t/3 < 4\rho/3 < 4n^{-1}/3$, and so
  \begin{equation}
    \label{eq:small_rho_3a}
    \exp\left(- \frac{n t^2}{4(\rho + t/3)}\right) \leq \exp\left(- \frac{n^{2} t^2}{16/3}\right).
  \end{equation}
  On the other hand, if $\rho \leq t \leq 4$, then $\rho + t/3 \leq 4t/3$, and so
  \begin{equation}
    \label{eq:small_rho_3b}
    \exp\left(- \frac{n t^2}{4(\rho + t/3)}\right) \leq \exp\left(- \frac{nt}{16/3}\right) = \exp\left(- \frac{nt^r}{16/3} t^{1-r}\right) \leq \exp\left(- \frac{nt^r}{16/3} 4^{1-r}\right).
  \end{equation}
  The last inequality holds because $0 < t \leq 4$ and $r \geq 1$, and so $t^{1-r} \geq 4^{1-r}$. On the other hand, if $t \geq 4$, then trivially
  \begin{equation}
    \label{eq:small_rho_3c}
    \mathbb{P}\left(|\mu_n(X_1, \ldots, X_n) - \mathbb{E}\mu_n(X_1, \ldots, X_n)| \geq t\right) = 0,
  \end{equation}
  since $|\mu_n(X_1, \ldots, X_n)| \leq 2$. From Eqs.\@ \eqref{eq:small_rho_3} through \eqref{eq:small_rho_3c} and Lemma \ref{lem:mixed_tailbound}, we get
  \begin{align}
    \begin{split}
      \label{eq:small_rho_4}
       & \left\|\mu_n(X_1, \ldots, X_n) - \mathbb{E}\mu_n(X_1, \ldots, X_n)\right\|_{\psi_r}                          \\
       & \leq K^* \max\left\{n^{-1}\frac{\sqrt{3}}{4}, n^{-1/r} \left(\frac{3 \cdot 4^{r-1}}{16}\right)^{1/r}\right\} \\
       & \leq K_r^{**} n^{-1/r},
    \end{split}
  \end{align}
  where $K^*$ is a universal constant and $K_r^{**}$ depends on $r$. Combine Eqs.\@ \eqref{eq:small_rho_1}, \eqref{eq:small_rho_2} and \eqref{eq:small_rho_4} to see that
  \begin{align*}
     & \left\|S_n(f_1,g_1) - \mathbb{E}S_n(f_1,g_1) - S_n(f_2,g_2) + \mathbb{E}S_n(f_2,g_2)\right\|_{\psi_r} \\
     & \leq n^{1/2 - 1/r} \left(K_r^{**} + 2(\log 2)^{-1/r}\right).
  \end{align*}
  This proves Eq.\@ \eqref{eq:orlicz_statement_small}.
\end{proof}

Finally, we can prove the main theorem of this section, which is the convergence of the process in \eqref{eq:goal_order}. Compared with the proof of Lemma \ref{lem:increments_orlicz}, this one is easy. It consists of an application of a known chaining-type result by \cite{kley_etal:2016} to get from a single increment as in Lemma \ref{lem:increments_orlicz} to a bound for the supremal increment. The remaining steps are technical arguments to show that the resulting bound is small.

\begin{theorem}
  \label{thm:Sn_convergence}
  Let $(X_k)_{k \in \mathbb{N}}$ be a $[0,1]$-valued i.i.d.\@ process. Let $\mathcal{F}$ be a class of Borel measurable functions from $[0,1]$ to $[0,1]$ with $N_{[\,]}(\varepsilon, \mathcal{F}, L_1(X_1)) \leq K_{[\,]}\varepsilon^{-s}$ for some fixed constants $s, K_{[\,]} \geq 1$. Assume further that there is a countable subset $\mathcal{F}_0 \subseteq \mathcal{F}$ which is dense in $\mathcal{F}$ with respect to pointwise convergence. For $n \in \mathbb{N}$, define the process $S_n \in \ell^\infty(\mathcal{F} \times \mathcal{F})$ by
  $$
    S_n(f,g) = \frac{1}{\sqrt{n}} \sum_{i=1}^n f(X_{n,i}')g(X_{n,i+1}').
  $$
  Then $S_n - \mathbb{E}S_n \rightsquigarrow G$ as $n \to \infty$, where $G$ is a tight mean-zero Gaussian process with covariance function
  $$
    \gamma[(f,g), (f',g')] = \mathrm{Cov}[f(X_1)g(X_1), f'(X_1)g'(X_1)].
  $$
\end{theorem}
\begin{proof}
  For any fixed collection $(f_1, g_1), \ldots, (f_m, g_m) \in \mathcal{F} \times \mathcal{F}$, we have
  $$
    \left[S_n(f_j,g_j)\right]_{j=1, \ldots, m} \rightsquigarrow \left[G(f_j,g_j)\right]_{j=1, \ldots, m}
  $$
  in $\mathbb{R}^m$ by Lemma \ref{lem:Sn_fidi}. It remains to establish asymptotic tightness in $\ell^\infty(\mathcal{F} \times \mathcal{F})$ of the sequence $S_n - \mathbb{E}S_n$, $n \in \mathbb{N}$.

  Assume without loss of generality that the $L_1(X_1)$-diameter of $\mathcal{F}$ does not exceed $e^{-2}/2$. If this is not the case, we can instead prove asymptotic tightness of the rescaled sequence $(S_n - \mathbb{E}S_n)/[e^{-2}/2]$, which then also implies asymptotic tightness of the original sequence $S_n - \mathbb{E}S_n$.

  We define two semimetrics $\rho$ and $d$ on $\mathcal{F} \times \mathcal{F}$, given by
  $$
    \rho\left[(f,g), (f',g')\right] = \mathbb{E}|f(X_1) - f'(X_1)| + \mathbb{E}|g(X_1) - g'(X_1)|
  $$
  and
  $$
    d = \frac{1}{\log (1/\rho)}.
  $$
  Since the $L_1(X_1)$-diameter of $\mathcal{F}$ is bounded by $e^{-2}/2$, the $\rho$-diameter of $\mathcal{F} \times \mathcal{F}$ is bounded by $e^{-2}$. The function $x \mapsto 1/\log(1/x)$ is concave on $[0,e^{-2}]$, and so $d$ really is a well-defined semimetric on $\mathcal{F} \times \mathcal{F}$ (every non-negative concave function on the real line is subadditive). We also point out that $S_n$ has measurable suprema. More precisely, it holds for any $\delta > 0$ that
  \begin{align*}
     & \sup_{f,g,f',g' \in \mathcal{F} : d[(f,g), (f',g')] < \delta} \left|S_n(f,g) - \mathbb{E}S_n(f,g) - S_n(f',g') + \mathbb{E}S_n(f',g')\right|      \\
     & = \sup_{f,g,f',g' \in \mathcal{F}_0 : d[(f,g), (f',g')] < \delta} \left|S_n(f,g) - \mathbb{E}S_n(f,g) - S_n(f',g') + \mathbb{E}S_n(f',g')\right|.
  \end{align*}
  The right-hand side (and therefore also the left-hand side) is measurable because the supremum runs over the countable set $\mathcal{F}_0$. Finally, since any $\varepsilon$-ball with respect to $d$ defines an $\exp(-1/\varepsilon)$-ball with respect to $\rho$, we have
  \begin{equation}
    \label{eq:packing_number_d}
    D(\varepsilon, \mathcal{F} \times \mathcal{F}, d) \leq N(\varepsilon/2, \mathcal{F} \times \mathcal{F}, d) \leq N_{[\,]}(\exp(-2/\varepsilon), \mathcal{F} \times \mathcal{F}, \rho) \leq K_{[\,]} \exp(2s/\varepsilon).
  \end{equation}
  $D(\varepsilon, \mathcal{F} \times \mathcal{F}, d)$ denotes the packing number of $\mathcal{F} \times \mathcal{F}$ with respect to $d$, i.e.\@ the maximal number of $\varepsilon$-separated $d$-balls in $\mathcal{F} \times \mathcal{F}$. The first inequality can be found on p.\@ 98 in \cite{van_der_vaart_wellner:weak_convergence}.

  We will prove asymptotic tightness by showing asymptotic uniform $d$-equicontinuity in probability, i.e.\@ we will show that for any $\mu, \nu > 0$ there exists a $\delta > 0$ such that
  \begin{equation}
    \label{eq:zz_asymptotic_d_equicont}
    \limsup_{n \to \infty} \mathbb{P}\left(\sup_{d[(f,g), (f',g')] < \delta} \left|S_n(f,g) - \mathbb{E}S_n(f,g) - S_n(f',g') + \mathbb{E}S_n(f',g')\right| > \mu \right) < \nu.
  \end{equation}
  To this end, fix some $\mu, \nu > 0$. Pick some $1 < r < 2$ and $\alpha > 2$, and define $\bar{\eta} = 2/[\alpha \log n]$. Then $d[(f,g), (f',g')] \geq \bar{\eta}/2$ is equivalent to $\rho[(f,g), (f',g')] \geq n^{-\alpha}$. Therefore, by Lemma \ref{lem:increments_orlicz}, there exists a constant $K$ depending only on $r$ and $\alpha$ such that
  \begin{equation}
    \label{eq:lemma_increment_application}
    \left\|S_n(f_1,g_1) - \mathbb{E}S_n(f_1,g_1) - S_n(f_2,g_2) + \mathbb{E}S_n(f_2,g_2)\right\|_{\psi_r} \leq K d[(f,g), (f',g')]
  \end{equation}
  for all $(f,g), (f',g') \in \mathcal{F} \times \mathcal{F}$ with $d[(f,g),(f',g')] \geq \bar{\eta}/2$. By Lemma A.1 in \cite{kley_etal:2016}, there exists for any $\eta \geq \bar{\eta}$ a random variable $Z$ and a subset $\tilde{T} \subseteq \mathcal{F} \times \mathcal{F}$ such that
  \begin{align}
    \begin{split}
      \label{eq:kley_statement}
       & \sup_{d[(f,g), (f',g')] \leq \delta} \left|S_n(f,g) - \mathbb{E}S_n(f,g) - S_n(f',g') + \mathbb{E}S_n(f',g')\right|                                         \\
       & \leq Z + 2 \sup_{d[(f,g), (f',g')] \leq \bar{\eta} : (f',g') \in \tilde{T}} \left|S_n(f,g) - \mathbb{E}S_n(f,g) - S_n(f',g') + \mathbb{E}S_n(f',g')\right|,
    \end{split}
  \end{align}
  where
  \begin{equation}
    \label{eq:kley_rv_bound}
    \|Z\|_{\psi_r} \lesssim \int_{\bar{\eta}/2}^\eta \psi_r^{-1}[D(\varepsilon, \mathcal{F} \times \mathcal{F}, d)] ~\mathrm{d}\varepsilon + (\delta + 2\bar{\eta}) \psi_r^{-1}[D^2(\eta, \mathcal{F} \times \mathcal{F}, d)],
  \end{equation}
  and the set $\tilde{T}$ has at most $D(\bar{\eta}, \mathcal{F} \times \mathcal{F}, d)$ elements. The symbol $\lesssim$ in Eq.\@ \eqref{eq:kley_rv_bound} is hiding a constant which depends only on $r$ through the norm $\psi_r$ and the constant $K$ from Eq.\@ \eqref{eq:lemma_increment_application}. This last claim is not included in the statement of Lemma A.1 in \cite{kley_etal:2016}, but it is stated in the proof of that lemma.

  We now choose $\eta = \delta$. We also point out (only for notational simplicity) that if $\delta > 0$ is sufficiently small that $D(\delta, \mathcal{F} \times \mathcal{F}, d) \geq 2$, it also holds that
  \begin{equation}
    \label{eq:psi_r_inverse}
    \psi_r^{-1}[D(\varepsilon, \mathcal{F} \times \mathcal{F}, d)] = \left[\log\left(1 + D(\varepsilon, \mathcal{F} \times \mathcal{F}, d)\right)\right]^{1/r} \leq \left[2 \log D(\varepsilon, \mathcal{F} \times \mathcal{F}, d)\right]^{1/r}
  \end{equation}
  for any $\varepsilon \leq \delta$. From Eqs.\@ \eqref{eq:packing_number_d}, \eqref{eq:kley_rv_bound} and \eqref{eq:psi_r_inverse}, we now immediately get
  \begin{align}
    \begin{split}
      \label{eq:kley_rv_bound_2}
      \|Z\|_{\psi_r} & \lesssim \int_0^\delta \left(\frac{4s}{\varepsilon}\right)^{1/r} ~\mathrm{d}\varepsilon + (\delta + 2\bar{\eta}) \left(\frac{8s}{\delta}\right)^{1/r} \\
                     & \xrightarrow[n \to \infty]{} \int_0^\delta \left(\frac{4s}{\varepsilon}\right)^{1/r} ~\mathrm{d}\varepsilon + (8s)^{1/r} \delta^{1 - 1/r}
    \end{split}
  \end{align}
  for sufficiently small $\delta$.

  It remains to consider the supremum on the right-hand side of Eq.\@ \eqref{eq:kley_statement}. For this, let $[l_i, u_i]$, $i = 1, \ldots, M_1$ be a collection of $n^{-\alpha/2}/2$-brackets of $\mathcal{F}$ with respect to $L_1(X_1)$. By assumption, we can choose $M_1 \lesssim n^{s\alpha/2}$. We may also assume without loss of generality that $0 \leq l_i, u_i \leq 1$ for all $i = 1, \ldots, M_1$, since otherwise we can replace $l_i$ by
  $$
    \tilde{l}_i = \begin{cases}
      0 & \quad \textrm{if } l_i < 0, \\ l_i &\quad \textrm{if } 0 \leq l_i \leq 1, \\ 1 &\quad \textrm{if } l_i > 1,
    \end{cases}
  $$ and $u_i$ by an analogously defined $\tilde{u}_i$. Furthermore, let us denote the elements of $\tilde{T}$ by $(\tilde{f}_k, \tilde{g}_k)$, $k = 1, \ldots, M_2$ for some $M_2 \leq D(\bar{\eta}, \mathcal{F} \times \mathcal{F}, d)$. Now for $i,j = 1, \ldots, M_1$ and $k = 1, \ldots, M_2$, define the sets
  \begin{equation}
    \label{eq:Uijk_definition}
    U_{ijk} = \left\{(f,g) \in \mathcal{F} \times \mathcal{F} ~|~ (f,g) \in [l_i, u_i] \times [l_j, u_j] \land d[(f,g), (\tilde{f}_k, \tilde{g}_k)] \leq \bar{\eta}\right\}.
  \end{equation}
  Suppose that $(f,g) \in U_{ijk}$. Then
  \begin{align*}
     & S_n(f,g) - \mathbb{E}S_n(f,g) - S_n(\tilde{f}_k, \tilde{g}_k) + \mathbb{E}S_n(\tilde{f}_k, \tilde{g}_k)                                                                                        \\
     & = \frac{1}{\sqrt{n}} \sum_{t = 1}^{n-1} \left\{f(X_{n,t}')g(X_{n,t+1}') - \mathbb{E}f(X_{n,t}')g(X_{n,t+1}')\right\} - S_n(\tilde{f}_k, \tilde{g}_k) + \mathbb{E}S_n(\tilde{f}_k, \tilde{g}_k) \\
     & \leq \frac{1}{\sqrt{n}} \sum_{t = 1}^{n-1} \left\{u_i(X_{n,t}')u_j(X_{n,t+1}') - \mathbb{E}u_i(X_{n,t}')u_j(X_{n,t+1}')\right\}                                                                \\
     & \quad + \frac{1}{\sqrt{n}} \sum_{t = 1}^{n-1} \mathbb{E}\left[u_i(X_{n,t}')u_j(X_{n,t+1}') - f(X_{n,t}')g(X_{n,t+1}')\right]                                                                   \\
     & \quad - S_n(\tilde{f}_k, \tilde{g}_k) + \mathbb{E}S_n(\tilde{f}_k, \tilde{g}_k)                                                                                                                \\
     & = S_n(u_i,u_j) - \mathbb{E}S_n(u_i,u_j) - S_n(\tilde{f}_k, \tilde{g}_k) + \mathbb{E}S_n(\tilde{f}_k, \tilde{g}_k)                                                                              \\
     & \quad + \frac{1}{\sqrt{n}} \sum_{t = 1}^{n-1} \mathbb{E}\left[u_i(X_{n,t}')u_j(X_{n,t+1}') - f(X_{n,t}')g(X_{n,t+1}')\right].
  \end{align*}
  We can bound the sum in the last line by observing that, for any $t = 1, \ldots, n-1$,
  \begin{align*}
     & \mathbb{E}\left[u_i(X_{n,t}')u_j(X_{n,t+1}') - f(X_{n,t}')g(X_{n,t+1}')\right]                 \\
     & \leq \mathbb{E}[u_i(X_{n,t}') - f(X_{n,t}')] + \mathbb{E}[u_j(X_{n,t+1}') - g(X_{n,t+1}')]     \\
     & \leq \mathbb{E}[u_i(X_{n,t}') - l_i(X_{n,t}')] + \mathbb{E}[u_j(X_{n,t+1}') - l_j(X_{n,t+1}')] \\
     & = \mathbb{E}[u_i(X_1) - l_i(X_1)] + \mathbb{E}[u_j(X_1) - l_j(X_1)]                            \\
     & \leq  n^{-\alpha/2}.
  \end{align*}
  Therefore,
  \begin{align}
    \begin{split}
      \label{eq:T_tilde_1}
       & S_n(f,g) - \mathbb{E}S_n(f,g) - S_n(\tilde{f}_k, \tilde{g}_k) + \mathbb{E}S_n(\tilde{f}_k, \tilde{g}_k)                                  \\
       & \leq S_n(u_i,u_j) - \mathbb{E}S_n(u_i,u_j) - S_n(\tilde{f}_k, \tilde{g}_k) + \mathbb{E}S_n(\tilde{f}_k, \tilde{g}_k) + n^{(1-\alpha)/2}.
    \end{split}
  \end{align}
  In a similar way, one shows that
  \begin{align}
    \begin{split}
      \label{eq:T_tilde_2}
       & S_n(f,g) - \mathbb{E}S_n(f,g) - S_n(\tilde{f}_k, \tilde{g}_k) + \mathbb{E}S_n(\tilde{f}_k, \tilde{g}_k)                                  \\
       & \geq S_n(l_i,l_j) - \mathbb{E}S_n(l_i,l_j) - S_n(\tilde{f}_k, \tilde{g}_k) + \mathbb{E}S_n(\tilde{f}_k, \tilde{g}_k) - n^{(1-\alpha)/2}.
    \end{split}
  \end{align}
  Eqs.\@ \eqref{eq:T_tilde_1} and \eqref{eq:T_tilde_2} together imply that
  \begin{align}
    \begin{split}
      \label{eq:T_tilde_3}
       & \left|S_n(f,g) - \mathbb{E}S_n(f,g) - S_n(\tilde{f}_k, \tilde{g}_k) + \mathbb{E}S_n(\tilde{f}_k, \tilde{g}_k)\right|                 \\
       & \leq \left|S_n(u_i,u_j) - \mathbb{E}S_n(u_i,u_j) - S_n(\tilde{f}_k, \tilde{g}_k) + \mathbb{E}S_n(\tilde{f}_k, \tilde{g}_k)\right|    \\
       & \quad + \left|S_n(l_i,l_j) - \mathbb{E}S_n(l_i,l_j) - S_n(\tilde{f}_k, \tilde{g}_k) + \mathbb{E}S_n(\tilde{f}_k, \tilde{g}_k)\right| \\
       & \quad + n^{(1-\alpha)/2}.
    \end{split}
  \end{align}
  Furthermore, because the cartesian products $[l_i, u_i] \times [l_j, u_j]$ cover $\mathcal{F} \times \mathcal{F}$, any $(f,g) \in \mathcal{F} \times \mathcal{F}$ with $d[(f,g), (\tilde{f}_k, \tilde{g}_k)] \leq \bar{\eta}$ must lie in some $U_{ijk}$. From this observation and Eq.\@ \eqref{eq:T_tilde_3}, we see that
  \begin{align}
    \begin{split}
      \label{eq:T_tilde_orlicz_1}
       & \left\|\sup_{d[(f,g), (f',g')] \leq \bar{\eta} : (f',g') \in \tilde{T}} \left|S_n(f,g) - \mathbb{E}S_n(f,g) - S_n(f',g') + \mathbb{E}S_n(f',g')\right|\right\|_{\psi_r}                                                        \\
       & \leq \left\|\max_{i,j = 1, \ldots, M_1} \max_{k=1, \ldots, M_2} \sup_{(f,g) \in U_{ijk}} \left|S_n(f,g) - \mathbb{E}S_n(f,g) - S_n(\tilde{f}_k, \tilde{g}_k) + \mathbb{E}S_n(\tilde{f}_k, \tilde{g}_k)\right|\right\|_{\psi_r} \\
       & \leq \left\|\max_{i,j = 1, \ldots, M_1} \max_{k=1, \ldots, M_2} \left|S_n(u_i,u_j) - \mathbb{E}S_n(u_i,u_j) - S_n(\tilde{f}_k, \tilde{g}_k) + \mathbb{E}S_n(\tilde{f}_k, \tilde{g}_k)\right| \right\|_{\psi_r}                 \\
       & \quad + \left\|\max_{i,j = 1, \ldots, M_1} \max_{k=1, \ldots, M_2} \left|S_n(l_i,l_j) - \mathbb{E}S_n(l_i,l_j) - S_n(\tilde{f}_k, \tilde{g}_k) + \mathbb{E}S_n(\tilde{f}_k, \tilde{g}_k)\right| \right\|_{\psi_r}              \\
       & \quad + n^{(1-\alpha)/2}.
    \end{split}
  \end{align}
  Because $[l_i, u_i]$ and $[l_j, u_j]$ are $n^{-\alpha/2}/2$-brackets with respect to $L_1(X_1)$, it follows that
  \begin{align*}
    \rho[(u_i, u_j), (\tilde{f}_k, \tilde{g}_k)] & \leq \rho[(u_i, u_j), (f,g)] + \rho[(f,g), (\tilde{f}_k, \tilde{g}_k)] \\
                                                 & \leq n^{-\alpha/2} + n^{-\alpha/2}                                     \\
                                                 & = 2 n^{-\alpha/2}.
  \end{align*}
  Here, $(f,g)$ is an arbitrary element of $U_{ijk}$. In the second inequality, we have used the definition of $\rho$ and the fact that $d[(f,g), (\tilde{f}_k, \tilde{g}_k)] \leq \bar{\eta}$ since $(f,g) \in U_{ijk}$, which is equivalent to $\rho[(f,g), (\tilde{f}_k, \tilde{g}_k)] \leq n^{-\alpha/2}$. Now apply the second part of Lemma \ref{lem:increments_orlicz} to find that
  \begin{equation}
    \label{eq:T_tilde_orlicz_2}
    \left\|S_n(u_i,u_j) - \mathbb{E}S_n(u_i,u_j) - S_n(\tilde{f}_k, \tilde{g}_k) + \mathbb{E}S_n(\tilde{f}_k, \tilde{g}_k)\right\|_{\psi_r} \lesssim n^{1/2 - 1/r}
  \end{equation}
  for sufficiently large $n$, where $\lesssim$ is hiding a constant depending only on $r$. The notion of `sufficiently large $n$' only depends on $\alpha$; the threshold is the smallest $n$ for which $2n^{-\alpha/2} < n^{-1}$, so that Eq.\@ \eqref{eq:orlicz_statement_small} becomes applicable. Eq.\@ \eqref{eq:T_tilde_orlicz_2} and Lemma 2.2.2 in \cite{van_der_vaart_wellner:weak_convergence} now give us
  \begin{align*}
     & \left\|\max_{i,j = 1, \ldots, M_1} \max_{k=1, \ldots, M_2} \left|S_n(u_i,u_j) - \mathbb{E}S_n(u_i,u_j) - S_n(\tilde{f}_k, \tilde{g}_k) + \mathbb{E}S_n(\tilde{f}_k, \tilde{g}_k)\right| \right\|_{\psi_r} \\
     & \lesssim \psi_r^{-1}(M_1^2 M_2) n^{1/2 - 1/r}
  \end{align*}
  for sufficiently large $n$. The symbol $\lesssim$ is hiding constants depending only on $r$ and $\alpha$. Now use the bounds on $M_1$ and $M_2$ and Eqs. \@ \eqref{eq:packing_number_d} and \eqref{eq:psi_r_inverse} to see that
  $$
    \psi_r^{-1}(M_1^2 M_2) \lesssim \psi_r^{-1}\left(n^{3s\alpha}\right) \lesssim \log n
  $$
  for sufficiently large $n$ and small $\delta$, where $\lesssim$ is hiding constants depending on $s$, $\alpha$ and $K_{[\,]}$. Therefore,
  \begin{align}
    \begin{split}
      \label{eq:T_tilde_orlicz_3}
       & \left\|\max_{i,j = 1, \ldots, M_1} \max_{k=1, \ldots, M_2} \left|S_n(u_i,u_j) - \mathbb{E}S_n(u_i,u_j) - S_n(\tilde{f}_k, \tilde{g}_k) + \mathbb{E}S_n(\tilde{f}_k, \tilde{g}_k)\right| \right\|_{\psi_r} \\
       & \lesssim n^{1/2 - 1/r} \log n
    \end{split}
  \end{align}
  for sufficiently large $n$ and small $\delta$. In a similar way, we can show that
  \begin{align}
    \begin{split}
      \label{eq:T_tilde_orlicz_4}
       & \left\|\max_{i,j = 1, \ldots, M_1} \max_{k=1, \ldots, M_2} \left|S_n(l_i,l_j) - \mathbb{E}S_n(l_i,l_j) - S_n(\tilde{f}_k, \tilde{g}_k) + \mathbb{E}S_n(\tilde{f}_k, \tilde{g}_k)\right| \right\|_{\psi_r} \\
       & \lesssim n^{1/2 - 1/r} \log n
    \end{split}
  \end{align}
  for sufficiently large $n$ and small $\delta$. Eqs.\@ \eqref{eq:T_tilde_orlicz_1}, \eqref{eq:T_tilde_orlicz_3} and \eqref{eq:T_tilde_orlicz_4} now give us
  \begin{align}
    \begin{split}
      \label{eq:T_tilde_orlicz_final}
       & \left\|\sup_{d[(f,g), (f',g')] \leq \bar{\eta} : (f',g') \in \tilde{T}} \left|S_n(f,g) - \mathbb{E}S_n(f,g) - S_n(f',g') + \mathbb{E}S_n(f',g')\right|\right\|_{\psi_r} \\
       & \lesssim n^{1/2 - 1/r} \log n + n^{(1-\alpha)/2}                                                                                                                        \\
       & \lesssim n^{1/2 - 1/r} \log n
    \end{split}
  \end{align}
  for sufficiently large $n$ and small $\delta$, where $\lesssim$ is hiding constants depending only on $r$, $s$, $\alpha$ and $K_{[\,]}$, and the notion of `sufficiently large $n$' depends only on $\alpha$.

  Now combine Eqs.\@ \eqref{eq:kley_statement}, \eqref{eq:kley_rv_bound_2} and \eqref{eq:T_tilde_orlicz_final} to see that
  \begin{align}
    \begin{split}
      \label{eq:limsup_psi_r_norm}
       & \limsup_{n \to \infty} \left\|\sup_{d[(f,g), (f',g')] \leq \delta} \left|S_n(f,g) - \mathbb{E}S_n(f,g) - S_n(f',g') + \mathbb{E}S_n(f',g')\right|\right\|_{\psi_r} \\
       & \lesssim \int_0^\delta \left(\frac{4s}{\varepsilon}\right)^{1/r} ~\mathrm{d}\varepsilon + (8s)^{1/r} \delta^{1 - 1/r}
    \end{split}
  \end{align}
  for all $\delta$ sufficiently small that $D(\delta, \mathcal{F} \times \mathcal{F}, d) \geq 2$. The right-hand side tends to $0$ for $\delta \downarrow 0$ because $r > 1$, and so Eq.\@ \eqref{eq:zz_asymptotic_d_equicont} follows from Markov's inequality. Furthermore, the sequence $S_n(f,g) - \mathbb{E}S_n(f,g)$, $n \in \mathbb{N}$, is asymptotically tight in $\mathbb{R}$ for any fixed $(f,g) \in \mathcal{F} \times \mathcal{F}$ as a consequence of Lemma \ref{lem:Sn_fidi}. By Theorem 1.5.7 in \cite{van_der_vaart_wellner:weak_convergence}, this means that the sequence of processes $S_n - \mathbb{E}S_n$, $n \in \mathbb{N}$, is asymptotically tight in $\ell^\infty(\mathcal{F} \times \mathcal{F})$. Our claim now follows from Theorem 1.5.4 in \cite{van_der_vaart_wellner:weak_convergence}.
\end{proof}

As the final result in this section, we state a short corollary to the previous theorem, which we formally state for easy reference. We have essentially already proven it in the previous theorem's proof.

\begin{corollary}
  \label{cor:order_psi_sup}
  Let $1 < r < 2$. Under the assumptions of Theorem \ref{thm:Sn_convergence}, it holds that
  $$
    \sup_{n \in \mathbb{N}} \left\|\sup_{(f,g) \in \mathcal{F} \times \mathcal{F}} \left|S_n(f,g) - \mathbb{E}S_n(f,g) \right|\right\|_{\psi_r} < \infty.
  $$
\end{corollary}
\begin{proof}
  Assume without loss of generality that $\mathcal{F}$ contains the constant $0$-function. If this is not the case, we can add it to both $\mathcal{F}$ and the countably dense subset $\mathcal{F}_0$ without changing any of the assumptions. Assume furthermore without loss of generality, as we did in the proof of Theorem \ref{thm:Sn_convergence}, that the $L_1(X_1)$-diameter of $\mathcal{F}$ is bounded by $e^{-2}/2$. Pick some $\delta > 0$ such that $D(\delta, \mathcal{F} \times \mathcal{F}, d) = 2$. Eq.\@ \eqref{eq:limsup_psi_r_norm} implies that
  $$
    M = \sup_{n \in \mathbb{N}} \left\|\sup_{d[(f,g), (f',g')] \leq \delta} \left|S_n(f,g) - \mathbb{E}S_n(f,g) - S_n(f',g') + \mathbb{E}S_n(f',g')\right|\right\|_{\psi_r} < \infty.
  $$
  Since we can cover $\mathcal{F} \times \mathcal{F}$ with $N(\delta, \mathcal{F} \times \mathcal{F}, d) \leq D(\delta, \mathcal{F} \times \mathcal{F}, d) = 2$ many $\delta$-balls, and because $\mathcal{F}$ contains the constant $0$ function by assumption, we get from this
  $$
    \sup_{n \in \mathbb{N}} \left\|\sup_{(f,g) \in \mathcal{F} \times \mathcal{F}} \left|S_n(f,g) - \mathbb{E}S_n(f,g) \right|\right\|_{\psi_r} \leq 2 M < \infty.
  $$
\end{proof}

\subsection{Concomitants of Order Statistics}
The goal of this section is to establish convergence of the process
\begin{equation}
  \label{eq:goal_concomitants}
  [0,1]^2 \ni a \mapsto \frac{1}{\sqrt{n}} \sum_{i=1}^{n-1} \textbf{1}_{[0,a]}(Y_{n,i}', Y_{n,i+1}') - \mathbb{E} \textbf{1}_{[0,a]}(Y_{n,i}', Y_{n,i+1}'),
\end{equation}
from which the process convergence results of Section \ref{sec:mains_chatterjee} will follow by our general methods from Section \ref{sec:mains}. The results specific to Chatterjee's rank correlation will be proven later in Section \ref{sec:proofs_chatterjee}.

We begin with a certain representation for the concomitants $Y_{n,1}', \ldots, Y_{n,n}'$. The idea is to express each $Y_k = \tau(X_k, U_k)$ as a measurable function of $X_k$ and some independent noise. This representation is well-known \citep[see, for instance, Lemma 7.11 in][]{vandervaart:asymptotic_statistics}, and we only state this lemma here because the specific form of the function $\tau$ is important for us.

\begin{lemma}
  \label{lem:representation_lemma}
  Let $(X_k,Y_k)$, $k \in \mathbb{N}$, be independent copies of some random vector $(X,Y) \in \mathbb{R}^2$. Write $F_x(y) = \mathbb{P}(Y \leq y ~|~ X = x)$ and define $\tau(x,u) = F_x^{-1}(u)$, the generalised inverse of $F_x$ evaluated at $u$. Then there exists a copy $\left(\bar{X}_k, \bar{Y}_k\right)_{k \in \mathbb{N}}$ of $(X_k, Y_k)_{k \in \mathbb{N}}$ and an i.i.d.\@ sequence $U_k \sim \mathcal{U}[0,1]$, $k \in \mathbb{N}$, with the following properties:
  \begin{enumerate}
    \item\label{it:darstellung} $\left(\bar{X}_k, \bar{Y}_k\right)_{k \in \mathbb{N}} = \left(\bar{X}_k, \tau\left(\bar{X}_k, U_k\right)\right)_{k \in \mathbb{N}}$ almost surely,
    \item\label{it:independence} $(U_k)_{k \in \mathbb{N}}$ is independent of $\left(\bar{X}_k\right)_{k \in \mathbb{N}}$,
    \item\label{it:reordering} for all $n \in \mathbb{N}$, $U_{n,1}', \ldots, U_{n,n}'$ are i.i.d.\@ and independent of $\left(\bar{X}_k\right)_{k \in \mathbb{N}}$, where $U_{n,1}', \ldots, U_{n,n}'$ are a reordering of $U_1, \ldots, U_n$ according to the same permutation that maps $\bar{X}_1, \ldots, \bar{X}_n$ onto $\bar{X}_{n,1}, \ldots, \bar{X}_{n,n}$.
  \end{enumerate}
\end{lemma}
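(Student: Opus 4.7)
The plan is to build $(\bar X_k, \bar Y_k)_{k \in \mathbb{N}}$ as i.i.d.\@ copies of $(X,Y)$ via the noise representation $Y = g(X,U)$ with $U$ independent of $X$ from Proposition 4.1 of \cite{peters_et_al:elements_of_causal_inference}, and then derive (iii) from an exchangeability argument exploiting that the sorting permutation is independent of the $U_k$. To keep the handling of ties clean I will work on a probability space that also carries an auxiliary i.i.d.\@ sequence $(V_k)_{k \in \mathbb{N}}$ of $\mathrm{Uniform}[0,1]$ random variables, independent of everything else.

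First, by the cited proposition there exist a measurable $g$ and a random variable $U$ independent of $X$ with $Y = g(X,U)$ almost surely. On a sufficiently rich space I take $(\bar X_k, U_k, V_k)_{k \in \mathbb{N}}$ i.i.d.\@ with $\bar X_k \sim X$, $U_k \sim U$, $V_k \sim \mathrm{Uniform}[0,1]$, the three coordinates mutually independent at each $k$, and set $\bar Y_k := g(\bar X_k, U_k)$. Each $(\bar X_k, \bar Y_k)$ then has the joint distribution of $(X, g(X,U)) = (X,Y)$ and independence across $k$ transfers from $(\bar X_k, U_k)_{k \in \mathbb{N}}$, giving (i); claim (ii) is immediate since $(U_k)$ was chosen i.i.d.\@ and independent of $(\bar X_k)$.

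For (iii) fix $n$ and let $\pi_n$ be the random permutation of $\{1, \ldots, n\}$ obtained by sorting the pairs $(\bar X_k, V_k)_{k \leq n}$ lexicographically, so that $\bar X_{(k)} = \bar X_{\pi_n(k)}$ and ties among the $\bar X_k$ are broken uniformly at random. Set $U_k' := U_{\pi_n(k)}$. Because $\pi_n$ is a measurable function of $((\bar X_k, V_k))_{k \leq n}$, it is independent of $(U_k)_{k \in \mathbb{N}}$ and of $(\bar X_k)_{k > n}$. Let $\mathcal{G} := \sigma(\bar X_k : k \in \mathbb{N}) \vee \sigma(V_k : k \leq n)$. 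Conditional on $\mathcal{G}$, the permutation $\pi_n$ is deterministic while $(U_1, \ldots, U_n)$ remains i.i.d.\@ with law of $U$ by independence of $(U_k)$ from $\mathcal{G}$, and since any deterministic permutation preserves the joint law of an i.i.d.\@ tuple, $(U_1', \ldots, U_n')$ is conditionally i.i.d.\@ with law of $U$. This conditional law does not depend on $\mathcal{G}$, so $(U_1', \ldots, U_n')$ is unconditionally i.i.d.\@ and independent of $\mathcal{G}$; in particular independent of $(\bar X_k)_{k \in \mathbb{N}}$.

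The only subtle point is the treatment of ties in the $\bar X_k$, which is precisely what the auxiliary $V_k$ are for; once they are in place the argument reduces to the invariance of product measures under permutations, and I do not expect any real obstacle beyond this bookkeeping.
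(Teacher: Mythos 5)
Your proof is correct and follows essentially the same route as the paper: the representation $Y = g(X,U)$ from Proposition 4.1 of \cite{peters_et_al:elements_of_causal_inference}, followed by the observation that the sorting permutation depends only on the $\bar{X}_k$ (and tie-breaking randomness) and is therefore independent of the $U_k$, so that invariance of product measures under deterministic permutations gives (iii). Your explicit auxiliary uniforms $V_k$ for the tie-breaking are slightly more careful bookkeeping than the paper's assertion that the permutation is ``fully determined by'' $\bar{X}_1, \ldots, \bar{X}_n$, but the conditioning argument you use is just the conditional-expectation form of the paper's Fubini computation.
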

\begin{proof}
  By Proposition 4.1 in \cite{peters_et_al:elements_of_causal_inference}, it holds that $(X_1, Y_1) = (X_1, \tau(X_1, U_1))$ in distribution for a random variable $U_1 \sim \mathcal{U}[0,1]$ independent of $X_1$. Let $\left(\bar{X}_k, U_k\right)$ be i.i.d. copies of $(X_1, U_1)$, $k \geq 2$, set $\bar{X}_1 = X_1$ and $\bar{Y}_k = \tau\left(\bar{X}_k, U_k\right)$, $k \geq 1$. This proves (\ref{it:darstellung}) and (\ref{it:independence}). To prove (\ref{it:reordering}), notice that we can write $U_1', \ldots, U_n'$ as $\sigma_{\bar{X}_1, \ldots, \bar{X}_n}(U_1, \ldots, U_n)$, where $\sigma_{\bar{X}_1, \ldots, \bar{X}_n}$ is a random permutation fully determined by $\bar{X}_1, \ldots, \bar{X}_n$. Furthermore, with $\eta$ denoting the distribution of $U_1$, it holds that $(U_1, \ldots, U_n)$ has distribution $\eta^n$ by the i.i.d. property of the $U_k$. Let $M$ be a measurable set, and write $\mu_n$ for the joint distribution of the $\bar{X}_1, \ldots, \bar{X}_n$, then
  \begin{align*}
    \mathbb{P}((U_1', \ldots, U_n') \in M) & = \iint \textbf{1}_M(\sigma_{x_1, \ldots, x_n}(u_1, \ldots, u_n)) ~\mathrm{d}\eta^n(u_1, \ldots, u_n) ~\mathrm{d}\mu_n(x_1, \ldots, x_n) \\
                                           & = \iint \textbf{1}_M(u_1, \ldots, u_n) ~\mathrm{d}\eta^n(u_1, \ldots, u_n) ~\mathrm{d}\mu_n(x_1, \ldots, x_n)                            \\
                                           & = \int \mathbb{P}((U_1, \ldots, U_n) \in M) ~\mathrm{d}\mu_n = \mathbb{P}((U_1, \ldots, U_n) \in M),
  \end{align*}
  where the first equality holds because the $U_1, \ldots, U_n$ are independent of the $\bar{X}_1, \ldots, \bar{X}_n$ and the second one holds because product measures are invariant under permutations. Therefore the $U_1', \ldots, U_n'$ are i.i.d. since the same holds for the $U_1, \ldots, U_n$. By the same argument, it holds that
  \begin{align*}
    \mathbb{E}\left[\textbf{1}_M(U_1', \ldots, U_n') ~|~ \left(\bar{X}_k\right)_{k \in \mathbb{N}}\right] & = \int \textbf{1}_M(\sigma_{\bar{X}_1, \ldots, \bar{X}_n}(u_1, \ldots, u_n)) ~\mathrm{d}\eta^n(u_1, \ldots, u_n) \\
                                                                                                          & = \int \textbf{1}_M(u_1, \ldots, u_n) ~\mathrm{d}\eta^n(u_1, \ldots, u_n)                                        \\
                                                                                                          & = \mathbb{P}((U_1', \ldots, U_n') \in M)
  \end{align*}
  almost surely, and so $(U_1', \ldots, U_n')$ is independent of $\left(\bar{X}_k\right)_{k \in \mathbb{N}}$.
\end{proof}

Before we discuss the value of this lemma for the concomitant process \eqref{eq:goal_concomitants}, we need the following Glivenko-Cantelli type result for the functions of the order statistics $X_{n,1}, \ldots, X_{n,n}$. Its proof is very similar to the usual Glivenko-Cantelli theorem in function classes with bracketing. The only unusual part is that we have to combine these classical arguments with Lusin's theorem -- a technique which we have already used in Section \ref{sec:proofs_order_statistics}.

\begin{lemma}
  \label{lem:dreifach_glivenko_cantelli}
  Let $\mathcal{F}$ be a class of Borel measurable functions from the unit interval onto itself with $N_{[\,]}(\varepsilon, \mathcal{F}, L_1(X)) < \infty$ for all $\varepsilon > 0$. Let $(X_k)_{k \in \mathbb{N}}$ be an i.i.d.\@ process taking values in the unit interval. Then
  \begin{equation}
    \label{eq:ordnungsstatistiken_1}
    \sup_{f,g,h \in \mathcal{F}} \left| \frac{1}{n} \sum_{j=1}^{n-2} f(X_{n,j}') g(X_{n, j+1}') h(X_{n, j+2}') - \mathbb{E} [f(X_1) g(X_1) h(X_1)] \right| \xrightarrow[n \to \infty]{as*} 0,
  \end{equation}
  and, for any $d \in \mathbb{N}$,
  \begin{equation}
    \label{eq:ordnungsstatistiken_2}
    \sup_{f \in \mathcal{F}} \frac{1}{n} \sum_{j=1}^{n-d} \left|f(X_{n,j+d}')- f(X_{n,j}')\right| \xrightarrow[n \to \infty]{as*} 0.
  \end{equation}
  Furthermore, if $(U_k)_{k \in \mathbb{N}}$ is another i.i.d.\@ process independent of $(X_k)_{k \in \mathbb{N}}$ with $U_k \sim \mathcal{U}[0,1]$ for each $k \in \mathbb{N}$, then, for any $d \in \mathbb{N}$,
  \begin{equation}
    \label{eq:ordnungsstatistiken_3}
    \sup_{f \in \mathcal{F}} \frac{1}{n} \sum_{j=1}^{n-d} \left|\textbf{1}\{U_j \leq f(X_{n,j+d}')\}- \textbf{1}\{U_j \leq f(X_{n,j}')\}\right| \xrightarrow[n \to \infty]{as*} 0.
  \end{equation}
\end{lemma}
\begin{proof}
  Consider any single measurable function $f : [0,1] \to [0,1]$. We claim that, for any $d \in \mathbb{N}$,
  \begin{equation}
    \label{eq:slln_order_shift}
    \frac{1}{n} \sum_{j=1}^{n-d} |f(X_{n,j+d}') - f(X_{n,j}')| \xrightarrow[n \to \infty]{a.s.} 0,
  \end{equation}
  as well as
  \begin{equation}
    \label{eq:slln_order_shift_indikator}
    \frac{1}{n} \sum_{j=1}^{n-d} |\textbf{1}\{U_j \leq f(X_{n,j+d}')\} - \textbf{1}\{U_j \leq f(X_{n,j}')\}| \xrightarrow[n \to \infty]{a.s.} 0.
  \end{equation}

  Fix some $\varepsilon > 0$. By Lusin's theorem, there is a compact subset $K_\varepsilon \subseteq [0,1]$ such that the restriction of $f$ to $K_\varepsilon$ is continuous and $\mathbb{P}(X \notin K_\varepsilon) < \varepsilon$. As a continuous function on a compact set, this restriction is uniformly continuous, and we can choose $\delta > 0$ such that
  $$
    \sup_{x,y \in K_\varepsilon : |x-y|<\delta} |f(x) - f(y)| < \varepsilon.
  $$
  Let $T_1, \ldots, T_K$ be a partition of the unit interval into subintervals of length less than $\delta$ and write $C_n = \{j = 1, \ldots, n-d ~|~ X_{n,j}', X_{n,j+d}' \in K_\varepsilon\}$. For any $k = 1, \ldots, K$, let $t_{k,n} = \{j \in C_n ~|~ X_{n,j}' \in T_k\}$. We have
  $$
    \sum_{j \in t_{k,n}} |f(X_{n,j+d}') - f(X_{n,j}')| \leq d + \# t_{k,n} \varepsilon,
  $$
  where the $d$ on the right-hand side occurs to account for the largest $d$ indices $j$ in $t_{k,n}$, in which case it is possible that $X_{n,j+d}' \notin T_k$. Therefore,
  \begin{align*}
    \frac{1}{n} \sum_{j \in C_n} |f(X_{n,j+d}') - f(X_{n,j}')| & = \frac{1}{n}\sum_{k=1}^K \sum_{j \in t_{k,n}} |f(X_{n,j+d}') - f(X_{n,j}')|                       \\
                                                               & \leq \frac{Kd}{n} + \varepsilon \sum_{k=1}^K \frac{\# t_{k,n}}{n} \leq \frac{Kd}{n} + \varepsilon.
  \end{align*}
  The right-hand side is bounded by $2\varepsilon$ for large $n$. On the other hand,
  \begin{align}
    \begin{split}
      \label{eq:not_Cn}
       & \frac{1}{n} \sum_{j \notin C_n} |f(X_{n,j+d}') - f(X_{n,j}')|                                                                            \\
       & \leq \frac{\# (\{1, \ldots, n\} \setminus C_n)}{n}                                                                                       \\
       & \leq \frac{\# \{j = 1, \ldots, n ~|~ X_{n,j}' \notin K_\varepsilon\} + \# \{j = 1, \ldots, n-d ~|~ X_{n,j+d}' \notin K_\varepsilon\}}{n} \\
       & \xrightarrow[n \to \infty]{a.s.} 2 \mathbb{P}(X \notin K_\varepsilon) < 2 \varepsilon.
    \end{split}
  \end{align}
  From these two observations we get
  $$
    \limsup_{n \to \infty} \frac{1}{n} \sum_{j = 1}^n |f(X_{n,j+d}') - f(X_{n,j}')| < 4 \varepsilon
  $$
  almost surely. Since $\varepsilon > 0$ was arbitrary, we obtain Eq.\@ \eqref{eq:slln_order_shift}. To prove Eq.\@ \eqref{eq:slln_order_shift_indikator}, observe that if $j, j+d \in t_{k,n}$,
  $$
    |\textbf{1}\{U_j \leq f(X_{n,j+d}')\} - \textbf{1}\{U_j \leq f(X_{n,j}')\}| \leq \textbf{1}\{f(X_{n,j}') - \varepsilon \leq U_j \leq f(X_{n,j}') + \varepsilon\}.
  $$
  Thus,
  \begin{align*}
     & \sum_{j \in t_{k,n}} |\textbf{1}\{U_j \leq f(X_{n,j+d}')\} - \textbf{1}\{U_j \leq f(X_{n,j}')\}|                      \\
     & \quad \leq 2d + \sum_{j \in t_{k,n}} \textbf{1}\{f(X_{n,j}') - \varepsilon \leq U_j \leq f(X_{n,j}') + \varepsilon\},
  \end{align*}
  where the term $2d$ again accounts for the $d$ largest indices in $t_{k,n}$, in which case we use the trivial bound
  $$
    |\textbf{1}\{U_j \leq f(X_{n,j+d}')\} - \textbf{1}\{U_j \leq f(X_{n,j}')\}| \leq 2 + \textbf{1}\{f(X_{n,j}') - \varepsilon \leq U_j \leq f(X_{n,j}') + \varepsilon\}.
  $$
  Similarly to before, this gives us
  \begin{align}
    \begin{split}
      \label{eq:indikator_Cn}
       & \frac{1}{n}\sum_{j \in C_n} |\textbf{1}\{U_j \leq f(X_{n,j+d}')\} - \textbf{1}\{U_j \leq f(X_{n,j}')\}|                                 \\
       & \quad \leq \frac{2Kd}{n} + \frac{1}{n} \sum_{j \in C_n} \textbf{1}\{f(X_{n,j}') - \varepsilon \leq U_j \leq f(X_{n,j}') + \varepsilon\} \\
       & \quad  \leq \frac{2Kd}{n} + \frac{1}{n} \sum_{j=1}^n \textbf{1}\{f(X_{n,j}') - \varepsilon \leq U_j \leq f(X_{n,j}') + \varepsilon\}.
    \end{split}
  \end{align}
  To keep things short, let us write $\chi_{n,j} = \textbf{1}\{f(X_{n,j}') - \varepsilon \leq U_j \leq f(X_{n,j}') + \varepsilon\}$. Fix any $\kappa > 0$. Then by Hoeffding's inequality, and since $0 \leq \mathbb{E}[\chi_{n,j} ~|~ (X_k)_{k \in \mathbb{N}}] \leq 2\varepsilon$,
  \begin{align*}
     & \mathbb{P}\left(\frac{1}{n} \sum_{j=1}^n \chi_{n,j} - 2\varepsilon \geq \kappa ~\Big|~ (X_k)_{k \in \mathbb{N}}\right)                                                                                                                             \\
     & \quad \leq \mathbb{P}\left(\frac{1}{n} \sum_{j=1}^n \left\{\chi_{n,j} - \mathbb{E}[\chi_{n,j} ~|~ (X_k)_{k \in \mathbb{N}}]\right\} \geq \kappa ~\Big|~ (X_k)_{k \in \mathbb{N}}\right) \leq \exp\left\{-2n \kappa^2 (1 + 2\varepsilon)^2\right\}.
  \end{align*}
  Since the last bound does not depend on the process $(X_k)_{k \in \mathbb{N}}$, we also have
  $$
    \mathbb{P}\left(\frac{1}{n} \sum_{j=1}^n \chi_{n,j} - 2\varepsilon \geq \kappa\right) \leq  \exp\left\{-2n \kappa^2 (1 + 2\varepsilon)\right\}
  $$
  by the law of total probability. Summing the right-hand side over all $n \in \mathbb{N}$ results in a finite limit for any choice of $\kappa > 0$. The Borel-Cantelli lemma now implies
  $$
    \limsup_{n \to \infty} n^{-1} \sum_{j=1}^n \chi_{n,j} \leq 2 \varepsilon
  $$
  almost surely. Hence, by Eq.\@ \eqref{eq:indikator_Cn},
  $$
    \limsup_{n \to \infty} \frac{1}{n}\sum_{j \in C_n} |\textbf{1}\{U_j \leq f(X_{n,j+d}')\} - \textbf{1}\{U_j \leq f(X_{n,j}')\}| \leq 2\varepsilon
  $$
  almost surely. On the other hand, by essentially the same argument as in Eq.\@ \eqref{eq:not_Cn}, we have
  $$
    \limsup_{n \to \infty} \frac{1}{n}\sum_{j \notin C_n} |\textbf{1}\{U_j \leq f(X_{n,j+d}')\} - \textbf{1}\{U_j \leq f(X_{n,j}')\}| \leq 4\varepsilon
  $$
  almost surely. This last two results combine to imply \eqref{eq:slln_order_shift_indikator} since $\varepsilon > 0$ was arbitrary.

  Let us now prove the claims in the statement of the lemma. Using the identity $xyz - uvw = xy(z-w) + xw(y-v) + vw(x-u)$, Eq.\@ \eqref{eq:slln_order_shift} leads to
  \begin{equation}
    \label{eq:slln_three}
    \frac{1}{n} \sum_{j=1}^{n-2} f(X_{n,j}') g(X_{n, j+1}') h(X_{n, j+2}') - \mathbb{E} [f(X_1) g(X_1) h(X_1)] \xrightarrow[n \to \infty]{a.s.} 0
  \end{equation}
  for any fixed measurable $f,g,h : [0,1] \to [0,1]$. Finally, if $[l_i, u_i], i = 1, \ldots, L,$ is an $\varepsilon$-bracketing of $\mathcal{F}$, we can assume without loss of generality that $|u_i|, |l_i| \leq 1$ for all $i$, otherwise we replace each $u_i$ by $u_i \land 1$ and each $l_i$ by $l_i \lor 0$. Then $[l_i l_j l_k, u_i u_j u_k], i,j,k = 1, \ldots, L,$ defines a $3 \varepsilon$-bracketing of $\mathcal{F}_3 = \{fgh ~|~ f,g,h \in \mathcal{F}\}$. Hence, $N_{[\,]}(\varepsilon, \mathcal{F}_3, L_1(X)) < \infty$ for all $\varepsilon > 0$, and the claim in Eq. \eqref{eq:ordnungsstatistiken_1} follows by standard arguments; see the proof of Theorem 2.4.1 in \cite{van_der_vaart_wellner:weak_convergence}. The appeal to the strong law of large numbers in that proof can be replaced by Eq.\@ \eqref{eq:slln_three} to make it applicable to our situation.

  To strengthen Eq.\@ \eqref{eq:slln_order_shift} to the claim in Eq.\@ \eqref{eq:ordnungsstatistiken_2}, which is uniform in $f \in \mathcal{F}$, fix some $\varepsilon > 0$ and suppose that $f$  is in the $\varepsilon$-bracket $[l_i, u_i]$. We have the inequalities
  \begin{align*}
    f(X_{n,j+d}') - f(X_{n,j}') & = f(X_{n,j+d}') - u_i(X_{n,j}') + u_i(X_{n,j}') - f(X_{n,j}')          \\
                                & \leq u_i(X_{n,j+d}') - u_i(X_{n,j}') + u_i(X_{n,j}') - l_i(X_{n,j}')   \\
                                & \leq |u_i(X_{n,j+d}') - u_i(X_{n,j}')| + u_i(X_{n,j}') - l_i(X_{n,j}')
  \end{align*}
  and
  \begin{align*}
    f(X_{n,j}') - f(X_{n,j+d}') & = - f(X_{n,j+d}') + l_i(X_{n,j}') - l_i(X_{n,j}') + f(X_{n,j}')        \\
                                & \leq - l_i(X_{n,j+d}') + l_i(X_{n,j}') - l_i(X_{n,j}') + u_i(X_{n,j}') \\
                                & \leq |l_i(X_{n,j+d}') - l_i(X_{n,j}')| + u_i(X_{n,j}') - l_i(X_{n,j}')
  \end{align*}
  This implies that $|f(X_{n,j}') - f(X_{n,j+d}')|$ is bounded by
  $$
    |u_i(X_{n,j+d}') - u_i(X_{n,j}')| + |l_i(X_{n,j+d}') - l_i(X_{n,j}')| + u_i(X_{n,j}') - l_i(X_{n,j}'),
  $$
  and so
  \begin{align*}
    \frac{1}{n} \sum_{j=1}^{n-d} |f(X_{n,j+d}') - f(X_{n,j}')| & \leq \frac{1}{n} \sum_{j=1}^{n-d} |u_i(X_{n,j+d}') - u_i(X_{n,j}')|     \\
                                                               & \qquad + \frac{1}{n} \sum_{j=1}^{n-d} |l_i(X_{n,j+d}') - l_i(X_{n,j}')| \\
                                                               & \qquad  + \frac{1}{n} \sum_{j=1}^n u_i(X_{n,j}') - l_i(X_{n,j}').
  \end{align*}
  The first two sums converge to $0$ almost surely by Eq.\@ \eqref{eq:slln_order_shift}, whereas the third sum converges to $\mathbb{E}[u_i(X) - l_i(X)] \leq \varepsilon$ almost surely by the usual strong law of large numbers. It is therefore bounded by $2\varepsilon$ for large $n$. Taking the supremum over $f \in \mathcal{F}$ on the left-hand side translates to taking a finite maximum over $i = 1, \ldots, N_{[\,]}(\varepsilon, \mathcal{F}, L_1(X))$ on the right-hand side. So,
  \begin{align*}
     & \sup_{f \in \mathcal{F}} \frac{1}{n} \sum_{j=1}^{n-d} |f(X_{n,j+d}') - f(X_{n,j}')|                                                                      \\
     & \quad \leq \max_i \frac{1}{n} \sum_{j=1}^{n-d} |u_i(X_{n,j+d}') - u_i(X_{n,j}')| + \max_i \frac{1}{n} \sum_{j=1}^{n-d} |l_i(X_{n,j+d}') - l_i(X_{n,j}')| \\
     & \qquad + \max_i \frac{1}{n} \sum_{j=1}^n u_i(X_{n,j}') - l_i(X_{n,j}')                                                                                   \\
     & \quad \leq 2\varepsilon + 2 \varepsilon = 4 \varepsilon
  \end{align*}
  almost surely for $n$ sufficiently large. Since $\varepsilon > 0$ was arbitrary, this proves Eq.\@ \eqref{eq:ordnungsstatistiken_2}.

  Finally, let us prove Eq.\@ \eqref{eq:ordnungsstatistiken_3}. By the same arguments as before, it holds that
  \begin{align}
    \begin{split}
      \label{eq:partialsumme_indikator_sup}
       & \sup_{f \in \mathcal{F}}\frac{1}{n} \sum_{j=1}^{n-d} |\textbf{1}\{U_j \leq f(X_{n,j+d}')\} - \textbf{1}\{U_j \leq f(X_{n,j}')\}| \\
       & \leq \max_i \frac{1}{n} \sum_{j=1}^{n-d} |\textbf{1}\{U_j \leq u_i(X_{n,j+d}')\} - \textbf{1}\{U_j \leq u_i(X_{n,j}')\}|         \\
       & \quad + \max_i \frac{1}{n} \sum_{j=1}^{n-d} |\textbf{1}\{U_j \leq l_i(X_{n,j+d}')\} - \textbf{1}\{U_j \leq l_i(X_{n,j}')\}|      \\
       & \quad + \max_i \frac{1}{n} \sum_{j=1}^n \textbf{1}\{U_j \leq u_i(X_{n,j}')\} - \textbf{1}\{U_j \leq l_i(X_{n,j}')\}.
    \end{split}
  \end{align}
  As before, the first two terms tend to $0$ almost surely by Eq.\@ \eqref{eq:slln_order_shift_indikator}. For the third term, consider any fixed bracket $[l_i, u_i]$. Again, we assume without loss of generality that $0 \leq l_i, u_i \leq 1$. Using Hoeffding's inequality, we see that, for any $\kappa > 0$,
  $$
    \mathbb{P}\left(\frac{1}{n} \sum_{j=1}^n \textbf{1}\{U_j \leq u_i(X_{n,j}')\} - \textbf{1}\{U_j \leq l_i(X_{n,j}')\} - u_i(X_{n,j}') + l_i(X_{n,j}')\geq \kappa ~\Big|~ (X_k)_{k \in \mathbb{N}}\right)
  $$
  is bounded by $\exp\{-8n \kappa^2\}$. By the law of total probability,
  $$
    \mathbb{P}\left(\frac{1}{n} \sum_{j=1}^n \left\{\textbf{1}\{U_j \leq u_i(X_{n,j}')\} - \textbf{1}\{U_j \leq l_i(X_{n,j}')\} - u_i(X_{n,j}') + l_i(X_{n,j}')\right\} \geq \kappa\right)
  $$
  is also bounded by $\exp\{-8n \kappa^2\}$, and this bound is summable over $n \in \mathbb{N}$. The Borel-Cantelli lemma now gives us
  \begin{align*}
    \limsup_{n \to \infty} \frac{1}{n} \sum_{j=1}^n \textbf{1}\{U_j \leq u_i(X_{n,j}')\} - \textbf{1}\{U_j \leq l_i(X_{n,j}')\} & \leq \limsup_{n \to \infty} \frac{1}{n} \sum_{j=1}^n u_i(X_{n,j}') - l_i(X_{n,j}') \\
                                                                                                                                & = \mathbb{E}\left[u_i(X_1) - l_i(X_1)\right] \leq \varepsilon
  \end{align*}
  almost surely. The left-hand side is non-negative since $u_i \geq l_i$. Hence, for any $\varepsilon > 0$, the right-hand side in Eq.\@ \eqref{eq:partialsumme_indikator_sup} is bounded by $4\varepsilon$ for large enough $n$, which proves Eq.\@ \eqref{eq:ordnungsstatistiken_3}.
\end{proof}

We can now come to one of the main theorems in this section. Recall Lemma \ref{lem:representation_lemma}. The value of that result is that it also gives us a distributional representation for the concomitants $Y_{n,1}', \ldots, Y_{n,n}'$. By statement \ref{it:reordering}, we get that the joint distribution of $Y_{n,1}', \ldots, Y_{n,n}'$ is the same as that of $\tau(U_1, X_{n,1}'), \ldots, \tau(U_n, X_{n,n}')$. The value of the latter objects is that they contain the i.i.d.\@ random variables $U_1, \ldots, U_n$ directly. Later, we will combine this fact with the specific nature of the function $\tau$ as a generalised inverse in the following way: Consider a fixed $a = (a_1, a_2) \in [0,1]^2$. Since $F^{-1}(u) \leq y$ if and only if $u \leq F(y)$ for any distribution function $F$, it follows that
$$
  \textbf{1}\left\{\tau(U_i, X_{n,i}') \leq a_1 \land \tau(U_{i+1}, X_{n,i+1}') \leq a_2\right\} = \textbf{1}\left\{U_i \leq f_{a_1}(X_{n,i}') \land U_{i+1} \leq f_{a_2}(X_{n,i+1}')\right\},
$$
where $f_{a_1}(x) = \mathbb{P}(Y \leq a_1 ~|~ X = x)$, and $f_{a_2}$ analogously. In distribution, the left-hand side is equal to the $i$-th summand in \eqref{eq:goal_concomitants}, except for the centring with the expected value. The following theorem will establish process convergence for the centred partial sums of the right-hand side in the preceding display.

We will do this by conditioning on $X_1, \ldots, X_n$, which results in a process depending only on $(U_i, U_{i+1})$, which is a $1$-dependent sequence. After employing a very simple version of a common blocking technique (to get rid of the $1$-dependence), we can use standard methods from \cite{van_der_vaart_wellner:weak_convergence} for triangular arrays of partial sums of independent processes to get the desired convergence result conditional on $X_1, \ldots, X_n$. The extension to the unconditional convergence will be accomplished using Lemma \ref{lem:dreifach_glivenko_cantelli}. There are some technicalities here, because because `conditioning' is not well-defined when talking about potentially non-measurable random elements, but in essence this is the main idea.

\begin{theorem}
  \label{thm:weak_convergence_1}
  Let $(X_k)_{k \in \mathbb{N}}$ and $(U_k)_{k \in \mathbb{N}}$ be two independent i.i.d.\@ processes in the unit interval such that $U_k \sim \mathcal{U}[0,1]$. Let $\mathcal{F}$ be a class of Borel measurable functions from $[0,1]$ to $[0,1]$ with $\sup_Q N_{[\,]}(\varepsilon, \mathcal{F}, L_1(Q)) \lesssim \varepsilon^{-r}$ for some fixed $r \geq 1$, where the supremum is taken over all probability measures $Q$, and $\lesssim$ is hiding a universal constant. Assume further that there is a countable subset $\mathcal{F}_0 \subseteq \mathcal{F}$ which is dense in $\mathcal{F}$ with respect to pointwise convergence. Define the processes $G_n, \hat{G}_n \in \ell^\infty(\mathcal{F} \times \mathcal{F})$ by
  $$
    G_n(f,g) = \frac{1}{\sqrt{n}} \sum_{i=1}^{n-1} \left\{\textbf{1}(U_i \leq f(X'_{n,i}))\textbf{1}(U_{i+1} \leq g(X'_{n,i+1})) - f(X_{n,i}')g(X_{n,i+1}')\right\},
  $$
  and
  $$
    \hat{G}_n(f,g) = \frac{1}{\sqrt{n}} \sum_{i=1}^{n-1} \left\{\textbf{1}(U_i \leq f(X'_{n,i}))\textbf{1}(U_{i+1} \leq g(X'_{n,i+1})) - \mathbb{E}\left[f(X_{n,i}')g(X_{n,i+1}')\right]\right\}.
  $$
  Then the following statements hold:
  \begin{enumerate}
    \item \label{it:theorem_weak}
          $G_n \rightsquigarrow G$ in $\ell^\infty(\mathcal{F} \times \mathcal{F})$ for some tight mean-zero Gaussian process $G$ with covariance function
          \begin{align*}
            \Gamma((f,g), (f',g')) & = \mathbb{E}[(f \land f')(X_1) (g \land g')(X_1) - f(X_1) g(X_1) f'(X_1) g'(X_1)]     \\
                                   & \quad + \mathbb{E}[f'(X_1) g(X_1) (f \land g')(X_1) - f(X_1) g(X_1) f'(X_1) g'(X_1)]  \\
                                   & \quad + \mathbb{E}[f(X_1) g'(X_1) (f' \land g)(X_1) - f(X_1) g(X_1) f'(X_1) g'(X_1)].
          \end{align*}
    \item \label{it:theorem_EW_zentrierung} $\hat{G}_n \rightsquigarrow G + B$ in $\ell^\infty(\mathcal{F} \times \mathcal{F})$, where $G$ is the limiting process from Statement (\ref{it:theorem_weak}) and $B$ is another tight mean-zero Gaussian process, independent of $G$, with covariance function
          $$
            \beta((f,g), (f',g')) = \mathbb{E}[f(X_1)g(X_1)f'(X_1)g'(X_1)] - \mathbb{E}[f(X_1)g(X_1)] \mathbb{E}[f'(X_1) g'(X_1)].
          $$
  \end{enumerate}
\end{theorem}
\begin{proof}
  Since $(X_k)_{k \in \mathbb{N}}$ and $(U_k)_{k \in \mathbb{N}}$ are independent, we can assume without loss of generality that they are defined on two separate probability spaces $\Omega_1$ and $\Omega_2$, respectively. $\mathcal{F}$ satisfies the assumptions of Lemma \ref{lem:dreifach_glivenko_cantelli}, and so do the classes $|\mathcal{F} - \mathcal{F}| = \{|f-g| ~|~ f,g \in \mathcal{F}\}$ and $\mathcal{F} \land \mathcal{F} = \{f \land g ~|~ f,g \in \mathcal{F}\}$, since we can construct $\varepsilon$-brackets for either of these two classes from $\varepsilon$-brackets for $\mathcal{F}$. Therefore, the union $\mathcal{F}_\cup = \mathcal{F} \cup |\mathcal{F} - \mathcal{F}| \cup \mathcal{F} \land \mathcal{F}$ also satisfies the conditions of Lemma \ref{lem:dreifach_glivenko_cantelli}. By Lemma 1.9.2 in \cite{van_der_vaart_wellner:weak_convergence}, outer almost sure convergence and almost uniform convergence are equivalent for sequences. By definition of almost uniform convergence, this implies the following: For arbitrary but fixed $\gamma > 0$, we can find a measurable $A_\gamma \subseteq \Omega_1$ with $\mathbb{P}(A_\gamma) \geq 1 - \gamma$ such that, uniformly in $\omega_1 \in A_\gamma$,
  \begin{equation}
    \label{eq:au_convergence}
    \sup_{f,g,h \in \mathcal{F}_\cup} \left| \frac{1}{n} \sum_{j=1}^{n-2} f(X_{n,j}') g(X_{n, j+1}') h(X_{n, j+2}') - \mathbb{E} [f(X_1) g(X_1) h(X_1)] \right|(\omega_1) \xrightarrow[n \to \infty]{} 0
  \end{equation}
  as well as
  \begin{equation}
    \label{eq:au_convergence_difference}
    \sup_{f \in \mathcal{F}_\cup} \frac{1}{n} \sum_{j=1}^{n-d} \left| f(X_{n,j+d}') - f(X_{n,j}') \right|(\omega_1) \xrightarrow[n \to \infty]{} 0
  \end{equation}
  for all $d = 1, \ldots, D$ up to some pre-specified $D \in \mathbb{N}$. For the remainder of this proof, until pointed out otherwise, we will consider the processes $G_n = G_n(\omega_1, \omega_2)$ and $G = G(\omega_1, \omega_2)$ and all related random variables as functions in $\omega_2$ only for fixed $\omega_1 \in A_\gamma$. Essentially, we first show weak convergence conditionally on $(X_k)_{k \in \mathbb{N}}$, to then later extend it to the unconditional case. We also adopt the following notation throughout this proof: For a measurable function $h : \Omega_1 \times \Omega_2 \to \mathbb{R}$, the expression $\mathbb{E}_1h$ describes the expected value of $h$ over $\omega_1$ only while keeping $\omega_2$ fixed. More formally, $\mathbb{E}_1h$ is a function $\Omega_2 \to \mathbb{R}$ defined by $(\mathbb{E}_1 h)(\omega_2) = \int h(\omega_1, \omega_2) ~\mathbb{P}_1(\omega_1)$. $\mathbb{E}_2h$ is defined analogously, and $\mathbb{E}h$ denotes the expected value taken over both $\omega_1$ and $\omega_2$. In a few instances, we similarly write $\mathbb{E}_2^*$ for the outer expectation taken over $\omega_2$. A formal definition of this can be found on p.\@ 11 in \cite{van_der_vaart_wellner:weak_convergence}; but as we will show in this proof, all relevant expressions are measurable anyway, so that our definition for the measurable case suffices.

  We equip $\mathcal{F} \times \mathcal{F}$ with the semimetric $\rho((f,g), (f',g')) = \mathbb{E}|f(X_1) - f'(X_1)| + \mathbb{E}|g(X_1) - g'(X_1)|$. Then every $\varepsilon$-covering of $\mathcal{F}$ with respect to $L_1(X)$, centred at functions $f_1, \ldots, f_L$ induces a $2\varepsilon$-covering $(f_i, f_j)$, $i,j = 1, \ldots, L$, of $\mathcal{F} \times \mathcal{F}$ with respect to $\rho$. In particular, $(\mathcal{F} \times \mathcal{F}, \rho)$ is totally bounded.

  \paragraph*{Proof of Statement (\ref{it:theorem_weak})} Let us first prove that $G_n \rightsquigarrow G$. Fix some $K \in \mathbb{N}$. Define the processes $Z_{n,j}$ and $S_{n,i}$ by
  \begin{align*}
    Z_{n,j}(f,g) & = n^{-1/2} \textbf{1}\{U_j \leq f(X'_{n,j})\}\textbf{1}\{U_{j+1} \leq g(X'_{n,j+1})\}, \\
    S_{n,i}(f,g) & = \sum_{j= (i-1)(K+1) + 1}^{i(K+1) - 1} Z_{n,j}(f,g) - \mathbb{E}_2 Z_{n,j}(f,g),
  \end{align*}
  where $j = 1, \ldots, n$ and $i = 1, \ldots, \lfloor n/(K+1) \rfloor$. Write $m_n = \lfloor n/(K+1) \rfloor$. The sums in the definition of $S_{n,i}$ contain $K$ summands each, with $S_{n,1}$ being the sum over $Z_{n,1}, \ldots, Z_{n,K}$, $S_{n,2}$ the sum over $Z_{n,K+2}, \ldots, Z_{n,2K+1}$, and so on. Because the summands in $S_{n,i}$ and $S_{n,i+1}$ are separated by one observation, the processes $S_{n,1}, \ldots, S_{n,m_n}$ are independent for any fixed $\omega_1 \in \Omega_1$. For any finite collection $f_r, g_r \in \mathcal{F}$, $r = 1, \ldots, R$, Eq.\@ \eqref{eq:au_convergence} implies that the covariance matrix of
  $$
    \left\{\sum_{i=1}^{m_n}S_{n,i}(f_r,g_r)\right\}_{r = 1, \ldots, R}
  $$
  converges to $(\Gamma(f_r, g_r))_{r = 1, \ldots, R}$ uniformly in $\omega_1 \in A_\gamma$. By the usual multivariate Lindeberg theorem,
  \begin{equation}
    \label{eq:fidi_convergence_sni}
    \left\{\sum_{i=1}^{m_n} S_{n,i}(f_r,g_r)\right\}_{r = 1, \ldots, R} \rightsquigarrow (G(f_r, g_r))_{r=1, \ldots, R}
  \end{equation}
  uniformly in $\omega_1 \in A_\gamma$.

  For any $\eta > 0$, it holds that
  \begin{equation}
    \label{eq:bracketing_clt_1}
    \sum_{i=1}^{m_n} \mathbb{E}_2^*\left[\sup_{f,g \in \mathcal{F}} | S_{n,i}(f,g)| \textbf{1}\left(\sup_{f,g \in \mathcal{F}} |S_{n,i}(f,g)| > \eta\right)\right] \leq \frac{2K}{\sqrt{n}} \sum_{i=1}^{m_n} \textbf{1}\left(2Kn^{-1/2} > \eta\right) = 0
  \end{equation}
  for all $n \geq (2K/\eta)^2$ uniformly in $\omega_1 \in \Omega_1$.

  Fix some $f,g,f',g' \in \mathcal{F}$ and write $D_{n,j} = Z_{n,j}(f,g) - \mathbb{E}_2Z_{n,j}(f,g) - Z_{n,j}(f',g') + \mathbb{E}_2Z_{n,j}(f',g')$. Since $|D_{n,j}| \leq 4n^{-1/2}$, we have $(n/4)|\mathbb{E}_2[D_{n,i} D_{n,j}]| \leq \sqrt{n} \, \mathbb{E}_2|D_{n,j}|$ for any $i,j$, and the right-hand side is in turn bounded by
  \begin{align}
    \begin{split}
      \label{eq:Dnj_difference}
       & \mathbb{E}_2\left|\textbf{1}\{U_j \leq f(X_{n,j}')\} \textbf{1}\{U_{j+1} \leq g(X_{n,j+1}')\} - \textbf{1}\{U_j \leq f'(X_{n,j}')\} \textbf{1}\{U_{j+1} \leq g'(X_{n,j+1}')\}\right| \\
       & \quad + \left|f(X_{n,j}')g(X_{n,j+1}') - f'(X_{n,j}')g'(X_{n,j+1}')\right|                                                                                                           \\
       & \leq \mathbb{E}_2\left|\textbf{1}\{U_j \leq f(X_{n,j}')\} - \textbf{1}\{U_j \leq f'(X_{n,j}')\} \right|                                                                              \\
       & \quad + \mathbb{E}_2\left|\textbf{1}\{U_{j+1} \leq g(X_{n,j+1}')\} - \textbf{1}\{U_{j+1} \leq g'(X_{n,j+1}')\} \right|                                                               \\
       & \quad + \left|f(X_{n,j}')- f'(X_{n,j}')\right| + \left|g(X_{n,j+1}')- g'(X_{n,j+1}')\right|                                                                                          \\
       & = 2\left|f(X_{n,j}')- f'(X_{n,j}')\right| + 2\left|g(X_{n,j+1}')- g'(X_{n,j+1}')\right| ,
    \end{split}
  \end{align}
  where we have used the fact that the functions in $\mathcal{F}$ are bounded by $1$. On the other hand, since the $D_{n,j}$ are $1$-dependent for any fixed $\omega_1 \in \Omega_1$, $\mathbb{E}_2[D_{n,i} D_{n,j}] = 0$ whenever $|i-j| > 1$. From this we see that
  \begin{align}
    \begin{split}
      \label{eq:bracketing_beweis_1}
       & \sum_{i=1}^{m_n} \mathbb{E}_2\left[\left\{S_{n,i}(f,g) - S_{n,i}(f',g')\right\}^2\right]                                                                    \\
       & = \sum_{i=1}^{m_n} \sum_{j = (i-1)(K+1)+1}^{i(K+1)-1} \mathbb{E}_2[D_{n,j-1}D_{n,j}] + \mathbb{E}_2\left[D_{n,j}^2\right] + \mathbb{E}_2[D_{n,j} D_{n,j+1}] \\
       & \leq \frac{24}{n}  \sum_{i=1}^{m_n} \sum_{j = (i-1)(K+1)+1}^{i(K+1)-1} \left|f - f'\right|(X_{n,j}') + \left|g - g'\right|(X_{n,j+1}')                      \\
       & \leq \frac{24}{n} \sum_{j=1}^n \left|f - f'\right|(X_{n,j}') + \left|g - g'\right|(X_{n,j+1}')
    \end{split}
  \end{align}
  for all $\omega_1 \in \Omega_1$. By Eq.\@ \eqref{eq:au_convergence},
  \begin{align*}
     & \sup_{f,g,f',g' \in \mathcal{F}}\left|\frac{1}{n} \sum_{j=1}^n |f-f'|(X_{n,j}') + |g-g'|(X_{n,j+1}') - \rho((f,g), (f',g'))\right|                                 \\
     & = \sup_{f,g,f',g' \in \mathcal{F}}\left|\frac{1}{n} \sum_{j=1}^n |f-f'|(X_{n,j}') + |g-g'|(X_{n,j+1}') - \mathbb{E}\left[|f-f'|(X_1) + |g -g'|(X_1)\right]\right|,
  \end{align*}
  and the right-hand side tends to $0$ uniformly in $\omega_1 \in A_\gamma$. This also implies
  \begin{align*}
     & \sup_{\rho((f,g),(f',g')) < \delta} \left|\sum_{i=1}^{m_n} \mathbb{E}_2\left[(S_{n,i}(f,g) - S_{n,i}(f',g'))^2 \right]\right|                                        \\
     & \leq 24 \sup_{\rho((f,g),(f',g')) < \delta} \left| \frac{1}{n} \sum_{j=1}^n \left|f - f'\right|(X_{n,j}') + \left|g - g'\right|(X_{n,j+1}')\right|                   \\
     & \leq \sup_{f,g,f',g' \in \mathcal{F}}\left|\frac{1}{n} \sum_{j=1}^n |f-f'|(X_{n,j}') + |g-g'|(X_{n,j+1}') - \mathbb{E}\left[|f-f'|(X_1) + |g -g'|(X_1)\right]\right| \\
     & \quad +  24 \delta,
  \end{align*}
  and the right-hand side tends to $24\delta$ almost surely for any $\delta > 0$ uniformly in $\omega_1 \in A_\gamma$. In particular
  \begin{equation}
    \label{eq:bracketing_clt_2}
    \sup_{\rho((f,g),(f',g')) < \delta_n} \left|\sum_{i=1}^{m_n} \mathbb{E}_2\left[(S_{n,i}(f,g) - S_{n,i}(f',g'))^2 \right]\right| \xrightarrow[n \to \infty]{} 0
  \end{equation}
  uniformly in $\omega_1 \in A_\gamma$ for any sequence $\delta_n \downarrow 0$.

  Now for any $n \in \mathbb{N}$, let $\rho_n$ be the semimetric on $\mathcal{F}$ defined by
  $$
    \rho_n(f,f') = \frac{1}{n} \sum_{j=1}^n |f-f'|(X_{n,j}').
  $$
  $\rho_n$ is the $L_1$-seminorm with respect to a discrete probability measure and thus its bracketing number is bounded by $\varepsilon^{-r}$ by assumption. For any two $\delta$-brackets $[v_1, w_1]$ and $[v_2, w_2]$ and functions $f,f' \in [v_1, w_1]$ and $g,g' \in [v_2, w_2]$, we see by a similar argument as in Eq.\@ \eqref{eq:bracketing_beweis_1} that
  \begin{align}
    \begin{split}
      \label{eq:Sni_quadratische_differenz}
       & (S_{n,i}(f,g) - S_{n,i}(f',g'))^2                                                                                                                   \\
       & \leq \frac{K}{n} \sum_{j= (i-1)(K+1) + 1}^{i(K+1) - 1} \textbf{1}\left\{(f \land f')(X_{n,j}')\leq U_j \leq (f \lor f')(X'_{n,j})\right\}           \\
       & \quad + \frac{K}{n} \sum_{j= (i-1)(K+1) + 1}^{i(K+1) - 1}\textbf{1}\left\{(g \land g')(X_{n,j+1}')\leq U_{j+1} \leq (g \lor g')(X'_{n,j+1})\right\} \\
       & \quad + \frac{K}{n}  \sum_{j= (i-1)(K+1) + 1}^{i(K+1) - 1} |f-f'|(X_{n,j}') + |g-g'|(X_{n,j+1}')                                                    \\
       & \leq \frac{K}{n} \sum_{j= (i-1)(K+1) + 1}^{i(K+1) - 1} \textbf{1}\left\{v_1(X_{n,j}')\leq U_j \leq w_1(X'_{n,j})\right\}                            \\
       & \quad + \frac{K}{n} \sum_{j= (i-1)(K+1) + 1}^{i(K+1) - 1}\textbf{1}\left\{v_2(X_{n,j+1}')\leq U_{j+1} \leq w_2(X'_{n,j+1})\right\}                  \\
       & \quad + \frac{K}{n}  \sum_{j= (i-1)(K+1) + 1}^{i(K+1) - 1} |w_1-v_1|(X_{n,j}') + |w_2-v_2|(X_{n,j+1}')  ,
    \end{split}
  \end{align}
  and the same bound holds if we take the supremum over all $f,f' \in [v_1,w_1]$ and $g,g' \in [v_2, w_2]$ on the left-hand side. Hence,
  \begin{align*}
     & \sum_{i=1}^{m_n} \mathbb{E}\left[\sup_{f,f',g,g'}(S_{n,i}(f,g) - S_{n,i}(f',g'))^2 \right]                                             \\
     & \leq \frac{2K}{n} \sum_{i=1}^{m_n}\sum_{j= (i-1)(K+1) + 1}^{i(K+1) - 1} \left\{|w_1 - v_1|(X_{n,j}') + |w_2 - v_2|(X_{n,j+1}')\right\} \\
     & \leq \frac{2K}{n} \sum_{j=1}^n \left\{|w_1 - v_1|(X_{n,j}') + |w_2 - v_2|(X_{n,j+1}')\right\}                                          \\
     & = 2K\left\{\rho_n(v_1, w_1) + \rho_n(v_2, w_2)\right\} < 4K\delta,
  \end{align*}
  where the supremum in the first line is taken over all $f,f' \in [v_1,w_1]$ and $g,g' \in [v_2, w_2]$. The entire collection $[v_i, w_i] \times [v_j, w_j]$, $i,j = 1, \ldots, N_{[\,]}(\delta, \mathcal{F}, \rho_n)$ forms a partition of $\mathcal{F} \times \mathcal{F}$, and the size of this partition at most $\delta^{-2r}$. For the choice $\delta = \varepsilon^2/(4K)$, the last bound in the above set of inequalities becomes $\varepsilon^2$, and $\sqrt{\log \{(\varepsilon^2/4K)^{-2r}\}}$ is integrable on the unit interval. In particular, for any $\delta_n \downarrow 0$,
  \begin{equation}
    \label{eq:bracketing_clt_3}
    \int_0^{\delta_n} \sqrt{\log \left\{(\varepsilon^2/4K)^{-2r}\right\}} ~\mathrm{d}\varepsilon \xrightarrow[n \to \infty]{} 0.
  \end{equation}
  Eqs.\@ \eqref{eq:bracketing_clt_1}, \eqref{eq:bracketing_clt_2} and \eqref{eq:bracketing_clt_3} and the finite dimensional convergence in Eq.\@ \eqref{eq:fidi_convergence_sni} are the conditions of Theorem 2.11.9 in \cite{van_der_vaart_wellner:weak_convergence}. Therefore, by that theorem,
  \begin{equation}
    \label{eq:Gn_tilde_konvergenz}
    \tilde{G}_n = \sum_{i=1}^{m_n} S_{n,i} \rightsquigarrow G
  \end{equation}
  in $\ell^\infty(\mathcal{F} \times \mathcal{F})$ uniformly in $\omega_1 \in A_\gamma$. The difference $G_n - \tilde{G}_n$ is the sum over all $Z_{n,j}$ for which $j$ is not included in any of the $S_{n,i}$. In particular, the summands of this difference are independent for every $\omega_1$. Thus, even without resorting to a blocking scheme, one can check by the same methods as before that
  $$
    G_n - \tilde{G}_n \rightsquigarrow \frac{1}{K} \, G.
  $$
  in $\ell^\infty(\mathcal{F} \times \mathcal{F})$ uniformly in $\omega_1 \in A_\gamma$. The factor $1/K$ accounts for the fact that we only have $m_n = n/K$ summands in $G_n - \tilde{G}_n$, but still normalise with $n^{-1/2}$. This also implies that
  \begin{equation}
    \label{eq:billingsley_dehling_trick}
    \lim_{K \to \infty} \limsup_{n \to \infty} \mathbb{P}_2^*\left(\sup_{f,g \in \mathcal{F}} \left| G_n(f,g) - \tilde{G}_n(f,g)\right| > \varepsilon\right) = 0
  \end{equation}
  for any $\varepsilon > 0$. By Theorem 5.1 in \cite{dehling_durieu_tusche:2014}, it follows that $G_n \rightsquigarrow G$ in $\ell^\infty(\mathcal{F} \times \mathcal{F})$ as processes in $\omega_2$, uniformly in $\omega_1 \in A_\gamma$.

  By Theorem 1.7.2 in \cite{van_der_vaart_wellner:weak_convergence}, this is the case if and only if
  \begin{equation}
    \label{eq:fubini_1}
    \mathbb{E}_2^* f(G_n) \xrightarrow[n \to \infty]{} \mathbb{E}f(G)
  \end{equation}
  uniformly in $\omega_1 \in A_\gamma$ for all bounded, continuous and ball-measurable $f : \ell^\infty(\mathcal{F} \times \mathcal{F}) \to \mathbb{R}$. Ball-measurability means measurability with respect to the ball-$\sigma$-algebra on $\ell^\infty(\mathcal{F} \times \mathcal{F})$, i.e.\@ the $\sigma$-algebra generated by all open balls in $\ell^\infty(\mathcal{F} \times \mathcal{F})$; see Section 1.7 in \cite{van_der_vaart_wellner:weak_convergence} for details. The class $\mathcal{F}$ is uniformly bounded and allows for a pointwise dense countable subset by assumption. By Example 1.7.4 in \cite{van_der_vaart_wellner:weak_convergence}, the process $G_n$ is therefore ball-measurable for any $n \in \mathbb{N}$, and so the outer expectation in Eq.\@ \eqref{eq:fubini_1} can be replaced by the usual expectation. This allows us to apply Fubini's theorem to obtain
  \begin{align*}
     & \left|\mathbb{E}^* f(G_n) - \mathbb{E}f(G)\right|  = \left|\mathbb{E} \left[f(G_n) - f(G)\right]\right|                                                                                                   \\
     & = \left|\mathbb{E}_1 \left[\textbf{1}_{A_\gamma}\mathbb{E}_2 \left\{f(G_n) - f(G)\right\} + \textbf{1}_{A_\gamma^C}\mathbb{E}_2 \left\{f(G_n) - f(G)\right\}\right]\right|                                \\
     & \leq \mathbb{P}(A_\gamma) \sup_{\omega_1 \in A_\gamma}|\mathbb{E}_2[f(G_n) - f(G)]|(\omega_1) + \mathbb{P}\left(A_\gamma^C\right) \sup_{\omega_1 \notin A_\gamma}|\mathbb{E}_2[f(G_n) - f(G)]|(\omega_1).
  \end{align*}
  The first supremum tends to $0$ by Eq.\@ \eqref{eq:fubini_1}, whereas the second supremum is bounded by some constant uniform in $n$ since $f$ is assumed to be bounded. Since $\mathbb{P}(A_\gamma) \geq \gamma$, we have
  \begin{align}
    \begin{split}
      \label{eq:fubini_restterm}
      \limsup_{n \to \infty } \left|\mathbb{E}^* f(G_n) - \mathbb{E}f(G)\right| & \leq (1-\gamma) \limsup_{n \to \infty}\sup_{\omega_1 \notin A_\gamma}|\mathbb{E}_2[f(G_n) - f(G)]|(\omega_1) \\
                                                                                & \leq (1-\gamma) 2 \|f\|_\infty.
    \end{split}
  \end{align}
  $\gamma > 0$ is arbitrary, which proves that the left-hand side is $0$ for any continuous, bounded and ball-measurable $f$. Hence, by another appeal to Theorem 1.7.2 in \cite{van_der_vaart_wellner:weak_convergence}, we have $G_n \rightsquigarrow G$ in $\ell^\infty(\mathcal{F} \times \mathcal{F})$.

  \paragraph*{Proof of Statement (\ref{it:theorem_EW_zentrierung})}
  By Statement (\ref{it:theorem_weak}), it holds that $G_n \rightsquigarrow G$ in $\ell^{\infty}(\mathcal{F} \times \mathcal{F})$. Furthermore,
  $$
    (\hat{G}_n - G_n)(f,g) = \frac{1}{\sqrt{n}}\sum_{i=1}^{n-1} \left\{f(X_{n,i}')g(X_{n,i}') - \mathbb{E}\left[f(X_{n,i}')g(X_{n,i+1}')\right]\right\}.
  $$
  Thus, by Theorem \ref{thm:Sn_convergence}, it holds that $D_n = \hat{G}_n - G_n \rightsquigarrow B$.

  Recall the construction of the set $A_\gamma$ from the beginning of the proof. We claim that, for any measurable function $f$, $\mathbb{E}_2[f(G)](\omega_1)$ is constant as $\omega_1$ ranges through $A_\gamma$. Consider Eq.\@ \eqref{eq:fidi_convergence_sni}. Since the limiting distribution on the right-hand side is fully specified by the deterministic covariance matrix $(\Gamma(f_r, g_r))_{r = 1, \ldots, R}$, it does not depend on $\omega_1 \in A_\gamma$ (this is not a statement about stochastic independence of random vectors, only on the lack of influence that $\omega_1$ has on the limiting distribution in a marginal sense). Hence, by combining this observation with Eq.\@ \eqref{eq:billingsley_dehling_trick}, we know that the limiting distribution of $(G_n(f_r, g_r))_{r = 1, \ldots, R}$ does not depend on $\omega_1 \in A_\gamma$. Since the finite-dimensional projections of a tight process determine the entire distribution of the process \citep[Lemma 1.5.3 in][]{van_der_vaart_wellner:weak_convergence}, the limiting distribution of $G_n$ as a process in $\omega_2$ with $\omega_1 \in A_\gamma$ fixed does not depend on $\omega_1$. But this limiting distribution is exactly that of $G$, considered as a process in $\omega_2$ only, with $\omega_1 \in A_\gamma$ fixed. This implies constancy of $\mathbb{E}_2[f(G)](\omega_1)$ in $\omega_1 \in A_\gamma$. On the other hand, the process $D_n$ is fully determined by $\omega_1$, since it only depends on $(X_k)_{k \in \mathbb{N}}$, and so its limiting process $B$ must only depend on $\omega_1$, too. Therefore, if $f$ and $g$ are measurable functions, then
  \begin{align*}
    \mathbb{E}[f(G) g(B)] & = \mathbb{E}_1[ \textbf{1}_{A_\gamma} g(B) \mathbb{E}_2f(G)] + \mathbb{E}_1[\textbf{1}_{A_\gamma^C} g(B) \mathbb{E}_2f(G)]              \\
                          & = \mathbb{E}_1[ \textbf{1}_{A_\gamma} g(B)] \mathbb{E}_2f(G)(\omega_1^*) + \mathbb{E}_1[\textbf{1}_{A_\gamma^C} g(B) \mathbb{E}_2f(G)],
  \end{align*}
  where $\omega_1^* \in A_\gamma$ is arbitrary but fixed. Recall that the set $A_\gamma$ is measurable, so that we can really work with the usual expectations instead of outer expectations. Similarly,
  $$
    \mathbb{E}f(G) \mathbb{E}[g(B)] = \mathbb{E}_1[ \textbf{1}_{A_\gamma} g(B)]\mathbb{E}_2f(G)(\omega_1^*) + \mathbb{E}_1[\textbf{1}_{A_\gamma^C} g(B) \mathbb{E}_2f(\tilde{G})],
  $$
  where $\tilde{G}$ is a copy of $G$ independent of everything else. Thus,
  \begin{align*}
    |\mathbb{E}[f(G) g(B)] - \mathbb{E}f(G) \mathbb{E}g(B)| & \leq |\mathbb{E}_1[\textbf{1}_{A_\gamma^C} g(B) \mathbb{E}_2f(G)]| + |\mathbb{E}_1[\textbf{1}_{A_\gamma^C} g(B) \mathbb{E}_2f(\tilde{G})]| \\
                                                            & \leq 2 \|g\|_\infty \|f\|_\infty (1-\gamma).
  \end{align*}
  For $\gamma \uparrow 1$, the bound tends to $0$, which proves independence of $G$ and $B$.
\end{proof}

Next, we establish uniform $p$-integrability of the process $\hat{G}_n$ from the previous theorem, which is required for some of our general results from Section \ref{sec:mains}. For this, we will again use the incomplete chaining result from \cite{kley_etal:2016} which we have already encountered in Section \ref{sec:proofs_order_statistics}. Luckily, in this instance, we do not need an analogue of the unwieldy Lemma \ref{lem:increments_orlicz}. Instead, we will write $\hat{G}_n = G_n + (\hat{G}_n - G_n)$. Conditionally on $X_1, \ldots, X_n$, the process $G_n$ is $1$-dependent and centred, and we will be able to control its increments by standard arguments. On the other hand, the difference process $\hat{G}_n - G_n$ is precisely of the form as we have considered in Section \ref{sec:proofs_order_statistics}, and we can use Corollary \ref{cor:order_psi_sup} to deal with it. The proof of the following lemma has some overlap in terms of technique with that of Theorem \ref{thm:Sn_convergence}, because they both use the same chaining argument from \cite{kley_etal:2016}.

\begin{lemma}
  \label{lem:concomitants_uniformly_integrable}
  Assume the conditions of Theorem \ref{thm:weak_convergence_1}, and recall in particular the number $r$, the countable subset $\mathcal{F}_0$ and the process $\hat{G}_n$ from the statement of that theorem. Suppose that the countable class $\mathcal{F}_0 \subseteq \mathcal{F}$ is not only dense with respect to pointwise converge, but with respect to monotonically decreasing convergence, i.e.\@ for any $f \in \mathcal{F}$ there exists a sequence $f_k \in \mathcal{F}_0$, $k \in \mathbb{N}$, such that $f_k \downarrow f$ as $k \to \infty$. Then $\|\hat{G}_n\|_{\mathcal{F} \times \mathcal{F}}$ is Borel measurable, and it holds for any $p > 12r$ that $\sup_{n \in \mathbb{N}} \mathbb{E} \|\hat{G}_n\|_{\mathcal{F} \times \mathcal{F}}^p < \infty$.
\end{lemma}
\begin{proof}
  We may assume without loss of generality that $\mathcal{F}$ contains the constant $0$-function. If this is not the case, we can simply add it to both $\mathcal{F}$ and $\mathcal{F}_0$, which does not impact any of the assumptions.

  $\hat{G}_n$ has continuous sample paths with respect to monotonically decreasing convergence: the monotonicity ensures that the indicator functions on the definition of $\hat{G}_n$ converge pointwise, and so we can apply the dominated convergence theorem to get continuity of the sample paths. Therefore,
  $$
    \|\hat{G}_n\|_{\mathcal{F} \times \mathcal{F}} = \|\hat{G}_n\|_{\mathcal{F}_0 \times \mathcal{F}_0},
  $$
  and the right-hand side is Borel measurable, because it is the countable supremum of Borel measurable random variables. A similar argument can be made for the processes $G_n$ from Theorem \ref{thm:weak_convergence_1}.

  Observe that $\hat{G}_n = G_n + (\hat{G}_n - G_n)$, and that for any $1 < r < 2$,
  \begin{equation}
    \label{eq:Gnhat_Gn_sup}
    \sup_{n \in \mathbb{N}}\mathbb{E}\left\|\hat{G}_n - G_n\right\|_{\mathcal{F} \times \mathcal{F}}^p < \sup_{n \in \mathbb{N}}\left\|\left\|\hat{G}_n - G_n\right\|_{\mathcal{F} \times \mathcal{F}}\right\|_{\psi_r} < \infty
  \end{equation}
  by Corollary \ref{cor:order_psi_sup}, and because the $L_1$-norm can be controlled by any $\psi_r$-norm. It remains to control the supremal $L_1$-norm of $\|G_n\|_{\mathcal{F} \times \mathcal{F}}$.

  Now fix some $p > 12r$. Let us denote (as in the proof of Theorem \ref{thm:weak_convergence_1}) by $\mathbb{E}_1$ the expectation only over $X_1, \ldots, X_n$, and by $\mathbb{E}_2$ the expectation only over $U_1, \ldots, U_n$. We will show
  \begin{equation}
    \label{eq:uniform_integrability_goal}
    \sup_{n \in \mathbb{N}}\mathbb{E}_2\|G_n\|_{\mathcal{F} \times \mathcal{F}}^p < M
  \end{equation}
  for some deterministic constant $M$. Let us write $G_n = G_n^{(1)} + G_n^{(2)}$, where $G_n^{(1)}$ includes all odd indices in the sum from the definition of $G_n$, and $G_n^{(2)}$ all even indices. Then, since
  \begin{equation}
    \label{eq:uniform_integrability_goal_2}
    \sup_{n \in \mathbb{N}}\mathbb{E}_2\|G_n\|_{\mathcal{F} \times \mathcal{F}}^p \leq \sup_{n \in \mathbb{N}}\mathbb{E}_2\left\|G_n^{(1)}\right\|_{\mathcal{F} \times \mathcal{F}}^p + \sup_{n \in \mathbb{N}}\mathbb{E}_2\left\|G_n^{(2)}\right\|_{\mathcal{F} \times \mathcal{F}}^p,
  \end{equation}
  it suffices to establish Eq.\@ \eqref{eq:uniform_integrability_goal} for $G_n^{(1)}$ and $G_n^{(2)}$ instead of $G_n$. We will consider $G_n^{(1)}$, as the proof for $G_n^{(2)}$ is completely analogous.

  We will use the chaining-type argument from \cite{kley_etal:2016}, for which we need to first consider a fixed but arbitrary increment of our process. To this end, let us fix some $(f,g), (f',g') \in \mathcal{F} \times \mathcal{F}$, and write
  \begin{align*}
    D_i & = \textbf{1}\{U_{2i-1} \leq f(X_{n,2i-1}')\} \textbf{1}\{U_{2i} \leq g(X_{n,2i}')\} - f(X_{n,2i-1}')g(X_{n,2i}')            \\
        & \quad - \textbf{1}\{U_{2i-1} \leq f'(X_{n,2i-1}')\} \textbf{1}\{U_{2i} \leq g'(X_{n,2i}')\} + f'(X_{n,2i-1}')g'(X_{n,2i}'),
  \end{align*}
  for $i = 1, \ldots, \lceil (n-1)/2\rceil =: N$, so that $G_n^{(1)}(f,g) - G_n^{(1)}(f',g') = n^{-1/2}(D_1 + \ldots + D_N)$. Conditionally on $X_1, \ldots, X_n$, $D_1, \ldots, D_N$ are independent and centred random variables, and so Rosenthal's inequality \citep[Theorem 3 in][]{rosenthal:1970} gives
  \begin{equation}
    \label{eq:rosenthal_1}
    \mathbb{E}_2\left[\left|\sum_{i=1}^N D_i\right|^p\right] \leq K_p \max\left\{\sum_{i=1}^N \mathbb{E}_2\left[|D_i|^p\right], \left(\sum_{i=1}^N \mathbb{E}_2\left[D_i^2\right]\right)^{p/2}\right\}
  \end{equation}
  for some constant $K_p$ depending only on $p$. Consider two cases: First, if $\sum_i \mathbb{E}_2 \left[|D_i|^p\right] \leq 1$, Eq.\@ \eqref{eq:rosenthal_1} immediately yields
  \begin{align}
    \begin{split}
      \label{eq:rosenthal_2}
      \mathbb{E}_2\left[\left|G_n^{(1)}(f,g) - G_n^{(1)}(f',g')\right|^p\right] & = n^{-p/2} \mathbb{E}_2\left[\left|\sum_{i=1}^N D_i\right|^p\right]                                              \\
                                                                                & \leq K_p \max\left\{n^{-p/2}, \left(\frac{1}{n}\sum_{i=1}^N \mathbb{E}_2\left[D_i^2\right]\right)^{p/2}\right\}.
    \end{split}
  \end{align}
  Now assume $\sum_i \mathbb{E}_2 \left[|D_i|^p\right] > 1$. Then
  $$
    \sum_{i=1}^N \mathbb{E}_2 \left[|D_i|^p\right] < \left(\sum_{i=1}^N \mathbb{E}_2 \left[|D_i|^p\right]\right)^{p/2} \leq \left(\sum_{i=1}^N \mathbb{E}_2 \left[|D_i|^2\right]\right)^{p/2}.
  $$
  The last inequality holds because $|D_i| \leq 1$. Using this bound in Eq.\@ \eqref{eq:rosenthal_1} yields
  \begin{equation}
    \label{eq:rosenthal_3}
    \mathbb{E}_2\left[\left|G_n^{(1)}(f,g) - G_n^{(1)}(f',g')\right|^p\right]  = n^{-p/2} \mathbb{E}_2\left[\left|\sum_{i=1}^N D_i\right|^p\right] \leq K_p \left(\frac{1}{n}\sum_{i=1}^N \mathbb{E}_2\left[D_i^2\right]\right)^{p/2}.
  \end{equation}
  Combine Eqs.\@ \eqref{eq:rosenthal_2} and \eqref{eq:rosenthal_3} to see that
  \begin{equation}
    \label{eq:Gn_hat_1_increment_1}
    \mathbb{E}_2\left[\left|G_n^{(1)}(f,g) - G_n^{(1)}(f',g')\right|^p\right] \leq K_p \left(n^{-1} \lor \frac{1}{n}\sum_{i=1}^N \mathbb{E}_2\left[D_i^2\right]\right)^{p/2}
  \end{equation}
  always holds. Furthermore, we have
  $$
    \mathbb{E}_2\left[D_i^2\right] \leq 2 |f - f'|(X_{n,2i-1}') + 2|g-g'|(X_{n,2i}'),
  $$
  which can be seen in exactly the same way as in Eq.\@ \eqref{eq:Dnj_difference}. Write
  $$
    d_n\left[(f,g), (f',g')\right] = \frac{1}{n} \sum_{i=1}^n |f-f'|(X_i) + |g-g'|(X_i),
  $$
  then we get as a consequence of Eq.\@ \eqref{eq:Gn_hat_1_increment_1}
  \begin{equation}
    \label{eq:Gn_hat_1_increment_2}
    \left(\mathbb{E}_2\left[\left|G_n^{(1)}(f,g) - G_n^{(1)}(f',g')\right|^p\right]\right)^{1/p} \leq \sqrt{2}K_p^{1/p} \left\{n^{-1} \lor d_n\left[(f,g), (f',g')\right]\right\}^{1/2}.
  \end{equation}
  Since we assumed that the constant $0$-function lies in $\mathcal{F}$, and $G_n^{(1)}(0,0) = 0$ almost surely, we get that
  $$
    \left\|G_n^{(1)}\right\|_{\mathcal{F} \times \mathcal{F}} \leq \sup_{(f,g), (f',g') \in \mathcal{F} \times \mathcal{F}} \left|G_n^{(1)}(f,g) - G_n^{(1)}(f',g')\right|.
  $$
  To bound the supremal increment on the right-hand side, we use Lemma A.1 in \cite{kley_etal:2016} with (in their notation) $\delta = \eta = \mathrm{diam}(\mathcal{F} \times \mathcal{F}, d_n) = 2$ and $\bar{\eta} = 2/n$. This gives us
  \begin{equation}
    \label{eq:kley_Gn_1}
    \left\|G_n^{(1)}\right\|_{\mathcal{F} \times \mathcal{F}} \lesssim S_1 + 2 \sup_{d_n[(f,g), (f',g')] \leq 2/n : (f',g') \in \tilde{T}} \left|G_n^{(1)}(f,g) - G_n^{(1)}(f',g')\right|,
  \end{equation}
  where $\lesssim$ is hiding a constant depending only on $p$, $\tilde{T}$ is a set with at most $D(2/n, \mathcal{F} \times \mathcal{F}, d_n)$ many points, and $S_1$ is a random variable satisfying
  \begin{equation}
    \label{eq:kley_Gn_2}
    \left(\mathbb{E}_2\left[|S_1|^p\right]\right)^{1/p} \leq \tilde{K} \left[\int_0^2 D^{1/p}(\varepsilon, \mathcal{F} \times \mathcal{F}, d_n) ~\mathrm{d}\varepsilon + (2 + 4/n)\right]
  \end{equation}
  for some constant $\tilde{K}$ depending only on $p$ and the constant $K_p$ \citep[this dependence is not stated in Lemma A.1 of][directly, but follows from their proof]{kley_etal:2016}. Since $K_p$ in turn also only depends on $p$, $\tilde{K}$ depends only on $p$. $D(\varepsilon, \mathcal{F} \times \mathcal{F}, d_n)$ denotes the packing number of $\mathcal{F} \times \mathcal{F}$. Since packing numbers and covering numbers are comparable by the inequality $D(\varepsilon, \mathcal{F} \times \mathcal{F}, d_n) \leq N(\varepsilon/2, \mathcal{F} \times \mathcal{F}, d_n)$ \citep[cf.\@ p.\@ 96 in][]{van_der_vaart_wellner:weak_convergence}, we get
  $$
    D(\varepsilon, \mathcal{F} \times \mathcal{F}, d_n) \leq \sup_Q D^2(\varepsilon, \mathcal{F}, L_1(Q)) \leq \sup_Q N_{[\,]}^2(\varepsilon/2, \mathcal{F}, L_1(Q)) \lesssim (\varepsilon/2)^{-2r},
  $$
  where $\lesssim$ is hiding a universal constant (this is part of the assumptions of Theorem \ref{thm:weak_convergence_1}). Since $p > 12r > 2r$, Eq.\@ \eqref{eq:kley_Gn_2} gives us
  \begin{equation}
    \label{eq:kley_Gn_3}
    \left(\mathbb{E}_2\left[|S_1|^p\right]\right)^{1/p} \leq \tilde{K} \left[\int_0^2 (\varepsilon/2)^{-2r/p} ~\mathrm{d}\varepsilon + 6\right] < \infty.
  \end{equation}
  It remains to bound the (conditional) $L_p$-norm of the supremum in Eq.\@ \eqref{eq:kley_Gn_1}. This can be done in a similar way as in the proof of Theorem \ref{thm:Sn_convergence}. Let $[l'_i, u'_i]$ and $[l''_i, u_i'']$, $i = 1, \ldots, M_1$, be collections of $n^{-1}/2$-brackets with respect to $L_1(X)$ and $d_n$, respectively. We may assume without loss of generality that $l_i', l_i''$ and $u_i', u_i''$ take their values in $[0,1]$. $M_1$ is some number which can be assumed to be less than $(2n)^r$ up to some universal multiplicative constant. Set $l_{ij} = l_i' \lor l_j''$ and $u_{ij} = u_i' \land u_j''$. Then
  $$
    [l_{ij}, u_{ij}] \subseteq [l_i', u_i'] \cap [l_j'', u_j''],
  $$
  and since the original brackets cover all of $\mathcal{F}$, so do the brackets $[l_{ij}, u_{ij}]$. Furthermore, they are $n^{-1}/2$-brackets with respect to both $L_1(X)$ and $d_n$ simultaneously. Let us denote the finitely many elements of the set $\tilde{T}$ by $(\tilde{f}_m, \tilde{g}_m)$, $m = 1, \ldots, M_2 \leq D(2n^{-1}, \mathcal{F} \times \mathcal{F}, d_n)$. Define the sets
  $$
    U_{ijklm} = \left\{(f,g) \in \mathcal{F} \times \mathcal{F} ~|~ (f,g) \in [l_{ij}, u_{ij}] \times [l_{kl}, u_{kl}] \land d_n[(f,g), (\tilde{f}_m, \tilde{g}_m)] \leq 2/n\right\},
  $$
  $i,j,k,l = 1, \ldots, M_1$, $m = 1, \ldots, M_2$. The construction of the sets $U_{ijklm}$ as well as the following arguments are essentially the same as in the proof of Theorem \ref{thm:Sn_convergence}, starting at Eq.\@ \eqref{eq:Uijk_definition}. As in that proof, we can show that
  \begin{align*}
    \left|G_n^{(1)}(f,g) - G_n^{(1)}(\tilde{f}_m,\tilde{f}_m)\right| & \leq \left|G_n^{(1)}(u_{ij},u_{kl}) - G_n^{(1)}(\tilde{f}_m,\tilde{g}_m)\right|               \\
                                                                     & \quad + \left|G_n^{(1)}(l_{ij},l_{kl}) - G_n^{(1)}(\tilde{f}_m,\tilde{g}_m)\right| + n^{-1/2}
  \end{align*}
  for any $(f,g) \in U_{ijklm}$ and $m = 1, \ldots, M_2$. Any $(f,g) \in \mathcal{F} \times \mathcal{F}$ with $d_n[(f,g), (\tilde{f}_m, \tilde{g}_m)] \leq 2/n$ must lie in some $U_{ijklm}$. Therefore,
  \begin{align*}
     & \sup_{d_n[(f,g), (f',g')] \leq 2/n : (f',g') \in \tilde{T}} \left|G_n^{(1)}(f,g) - G_n^{(1)}(f',g')\right|                                              \\
     & \leq \max_{i,j,k,l = 1, \ldots, M_1} \max_{m = 1, \ldots, M_2} \left|G_n^{(1)}(u_{ij},u_{kl}) - G_n^{(1)}(\tilde{f}_m,\tilde{g}_m)\right|               \\
     & \quad + \max_{i,j,k,l = 1, \ldots, M_1} \max_{m = 1, \ldots, M_2} \left|G_n^{(1)}(l_{ij},l_{kl}) - G_n^{(1)}(\tilde{f}_m,\tilde{g}_m)\right| + n^{-1/2}
  \end{align*}
  By Lemma 2.2.2 in \cite{van_der_vaart_wellner:weak_convergence},
  \begin{align*}
     & \left(\mathbb{E}_2\left[\sup_{d_n[(f,g), (f',g')] \leq 2/n : (f',g') \in \tilde{T}} \left|G_n^{(1)}(f,g) - G_n^{(1)}(f',g')\right|^p\right]\right)^{1/p}                                                      \\
     & \lesssim (M_1^4 M_2)^{1/p }\max_{i,j,k,l = 1, \ldots, M_1} \max_{m = 1, \ldots, M_2} \left(\mathbb{E}_2\left[\left|G_n^{(1)}(u_{ij},u_{kl}) - G_n^{(1)}(\tilde{f}_m,\tilde{g}_m)\right|^p\right]\right)^{1/p} \\
     & \quad + (M_1^4 M_2)^{1/p}\max_{i,j,k,l = 1, \ldots, M_1} \max_{m = 1, \ldots, M_2}  \left(\mathbb{E}_2\left[\left|G_n^{(1)}(l_{ij},l_{kl}) - G_n^{(1)}(\tilde{f}_m,\tilde{g}_m)\right|^p\right]\right)^{1/p}  \\
     & \quad + n^{-1/2},
  \end{align*}
  where $\lesssim$ is hiding a constant depending only on $p$. Now use Eq.\@ \eqref{eq:Gn_hat_1_increment_2} to bound each of the expectations occurring on the right-hand side of this bound by some multiple of $n^{-1/2}$. This gives us
  \begin{align}
    \begin{split}
      \label{eq:kley_Gn_4}
       & \left(\mathbb{E}_2\left[\sup_{d_n[(f,g), (f',g')] \leq 2/n : (f',g') \in \tilde{T}} \left|G_n^{(1)}(f,g) - G_n^{(1)}(f',g')\right|^p\right]\right)^{1/p} \\
       & \lesssim (M_1^4 M_2)^{1/p } n^{-1/2}                                                                                                                     \\
       & \lesssim (n^{4r}n^{2r})^{1/p} n^{-1/2}                                                                                                                   \\
       & = n^{6r/p - 1/2},
    \end{split}
  \end{align}
  where $\lesssim$ is hiding constants depending only on $p$ and $r$. Now combine Eqs.\@ \eqref{eq:kley_Gn_1}, \eqref{eq:kley_Gn_3} and \eqref{eq:kley_Gn_4} to find that
  \begin{equation}
    \label{eq:kley_Gn_5}
    \left(\mathbb{E}_2 \left[\left\|G_n^{(1)}\right\|_{\mathcal{F} \times \mathcal{F}}^p\right]\right)^{1/p} \lesssim \tilde{K} \left[\int_0^2 (\varepsilon/2)^{-2r/p} ~\mathrm{d}\varepsilon + 6\right] + n^{6r/p - 1/2},
  \end{equation}
  for all $n \in \mathbb{N}$, where $\lesssim$ is hiding a constant depending only on $r$ and $p$. Furthermore, since $p > 12r$ by assumption, it holds that $6r/p - 1/2 < 0$. Eq.\@ \eqref{eq:kley_Gn_5} therefore implies that
  $$
    \sup_{n \in \mathbb{N}} \mathbb{E}_2 \left[\left\|G_n^{(1)}\right\|_{\mathcal{F} \times \mathcal{F}}^p\right] < M < \infty
  $$
  for some constant $M$ depending only on $r$ and $p$. The same arguments lead to an analogous bound for $G_n^{(2)}$, and so we have established Eq.\@ \eqref{eq:uniform_integrability_goal} by virtue of Eq.\@ \eqref{eq:uniform_integrability_goal_2}. Hence,
  $$
    \sup_{n \in \mathbb{N}} \mathbb{E} \left[\left\|G_n^{(1)}\right\|_{\mathcal{F} \times \mathcal{F}}^p\right] \leq \mathbb{E}_1 \sup_{n \in \mathbb{N}} \mathbb{E}_2 \left[\left\|G_n^{(1)}\right\|_{\mathcal{F} \times \mathcal{F}}^p\right] < M < \infty.
  $$
  Together with Eq.\@ \eqref{eq:Gnhat_Gn_sup}, this proves our claim.
\end{proof}

Next, as a corollary to Theorem \ref{thm:weak_convergence_1}, we finally establish convergence of the process in \eqref{eq:goal_concomitants}. The technique of how to do this has already been described in the discussion preceding Theorem \ref{thm:weak_convergence_1}. Following this corollary are a few more results, all of which are simple applications of our general methods, or technical lemmas in preparation thereof.

\begin{corollary}
  \label{cor:weak_convergence_Wtilde}
  Let $P_n$ be the empirical measure of $(Y_{n,1}', Y_{n,2}'), \ldots, (Y_{n,n-1}', Y_{n,n}')$. Then it holds that $\sqrt{n}(P_n - \mathbb{E}P_n) \rightsquigarrow G + B$ in in $\ell^{\infty}(\mathcal{G})$, where $\mathcal{G}$ is the class of all indicator functions $\textbf{1}_{[0,a]}$, $a \in [0,1]^2$, and $G$ and $B$ are two independent tight mean-zero Gaussian process with covariance functions $\Gamma$ and $\beta$, respectively, which are given by

  \begin{align*}
    \Gamma(\textbf{1}_{[0,s]}, \textbf{1}_{[0,t]}) & = \int \mathbb{E}[\textbf{1}_{[0,s]}(Y_1, Y_2) \textbf{1}_{[0,t]}(Y_1, Y_2) ~|~ (X_1, X_2) = (x,x)] ~\mathrm{d}\mathbb{P}^X(x)                     \\
                                                   & \quad + \int \mathbb{E}[\textbf{1}_{[0,s]}(Y_1, Y_2) \textbf{1}_{[0,t]}(Y_3, Y_1) ~|~ (X_1, X_2, X_3) = (x,x,x)] ~\mathrm{d}\mathbb{P}^X(x)        \\
                                                   & \quad + \int \mathbb{E}[\textbf{1}_{[0,s]}(Y_1, Y_2) \textbf{1}_{[0,t]}(Y_2, Y_3) ~|~ (X_1, X_2, X_3) = (x,x,x)] ~\mathrm{d}\mathbb{P}^X(x)        \\
                                                   & \quad - 3 \int \mathbb{E}[\textbf{1}_{[0,s]}(Y_1, Y_2) \textbf{1}_{[0,t]}(Y_3, Y_4) ~|~ (X_1, \ldots, X_4) = (x,x,x,x)] ~\mathrm{d}\mathbb{P}^X(x)
  \end{align*}
  and
  \begin{align*}
    \beta(\textbf{1}_{[0,s]}, \textbf{1}_{[0,t]})
     & = \int \mathbb{E}[\textbf{1}_{[0,s]}(Y_1, Y_2) \textbf{1}_{[0,t]}(Y_3, Y_4) ~|~ (X_1, \ldots, X_4) = (x,x,x,x)]~\mathrm{d}\mathbb{P}^X(x) \\
     & \quad - \left\{\int \mathbb{E}[\textbf{1}_{[0,s]}(Y_1, Y_2) ~|~ (X_1, X_2) = (x,x)] ~\mathrm{d}\mathbb{P}^X(x)   \right.                  \\
     & \qquad\qquad \left.\cdot \int \mathbb{E}[\textbf{1}_{[0,t]}(Y_1, Y_2) ~|~ (X_1, X_2) = (x,x)] ~\mathrm{d}\mathbb{P}^X(x)\right\}.
  \end{align*}
\end{corollary}
\begin{proof}
  By Lemma \ref{lem:representation_lemma} we may assume without loss of generality that $Y_{n,i}' = \tau(X_{n,i}', U_i')$, where $\tau$ and $U_1, \ldots, U_n$ are explained in that lemma. Consider any $a = (a_1, a_2) \in [0,1]^2$. Then it holds that $(Y_{n,i}', Y_{n,i+1}') \in [0,a]$ if and only if $\tau(X_{n,i}', U_i) \leq a_1$ and $\tau(X_{n,i+1}', U_{i+1}) \leq a_2$. Recall that $\tau$ is defined by $\tau(x,u) = F_x^{-1}(u)$, where $F_x$ is the cumulative distribution function of $Y$ conditional on $X = x$. Since $F^{-1}(u) \leq a_i$ if and only if $u \leq F(a_i)$ for any distribution function $F$ \citep[Lemma 21.1 in][]{vandervaart:asymptotic_statistics}, it follows that
  $$
    \textbf{1}_{[0,a]}(Y_{n,i}', Y_{n,i+1}') = \textbf{1}\{U_i \leq f_{a_1}(X_{n,i}')\}\textbf{1}\{U_{i+1} \leq f_{a_2}(X_{n,i+1}')\},
  $$
  where $f_t(x) = \mathbb{P}(Y \leq t ~|~ X = x)$. This also implies
  \begin{align*}
    \mathbb{E}P_n(\textbf{1}_{[0,a]}) & = \mathbb{E}\left[\frac{1}{n-1} \sum_{i=1}^{n-1} \textbf{1}\{U_i \leq f_{a_1}(X_{n,i}')\}\textbf{1}\{U_{i+1} \leq f_{a_2}(X_{n,i+1}')\}\right] \\
                                      & = \frac{1}{n-1} \sum_{i=1}^{n-1} f_{a_1}(X_{n,i}')f_{a_2}(X_{n,i+1}')
  \end{align*}
  by independence of $U_1, \ldots, U_n$ from everything else. Therefore,
  \begin{align}
    \begin{split}
      \label{eq:indikator_funktionenklasse_F}
       & \sqrt{n}(P_n - \mathbb{E}P_n)(\textbf{1}_{[0,a]})                                                                                                                                              \\
       & = \frac{n}{n-1} \frac{1}{\sqrt{n}} \sum_{i=1}^{n-1} \left[\textbf{1}\{U_i \leq f_{a_1}(X_{n,i}')\}\textbf{1}\{U_{i+1} \leq f_{a_2}(X_{n,i+1}')\} - f_{a_1}(X_{n,i}')f_{a_2}(X_{n,i+1}')\right] \\
       & = \frac{n}{n-1}\hat{G}_n(f_{a_1}, f_{a_2}),
    \end{split}
  \end{align}
  where $\hat{G}_n$ is the process from Theorem \ref{thm:weak_convergence_1}. It remains to show that the function class $\mathcal{F} = \{f_t ~|~ t \in [0,1]\}$ satisfies the assumptions of Theorem \ref{thm:weak_convergence_1}.

  Let $Q$ be any probability measure. Then $F_Q(t) = \int \mathbb{P}(Y \leq t ~|~ X = x) ~\mathrm{d}Q(x)$ defines a distribution function on $[0,1]$, regardless of whether $Q$ is the actual distribution of $X$ or another distribution. For any $\varepsilon > 0$, we can find a collection of at most $N = 2/\varepsilon$ many points $0 = s_1 < \ldots < s_{N} = 1$ such that $F_Q(s_{i+1} - ) - F_Q(s_{i}) < \varepsilon$ for any $i = 1, \ldots, N-1$, where $F_Q(s - )$ denotes the left-sided limit of $F_Q$ in $s$. The fact that this is possible can be seen by first including all points of discontinuity of $F_Q$ with $F_Q$-probability of at least $\varepsilon$ into the collection $s_1 < \ldots < s_N$ and then partitioning the spaces between these points into intervals with $F_Q$-probability of less than $\varepsilon$. The fact that $2/\varepsilon$ many points suffice comes from the observation that all these disjoint intervals have a combined $F_Q$-probability of at most $1$. Finally, for any $i = 1, \ldots, N-1$, define $u_i(x) = \mathbb{P}(Y < s_{i+1} ~|~ X = x)$ and $l_i(x) = \mathbb{P}(Y \leq s_i ~|~ X = x)$. The collection $[l_i, u_i]$, $i = 1, \ldots, N$, covers $\mathcal{F}$, and
  $$
    \|u_i - l_i\|_{L_1(Q)} = \int u_i - l_i ~\mathrm{d}Q = F_Q(s_{i+1}-) - F_Q(s_i) < \varepsilon,
  $$
  so each $[l_i, u_i]$ is an $\varepsilon$-bracket for $\mathcal{F}$ with respect to the $L_1(Q)$-norm. From this, it follows that $N_{[\,]}(\varepsilon, \mathcal{F}, L_1(Q)) \leq 2/\varepsilon$. Since the right-hand side does not depend on $Q$, taking the supremum over all $Q$ does not change the bound $2/\varepsilon$.

  As a countable subset $\mathcal{F}_0 \subseteq \mathcal{F}$ which is dense with respect to pointwise convergence, choose $\mathcal{F}_0 = \{f_q ~|~ q \in \mathbb{Q} \cap [0,1]\}$. We claim that this subset has the desired property. To see this, consider any $f_t \in \mathcal{F}$. If $t \in \mathbb{Q}$, then $f_t \in \mathcal{F}_0$ and we are done. Assume therefore that $t \notin \mathbb{Q}$. Now take a sequence $q_n \in \mathbb{Q} \cap [0,1]$ such that $q_n \downarrow t$. The sequence of functions $f_{q_n}$ approximates $f_t$ pointwise: Fix some $x \in [0,1]$ and let $D = \{d_1, d_2, \ldots\}$ denote the at most countable collection of mass points of $Y|X=x$, i.e.\@ the collection of all points $d_j$ such that $p_j = \mathbb{P}(Y = d_j ~|~ X = x) > 0$. For a given $\varepsilon > 0$, determine an index $J \in \mathbb{N}$ such that $\sum_{j > J} p_j < \varepsilon/2$. By choosing $n$ large enough we can ensure that $d_1, \ldots, d_J \notin (t, q_n]$. On the other hand, $(t,q_n] \setminus D$ does not contain any points of mass by definition, so we can also choose $n$ large enough that $\mathbb{P}((t, q_n]\setminus D ~|~ X = x) < \varepsilon/2$. Thus, for sufficiently large $n$, it holds that
  \begin{align*}
    0 \leq f_{q_n}(x) - f_t(x) & = \mathbb{P}(Y \in (t, q_n] ~|~ X = x)                                                   \\
                               & = \mathbb{P}(Y \in D \cap (t,q_n] ~|~ X = x) + \mathbb{P}((t, q_n]\setminus D ~|~ X = x) \\
                               & \leq \sum_{j > J} p_j + \mathbb{P}((t, q_n]\setminus D ~|~ X = x)                        \\
                               & < \varepsilon/2 + \varepsilon/2 = \varepsilon.
  \end{align*}
  Since $\varepsilon > 0$ was arbitrary, this proves $f_{q_n}(x) \to f_t(x)$, and hence $\mathcal{F}_0$ is dense in $\mathcal{F}$ with respect to pointwise convergence. The weak convergence now follows by Eq.\@ \eqref{eq:indikator_funktionenklasse_F} and Theorem \ref{thm:weak_convergence_1}. To work out the specific form of the covariance functions, we use the fact that $f(x) = \mathbb{E}[\textbf{1}(U \leq f(x))]$ for $U \sim \mathcal{U}[0,1]$ in combination with the equivalence $U \leq f_t(x)$ if and only if $\tau(x,U) \leq t$. To illustrate this, consider the first term in the generic formula for $\Gamma$ in Theorem \ref{thm:weak_convergence_1},

  \begin{align*}
     & \mathbb{E}[(f_{s_1} \land f_{t_1})(X_1) (f_{s_2} \land f_{t_2})(X_1)]                                                                                                            \\
     & = \mathbb{E}[\textbf{1}\{U_1 \leq f_{s_1}(X_1)\} \textbf{1}\{U_1 \leq f_{t_1}(X_1)\} \textbf{1}\{U_2 \leq f_{s_2}(X_1)\} \textbf{1}\{U_2 \leq f_{t_2}(X_1)\}]                    \\
     & = \mathbb{E}[\textbf{1}\{\tau(X_1, U_1) \leq s_1\} \textbf{1}\{\tau(X_1, U_1) \leq t_1\} \textbf{1}\{\tau(X_1, U_2) \leq s_2\} \textbf{1}\{\tau(X_1, U_2) \leq t_2\}]            \\
     & = \int \mathbb{E}[\textbf{1}\{Y_1 \leq s_1\} \textbf{1}\{Y_1 \leq t_1\} \textbf{1}\{Y_2 \leq s_2\} \textbf{1}\{Y_2 \leq t_2\} ~|~ (X_1, X_2) = (x,x)] ~\mathrm{d}\mathbb{P}^X(x) \\
     & = \int \mathbb{E}[\textbf{1}_{[0,s]}(Y_1, Y_2) \textbf{1}_{[0,t]}(Y_1, Y_2) ~|~ (X_1, X_2) = (x,x)] ~\mathrm{d}\mathbb{P}^X(x).
  \end{align*}

  The other terms can be dealt with analogously, resulting in the covariance functions from the statement of this corollary.
\end{proof}

\begin{corollary}
  \label{cor:convergence_sum_classes}
  Fix some $1 \leq c < \infty$. Using the notation of Corollary \ref{cor:weak_convergence_Wtilde}, we have the convergence $\sqrt{n}(P_n - \mathbb{E}P_n) \rightsquigarrow G$ in $\ell^\infty[\mathcal{HK}(c)]$ for some tight mean-zero Gaussian process $G$. $G$ has uniformly continuous sample paths with respect to the supremum norm on $\mathcal{HK}(c)$.
\end{corollary}
\begin{proof}
  Everything except for the characterisation of the degeneracy is a direct consequence of Corollary \ref{cor:weak_convergence_Wtilde} and Theorem \ref{thm:extension_hk_simple}, noting that tightness and separability of Borel measures are equivalent in complete metric spaces \citep[Lemma 1.3.2 in][]{van_der_vaart_wellner:weak_convergence}. Finally, by Lemma \ref{lem:concomitants_uniformly_integrable} and Eq.\@ \eqref{eq:indikator_funktionenklasse_F}, it holds that $\sup{n \in \mathbb{N}} \mathbb{E}\|\sqrt{n}(P_n - \mathbb{E}P_n)\|_\mathcal{F}^p < \infty$ for some sufficiently large $p > 1$.
\end{proof}

\begin{lemma}
  \label{lem:delta_n_0}

  Let $P_n$ be the empirical measure of $(Y_{n,1}', Y_{n,2}'), \ldots, (Y_{n,n-1}', Y_{n,n}')$, and let $P$ be the measure from Definition \ref{def:centring_P}. Then, for any $0 < c < \infty$, $\mathbb{E}P_n \to P$ in $\ell^\infty[\mathcal{HK}(c)]$.
\end{lemma}
\begin{proof}

  By Lemma \ref{lem:representation_lemma}, we may assume without loss of generality that $Y_{n,i}' = \tau(U_i, X_{n,i}')$ for any $n \in \mathbb{N}$ and $i = 1, \ldots, n-1$, where $U_i \sim \mathcal{U}[0,1]$ i.i.d.\@ and $\tau$ as defined in that lemma. Observe that, for any measurable function $h : [0,1]^2 \to \mathbb{R}$,
  \begin{align*}
     & \mathbb{E}\left[\frac{1}{n-1} \sum_{i=1}^{n-1} h\left\{\tau(U_i, X_{n,i}'), \tau(U_{i+1}, X_{n,i}')\right\}\right]   \\
     & = \frac{1}{n-1} \sum_{i=1}^{n-1} \mathbb{E}\left[h\left\{\tau(U_i, X_{n,i}'), \tau(U_{i+1}, X_{n,i}')\right\}\right] \\
     & = \frac{1}{n-1} \sum_{i=1}^{n-1} \mathbb{E}\left[h\left\{\tau(U_i, X_i), \tau(U_{i+1}, X_i)\right\}\right]           \\
     & = \mathbb{E}\left[h\left\{\tau(U_1, X_1), \tau(U_{2}, X_1)\right\}\right]                                            \\
     & = \int \mathbb{E}[h(Y_1, Y_2) ~|~ X_1 = X_2 = x] ~\mathrm{d}\mathbb{P}^X(x)                                          \\
     & = P(h).
  \end{align*}
  In the third inequality we have used that the $U_1, \ldots, U_n$ are independent of $X_1, \ldots, X_n$, and so applying a permutation to them which only depends on $X_1, \ldots, X_n$ does not change their joint distribution. Hence, by Theorem \ref{thm:koksma-hlawka_variant},
  \begin{align}
    \begin{split}
      \label{eq:bias_ineq}
       & \|\mathbb{E}P_n - P\|_{\mathcal{HK}(c)}                                                                                                                                                                           \\
       & \leq c \sup_{a \in [0,1]^2} \left|\mathbb{E}P_n(\textbf{1}_{[0,a]}) - P(\textbf{1}_{[0,a]})\right|                                                                                                                \\
       & = \sup_{a \in [0,1]^2} \left|\mathbb{E}\left[\frac{1}{n-1}\sum_{i=1}^{n-1} \textbf{1}_{[0,a]}(Y_{n,i}', Y_{n,i+1}') - \textbf{1}_{[0,a]}\left\{\tau(U_i, X_{n,i}'), \tau(U_{i+1}, X_{n,i}')\right\}\right]\right| \\
       & \leq \mathbb{E}^* \sup_{a \in [0,1]^2} \left|\frac{1}{n-1}\sum_{i=1}^{n-1} \textbf{1}_{[0,a]}(Y_{n,i}', Y_{n,i+1}') - \textbf{1}_{[0,a]}\left\{\tau(U_i, X_{n,i}'), \tau(U_{i+1}, X_{n,i}')\right\}\right|.
    \end{split}
  \end{align}
  The last inequality is an application of the following observation: If $f_i$, $i \in I$, is a family of measurable real-valued functions indexed in some arbitrary set $I$, then
  $$
    \sup_{i \in I} |\mathbb{E} f_i| \leq \sup_{i \in I} \mathbb{E}|f_i| = \sup_{i \in I} \mathbb{E}^*|f_i| \leq \sup_{i \in I} \mathbb{E}^* \sup_{j \in I} |f_j| = \mathbb{E}^* \sup_{j \in I} |f_j|
  $$
  by Jensen's inequality for the usual expectation and monotonicity of the outer expectation.

  For any $a = (a_1, a_2) \in [0,1]^2$, we define the functions $f_a(x) = \mathbb{P}(Y \leq a_1 ~|~ X = x)$ and $g_a(x) = \mathbb{P}(Y \leq a_2 ~|~ X = x)$. Then $\textbf{1}_{[0,a]}[\tau(x_1,u_1), \tau(x_2, u_2)]$ is equal to
  $$
    \textbf{1}\{\tau(x_1, u_1) \leq a_1\} \textbf{1}\{\tau(x_2, u_2) \leq a_2\} = \textbf{1}\{u_1 \leq f_a(x_1)\} \textbf{1}\{u_2 \leq g_a(x_2)\}.
  $$
  Hence, with $\mathcal{F}$ denoting the collection of all such functions $f_a$ and $g_a$,
  \begin{align}
    \begin{split}
      \label{eq:bias_ineq_2}
       & \sup_{a \in [0,1]^2} \left|\frac{1}{n-1}\sum_{i=1}^{n-1} \textbf{1}_{[0,a]}(Y_{n,i}', Y_{n,i+1}') - \textbf{1}_{[0,a]}\left\{\tau(U_i, X_{n,i}'), \tau(U_{i+1}, X_{n,i}')\right\}\right|                  \\
       & = \sup_{f,g \in \mathcal{F}} \left|\frac{1}{n-1} \sum_{i=1}^{n-1} \textbf{1}\{U_i \leq f(X_{n,i}')\}\left[\textbf{1}\{U_{i+1} \leq g(X_{n,i+1}')\} - \textbf{1}\{U_{i+1} \leq g(X_{n,i}')\}\right]\right| \\
       & \leq \sup_{g \in \mathcal{F}} \frac{1}{n-1} \sum_{i=1}^{n-1} \left|\textbf{1}\{U_{i+1} \leq g(X_{n,i+1}')\} - \textbf{1}\{U_{i+1} \leq g(X_{n,i}')\}\right|,
    \end{split}
  \end{align}
  and the last expression tends to $0$ outer almost surely by Lemma \ref{lem:dreifach_glivenko_cantelli}, Eq.\@ \eqref{eq:ordnungsstatistiken_3}. It is also bounded by $1$ for all $n \in \mathbb{N}$. Therefore,
  $$
    \mathbb{E}^* \sup_{a \in [0,1]^2} \left|\frac{1}{n-1}\sum_{i=1}^{n-1} \textbf{1}_{[0,a]}(Y_{n,i}', Y_{n,i+1}') - \textbf{1}_{[0,a]}\left\{\tau(U_i, X_{n,i}'), \tau(U_{i+1}, X_{n,i}')\right\}\right| \xrightarrow[n \to \infty]{} 0
  $$
  by the dominated convergence theorem for outer expectations \citep[Exercise 1.2.4 in][]{van_der_vaart_wellner:weak_convergence}, and so our claim follows by Eq.\@ \eqref{eq:bias_ineq}.
\end{proof}

\begin{proof}[Proof of Theorem \ref{thm:prozesskonvergenz_klammer}]

  By Corollary \ref{cor:convergence_sum_classes}, $\sqrt{n}(P_n - \mathbb{E}P_n) \rightsquigarrow G$ in $\ell^\infty[\mathcal{HK}(c)]$. $G$ is tight and has uniformly continuous sample paths with respect to the supremum norm. Furthermore, $\mathbb{E}P_n \to P$ in $\ell^\infty[\mathcal{HK}(c)]$ by Lemma \ref{lem:delta_n_0}, where $P$ is the measure from Definition \ref{def:centring_P}.

  Therefore, by Theorems \ref{thm:v_statistik_from_empirical_process} and \ref{thm:product_uniformly_integrable} and Lemma \ref{lem:concomitants_uniformly_integrable},
  $$
    \sqrt{n}\left(\tilde{P}_n^m - \mathbb{E}\left[P_n^m\right]\right) \rightsquigarrow G_P^{(m)}
  $$
  in $\ell^\infty[\mathcal{HK}(c)]$. To prove Eq.\@ \eqref{eq:concomitants_p_moments}, observe that
  \begin{align*}
     & \sup_{n \in \mathbb{N}}\mathbb{E}^* \left\|\sqrt{n}(P_n^m - \mathbb{E}P_n^m)\right\|_{\mathcal{HK}(c)_m}^p                                                                                                                                     \\
     & \leq 2^{p-1}\sup_{n \in \mathbb{N}}\mathbb{E}^* \left\|\sqrt{n}(P_n^m - (\mathbb{E}P_n)^m)\right\|_{\mathcal{HK}(c)_m}^p + 2^{p-1} \sup_{n \in \mathbb{N}} \left\|\mathbb{E}\sqrt{n}(P_n^m - (\mathbb{E}P_n)^m)\right\|_{\mathcal{HK}(c)_m}^p.
  \end{align*}
  The second line of this inequality is finite for sufficiently large $p$, again as a consequence of Theorems \ref{thm:v_statistik_from_empirical_process} and \ref{thm:product_uniformly_integrable} and Lemma \ref{lem:concomitants_uniformly_integrable}.
\end{proof}

\begin{proof}[Proof of Theorem \ref{thm:kovarianzfunktion_konkomitantenprozess}]
  This is a direct consequence of Corollary \ref{cor:weak_convergence_Wtilde} and Theorems \ref{thm:extension_covariance} and \ref{thm:v_statistik_from_empirical_process}, Eq.\@ \eqref{eq:V_kovarianz_statement}.
\end{proof}

Let us now prove the statement of Theorem \ref{thm:bias_conditions} concerning $\Delta_n$. The claim about $\delta_n$ will follow from this more general result. For this, we refer to Corollary \ref{cor:delta_n_bias}. It will be shown that the bias $\Delta_n$ is related to the behaviour of the order statistics $X_{n,1}', \ldots, X_{n,n}'$ in the sense that
$$
  \|\Delta_n\|_{\mathcal{HK}(c)} \leq \sup_{g \in \mathcal{F}} \mathbb{E}\left[\frac{1}{\sqrt{n}} \sum_{j=1}^{n-1} \left|g(X_{n,j+1}') - g(X_{n,j}')\right|\right] + o(1),
$$
and the conditions in Theorem \ref{thm:bias_conditions} are such that they make controlling the right-hand side easy.

\begin{proof}[Proof of Theorem \ref{thm:bias_conditions} for $\Delta_n$]

  Recall from the proof of Theorem \ref{thm:prozesskonvergenz_klammer} that Theorem \ref{thm:product_uniformly_integrable} is applicable. Therefore,
  $$
    \|\sqrt{n}\{\mathbb{E}P_n^m - (\mathbb{E}P_n)^m\}\|_{\mathcal{HK}(c)_m} \xrightarrow[n \to \infty]{} 0.
  $$
  This is Eq.\@ \eqref{eq:moment_convergence_conclusion_2} of Theorem \ref{thm:product_uniformly_integrable}. Since
  $$
    \|\Delta_n\|_{\mathcal{HK}(c)_m} \leq \|\sqrt{n}\{\mathbb{E}P_n^m - (\mathbb{E}P_n)^m\}\|_{\mathcal{HK}(c)_m} + \|\sqrt{n}\{(\mathbb{E}P_n)^m - P^m\}\|_{\mathcal{HK}(c)_m},
  $$
  it suffices to consider the second term on the right-hand side of this inequality. As in the derivation of Eq.\@ \eqref{eq:bias_bound}, one can show that
  \begin{align*}
     & \|\sqrt{n}\{(\mathbb{E}P_n)^m - P^m\}\|_{\mathcal{HK}(c)_m}                                                                         \\
     & \leq \|\sqrt{n}\{\mathbb{E}P_n - P\}\|_{\mathcal{HK}(c)} + \|\sqrt{n}\{(\mathbb{E}P_n)^{m-1} - P^{m-1}\}\|_{\mathcal{HK}(c)_{m-1}}.
  \end{align*}
  We can iterate this inequality to find that
  \begin{equation}
    \label{eq:neu_1}
    \|\sqrt{n}\{(\mathbb{E}P_n)^m - P^m\}\|_{\mathcal{HK}(c)_m} \leq m\|\sqrt{n}\{\mathbb{E}P_n - P\}\|_{\mathcal{HK}(c)}.
  \end{equation}
  Similar to Eqs.\@ \eqref{eq:bias_ineq} and \eqref{eq:bias_ineq_2}, we can show that
  \begin{align}
    \begin{split}
      \label{eq:neu_2}
       & \|\sqrt{n}\{\mathbb{E}P_n - P\}\|_{\mathcal{HK}(c)}                                                                                                                                           \\
       & \leq c \sup_{g \in \mathcal{F}} \mathbb{E} \left[\frac{\sqrt{n}}{n-1} \sum_{j=1}^{n-1} \left|\textbf{1}\{U_{j+1} \leq g(X_{n,j+1}')\} - \textbf{1}\{U_{j+1} \leq g(X_{n,j}')\}\right|\right],
    \end{split}
  \end{align}
  where $\mathcal{F} = \{f_a ~|~ a \in [0,1]\}$, $f_a(x) = \mathbb{P}(Y \leq a ~|~ X = x)$. The only difference compared to the procedure in Eqs.\@ \eqref{eq:bias_ineq} and \eqref{eq:bias_ineq_2} is that we don't pull the expectation outside of the supremum, and the extra factor of $\sqrt{n}$. Hence, by Eqs.\@ \eqref{eq:neu_1} and \eqref{eq:neu_2},

  \begin{align}
    \begin{split}
      \label{eq:delta_n_Sn}
       & \|\sqrt{n}\{(\mathbb{E}P_n)^m - P^m\}\|_{\mathcal{HK}(c)_m}                                                                                                                                 \\
       & \lesssim \sup_{g \in \mathcal{F}} \mathbb{E}\left[ \frac{1}{\sqrt{n}} \sum_{j=1}^{n-1}\left|\textbf{1}\{U_{j+1} \leq g(X_{n,j+1}')\} - \textbf{1}\{U_{j+1} \leq g(X_{n,j}')\}\right|\right] \\
       & \leq \sup_{g \in \mathcal{F}} \mathbb{E}\left[\frac{1}{\sqrt{n}} \sum_{j=1}^{n-1} \left|g(X_{n,j+1}') - g(X_{n,j}')\right|\right],
    \end{split}
  \end{align}
  where $\lesssim$ is hiding a constant depending only on $c$ and $m$.

  Under assumption (\ref{it:bounded_tv}), it holds that
  $$
    \frac{1}{\sqrt{n}} \sum_{j=1}^{n-1} \left|g(X_{n,j+1}') - g(X_{n,j}')\right| \leq \sup_{g \in \mathcal{F}} \|g\|_{\mathrm{TV}} n^{-1/2},
  $$
  since the order statistics $X_{n,1}' \leq \ldots \leq X_{n,n}'$ partition the unit interval. In all other cases, we can bound the right-hand side of Eq.\@ \eqref{eq:delta_n_Sn} in terms of the number of indices $j = 1, \ldots, n-1$ such that $X_{n,j}' \neq X_{n,j+1}'$. This number in turn cannot exceed the number $K_n$ of unique values in the sample $X_1, \ldots, X_n$; i.e.\@
  $$
    \frac{1}{\sqrt{n}} \sum_{j=1}^{n-1} \left|g(X_{n,j+1}') - g(X_{n,j}')\right| \leq K_n n^{-1/2},
  $$
  Under assumption (\ref{it:discrete_finite}), $\sup_{n \in \mathbb{N}} K_n$ can be bounded by some deterministic finite number $K_\infty$, and so
  $$
    \frac{1}{\sqrt{n}} \sum_{j=1}^{n-1} \mathbb{E}\left|g(X_{n,j+1}') - g(X_{n,j}')\right| \leq K_\infty n^{-1/2},
  $$
  Finally, under assumption (\ref{it:discrete_infinite}), it holds that
  $$
    \mathbb{E}\left[n^{-1/2} K_n\right] \leq C n^{\gamma - 1/2} L(n)
  $$
  for some constant $C$ depending only on the function $\alpha$. This is a consequence of Theorem 1' in \cite{karlin:1967}. Since $L$ is slowly varying, the right-hand side converges to $0$ for $n \to \infty$ by Lemma VIII.8.2 in \cite{feller:prob_theory_applications_vol2}. Thus, under any of the conditions of the theorem,
  $$
    \|\sqrt{n}\{(\mathbb{E}P_n)^m - P^m\}\|_{\mathcal{HK}(c)_m} \xrightarrow[n \to \infty]{} 0
  $$
  as a consequence of Eq.\@ \eqref{eq:delta_n_Sn}.
\end{proof}

Next, we prove Theorem \ref{thm:bias_conditions_large} for $\Delta_n$. The proof for $\delta_n$ will be given in Section \ref{sec:proofs_chatterjee}. We know from Theorem \ref{thm:bias_conditions} that the bias $\sqrt{n}\Delta_n$ is small if the functions $f_a : x \mapsto \mathbb{P}(Y \leq a ~|~ X = x)$ are sufficiently smooth in some sense. Our approach is to invert this idea, i.e.\@ to construct a joint distribution for which the functions $f_a$ are sufficiently non-smooth so as to make the bias non-negligible.

\begin{proof}[Proof of Theorem \ref{thm:bias_conditions_large} for $\Delta_n$]

  As in the proof of Theorem \ref{thm:bias_conditions}, observe that
  $$
    \|\sqrt{n}\{\mathbb{E}P_n^m - (\mathbb{E}P_n)^m\}\|_{\mathcal{HK}(c)_m} \xrightarrow[n \to \infty]{} 0.
  $$
  It therefore suffices to find a function $h \in \mathcal{HK}(c)_m$ such that
  $$
    \sqrt{n}\left\{(\mathbb{E}P_n)^m(h) - P^m(h)\right\} \xrightarrow[n \to \infty]{} \infty.
  $$
  We will do so for the case $m = 2$ and some sufficiently large $c > 0$, and then argue below how this can be extended to the general case. Consider the function
  $$
    h[(s_1, s_2), (t_1, t_2)] = \textbf{1}\{s_1 \land s_2 < t_1 \leq s_1 \lor s_2\}
  $$
  This choice of $h$ is not obvious. The main reason for choosing this function is that it will be important in the analysis of the bias $\delta_n$ for Chatterjee's rank correlation, so that by considering this function we are already doing some work for $\delta_n$.

  Let us now construct a specific joint distribution for $X$ and $Y$. Recall that we want the functions $f_a : x \mapsto \mathbb{P}(Y \leq a ~|~ X = x)$ to be sufficiently non-smooth in an appropriate sense. The construction is as follows:

  Fix some $1/2 < \beta < \alpha < 1$ and a countably infinite collection of points $0 < \omega_1 < \omega_2 < \ldots < 1$. Let $p_k' = c_0 k^{-\alpha/\beta}$, $k \in \mathbb{N}$, where $c_0 = (\sum_{k=1}^\infty k^{-\alpha/\beta})^{-1}$. From this, construct a new sequence of numbers $p_k$, $k \in \mathbb{N}$, such that $p_k = p_k'$ for all $k \geq K$, where $K \in \mathbb{N}$ is some arbitrary but fixed threshold, and $\sum_{k=1}^\infty p_{2k} = \sum_{k=1}^\infty p_{2k-1} = 1/2$. The specific way of how this is done is not relevant; what is important is that the even and odd indices have equal measure, and that $p_k = p_k'$ for almost all $k$. Let $X$ be distributed on $\{\omega_k ~|~ k \in \mathbb{N}\}$ with $\mathbb{P}(X = \omega_k) = p_k$, and define $E = \{\omega_{2k} ~|~ k \in \mathbb{N}\}$ and $f = \textbf{1}_E/2$. Define
  \begin{equation}
    \label{eq:def_Y}
    Y = U/2 + f(X),
  \end{equation}
  where $U \sim \mathcal{U}[0,1]$. Elementary calculations show that the cumulative distribution function of $Y$ is given by
  \begin{equation}
    \label{eq:cdf_von_Y}
    G(z) = \begin{cases}
      0, & \quad z < 0, \\  z, &\quad 0 \leq z < 1, \\  1, &\quad z \geq 1,
    \end{cases}
  \end{equation}
  and so the marginal distribution of $Y$ is $\mathcal{U}[0,1]$.  Let us now calculate the integrals $P^2(h)$ and $(\mathbb{E}P_n)^2 (h)$. Since the function $h$ depends on $(t_1, t_2)$ only through $t_1$, it holds that
  \begin{align*}
    P^2(h) & = \iint \textbf{1}\{s_1 \land s_2 < t_1 \leq s_1 \lor s_2\} ~\mathrm{d}P(t_1, t_2) ~\mathrm{d}P(s_1, s_2)       \\
           & = \iint \textbf{1}\{s_1 \land s_2 < t_1 \leq s_1 \lor s_2\} ~\mathrm{d}\mathbb{P}^Y(t_1) ~\mathrm{d}P(s_1, s_2) \\
           & = \int |s_1 - s_2| ~\mathrm{d}P(s_1, s_2)                                                                       \\
           & = \int \mathbb{E}[|Y_1 - Y_2| ~|~ X_1 = X_2 = x] ~\mathrm{d}\mathbb{P}^X(x)                                     \\
           & = \frac{1}{6}
  \end{align*}
  In the third equality we have used that $Y \sim \mathcal{U}[0,1]$. The last equality holds due to the construction of $Y$, Eq.\@ \eqref{eq:def_Y}, which ensures that the conditional expectation $\mathbb{E}[|Y_1 - Y_2| ~|~ X_1 = X_2 = x]$ is always equal to $\mathbb{E}|U_1 - U_2|/2 = 1/6$. To work out $(\mathbb{E}P_n)^2$, let $(\tilde{X}_1, \tilde{Y}_1), \ldots, (\tilde{X}_n, \tilde{Y}_n)$ be copies of $(X_1, Y_1), \ldots, (X_n, Y_n)$ independent of everything else. Then
  \begin{align*}
    (\mathbb{E}P_n)^2(h) & = \frac{1}{(n-1)^2} \sum_{i,j=1}^{n-1} \mathbb{E}\left[\textbf{1}\left\{Y_{n,i}' \land Y_{n,i+1}' < \tilde{Y}_{n,j}' \leq Y_{n,i}' \lor Y_{n,i+1}'\right\}\right] \\
                         & = \frac{1}{(n-1)^2} \sum_{i,j=1}^{n-1} \mathbb{E}\left[\textbf{1}\left\{Y_{n,i}' \land Y_{n,i+1}' < \tilde{Y}_j \leq Y_{n,i}' \lor Y_{n,i+1}'\right\}\right]      \\
                         & = \frac{1}{n-1} \sum_{i=1}^{n-1} \mathbb{E}\left|Y_{n,i}'- Y_{n,i+1}'\right|
  \end{align*}
  Once again, we have used the fact that $h$ depends on $(t_1,t_2)$ only through $t_1$ (second equality) and the fact that $Y$ has a uniform marginal distribution (third equality). Combining the identities for $P^2(h)$ and $(\mathbb{E}P_n)^2(h)$, we see that
  \begin{equation}
    \label{eq:diff_identity_1}
    \sqrt{n}\left\{(\mathbb{E}P_n)^2(h) - P^2(h)\right\} = \frac{\sqrt{n}}{n-1}\sum_{i=1}^{n-1} \mathbb{E}\left[\left|Y_{n,i}'- Y_{n,i+1}'\right| - \frac{1}{6}\right].
  \end{equation}

  Now use once more the construction of $Y$, Eq.\@ \eqref{eq:def_Y}, to see that
  $$
    |Y_{n,i}' - Y_{n,i+1}'| = \begin{cases}
      |U_{n,i}' - U_{n,i+1}'|/2     & \quad \textrm{if } X_{n,i}', X_{n,i+1}' \in E,          \\
      |U_{n,i}' - U_{n,i+1}'|/2     & \quad \textrm{if } X_{n,i}', X_{n,i+1}' \notin E,       \\
      |U_{n,i}' - U_{n,i+1}' + 1|/2 & \quad \textrm{if } X_{n,i}' \in E, X_{n,i+1}' \notin E, \\
      |U_{n,i}' - U_{n,i+1}' - 1|/2 & \quad \textrm{if } X_{n,i}' \notin E, X_{n,i+1}' \in E, \\
    \end{cases}
  $$
  where $U_{n,1}', \ldots, U_{n,n}'$ is a reordering of $U_1, \ldots, U_n$ according to the same permutation which transforms $X_1, \ldots, X_n$ into $X_{n,1}', \ldots, X_{n,n}'$. Since $U_1, \ldots, U_n$ is independent from $X_1, \ldots, X_n$, the joint distribution of $U_{n,i}'$ and $U_{n,i+1}'$ is the same as that of $U_1$ and $U_2$, i.e.\@ uniform on $[0,1]^2$. Hence,
  \begin{align*}
     & \mathbb{E}\left[\left|Y_{n,i}'- Y_{n,i+1}'\right| ~|~ X_1, \ldots, X_n\right]                                                                               \\
     & = \frac{1}{6} \textbf{1}\left\{X_{n,i}', X_{n,i+1}' \in E\right\} + \frac{1}{6} \textbf{1}\left\{X_{n,i}', X_{n,i+1}' \notin E\right\}                      \\
     & \quad + \frac{1}{2}\textbf{1}\left\{X_{n,i}' \in E, X_{n,i+1}' \notin E\right\}  + \frac{1}{2}\textbf{1}\left\{X_{n,i}' \notin E, X_{n,i+1}' \in E\right\}.
  \end{align*}
  Therefore, by Fubini's theorem and Eq.\@ \eqref{eq:diff_identity_1},
  \begin{align}
    \begin{split}
      \label{eq:diff_identity_2}
       & \sqrt{n}\left\{(\mathbb{E}P_n)(h) - P^2(h)\right\}                                                                                                                                             \\
       & = \frac{\sqrt{n}}{3(n-1)} \mathbb{E}\left[\sum_{i=1}^{n-1} \textbf{1}\left\{X_{n,i}' \in E, X_{n,i+1}' \notin E\right\}  + \textbf{1}\left\{X_{n,i}' \notin E, X_{n,i+1}' \in E\right\}\right]
    \end{split}
  \end{align}

  Recall the specific index $K$ from the construction of the numbers $p_k$ and define the events
  $$
    \Omega_{1,n} = \left\{\forall k = K, K+1, \ldots, \left\lceil n^\beta \right\rceil : \exists j = 1, \ldots, n : X_j = \omega_k\right\}.
  $$
  if $n$ is large enough that $\lceil n^\beta \rceil \geq K$, and $\Omega_{1,n} = \emptyset$ otherwise. Clearly, on the event $\Omega_{1,n}$, the sum in the last line of Eq.\@ \eqref{eq:diff_identity_2} is bounded from below by $n^\beta - K + 1$. The reason for considering $\omega_k$ only for $k \geq K$ is purely technical; namely, that for $k \geq K$, the associated probabilities $p_k$ are given by the nice formula $p_k = c_0 k^{-\alpha/\beta}$, so that the following arguments are easier. We will now work out the probability of $\Omega_{1,n}$ through its complement $\Omega_{1,n}^C$. If $n$ is sufficiently large that $K \leq \lceil n^\beta \rceil \leq 2^{\beta/\alpha} n^\beta$, then
  \begin{align}
    \begin{split}
      \label{eq:wsk_Omega_n}
      \mathbb{P}\left(\Omega_{1,n}^C\right) \leq \sum_{k=K}^{\left\lceil n^\beta \right\rceil} (1 - p_k)^n & \leq 2^{\beta/\alpha} n^\beta \left(1 - \min_{K \leq k \leq 2^{\beta/\alpha} n^\beta} p_k\right)^n \\
                                                                                                           & \leq 2^{\beta/\alpha} n^\beta \left[1 - \frac{c_0}{2} n^{-\alpha}\right]^n,
    \end{split}
  \end{align}
  since $p_k = c_0 k^{-\alpha/\beta}$ for $k \geq K$. Define the function $g : (1,\infty) \to \mathbb{R}$ by
  $$
    g(x) = \beta \log x + x \log \left[1 - \frac{c_0}{2} x^{-\alpha}\right].
  $$
  By Taylor's theorem, $\log [1-(c_0/2)z] = -(c_0/2) z + \mathcal{O}(z^2)$ for $z \downarrow 0$, and so
  $$
    g(x) = \beta \log x - \frac{c_0}{2} x^{1 - \alpha} + \mathcal{O}\left(x^{1 - 2\alpha}\right).
  $$
  Since $1/2 < \alpha < 1$, this means that $g(x) \to -\infty$ for $x \to \infty$. The right-hand side of Eq.\@ \eqref{eq:wsk_Omega_n} is equal to $2^{\beta/\alpha} \exp g(n)$, and so $\mathbb{P}(\Omega_{1,n}) \to 1$ for $n \to \infty$.  Therefore, for $n$ sufficiently large,
  \begin{align*}
     & \mathbb{E}\left[\sum_{i=1}^{n-1} \textbf{1}\left\{X_{n,i}' \in E, X_{n,i+1}' \notin E\right\}  + \textbf{1}\left\{X_{n,i}' \notin E, X_{n,i+1}' \in E\right\}\right] \\
     & \geq (n^\beta - K + 1) \mathbb{P}(\Omega_{1,n})                                                                                                                      \\
     & \geq n^\beta/2.
  \end{align*}
  By Eq.\@ \eqref{eq:diff_identity_2}, this means that
  $$
    \Delta_n(h) = \sqrt{n}\left\{(\mathbb{E}P_n)^2(h) - P^2(h)\right\} \geq \frac{n^{\beta + 1/2}}{6(n-1)}
  $$
  for sufficiently large $n$. Because $\beta > 1/2$, the right-hand side tends to infinity, and so $\|\Delta_n\|_{\mathcal{HK}(c)_2} \geq |\Delta_n(h)| \to \infty$ for some sufficiently large $c$. By scaling the function $h$ with some scalar parameter, we can prove the claim for arbitrary $c > 0$.

  We now sketch how one can prove the claim for arbitrary $m \in \mathbb{N}$. If $m > 2$, simply replace the function $h$ by $g^{(m)}$, defined by
  \begin{equation}
    \label{eq:gm}
    g^{(m)}[(s_{1,1}, s_{1,2}), \ldots, (s_{m,1}, s_{m,2})] = h[(s_{1,1}, s_{1,2}), (s_{2,1}, s_{2,2})],
  \end{equation}
  and proceed as above. For $m = 1$, replace $h$ by $g^{(1)}$, defined by
  $$
    g^{(1)}(s_1, s_2) = |s_1 - s_2|.
  $$
  Then $\mathbb{E}P_n(g^{(1)}) = (\mathbb{E}P_n)^2(h)$ and $P(g^{(1)}) = P^2(h)$, and the proof can be completed by the same arguments as before.
\end{proof}

\section{Proofs for Chatterjee's Rank Correlation}
\label{sec:proofs_chatterjee}
In this section we prove the results from Section \ref{sec:mains_chatterjee} which specifically concern Chatterjee's rank correlation. They are mainly simply consequences of the corresponding process results from that section.

Recall the kernel functions $h_1^*$ and $h_2^*$ from Eq.\@ \eqref{eq:h_stern_definition}. The following lemma ensures that the process results from Section \ref{sec:mains_chatterjee} can be used to deal with Chatterjee's rank correlation.

\begin{lemma}
  \label{lem:kerne_in_klammer}
  $h_1^*, h_2^* \in \mathcal{HK}(c)_3$ for any $c > 8$.
\end{lemma}
\begin{proof}

  In light of Theorem \ref{thm:characterisation_HKcm} it suffices to verify the coordinate-wise Hardy-Krause variations of $h_1^*$ and $h_2^*$. We can rewrite $h_1^*$ as
  \begin{align*}
    h_1^*\{(s_1, s_2), (t_1, t_2), (u_1, u_2)\} & = \textbf{1}_{[t_1, 1] \times [0,t_1)}(s_1, s_2) + \textbf{1}_{[0,t_1) \times  [t_1, 1]}(s_1, s_2)                                                  \\
                                                & = \textbf{1}_{(s_1 \land s_2, s_1 \lor s_2] \times [0,1]}(t_1, t_2)                                                                                 \\
                                                & = \left[\mathrm{sgn}(s_2 - s_1)\left\{\textbf{1}(t_1 \leq s_2) - \textbf{1}(t_1 \leq s_1)\right\}\right] \textbf{1}_{[0,1] \times [0,1]}(u_1, u_2).
  \end{align*}
  Indicator functions of axis parallel rectangles have Hardy-Krause variation of at most $4$. Therefore, the coordinate-wise Hardy-Krause variations of $h_1^*$ are bounded by $8$ as a consequence of the above representation. Similarly, we can write
  \begin{align*}
    h_2^*\{(s_1, s_2), (t_1, t_2), (u_1, u_2)\} & = \textbf{1}_{(u_1, t_1] \times [0,1]}(s_1, s_2)                         \\
                                                & = \textbf{1}\{u_1 < s_1\} \textbf{1}_{[s_1,1] \times [0,1]}(t_1, t_2)    \\
                                                & = \textbf{1}\{t_1 \geq s_1\} \textbf{1}_{[0,s_1) \times [0,1]}(u_1,u_2),
  \end{align*}
  which reveals that the coordinate-wise Hardy-Krause variations of $h_2^*$ are at most $4$.
\end{proof}

Next, a suitable representation of the Dette-Siburg-Stoimenov measure of dependence $\xi(X,Y)$. This follows from the definition of $\xi$ and some elementary calculations.

\begin{lemma}
  \label{lem:xi_alternative_darstellung}
  If $Y$ is not almost surely constant, then $\xi(X,Y) = 1 - \mu_1/(2\mu_2)$, where, with $P$ denoting the measure from Definition \ref{def:Fm_class},

  \begin{align*}
    \mu_{1} & = P(h_1^*) = \int \mathbb{P}\left[Y_1 \land Y_2 < Y_3 \leq Y_1 \lor Y_2 ~|~ (X_1, X_2) = (x,x)\right] ~\mathrm{d}\mathbb{P}^X(x), \\
    \mu_{2} & = P(h_2^*) = \mathbb{P}(Y_1 < Y_2 \leq Y_3).
  \end{align*}
\end{lemma}
\begin{proof}
  If $Y$ is not almost surely constant, then $\mu_2 > 0$. The identity $\xi = 1 - \mu_1/(2\mu_2)$ follows if we can show that
  $$
    \frac{\int \mathbb{E}\left[\mathrm{Var}(\textbf{1}_{[y,\infty)}(Y) ~|~ X)\right] ~\mathrm{d}\mathbb{P}^Y(y)}{\int \mathrm{Var}(\textbf{1}_{[y,\infty)}(Y)) ~\mathrm{d}\mathbb{P}^Y(y)} = \frac{\mu_1}{2 \mu_2},
  $$
  because $\mathrm{Var}(\mathbb{E}[U \,|\, V]) = \mathrm{Var}(U) - \mathbb{E}[\mathrm{Var}(U \,|\, V)]$ for any random variables $U$ and $V$. We only consider the numerator as the denominator is analogous. It holds that

  \begin{align*}
     & \mathrm{Var}(\textbf{1}_{[y,\infty)}(Y) ~|~ X = x)                                                                                                                                                                         \\
     & = \mathbb{P}(y \leq Y ~|~ X = x) - \mathbb{P}(y \leq Y ~|~ X = x)^2                                                                                                                                                        \\
     & = \mathbb{P}(y \leq Y_2 ~|~ X_2 = x) - \mathbb{P}(y \leq Y_1 ~|~ X_1 = x)\mathbb{P}(y \leq Y_2 ~|~ X_2 = x)                                                                                                                \\
     & = \mathbb{P}\left[y \leq Y_2 ~\Big|~ \begin{pmatrix}
                                                X_1 \\ X_2
                                              \end{pmatrix} = \begin{pmatrix}
                                                                x \\ x
                                                              \end{pmatrix}\right] - \mathbb{P}\left[y \leq Y_1 ~\Big|~ \begin{pmatrix}
                                                                                                                          X_1 \\ X_2
                                                                                                                        \end{pmatrix} = \begin{pmatrix}
                                                                                                                                          x \\ x
                                                                                                                                        \end{pmatrix}\right]\mathbb{P}\left[y \leq Y_2 ~\Big|~ \begin{pmatrix}
                                                                                                                                                                                                 X_1 \\ X_2
                                                                                                                                                                                               \end{pmatrix} = \begin{pmatrix}
                                                                                                                                                                                                                 x \\ x
                                                                                                                                                                                                               \end{pmatrix}\right] \\
     & = \mathbb{P}\left[y \leq Y_2 ~\Big|~ \begin{pmatrix}
                                                X_1 \\ X_2
                                              \end{pmatrix} = \begin{pmatrix}
                                                                x \\ x
                                                              \end{pmatrix}\right] - \mathbb{P}\left[y \leq Y_1 \land Y_2 ~\Big|~ \begin{pmatrix}
                                                                                                                                    X_1 \\ X_2
                                                                                                                                  \end{pmatrix} = \begin{pmatrix}
                                                                                                                                                    x \\ x
                                                                                                                                                  \end{pmatrix}\right]                                                              \\
     & = \mathbb{P}\left[Y_1 \land Y_2 < y \leq Y_2 ~\Big|~ \begin{pmatrix}
                                                                X_1 \\ X_2
                                                              \end{pmatrix} = \begin{pmatrix}
                                                                                x \\ x
                                                                              \end{pmatrix}\right].
  \end{align*}
  Integrating over $x$ with respect to $\mathbb{P}^X$ shows that
  \begin{align*}
     & \mathbb{E}\left[\mathrm{Var}(\textbf{1}_{[y,\infty)}(Y) ~|~ X)\right]                                      \\
     & = \int \mathbb{P}\left[Y_1 \land Y_2 < y \leq Y_2 ~|~ (X_1, X_2) = (x,x)\right] ~\mathrm{d}\mathbb{P}^X(x) \\
     & = \int \mathbb{P}\left[Y_1 < y \leq Y_2 ~|~ (X_1, X_2) = (x,x)\right] ~\mathrm{d}\mathbb{P}^X(x)
  \end{align*}
  since $\{Y_2 < Y_2\}$ is the empty set. By integrating over $y$, we finally obtain
  \begin{align*}
     & \int \mathbb{E}\left[\mathrm{Var}(\textbf{1}_{[y,\infty)}(Y) ~|~ X)\right] ~\mathrm{d}\mathbb{P}^Y(y)                        \\
     & = \iint \mathbb{P}\left[Y_1 < y \leq Y_2 ~|~ (X_1, X_2) = (x,x)\right] ~\mathrm{d}\mathbb{P}^X(x) ~\mathrm{d}\mathbb{P}^Y(y) \\
     & = \iint \mathbb{P}\left[Y_1 < y \leq Y_2 ~|~ (X_1, X_2) = (x,x)\right] ~\mathrm{d}\mathbb{P}^Y(y) ~\mathrm{d}\mathbb{P}^X(x) \\
     & = \int \mathbb{P}\left[Y_1 < Y_3 \leq Y_2 ~|~ (X_1, X_2) = (x,x)\right] ~\mathrm{d}\mathbb{P}^X(x)                           \\
     & = \frac{\mu_1}{2}.
  \end{align*}

  In a similar way, we can show that
  $$
    \int \mathrm{Var}(\textbf{1}_{[y,\infty)}(Y)) ~\mathrm{d}\mathbb{P}^Y(y) = \mu_2.
  $$
\end{proof}

We can now prove our main results concerning the asymptotic behaviour of Chatterjee's rank correlation. We will use the representation $\xi_n \approx 1 - V_1/(2V_2)$, and we can deduce from the already established results for the concomitant processes that $\sqrt{n}(V_1 - \mathbb{E}V_1)$ and $\sqrt{n}(V_2 - \mathbb{E}V_2)$ converge jointly to some two-dimensional normal distribution. From this, the step to the desired weak convergence of $\xi_n$ is taken with a usual Delta-method argument. The only challenge remaining is that we want our centring term to be $\mathbb{E}\xi_n$, whereas the Delta-method will give a centring term of $1 - \mathbb{E}V_1 / (2\mathbb{E}V_2)$. We will show that the difference between these centring terms is negligible using some standard arguments.

Following this proof are the proofs for the our results concerning the bias term $\delta_n$. These follow almost immediately from the more general results on the bias processes $\Delta_n$, which we have already proven.

\begin{proof}[Proof of Theorem \ref{thm:asymptotik} and Corollary \ref{cor:chatterjee_variance}]
  Define $\phi : \mathbb{R}^2 \to \mathbb{R}$ by
  $$
    \phi(x,y) = \begin{cases}
      1 - x/(2y) & \quad \textrm{if } y \neq 0, \\ 1 &\quad \textrm{otherwise.}
    \end{cases}
  $$

  Let $h_1^*$ and $h_2^*$ be the kernels from Eq.\@ \eqref{eq:h_stern_definition}, and write $V_1$ and $V_2$ for the V-statistics with kernel $h_1^*$ and $h_2^*$, respectively, based on the data $(Y_{n,1}', Y_{n,2}'), \ldots, (Y_{n,n-1}', Y_{n,n}')$. We have seen in Section \ref{sec:mains_chatterjee} that
  $$
    \xi_n = \phi(V_1, V_2) + \mathcal{O}\left(\frac{1}{n}\right),
  $$
  and by Lemma \ref{lem:xi_alternative_darstellung} it holds that $\xi = 1 - \mu_1/(2\mu_2)$, where $\mu_1 = P(h_1^*)$ and $\mu_2 = P(h_2^*)$. By Theorems \ref{thm:prozesskonvergenz_klammer} and \ref{thm:kovarianzfunktion_konkomitantenprozess} and Lemma \ref{lem:kerne_in_klammer}, it holds that
  $$
    \sqrt{n} \begin{pmatrix}
      V_1 - \mathbb{E}V_1 \\
      V_2 - \mathbb{E}V_2
    \end{pmatrix}
    \rightsquigarrow \mathcal{N}(0, \Sigma),
  $$
  where $\Sigma = (\sigma_{ij})_{1 \leq i,j \leq 2}$ and $\sigma_{ij} = \Lambda_P^{(3)}(h_i^*, h_j^*)$. $\phi$ is continuously differentiable on $\{(x,y) ~|~ y \neq 0\}$ with gradient $\nabla \phi(x,y) = (2y)^{-1} (-1, x/y)$. Since $Y$ is not almost surely constant, it holds that $\mu_2 > 0$. $\phi$ is therefore continuously differentiable in a neighbourhood of $\mu = (\mu_{1}, \mu_{2})$, and so by the Delta-method \citep[Theorem 3.8 in][]{vandervaart:asymptotic_statistics} it holds that
  \begin{equation}
    \label{eq:phi_convergence}
    \sqrt{n}\left[\phi(V_1, V_2) - \phi(\mathbb{E}V_1, \mathbb{E}V_2) \right] \rightsquigarrow \mathcal{N}\left(0, \sigma^2\right)
  \end{equation}
  for
  $$
    \sigma^2 = \nabla \phi(\mu_1, \mu_2)\Sigma \nabla \phi(\mu_1,\mu_2)^\top.
  $$
  Plugging in $\nabla\phi(\mu_1,\mu_2) = (2\mu_2)^{-1}(-1,\mu_1/\mu_2)$ establishes the formula for $\sigma^2$ given in Corollary \ref{cor:chatterjee_variance}.

  It remains to show that the difference between $\mathbb{E}\phi(V_1,V_2) = \mathbb{E}\xi_n(X,Y) + \mathcal{O}(1/n)$ and $\phi(\mathbb{E}V_1, \mathbb{E}V_2)$ is negligible. Let $U = \{(x_1, x_2) ~|~ x_2 > \mu_2/2\} \subseteq \mathbb{R}^2$. Observe that $V_2$ is actually just a V-statistic with a bounded kernel, based on i.i.d.\@ data. In particular, this implies that $\mathbb{E}V_2 = \mu_2 + \mathcal{O}(1/n)$, and so there exists some $n_0 \in \mathbb{N}$ such that $(\mathbb{E}V_1, \mathbb{E}V_2) \in U$ for all $n \geq n_0$. By Taylor's theorem, there exists for any $n \geq n_0$ and $u \in U$ some $\alpha_n = \alpha_n(u) \in U$ with
  \begin{align}
    \begin{split}
      \label{eq:phi_taylor}
      \phi(u_1, u_2) - \phi(\mathbb{E}V_1, \mathbb{E}V_2) & = \left\langle \nabla \phi(\mathbb{E}V_1, \mathbb{E}V_2), (u_1 - \mathbb{E}V_1, u_2 - \mathbb{E}V_2)\right\rangle_2                                  \\
                                                          & \quad + \frac{1}{2} \sum_{1 \leq k,l \leq 2} \frac{\partial^2}{\partial x_k \partial x_l} \phi(\alpha_n) (u_k - \mathbb{E}V_k)(u_l - \mathbb{E}V_l).
    \end{split}
  \end{align}
  By explicit calculation of the partial derivatives of $\phi$, we find that
  $$
    \left|\frac{\partial^2}{\partial x_k \partial x_l}\phi(\alpha_n)\right| \leq 8 \mu_2^{-3} < \infty
  $$
  for any $1 \leq k,l \leq 2$, where we use that $\alpha_n \in U$. Furthermore,
  $$
    \left\|\nabla \phi(\mathbb{E}V_1, \mathbb{E}V_2)\right\|_2 \leq \frac{1}{\mu_2} \sqrt{1 + 4\mu_1^2 / \mu_2^2} < \infty
  $$
  for all $n \geq n_0$. Eq.\@ \eqref{eq:phi_taylor} now gives us for all $n \geq n_0$
  \begin{align}
    \begin{split}
      \label{eq:phi_diff_E_1}
       & \mathbb{E}\left[\left|\sqrt{n}\{\phi(V_1, V_2) - \phi(\mathbb{E}V_1, \mathbb{E}V_2)\}\right|^p \textbf{1}\{(V_1, V_2) \in U\}\right]                                                                               \\
       & \lesssim \mathbb{E}\left|\left\langle \nabla \phi(\mathbb{E}V_1, \mathbb{E}V_2), \sqrt{n}(V_1 - \mathbb{E}V_1, V_2 - \mathbb{E}V_2)\right\rangle_2\right|^p                                                        \\
       & \quad + \sum_{1 \leq k,l \leq 2} \mathbb{E}\left|\sqrt{n}(V_k - \mathbb{E}V_k)(V_l - \mathbb{E}V_l)\right|^p                                                                                                       \\
       & \lesssim \mathbb{E}\left\|\sqrt{n}\begin{pmatrix}
                                             V_1 - \mathbb{E}V_1 \\ V_2 - \mathbb{E}V_2
                                           \end{pmatrix}\right\|_2^p + \sum_{1 \leq k,l \leq 2} \sqrt{\mathbb{E}\left|n^{1/4} (V_k - \mathbb{E}V_k)\right|^{2p} \mathbb{E}\left|n^{1/4} (V_l - \mathbb{E}V_l)\right|^{2p}},
    \end{split}
  \end{align}
  where $\lesssim$ is hiding constants depending only on $p$, $\mu_1$ and $\mu_2$. If $p$ is sufficiently large, then the last line is bounded uniformly in $n$ by Theorem \ref{thm:prozesskonvergenz_klammer}, Eq.\@ \eqref{eq:concomitants_p_moments}, and Lemma \ref{lem:kerne_in_klammer}. On the other hand, as we have already pointed out that $V_2$ is a V-statistic based on i.i.d.\@ data, standard concentration inequalities \citep[e.g.\@ the bounded difference inequality, Theorem 6.2 in][]{boucheron_etal:2013} imply that, for all $n$ sufficiently large that $|\mathbb{E}V_2 - \mu_2| \leq \mu_2/4$,
  \begin{align*}
    \mathbb{P}\{(V_1, V_2) \notin U\} & = \mathbb{P}(|V_2 - \mu_2| \geq \mu_2/2)                                      \\
                                      & \leq \mathbb{P}(|V_2 - \mathbb{E}V_2| \geq \mu_2/2 - |\mathbb{E}V_2 - \mu_2|) \\
                                      & \leq \mathbb{P}(|V_2 - \mathbb{E}V_2| \geq \mu_2/4)                           \\
                                      & \leq 2 \exp\left\{-n\mu_2^2/8\right\}.
  \end{align*}
  Finally, the minimum non-zero value which $V_2$ can take is $n^{-3}$, and so $|\phi(V_1, V_2)| \leq 1 + n^3/2 \leq n^3$ for $n > 1$. Therefore,
  \begin{align}
    \begin{split}
      \label{eq:phi_diff_E_2}
       & \mathbb{E}\left[\left|\sqrt{n}\{\phi(V_1, V_2) - \phi(\mathbb{E}V_1, \mathbb{E}V_2)\}\right|^p \textbf{1}\{(V_1, V_2) \notin U\}\right] \\
       & \leq 2^p n^{7p/2} \mathbb{P}\{(V_1, V_2) \notin U\} \xrightarrow[n \to \infty]{} 0.
    \end{split}
  \end{align}
  Eqs.\@ \eqref{eq:phi_diff_E_1} and \eqref{eq:phi_diff_E_2} together imply that the sequence $\sqrt{n}\{\phi(V_1, V_2) - \phi(\mathbb{E}V_1, \mathbb{E}V_2)\}$, $n \in \mathbb{N}$, is uniformly integrable (just pick some $p > 1$), and so by Eq.\@ \eqref{eq:phi_convergence},
  \begin{equation}
    \label{eq:chatterjee_phi_bias}
    \sqrt{n}\left\{\mathbb{E}\phi(V_1, V_2) - \phi(\mathbb{E}V_1, \mathbb{E}V_2)\right\} = \mathbb{E}\left[\sqrt{n}\left\{\phi(V_1, V_2) - \phi(\mathbb{E}V_1, \mathbb{E}V_2)\right\}\right] \xrightarrow[n \to \infty]{} 0.
  \end{equation}
  Therefore,
  $$
    \sqrt{n}\left[\phi(V_1, V_2) - \mathbb{E}\phi(V_1, V_2) \right] \rightsquigarrow \mathcal{N}\left(0, \sigma^2\right),
  $$
  again by Eq.\@ \eqref{eq:phi_convergence}, and our claim follows since $\xi_n(X,Y) = \phi(V_1, V_2) + \mathcal{O}(1/n)$.
\end{proof}

\begin{corollary}
  \label{cor:delta_n_bias}
  Under the assumptions of Theorem \ref{thm:bias_conditions}, $\sqrt{n}\delta_n \to 0$ as $n \to \infty$.
\end{corollary}
\begin{proof}

  Let $\phi, V_1, V_2, \mu_1, \mu_2$ be the objects from the proof of Theorem \ref{thm:asymptotik}, and recall that $\xi_n(X,Y) = \phi(V_1, V_2) + \mathcal{O}(1/n)$ and $\xi(X,Y) = \phi(\mu_1, \mu_2)$. Write
  \begin{align*}
    \sqrt{n}\delta_n & = \sqrt{n}\left\{\mathbb{E}\xi_n(X,Y) - \xi(X,Y)\right\}                                                                                                                                                    \\
                     & = \sqrt{n}\left\{\mathbb{E}\phi(V_1, V_2) - \phi(\mathbb{E}V_1, \mathbb{E}V_2)\right\} + \sqrt{n}\left\{\phi(\mathbb{E}V_1, \mathbb{E}V_2) - \phi(\mu_1, \mu_2)\right\} + \mathcal{O}\left(n^{-1/2}\right).
  \end{align*}
  The first term in the last line tends to $0$ by Eq.\@ \eqref{eq:chatterjee_phi_bias}, and it remains to consider the second term. Since $\mu_2 > 0$, $\phi$ is continuously differentiable and hence totally differentiable in a neighbourhood $U$ of $(\mu_1, \mu_2)$. Therefore,
  \begin{align*}
     & \sqrt{n}\left\{\phi(\mathbb{E}V_1, \mathbb{E}V_2) - \phi(\mu_1, \mu_2)\right\}                                                               \\
     & = \left\langle\nabla \phi(\mu_1, \mu_2), \sqrt{n}\begin{pmatrix}
                                                          \mathbb{E}V_1 - \mu_1 \\ \mathbb{E}V_2 - \mu_2
                                                        \end{pmatrix}\right\rangle_2 + R\left[\sqrt{n}\begin{pmatrix}
                                                                                                          \mathbb{E}V_1 - \mu_1 \\ \mathbb{E}V_2 - \mu_2
                                                                                                        \end{pmatrix}\right],
  \end{align*}
  where $R : U \to \mathbb{R}$ is a function satisfying $R(x)/\|x\|_2 \to 0$ as $x \to 0$. By the first part of Theorem \ref{thm:bias_conditions} (i.e.\@ the version for $\Delta_n$), which we have already proven, combined with Lemma \ref{lem:kerne_in_klammer}, it holds for any $c > 8$ that
  $$
    \left\|\sqrt{n}\begin{pmatrix}
      \mathbb{E}V_1 - \mu_1 \\ \mathbb{E}V_2 - \mu_2
    \end{pmatrix}\right\|_2 \leq \sqrt{2n} \|\Delta_n\|_{\mathcal{HK}(c)_3} \xrightarrow[n \to \infty]{} 0,
  $$
  and so $\delta_n \to 0$ as $n \to \infty$.
\end{proof}

\begin{proof}[Proof of Theorem \ref{thm:bias_conditions_large} for $\delta_n$]

  Let $\mu_1$ and $\mu_2$ be as in Lemma \ref{lem:xi_alternative_darstellung}, and $\phi$, $V_1$ and $V_2$ as in the proof of Theorem \ref{thm:asymptotik}. By Eq.\@ \eqref{eq:chatterjee_phi_bias}, we have
  $$
    \left|\sqrt{n}\left\{\mathbb{E}\xi_n(X,Y) - \phi(\mathbb{E}V_1, \mathbb{E}V_2)\right\}
    \right| \xrightarrow[n \to \infty]{} 0,
  $$
  and it therefore suffices to prove that
  $$
    \left|\sqrt{n}\left\{\phi(\mathbb{E}V_1, \mathbb{E}V_2) - \xi(X,Y)\right\}
    \right| \xrightarrow[n \to \infty]{} 0.
  $$
  Observe that $V_2$ is just a regular V-statistic of i.i.d.\@ data with a bounded kernel. Therefore, by standard arguments, $\mathbb{E}V_2 = \mu_2 + \mathcal{O}(1/n)$. In particular, $\mathbb{E}V_2 > 0$ for $n$ sufficiently large, and for such $n$ it holds that
  \begin{align}
    \begin{split}
      \label{eq:xi_bias_phi}
      \sqrt{n}\left\{\phi(\mathbb{E}V_1, \mathbb{E}V_2) - \xi(X,Y)\right\}
       & = \sqrt{n}\left\{\phi(\mathbb{E}V_1, \mathbb{E}V_2) - \phi(\mu_1,\mu_2)\right\}                                                 \\
       & = \sqrt{n}\left\{\frac{\mathbb{E}V_1}{2 \mathbb{E}V_2} - \frac{\mu_1}{2 \mu_2}\right\}                                          \\
       & = \frac{\sqrt{n}(\mu_2 \mathbb{E}V_1 - \mu_1 \mathbb{E}V_2)}{2\mathbb{E}V_2 \mu_2}                                              \\
       & =\frac{\sqrt{n}(\mathbb{E}V_1 - \mu_1)}{2 \mathbb{E}V_2} - \mu_1 \frac{\sqrt{n}(\mathbb{E}V_2 - \mu_2)}{2 \mathbb{E}V_2 \mu_2}.
    \end{split}
  \end{align}
  The second term in the last line tends to $0$ as $n \to \infty$ because $\mathbb{E}V_2 = \mu_2 + \mathcal{O}(1/n)$ and $\mu_2 > 0$. For the first term, observe that the function $h_1^*$ from Eq.\@ \eqref{eq:h_stern_definition} is equal to the function $g^{(3)}$ from Eq.\@ \eqref{eq:gm}. Therefore, by the proof of this Theorem for $\Delta_n$, which we have already completed, we get
  $$
    \sqrt{n}(\mathbb{E}V_1 - \mu_1) = \sqrt{n}\left\{(\mathbb{E}P_n)^3\left(g^{(3)}\right) - P^3\left(g^{(3)}\right)\right\} \xrightarrow[n \to \infty]{} \infty.
  $$
  Hence, the last line in Eq.\@ \eqref{eq:xi_bias_phi} tends to infinity as $n \to \infty$, which proves our claim.
\end{proof}
\section{Proofs for Mixing Sequences}
\label{sec:mixing_proofs}

The proofs in this section are very straightforward, since strongly mixing processes are well understood. In particular, we understand the behaviour of the empirical process based on strongly mixing observations. The proofs in this section are thus essentially just applications of our general results from Section \ref{sec:mains}, together with the Delta-method for Kendall's $\tau$.

\begin{proof}[Proof of Theorem \ref{thm:v_statistik_alpha_mixing}]

  Denote the $j$-th coordinate of $X_k$ by $X_{k,j}$ and write $F_j$ for the distribution function of $X_{1,j}$, $j = 1, \ldots d$. By a multivariate extension of Theorem 6.8 in \cite{bradley:mixing_vol1} -- see also Remark 6.11 (II) on p.\@ 204 of that reference --, there exists a strictly stationary sequence $U_k = (U_{k,1}, \ldots, U_{k,d})$ such that every $U_{k,j}$ follows a uniform distribution on the unit interval, $X_{k,j} = F_j^{-1}(U_{k,j})$ almost surely for every $j = 1, \ldots, d$ and $k \in \mathbb{N}$, and $\alpha(n) = \alpha_X(n) = \alpha_U(n)$, i.e.\@ the mixing coefficients of the sequences $(X_k)_{k \in \mathbb{N}}$ and $(U_k)_{k \in \mathbb{N}}$ are identical. Let $H_n = \sqrt{n}(Q_n - Q)$ be the empirical process based on $U_1, \ldots, U_n$ ($Q_n$ and $Q$ denote the empirical distribution function and true distribution function, respectively, of $U_1, \ldots, U_n$). By Theorem 7.3 in \cite{rio:2017} it holds that $H_n \rightsquigarrow H$ in $\ell^\infty([0,1]^d)$ for some tight mean-zero Gaussian process $H$. Let $P_n$ be the empirical measure of $X_1, \ldots, X_n$ and $G_n = \sqrt{n}(P_n - P)$ the corresponding empirical process. Since $X_k = (F_1^{-1}(U_{k,1}), \ldots, F_d^{-1}(U_{k,d}))$ almost surely for any $k \in \mathbb{N}$, Lemma 21.1 in \cite{van_der_vaart_wellner:weak_convergence} implies that $G_n(z) = (H_n \circ \phi)(z)$ for any $z \in \mathbb{R}^d$, where $\phi$ is defined by $\phi(z) = \phi(z_1, \ldots, z_d) = (F_1(z_1), \ldots, F(z_d))$. It is easily seen that the map
  \begin{align*}
    \ell^\infty\left([0,1]^d\right) & \to \ell^\infty\left(\mathbb{R}^d\right), \\
    X                               & \mapsto X \circ \phi,
  \end{align*}
  is Lipschitz-continuous and linear. By the continuous mapping theorem \citep[Theorem 1.3.6 in][]{van_der_vaart_wellner:weak_convergence}, we get
  $$
    G_n = H_n \circ \phi \rightsquigarrow  G = H \circ \phi,
  $$
  and $G$ is itself tight, mean-zero and Gaussian because $\phi$ is continuous and linear. The weak convergence of $V_n$ now follows from Theorems \ref{thm:extension_hk_simple} and \ref{thm:v_statistik_from_empirical_process}.

  Consider any fixed $h \in \mathcal{HK}(c)$. Following the argument in the proof of Lemma 5.7.3 in \cite{serfling:approximation_theorems}, we see that
  $$
    U_n(h) - V_n(h) = \mathcal{O}\left(n^{-1}\right) [U_n(h) - W_n(h)],
  $$
  where $W_n(h)$ is the average over all $h(X_{i_1}, \ldots, X_{i_m})$ with $i_k = i_l$ for at least one pair $k \neq l$, and the constant hidden in the Landau symbol only depends on $m$. Therefore,
  $$
    \sqrt{n} |U_n - V_n(h)| = \mathcal{O}\left(n^{-1/2}\right) \|h\|_\infty.
  $$
  We have seen in the proof of Theorem \ref{thm:characterisation_HKcm} that $\|h\|_\infty \leq c$ for any $h \in \mathcal{HK}(c)$. Therefore,
  $$
    \sup_{h \in \mathcal{HK}(c)} \sqrt{n} |U_n - V_n(h)| = \mathcal{O}\left(n^{-1/2}\right).
  $$
\end{proof}

\begin{proof}[Proof of Corollary \ref{cor:kendalls_tau}]
  Since Kendall's $\tau$ is a rank-based statistic, we can assume without loss of generality that $X,Y \in (0,1)$, since otherwise we can replace them by $w(X)$ and $w(Y)$, where $w : \mathbb{R} \to (0,1)$ is the strictly monotonic transformation given by $w(t) = 1/\{1 + \exp(-t)\}$. Let $P_n$ be the empirical measure of $(X_1, Y_1), \ldots (X_n, Y_n)$ and $P$ the distribution of $(X, Y)$. Define the kernel functions
  \begin{align*}
    f[(x_1, y_1), (x_2, y_2)] & = \textbf{1}\{(x_1 - x_2)(y_1 - y_2) > 0\}, \\
    g[(x_1, y_1), (x_2, y_2)] & = \textbf{1}\{(x_1 - x_2)(y_1 - y_2) < 0\}, \\
    u[(x_1, y_1), (x_2, y_2)] & = \textbf{1}\{x_1 = x_2\},                  \\
    v[(x_1, y_1), (x_2, y_2)] & = \textbf{1}\{y_1 = y_2\}.                  \\
  \end{align*}
  Then
  \begin{equation}
    \label{eq:kendalls_tau_terms}
    2 n^{-2} \begin{pmatrix}
      C_n \\
      D_n \\
      T_n \\
      U_n
    \end{pmatrix}
    =
    \begin{pmatrix}
      P_n^2(f)                                  \\
      P_n^2(g)                                  \\
      P_n^2(u) + \mathcal{O}\left(n^{-1}\right) \\
      P_n^2(v) + \mathcal{O}\left(n^{-1}\right)
    \end{pmatrix}.
  \end{equation}
  We claim that $f,g,u,v \in \mathcal{HK}(c)_2$ for some $c > 0$. We only verify this for $f$ and $u$, as the reasoning for $g$ and $v$ is analogous. For $f$, this follows from the identity
  $$
    f[(x_1, y_1), (x_2, y_2)] = \textbf{1}_{[0,x_2) \times [0,y_2)}(x_1, y_1) + \textbf{1}_{(x_2, 1] \times (y_2, 1]}(x_1, y_1),
  $$
  and the fact that the same identity holds if we switch the roles of $x_1$ and $x_2$, and those of $y_1$ and $y_2$. For $u$, it follows from
  $$
    u[(x_1, y_1), (x_2, y_2)] = \textbf{1}_{[0,x_2] \times [0,1]}(x_1, y_1) - \textbf{1}_{[0,x_2) \times [0,1]}(x_1, y_1),
  $$
  and the same switching argument of $x_1$ and $x_2$ as well as $y_1$ and $y_2$. By similar arguments for $g$ and $v$, we find that there is some sufficiently large $c > 0$ such that $f,g,u,v \in \mathcal{HK}(c)_2$. Hence, by Theorem \ref{thm:v_statistik_alpha_mixing},
  $$
    \sqrt{n}\left\{ P_n^2\begin{pmatrix}
      f \\ g \\ u \\ v
    \end{pmatrix}
    - P^2
    \begin{pmatrix}
      f \\ g \\ u \\ v
    \end{pmatrix}
    \right\} \rightsquigarrow \mathcal{N}(0, \Sigma)
  $$
  for some non-negative definite covariance matrix $\Sigma$, which also implies by Eq.\@ \eqref{eq:kendalls_tau_terms}
  \begin{equation}
    \label{eq:kendalls_tau_terme_in_prob}
    \sqrt{n}\left\{2n^{-2}\begin{pmatrix}
      C_n \\ D_n \\ T_n \\ U_n
    \end{pmatrix}
    -
    \begin{pmatrix}
      C \\ D \\ T \\ U
    \end{pmatrix}
    \right\} \rightsquigarrow \mathcal{N}(0, \Sigma),
  \end{equation}
  where, for an independent copy $(\tilde{X}, \tilde{Y})$ of $(X,Y)$,
  \begin{align*}
    C & = P^2(f) = \mathbb{P}\{(X - \tilde{X})(Y - \tilde{Y}) > 0\}, \\
    D & = P^2(g) = \mathbb{P}\{(X - \tilde{X})(Y - \tilde{Y}) < 0\}, \\
    T & = P^2(u) = \mathbb{P}(X = \tilde{X}),                        \\
    U & = P^2(v) = \mathbb{P}(Y = \tilde{Y}).
  \end{align*}
  Since $X$ and $Y$ are not almost surely constant, we have $U,T \in [0,1)$. Consider the map
  \begin{align*}
    \phi : \mathbb{R}^2 \times [0, 1)^2 & \to \mathbb{R},                                  \\
    (x_1, \ldots, x_4)                  & \mapsto \frac{x_1 - x_2}{\sqrt{(1-x_3)(1-x_4)}}.
  \end{align*}
  $\phi$ is differentiable everywhere with gradient
  $$
    \nabla \phi \begin{pmatrix}
      x_1 \\ x_2 \\ x_3 \\ x_4
    \end{pmatrix}
    =
    \begin{pmatrix}
      \{(1-x_3)(1-x_4)\}^{-1/2} \\ -\{(1-x_3)(1-x_4)\}^{-1/2} \\ \frac{x_1 - x_2}{2\sqrt{(1-x_3)^3 (1-x_4)}} \\ \frac{x_1 - x_2}{2\sqrt{(1-x_3)(1-x_4)^3}}
    \end{pmatrix}.
  $$
  Furthermore, since $[n(n-1)/2]^{-1} = 2 n^{-2} + \mathcal{O}\left(n^{-1}\right)$ and due to the fact that $C_n$, $D_n$, $T_n$ and $U_n$ are all absolutely bounded by $n^2$, $\hat{\tau}_b$ is equal to
  $$
    \frac{2 n^{-2} C_n - 2 n^{-2} D_n + \mathcal{O}\left(n^{-1}\right)}{\sqrt{\left[1 - 2 n^{-2} T_n + \mathcal{O}\left(n^{-1}\right)\right] \left[1 - 2 n^{-2} U_n + \mathcal{O}\left(n^{-1}\right)\right]}} = \phi\left[2n^{-2}\begin{pmatrix}
        C_n \\ D_n \\ T_n \\ U_n
      \end{pmatrix} + \mathcal{O}\left(n^{-1}\right)\right],
  $$
  where the $\mathcal{O}(n^{-1})$ term on the right-hand side is meant coordinate-wise. Since all partial derivatives of $\phi$ are continuous everywhere on $\mathbb{R}^2 \times [0, 1)^2$, $\phi$ is totally differentiable and we get
  $$
    \left|\hat{\tau}_b - \phi\left[2n^{-2}\begin{pmatrix}
        C_n \\ D_n \\ T_n \\ U_n
      \end{pmatrix} \right]\right|  =  \left\|\nabla\phi\left[2n^{-2}\begin{pmatrix}
        C_n \\ D_n \\ T_n \\ U_n
      \end{pmatrix} \right]\right\|_2 \mathcal{O}\left(n^{-1}\right) + o\left(n^{-1}\right) = \mathcal{O}_\mathbb{P}\left(n^{-1}\right).
  $$
  In the last equality we have used that the norm of the gradient is $\mathcal{O}_\mathbb{P}(1)$ due to Eq.\@ \eqref{eq:kendalls_tau_terme_in_prob} and the continuous mapping theorem. Furthermore, $\phi(C,D,T,U) = \tau_b$ as defined in the statement of the Lemma. The claim now follows by the Delta-method \citep[e.g.\@ Theorem 3.1 in][]{vandervaart:asymptotic_statistics}.
\end{proof}

\bibliographystyle{abbrvnat}
\bibliography{schwach_chatterjee}
\end{document}